\numberwithin{equation}{section}
\newcommand{\e}{\varepsilon}
\newcommand{\R}{\mathbb R}
\newcommand{\Z}{\mathbb Z}
\newcommand{\N}{{\mathbb N}}
\newcommand{\dv}{{\rm{div}}}
\newcommand{\tenchi}{{}^t\!}
\newcommand{\wtts}{\overset{2,2}{\rightharpoonup}}
\renewcommand{\d}{\mathrm{d}}
\newcommand{\vierint}{\int_{0}^1\int_0^T\int_{\square}\int_{\Omega}}
\newcommand{\vep}{\varepsilon}
\newcommand{\lam}{\lambda}
\newcommand{\T}{\mathbb{T}}
\newcommand{\Vs}{V}
\newtheorem{thm}{Theorem}[section]
\newtheorem{lem}[thm]{Lemma}
\newtheorem{rmk}[thm]{Remark}
\newtheorem{prop}[thm]{Proposition}
\newtheorem{defi}[thm]{Definition}
\newtheorem{cor}[thm]{Corollary}
\newtheorem{remark}[thm]{Remark}
\DeclareMathOperator*{\esssup}{ess\,sup}
\title{Space-time homogenization for nonlinear diffusion}
\author{Goro Akagi}
\address[Goro Akagi]{Mathematical Institute and Graduate School of Sciences, Tohoku University, Aoba, Sendai 980-8578 Japan}
\email{goro.akagi@tohoku.ac.jp}
\author{Tomoyuki Oka}
\address[Tomoyuki Oka]{Graduate School of Sciences, Tohoku University, Sendai 980-8579 Japan}
\email{tomoyuki.oka.q3@dc.tohoku.ac.jp}
\date{\today}
\keywords{Periodic space-time homogenization, two-scale convergence, fast diffusion equation, porous media equation, nonlinear diffusion}
\begin{document}

\subjclass[2010]{\emph{Primary}: 35B27; \emph{Secondary}: 80M40, 47J35} 

\maketitle

\begin{abstract}
The present paper is concerned with a \emph{space-time homogenization problem} for nonlinear diffusion equations with periodically oscillating (in space and time) coefficients. Main results consist of a homogenization theorem (i.e., convergence of solutions as the period of oscillation goes to zero) as well as a characterization of homogenized equations. In particular, homogenized matrices are described in terms of solutions to cell-problems, which have different forms depending on the log-ratio of the spatial and temporal periods of the coefficients. At a critical ratio, the cell problem turns out to be a parabolic equation in microscopic variables (as in linear diffusion) and also involves the limit of solutions, which is a function of macroscopic variables. The latter feature stems from the nonlinearity of the equation, and moreover, some strong interplay between microscopic and macroscopic structures can be explicitly seen for the nonlinear diffusion. As for the other ratios, the cell problems are always elliptic (in micro-variable only) and do not involve any macroscopic variables, and hence, micro- and macrostructures are weakly interacting each other. Proofs of the main results are based on the two-scale convergence theory (for space-time homogenization). Furthermore, finer asymptotics of gradients,  diffusion fluxes and time-derivatives with certain corrector terms  will be provided, and a qualitative analysis on homogenized matrices will be also performed. 
\end{abstract}

\section{Introduction and main results}

\emph{Homogenization} is a method of asymptotic analysis for complex structures and systems in physics and engineering. For instance, it is often used for modeling various composite materials consisting of a large number of microstructures with some equivalent homogeneous materials at the macroscopic scale. On the other hand, \emph{mathematical homogenization problems} are concerned with rigorous derivations of macroscopic equations, often called \emph{homogenized equation}, by passage to the limit of solutions for microscopic equations with rapidly oscillating coefficients as the oscillation period goes to zero. 

Throughout this paper, let $\Omega$ be a bounded domain of $\R^N$ with smooth boundary $\partial \Omega$. A typical (\emph{periodic}) \emph{homogenization problem} concerns asymptotic behavior as $\vep \to 0_+$ of the (weak) solution $u_\vep = u_\vep(x): \Omega \to \R$ to the homogeneous Dirichlet problem,
\begin{equation}\label{lin-ell}
-\mathrm{div}\,\left(a(\tfrac x \vep) \nabla u_\vep \right) = f \ \mbox{ in } \Omega, \quad u_\vep|_{\partial \Omega} = 0, 
\end{equation}
where $a : \T^N \to \R^{N \times N}$ is an $N \times N$ symmetric matrix field (hence, $a(\tfrac x \vep)$ describes an $\vep$-periodic microstructure) with a torus $\T^N = \R^N / \Z^N$ of dimension $N$ (i.e., $x \in \R^N$ is identified with $x + y$ for any $y \in \Z^N$) such that $a(y)$ is \emph{uniformly elliptic} at each $y \in \T^N$ (see~\eqref{ellip} below without $s$-variable), and moreover, $f=f(x)$ is a given function. A classical (and intuitive) approach to this issue would be a method of \emph{asymptotic expansion}, where $u_\vep$ is (formally) expanded as a series,
\begin{equation}\label{asym_exp}
u_\vep(x) = \sum_{j = 0}^\infty \vep^j u_j(x,\tfrac x\vep) 
\end{equation}
and then, the first few functions $u_j = u_j(x,y) : \Omega \times \T^N \to \R$ for $j=0,1,2,\ldots$ are specified (or characterized) by substituting \eqref{asym_exp} to \eqref{lin-ell} and by focusing on the order of $\vep$ in each term (see, e.g.,~\cite{ben}). Then, at a formal level, $u_0$ turns out to be independent of the second variable $y$ (often called \emph{microscopic variable}), and hence, we can expect (and indeed prove) that $u_\vep$ converges to $u_0$ strongly in $L^2(\Omega)$ as $\vep \to 0_+$. On the other hand, $u_1$ does depend on $y$ as well as $x$, and therefore, $u_\vep$ is no longer strongly convergent in $H^1_0(\Omega)$ due to the oscillation of the gradient,
$$
\nabla u_\vep(x) = \nabla u_0(x) + \vep\nabla u_1(x,\tfrac x \vep) + \nabla_y u_1(x,\tfrac x \vep) + O(\vep),
$$
where $\nabla_y$ stands for the gradient operator with respect to the second variable $y$, as $\vep \to 0_+$. Moreover, $u_1 : \Omega \times \T^N \to \R$ is characterized as a solution of the equation,
$$
- \mathrm{div}_y \left( a(y) \left[ \nabla_y u_1(x,y) + \nabla u_0(x)\right] \right) = 0 \ \mbox{ in } \Omega \times \T^N, 
$$
(where $\mathrm{div}_y$ denotes the divergence in $y$), and hence, it can be written as a linear combination
$$
u_1(x,y) = \sum_{k=1}^N \frac{\partial u_0}{\partial x_k}(x) \Phi_k(y)
$$
of solutions $\Phi_k : \T^N \to \R$ (for $k = 1,2,\ldots,N$) to the following \emph{cell problems},
\begin{equation}\label{cell-pb}
-\mathrm{div}_y \left( a(y) \left[\nabla_y \Phi_k(y)+ e_k \right] \right) = 0 \ \mbox{ in } \T^N,
\end{equation}
where $\{e_k\} = \{[\delta_{jk}]_{j=1,2,\ldots,N}\}$ stands for a canonical basis of $\R^N$. Furthermore, $u_0$ turns out to be a solution of the \emph{homogenized equation},
\begin{equation*}
-\mathrm{div} \big( a_{\rm hom} \nabla u_0) = f \ \mbox{ in } \Omega, \quad u_0|_{\partial \Omega} = 0, 
\end{equation*}
where $a_{\rm hom}$ is the so-called \emph{homogenized matrix} (describing a homogeneous structure equivalent to the original one at the macroscopic scale) given by
\begin{equation}\label{hom_mat}
a_{\rm hom} e_k = \int_{\square} a(y)(\nabla_y \Phi_k(y)+e_k) \, \d y \quad \mbox{ for } \ k=1,2,\ldots,N
\end{equation}
and $\square := (0,1)^N$ is a unit cell. 

Although the argument so far is formal (indeed, \eqref{asym_exp} is an ansatz only), these observations can be justified with the aid of a couple of theories such as \emph{two-scale convergence theory}, which was first proposed by G.~Nguetseng~\cite{ng} and then developed by G.~Allaire~\cite{al1,al2} (see also, e.g.,~\cite{cd2000,ln}) and which enables us to analyze how strong compactness of bounded sequences in Sobolev spaces fails due to their oscillatory behaviors. Indeed, in view of Functional Analysis, homogenization problem could be regarded as a precise analysis on ``breaking of strong compactness'' of (non-trivially) oscillating sequences in Sobolev spaces. 

Homogenization problems have been studied for various types of linear and nonlinear PDEs as well as systems (see, e.g.,~\cite{DeFr75},~\cite{Spa76},~\cite{ben},~\cite{Mar78},~\cite{ng},~\cite{al1,al2},~\cite{MuTa97},~\cite{cd2000},~\cite{cdg02,cdg08,cdg18},~\cite{Vis06}), and moreover, they are not limited to periodic coefficients, but also to almost periodic ones (see, e.g.,~\cite{Koz2},~\cite{Bra,Bra2},~\cite{arm,arm3}) as well as random (stochastic) ones (see, e.g.,~\cite{JKO},~\cite{Koz1},~\cite{PV},~\cite{arm},~\cite{n2018}). In most of them, homogenized equations (or homogenized matrices) are represented in terms of solutions $\{\Phi_k\}$ to a linear elliptic cell-problem  (see, e.g.,~\eqref{cell-pb}) or minimizers of some functionals, whose Euler-Lagrange equations may be elliptic PDEs. Moreover, in homogenization for \emph{nonlinear} PDEs, cell-problems are not always linear (see, e.g.,~\cite{ln}), and solutions $\{\Phi_k\}$ to the cell-problems (or their analogues) are not always irrelevant to the homogenized solution $u_0$, i.e., the limit of solutions $u_\vep$ as $\vep \to 0_+$ (see, e.g.,~\cite{MPP05}). In particular, such a dependence can be regarded as an interaction between micro- and macrostructures due to the nonlinearity through the homogenization. Generally, it is not so easy to explicitly reveal such a dependence of the cell-problem on the homogenized limit $u_0$.

Bensoussan, Lions and Papanicolaou~\cite{ben} studied a \emph{space-time homogenization problem}, which is a homogenization for linear evolution equations involving coefficient matrix fields oscillating both in space and time, based on the method of asymptotic expansion. Moreover, developing two-scale convergence theory, Holmbom~\cite{hol} justified formal observations in~\cite{ben} on a space-time homogenization problem for the linear diffusion equation,
\begin{equation*}
\partial_t u_\vep -\mathrm{div}\,\left(a(\tfrac x \vep,\tfrac t {\vep^r}) \nabla u_\vep \right) = f \ \mbox{ in } \Omega \times (0,T), \quad u_\vep|_{\partial \Omega} = 0, \quad u_\vep|_{t=0} = u^0,
\end{equation*}
where $f = f(x,t)$ is a prescribed data. Here $a : \T^N \times \T \to \R^{N\times N}$ ($\T := \T^1$) is an $N \times N$ matrix field such that $a(y,s)$ is uniformly elliptic at each $(y,s) \in \T^N \times \T$, i.e.,
\begin{equation}\label{ellip}
 \lam|\xi|^2 \leq a(y,s) \xi \cdot \xi \leq |\xi|^2 \quad \mbox{ for } \ \xi \in \R^N
\end{equation}
for some constant $\lam \in (0,1]$. We emphasize that the periods in $x$ and $t$ of $a(\tfrac x \vep, \tfrac t {\vep^r})$ are $\vep$ and $\vep^r$, respectively. Then the form of the corresponding cell-problem changes significantly at a critical value of $r$, and moreover, in contrast to standard homogenization, it is not always elliptic, but parabolic at the critical scale (see also~\cite{DAM97,flo1,flo2,flo3,flo4}). 

In the present paper, we restrict ourselves to a space-time homogenization problem for the nonlinear diffusion equation,
\begin{equation}\label{intro}
\partial_tu_{\e}
-\dv\left( a(\tfrac x \vep, \tfrac t {\vep^r})\nabla |u_{\e}|^{p-1}u_{\e} \right)=f_{\e} \ \text{ in } \Omega\times (0,T), 
\end{equation}
where $r > 0$ is a scale parameter, $1 < p < +\infty$, $f_\vep = f_\vep(x,t): \Omega \times (0,T) \to \R$ is a prescribed function and $a : \T^N \times \T \to \R^{N\times N}$ is a symmetric matrix field satisfying \eqref{ellip}, furnished with the  homogeneous  Dirichlet boundary and initial conditions. By analogy to classical Fick's law, the diffusion coefficient (matrix) of \eqref{intro} is proportional to $|u_\vep|^{p-1}$. In case $p=1$, \eqref{intro} is nothing but the classical (linear) normal diffusion. In case $0<p<1$, Equation \eqref{intro} is called a \emph{fast diffusion equation}, where $|u_\vep|^{p-1}$ is decreasing in $|u_\vep|$ and divergent as $u_\vep \to 0$; hence, it is also classified as a \emph{singular diffusion}. In case $1<p<+\infty$, \eqref{intro} is called a \emph{porous media equation}, where $|u_\vep|^{p-1}$ increases in $|u_\vep|$ and vanishes at $u_\vep =0$; i.e., it is a \emph{degenerate diffusion}. The classical porous medium and fast diffusion equations (that is, \eqref{intro} with the identity matrix $a \equiv \mathbb{I}$) have been well studied in a great deal of literature. For instance, we refer the reader to the books~\cite{vaz,vaz2} for more details. 

Now, let us consider the Cauchy-Dirichlet problem,
\begin{equation}\label{eq}
  \left\{
    \begin{aligned}
    \partial_tu_{\e}&=\dv\left( a\left(\tfrac{x}{\e},\tfrac{t}{\e^r}\right)\nabla |u_{\e}|^{p-1}u_{\e} \right)+f_{\e} 
     &&\text{ in } \Omega\times (0,T), \\
    |u_{\e}|^{p-1}u_{\e}&=0 &&\text{ on } \partial\Omega\times (0,T), \\
    u_{\e}&=u^0  &&\text{ in } \Omega \times \{0\},
    \end{aligned}
    \right.
\end{equation}
where $a = [a_{ij}]_{i.j=1,2,\ldots,N}$ is an $N \times N$ symmetric matrix field over $\R^N \times [0,+\infty)$  satisfying \eqref{ellip}  and $u_0 \in H^{-1}(\Omega)$ and $f_\vep : (0,T) \to H^{-1}(\Omega)$ are given. Throughout the present paper, we are concerned with \emph{weak solutions} of \eqref{eq} defined by
\begin{defi}[Weak solution of \eqref{eq}]\label{D:sol}
A function $u_\e = u_\e(x,t) : \Omega \times (0,T) \to \R$ is called a {\rm(\emph{weak})} \emph{solution} to \eqref{eq}, if the following conditions are all satisfied\/{\rm:}
\begin{itemize}
\rm
 \item[(i)] $u_{\e}\in W^{1,2}(0,T;H^{-1}(\Omega)) \cap L^{p+1}(\Omega\times(0,T))$, $|u_\vep|^{p-1}u_\e \in L^{2}(0,T;H^{1}_0(\Omega))$ and $u_{\e}(t,0)\to u^0$ strongly in $H^{-1}(\Omega)$ as $t\to 0_+$, 
 
 \item[(ii)] it holds that
\begin{equation*}
\left\langle \partial_t u_{\e}(t),w\right\rangle_{H^1_0(\Omega)} + \int_{\Omega}a\left(\tfrac{x}{\e},\tfrac{t}{\e^r}\right) \nabla (|u_\e|^{p-1}u_\e)(x,t)\cdot \nabla w(x)\ dx =\langle f_{\e}(t),w\rangle_{H^{1}_0(\Omega)}
\end{equation*}
for a.e.~$t\in (0,T)$ and all $w\in H^1_0(\Omega)$.
\end{itemize}
\end{defi}
We begin with the well-posedness of \eqref{eq} (see also Notation below).
\begin{thm}[Well-posedness of \eqref{eq}]\label{T:wp}
Let $0 < p, r, \vep < +\infty$ and let $a=[a_{ij}]_{i,j=1,2,\ldots,N}$ be an $N \times N$ symmetric matrix field satisfying \eqref{ellip} as well as $(x,t) \mapsto a_{ij}(\tfrac x\e,\tfrac t{\e^r}) \in W^{1,1}(0,T;L^{\infty}(\Omega))$ for $i,j =1,2,\ldots,N$. Then for any $f_{\e} \in W^{1,2}( 0,T ;H^{-1}(\Omega)) \cap L^1( 0,T;L^2(\Omega))$ and $u^0\in L^2(\Omega) \cap L^{p+1}(\Omega)$, the Cauchy-Dirichlet problem \eqref{eq} admits a unique weak solution $u_\vep = u_\vep(x,t): \Omega \times (0,T) \to \R$ such that 
\begin{align*}
 u_{\e} &\in W^{1,\infty}_{\mathrm{loc}}((0,T];H^{-1}(\Omega)) \cap  C_{\rm w}([0,T];L^2(\Omega)) \cap C([0,T];L^{p+1}(\Omega)),\\
 |u_{\e}|^{p-1}u_{\e} &\in  L^{\infty}_{\mathrm{loc}}((0,T];H^{1}_0(\Omega))\cap C([0,T];L^{(p+1)/p}(\Omega)).
\end{align*}
Furthermore, the weak solution continuously depends on the initial datum in the following sense\/{\rm :} let $u^{0,1}$, $u^{0,2}\in H^{-1}(\Omega)$ and let $u_1$, $u_2$ be weak solutions of \eqref{eq} for the initial data $u^{0,1}$, $u^{0,2}$, respectively. Then there exists a constant $C_T\ge 0$ depending on $T$ but independent of $t$, $u^{0,1}$ and $u^{0,2}$ such that  
\begin{align*}
\sup_{t\in [0,T]}\left\|u_1(t)-u_2(t)\right\|_{H^{-1}(\Omega)}^2  \le C_T\|u^{0,1}-u^{0,2}\|_{H^{-1}(\Omega)}^2.
\end{align*}
\end{thm}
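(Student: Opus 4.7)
The plan is to treat \eqref{eq} as an abstract evolution equation
$$
\frac{\d u_\vep}{\d t}+A^\vep(t)\beta(u_\vep)=f_\vep,\qquad u_\vep(0)=u^0,
$$
in $H^{-1}(\Omega)$, where $A^\vep(t)v:=-\dv(a(\tfrac{\cdot}{\vep},\tfrac{t}{\vep^r})\nabla v)$ is a uniformly elliptic symmetric isomorphism $H^1_0(\Omega)\to H^{-1}(\Omega)$ (with $W^{1,1}$-in-$t$ coefficients, by hypothesis) and $\beta(s):=|s|^{p-1}s$ is maximal monotone on $\R$. First I would construct approximate solutions $u^{\lam}_\vep$ by replacing $\beta$ with the strongly monotone regularization $\beta_\lam(s):=|s|^{p-1}s+\lam s$ ($\lam>0$); the regularized problem is a uniformly parabolic quasilinear PDE whose well-posedness is classical, e.g.\ by a Faedo--Galerkin scheme in $H^1_0(\Omega)$ along the eigenbasis of $-\Delta$, the $W^{1,1}$-in-$t$ smoothness of $a(\tfrac{\cdot}{\vep},\tfrac{t}{\vep^r})$ giving meaning to the corresponding ODE system.

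Next I would derive a priori bounds uniform in $\lam$. Testing the regularized equation against $\beta_\lam(u^\lam_\vep)$ and using \eqref{ellip} delivers $u^\lam_\vep\in L^{\infty}(0,T;L^2(\Omega)\cap L^{p+1}(\Omega))$ and $\beta_\lam(u^\lam_\vep)\in L^{2}(0,T;H^1_0(\Omega))$. The decisive estimate, which underlies the $W^{1,\infty}_{\mathrm{loc}}((0,T];H^{-1}(\Omega))$ regularity, comes from formally differentiating the equation in $t$ and pairing with the nonnegative multiplier $\beta'_\lam(u^\lam_\vep)\partial_tu^\lam_\vep$, which produces an identity controlling
$$
\frac{1}{2}\frac{\d}{\d t}\!\int_{\Omega}\!a\bigl(\tfrac{x}{\vep},\tfrac{t}{\vep^r}\bigr)\nabla\beta_\lam(u^\lam_\vep)\cdot\nabla\beta_\lam(u^\lam_\vep)\,\d x+\int_{\Omega}\!\beta'_\lam(u^\lam_\vep)\bigl|\partial_t u^\lam_\vep\bigr|^2\d x
$$
modulo the error $\int_\Omega\partial_ta(\tfrac{x}{\vep},\tfrac{t}{\vep^r})\nabla\beta_\lam(u^\lam_\vep)\cdot\nabla\beta_\lam(u^\lam_\vep)\,\d x$, which is integrable in $t$ precisely because $a_{ij}(\tfrac{\cdot}{\vep},\tfrac{\cdot}{\vep^r})\in W^{1,1}(0,T;L^\infty(\Omega))$. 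Multiplying through by $t$ (so as not to impose $u^0\in H^1_0(\Omega)$) and invoking Gronwall yields the local-in-time bounds $t\nabla\beta_\lam(u^\lam_\vep)\in L^\infty(0,T;L^2(\Omega))$ and $t\partial_tu^\lam_\vep\in L^\infty(0,T;H^{-1}(\Omega))$. This is the main technical obstacle: the time-differentiation has to be carried out rigorously at the regularized level (where it is legitimate) and then shown to persist in the limit $\lam\to 0_+$.

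With these estimates the passage to the limit is standard. Weak/weak-$*$ compactness delivers $u_\vep$ and a limit $\xi$ with $u^\lam_\vep\rightharpoonup u_\vep$ and $\beta_\lam(u^\lam_\vep)\rightharpoonup\xi$ in the appropriate spaces; combining the $H^{-1}$-bound on $\partial_tu^\lam_\vep$ with the $H^1_0$-bound on $\beta_\lam(u^\lam_\vep)$, an Aubin--Lions-type compactness argument gives strong convergence of $u^\lam_\vep$ in suitable Bochner spaces, and Minty's monotonicity trick identifies $\xi=|u_\vep|^{p-1}u_\vep$. For the regularity statements, $u_\vep\in C_{\rm w}([0,T];L^2(\Omega))$ follows from the $L^\infty(L^2)$-bound together with a weak time derivative, whereas $u_\vep\in C([0,T];L^{p+1}(\Omega))$ is obtained from the energy identity
$$
\frac{\d}{\d t}\!\int_{\Omega}\!\frac{|u_\vep|^{p+1}}{p+1}\,\d x+\int_{\Omega}\!a\bigl(\tfrac{x}{\vep},\tfrac{t}{\vep^r}\bigr)\nabla\beta(u_\vep)\cdot\nabla\beta(u_\vep)\,\d x=\langle f_\vep,\beta(u_\vep)\rangle_{H^1_0(\Omega)}
$$
combined with the uniform convexity of $L^{p+1}(\Omega)$; the corresponding continuity of $|u_\vep|^{p-1}u_\vep$ in $L^{(p+1)/p}(\Omega)$ is then automatic, since $s\mapsto|s|^{p-1}s$ is a homeomorphism between these Lebesgue spaces.

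Finally, for uniqueness and continuous dependence I would set $w:=u_1-u_2$ and equip $H^{-1}(\Omega)$ with the equivalent time-dependent norm $\|\zeta\|_{*,t}^2:=\langle\zeta,A^\vep(t)^{-1}\zeta\rangle_{H^1_0(\Omega)}$. Subtracting the two equations, pairing with $A^\vep(t)^{-1}w$, and using the symmetry of $a$ to handle $\tfrac{\d}{\d t}A^\vep(t)^{-1}$, yields
$$
\frac{1}{2}\frac{\d}{\d t}\|w(t)\|_{*,t}^2=-\!\int_{\Omega}\!\bigl(\beta(u_1)-\beta(u_2)\bigr)(u_1-u_2)\,\d x+\frac{1}{2}\!\int_{\Omega}\!\partial_ta\bigl(\tfrac{x}{\vep},\tfrac{t}{\vep^r}\bigr)\nabla A^\vep(t)^{-1}w\cdot\nabla A^\vep(t)^{-1}w\,\d x,
$$
in which the first term on the right is nonpositive by monotonicity of $\beta$ and the second is bounded by $g(t)\|w\|_{H^{-1}(\Omega)}^2$ for some $g\in L^1(0,T)$. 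Gronwall together with the uniform equivalence of $\|\cdot\|_{*,t}$ and $\|\cdot\|_{H^{-1}(\Omega)}$ (via \eqref{ellip}) produces the stated stability estimate and, in particular, uniqueness.
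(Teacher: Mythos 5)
Your uniqueness and continuous-dependence argument is essentially the paper's own: there, too, one introduces the isomorphism $A^t:H^1_0(\Omega)\to H^{-1}(\Omega)$ induced by the bilinear form with coefficient $a(\tfrac x\e,\tfrac t{\e^r})$, tests the subtracted equations by $G=(A^t)^{-1}(u_1-u_2)$ (exactly your $A^\e(t)^{-1}w$, since the paper works with $v_j=|u_j|^{p-1}u_j$ and $v_j^{1/p}=u_j$), uses the symmetry of $a$ and the monotonicity of $s\mapsto |s|^{p-1}s$, absorbs the term coming from $\partial_t\bigl[a(\tfrac x\e,\tfrac t{\e^r})\bigr]$ thanks to the hypothesis $a_{ij}(\tfrac\cdot\e,\tfrac\cdot{\e^r})\in W^{1,1}(0,T;L^\infty(\Omega))$, and closes with Gronwall and the equivalence of $\langle A^t\cdot,\cdot\rangle^{1/2}$ with the $H^{-1}(\Omega)$-norm. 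Where you genuinely diverge is the existence part: the paper does not construct solutions at all, but invokes the abstract theory of doubly nonlinear evolution equations governed by time-dependent subdifferentials (\cite[Theorem 3.2]{akagi}), which delivers existence together with the local regularity $W^{1,\infty}_{\mathrm{loc}}((0,T];H^{-1}(\Omega))$ and $L^\infty_{\mathrm{loc}}((0,T];H^1_0(\Omega))$ in one stroke, whereas your regularization--Galerkin--Minty construction with the $t$-weighted, time-differentiated energy estimate is a self-contained alternative that makes visible where the hypotheses enter ($f_\e\in W^{1,2}(0,T;H^{-1}(\Omega))$ to integrate $\langle f_\e,\partial_t\beta_\lam(u)\rangle$ by parts in time; $u^0\in L^2(\Omega)\cap L^{p+1}(\Omega)$ for the initial layer). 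Two details to repair if you carry it out: for $0<p<1$ your $\beta_\lam(s)=|s|^{p-1}s+\lam s$ is not Lipschitz near $s=0$, so the regularized problem is not uniformly parabolic in the naive "classical" sense --- take the Yosida regularization of $\beta$ plus $\lam\,\mathrm{id}$ (or a truncation) before running Galerkin; and testing with $\beta_\lam(u^\lam_\e)$ only bounds $\lam\|u^\lam_\e\|_{L^2(\Omega)}^2$, not $\|u^\lam_\e\|_{L^2(\Omega)}^2$ uniformly in $\lam$, so the $C_{\rm w}([0,T];L^2(\Omega))$ statement requires testing with (truncations of) $u^\lam_\e$ itself and uses $f_\e\in L^1(0,T;L^2(\Omega))$, which is precisely how the paper obtains the analogous bound in Lemma \ref{bdd}. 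These are fixable bookkeeping points rather than gaps in the strategy.
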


The existence part of the assertion can be proved by applying a general theory on the existence of gradient flows for time-dependent convex energies in reflexive Banach spaces (see~\cite{akagi}). On the other hand, the rest of the assertions (i.e., uniqueness and continuous dependence on data) still seems non-trivial due to the presence of the time-dependent coefficient $a(\tfrac x \vep, \tfrac t {\vep^r})$. 

We next give a homogenization theorem.  Here and henceforth, $J := (0,1)$ stands for the unit interval.
\begin{thm}[Homogenization theorem for \eqref{eq}]\label{thm1}
Let $0 < p, r < +\infty$ and let $\e_n \to 0_+$ be an arbitrary sequence in $(0,+\infty)$. In addition to the assumptions in Theorem \ref{T:wp} for $\e = \e_n$,  suppose that
\begin{itemize}
 \item 
 $f_{\e_n} \to f$ weakly in $L^2( 0,T;H^{-1}(\Omega))$, 
 \item $(f_{\e_n})$ is bounded in $L^1( 0,T;L^2(\Omega))$ if $p\in(0,1)$,
 \item $a_{ij}$ is $(\square \times J)$-periodic for $i,j=1,2,\ldots,N$.
\end{itemize}
Let $u_{\e_n}$ be the unique weak solution to \eqref{eq} with $\e=\e_n$. Then there exist a {\rm (}not relabeled{\rm )} subsequence of $(\e_n)$ and functions
\begin{align*}
u_{0}&\in W^{1,2}(0,T;H^{-1}(\Omega)) \cap L^{p+1}(\Omega\times(0,T)) \cap W^{1,\infty}_{\mathrm{loc}}((0,T];H^{-1}(\Omega)) \cap C_{\rm w}([0,T];L^2(\Omega)),\\
z&\in L^{2}(\Omega\times (0,T) ;L^{2}(J;H^{1}_{\mathrm{per}}(\square)/\R))
\end{align*}
{\rm (}see Notation at the end of this section{\rm )} such that $|u_{0}|^{p-1}u_{0} \in L^2(0,T;H^1_0(\Omega))$,
\begin{alignat}{4}
|u_{\e_n}|^{p-1}u_{\e_n} &\to |u_{0}|^{p-1}u_{0} \quad &&\text{ weakly in }\ L^2(0,T;H^1_0(\Omega)), \label{thm1pf1}\\
u_{\e_n} &\to u_{0} \quad &&\text{ strongly in }\ L^\rho(0,T;L^{p+1}(\Omega))\label{thm1pf3}
\end{alignat}
for any $\rho \in [1,+\infty)$ and
\begin{align}
a(\tfrac{x}{\e_n},\tfrac{t}{\e_n^r})\nabla |u_{\e_n}|^{p-1}u_{\e_n} &\wtts a(y,s) \left(\nabla |u_{0}|^{p-1}u_0+\nabla_y z\right)\quad \mbox{ in } \  [L^2(\Omega \times  (0,T)  \times \square \times J)]^N \label{thm1pf2}
\end{align}
where $\wtts$ denotes the notion of weak two-scale convergence and it will be defined in Section \ref{Ss:wtts} below. Moreover, the limit $u_0$ solves the weak form of the homogenized equation,
\begin{equation}\label{homeq}
\left\{
\begin{aligned}
&\langle \partial_tu_{0}(t),\phi\rangle_{H^1_0(\Omega)}+\int_{\Omega} j_{\rm hom}(x,t)\cdot\nabla\phi(x)\, dx  =\int_{\Omega} f(x,t)\phi(x)\, dx \ \mbox{ for } \, \phi\in H^1_0(\Omega),\\
&u_{0}( \cdot,0)=u^0 \ \mbox{ in } \Omega
\end{aligned}
 \right.
\end{equation}
with a homogenized diffusion flux $j_{\rm hom}\in[L^2(\Omega\times (0,T))]^{N}$ given by 
\begin{equation}\label{jhom}
j_{\rm hom}(x,t)=\int_0^1\int_{\square}a(y,s)\left(\nabla |u_{0}|^{p-1}u_0(x,t)+\nabla_yz(x,t,y,s)\right)\, dyds
\end{equation}
for a.e.~$t \in (0,T)$.
\end{thm}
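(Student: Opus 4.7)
The plan is to derive uniform (in $n$) a priori estimates, extract weak and two-scale limits along a subsequence, identify the weak limit of the nonlinear image $|u_{\vep_n}|^{p-1}u_{\vep_n}$, and finally pass to the limit in the weak form of \eqref{eq} to obtain the homogenized equation.

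First, I would test the weak form of \eqref{eq} with $w=|u_{\vep_n}|^{p-1}u_{\vep_n}$ and use the ellipticity \eqref{ellip} to derive the energy inequality
\[
\frac{1}{p+1}\frac{d}{dt}\|u_{\vep_n}(t)\|_{L^{p+1}(\Omega)}^{p+1}+\lam\|\nabla(|u_{\vep_n}|^{p-1}u_{\vep_n})\|_{L^2(\Omega)}^2\le\|f_{\vep_n}(t)\|_{H^{-1}(\Omega)}\|\nabla(|u_{\vep_n}|^{p-1}u_{\vep_n})\|_{L^2(\Omega)}.
\]
Combined with the weak $L^2(0,T;H^{-1}(\Omega))$ convergence of $f_{\vep_n}$ and Young's inequality, this yields uniform bounds for $u_{\vep_n}$ in $L^\infty(0,T;L^{p+1}(\Omega))$ and for $|u_{\vep_n}|^{p-1}u_{\vep_n}$ in $L^2(0,T;H^1_0(\Omega))$; in the fast-diffusion regime $p\in(0,1)$, the additional $L^1(0,T;L^2)$ bound on $f_{\vep_n}$ supplies uniform control of $u_{\vep_n}$ in $L^\infty(0,T;L^2(\Omega))$. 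Comparison in the equation then gives $\partial_t u_{\vep_n}$ uniformly bounded in $L^2(0,T;H^{-1}(\Omega))$.

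Passing to a (not relabeled) subsequence, I obtain weak limits $u_{\vep_n}\to u_0$ in $W^{1,2}(0,T;H^{-1}(\Omega))$ (hence uniformly in $C([0,T];H^{-1}(\Omega))$ by Ascoli, which fixes the initial trace $u_0(\cdot,0)=u^0$), and $|u_{\vep_n}|^{p-1}u_{\vep_n}\to v$ in $L^2(0,T;H^1_0(\Omega))$. To identify $v$, I would invoke an Aubin--Lions-type compactness lemma tailored to the nonlinear evolutionary setting: coupling $\partial_t u_{\vep_n}\in L^2(0,T;H^{-1}(\Omega))$ with the monotone image $|u_{\vep_n}|^{p-1}u_{\vep_n}\in L^2(0,T;H^1_0(\Omega))$ via the strict monotonicity of $s\mapsto|s|^{p-1}s$ yields strong convergence $u_{\vep_n}\to u_0$ in $L^{p+1}(\Omega\times(0,T))$. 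Continuity of the Nemytskii map then gives $v=|u_0|^{p-1}u_0$, establishing \eqref{thm1pf1}, and interpolation with the $L^\infty(0,T;L^{p+1})$ bound upgrades the strong convergence to \eqref{thm1pf3}.

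For \eqref{thm1pf2}, I apply the space-time two-scale compactness theorem for gradients to the bounded sequence $(|u_{\vep_n}|^{p-1}u_{\vep_n})$, producing $z\in L^2(\Omega\times(0,T);L^2(J;H^1_{\mathrm{per}}(\square)/\R))$ with $\nabla(|u_{\vep_n}|^{p-1}u_{\vep_n})\wtts\nabla|u_0|^{p-1}u_0+\nabla_y z$; multiplication by the admissible coefficient $a(\cdot/\vep_n,\cdot/\vep_n^r)$, whose two-scale profile is $a(y,s)$, then delivers \eqref{thm1pf2}. To derive \eqref{homeq}, I pass to the limit in the weak formulation against test functions of the form $\phi(x)\eta(t)$, with $\phi\in H^1_0(\Omega)$ and $\eta\in C_c^\infty(0,T)$: the time-derivative term converges by weak convergence of $\partial_t u_{\vep_n}$, the source by the hypothesis on $f_{\vep_n}$, and the diffusion term by \eqref{thm1pf2} viewed with $\phi(x)\eta(t)$ as a (trivially $y,s$-independent) two-scale test function; integrating out $(y,s)$ reproduces $\int_\Omega j_{\rm hom}\cdot\nabla\phi\,dx$ with $j_{\rm hom}$ given by \eqref{jhom}. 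The main obstacle is precisely the strong compactness step: since the $H^1_0$-control is on the monotone image $|u_{\vep_n}|^{p-1}u_{\vep_n}$ rather than on $u_{\vep_n}$ itself, classical Aubin--Lions does not apply, and one must invoke (or establish) a nonlinear compactness lemma that transfers the $H^{-1}$ time regularity of $u_{\vep_n}$ and the $H^1_0$ spatial regularity of its nonlinear transform into strong convergence of $u_{\vep_n}$ in $L^{p+1}(\Omega\times(0,T))$.
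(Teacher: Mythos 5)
Your overall route (energy estimate via testing with $|u_{\e_n}|^{p-1}u_{\e_n}$, comparison bound on $\partial_t u_{\e_n}$ in $L^2(0,T;H^{-1}(\Omega))$, two-scale gradient compactness for $|u_{\e_n}|^{p-1}u_{\e_n}$, passage to the limit in the weak form) is the same as the paper's. The genuine gap is exactly the step you flag at the end: identifying the weak $L^2(0,T;H^1_0(\Omega))$ limit of $|u_{\e_n}|^{p-1}u_{\e_n}$ as $|u_0|^{p-1}u_0$ and obtaining \eqref{thm1pf3}. You propose to first prove strong convergence of $u_{\e_n}$ in $L^{p+1}(\Omega\times(0,T))$ by an unspecified ``nonlinear Aubin--Lions lemma'' and then identify the limit via the Nemytskii map; as stated, this step would not go through, because no such lemma is invoked or established, and classical Aubin--Lions indeed does not apply since the spatial regularity sits on the monotone image rather than on $u_{\e_n}$.

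The paper closes this gap by reversing the order and avoiding any compactness lemma for $u_{\e_n}$ in $L^{p+1}$ upfront. First, Ascoli--Arzel\`a gives $u_{\e_n}\to\xi$ strongly in $C([0,T];H^{-1}(\Omega))$: equicontinuity comes from the $\partial_t$ bound, and pointwise relative compactness comes from the uniform $L^\infty(0,T;L^2(\Omega))$ bound (this is precisely where the extra hypothesis $(f_{\e_n})$ bounded in $L^1(0,T;L^2(\Omega))$ for $p\in(0,1)$ enters) together with the compact embedding $L^2(\Omega)\hookrightarrow H^{-1}(\Omega)$ --- a point your appeal to Ascoli leaves implicit. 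Second, pairing this strong convergence with the weak convergence of $v_{\e_n}:=|u_{\e_n}|^{p-1}u_{\e_n}$ in $L^2(0,T;H^1_0(\Omega))$ yields convergence of the duality products $\int_0^T\langle u_{\e_n},v_{\e_n}\rangle\,dt$, and Minty's trick for the maximal monotone (subdifferential) operator $w\mapsto |w|^{(1-p)/p}w$ from $L^{(p+1)/p}$ to $L^{p+1}$ identifies $\xi=u_0$ with $v=|u_0|^{p-1}u_0$, giving \eqref{thm1pf1}. Third, the same convergence of products gives $\|u_{\e_n}\|_{L^{p+1}(\Omega\times(0,T))}\to\|u_0\|_{L^{p+1}(\Omega\times(0,T))}$, so weak convergence plus uniform convexity of the $L^{p+1}$ norm (Radon--Riesz) upgrades to strong convergence; interpolation with the $L^\infty(0,T;L^{p+1})$ bound then gives \eqref{thm1pf3}, as you indicate. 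In short, the ingredients you list (Ascoli in $C([0,T];H^{-1})$ and monotonicity of $s\mapsto|s|^{p-1}s$) already suffice; what is missing in your proposal is assembling them via Minty's trick and the Radon--Riesz property rather than postulating a nonlinear compactness lemma.
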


We now move on to a qualitative analysis of the space-time homogenization. In the next theorem, the homogenized diffusion flux $j_{\rm hom}$ will be represented as
\begin{equation}\label{j-a}
j_{\rm hom}=a_{\rm hom}\nabla |u_0|^{p-1}u_0 \ \mbox{ in } \Omega \times (0,T) 
\end{equation}
for a homogenized matrix $a_{\rm hom}$, and moreover, $a_{\rm hom}$ will be characterized depending on the scale parameter $0 < r < +\infty$.
\begin{thm}[Characterization of homogenized matrices]\label{thm2}
Let $p \in(0,2)$. In addition to all the assumptions of Theorem \ref{thm1}, suppose that
\begin{equation}\label{t2:hyp}
\begin{cases}
(f_{\e_n}) \mbox{ is bounded in } L^1( 0,T;L^{3-p}(\Omega)),\\
u^0 \in L^{3-p}(\Omega) \ \mbox{ if } \ p \in (0,1)\,{\rm ;} \ \
u^0 \in L^{p+1}(\Omega)\ \mbox{ if } \ p \in (1,2).
\end{cases}
\end{equation}
Let $u_0$ be a {\rm (}homogenized{\rm )} limit of weak solutions $(u_{\e_n})$ to \eqref{eq} along a sequence $\e_n \to 0_+$ such that \eqref{thm1pf1}--\eqref{thm1pf2} are fulfilled, and hence, $u_0$ is a weak solution of the homogenized equation \eqref{homeq}. Then the homogenized flux $j_{\rm hom}(x,t)$ is represented as \eqref{j-a} for a homogenized matrix $a_{\rm hom}$. 

Moreover, $a_{\rm hom}$ can be characterized as follows\/{\rm :}
\begin{description}
\item[\rm{(i)}] In case $0<r<2$, $a_{\rm hom}$ is a constant $N\times N$ matrix given by 
\begin{align}\label{ahomfast}
a_{\rm hom}e_k=\int_0^1\int_{\square}a(y,s) \left(\nabla_y\Phi_k(y,s)+e_{k}\right)\, dyds \ \mbox{ for } k = 1,2,\ldots,N,
\end{align}
where $\Phi_k\in L^2(J;H^1_{\mathrm{per}}(\square)/\R)$ is the unique weak solution to the cell-problem\/{\rm :} 
\begin{equation}\label{local1}
-\dv_y \left(a(y,s) \left[ \nabla_y\Phi_k(y,s)+e_{k}\right]\right)=0\ \text{ in }\ \T^N \times \T.
\end{equation}
Furthermore, the pair $(u_0,z)$ satisfying \eqref{thm1pf1}--\eqref{homeq} is uniquely determined. Hence $(u_{\e_n})$ converges to $u_{0}$ {\rm(}without taking any subsequence{\rm )}. Moreover, the function $z=z(x,t,y,s)$ can be written as
\begin{equation}\label{z}
z(x,t,y,s) = \sum_{k=1}^N \left( \partial_{x_k} |u_0|^{p-1}u_0(x,t) \right) \Phi_k(y,s).
\end{equation} 
\item[\rm{(ii-FDE)}] In case $r=2$ and $p \in (0,1)$, the homogenized matrix $a_{\rm hom}(x,t)$ is characterized by
\begin{equation}\label{ahom:c:fd}
a_{\rm hom}(x,t) e_k = \int^1_0 \int_\square a(y,s) \left( \nabla_y \Phi_k(x,t,y,s) + e_k \right) \, d y d s, 
\end{equation}
where $\Phi_k = \Phi_k(x,t,y,s)$ is a function belonging to $L^{\infty}(\Omega \times (0,T)  ; L^2(J ; H^1_{\rm per}(\square)/\R))$ such that 
\begin{align*}
|u_0|^{1-p}\Phi_k &\in L^{\infty}(\Omega \times (0,T) ; W^{1,2}(J;[H^1_{\rm per}(\square)/\R]^*)),\\
|u_0|^{(1-p)/2}\Phi_k &\in L^{\infty}(\Omega \times (0,T) ; C(\overline{J};L^2(\square)/\R))
\end{align*}
and solves the cell-problem,
\begin{equation}\label{local2}
\left\{
\begin{array}{ll}
\frac{1}{p}|u_0(x,t)|^{1-p}\partial_s\Phi_k(x,t,y,s)=\dv_y\left(a(y,s)\left[\nabla_y\Phi_k(x,t,y,s)+e_{k}\right]\right) \hspace{-1mm}&\mbox{in } \T^N \times \T,\\
 \Phi_k(x,t,y,0)=\Phi_k(x,t,y,1) &\mbox{in } \T^N
\end{array}
\right.
\end{equation}
for each $(x,t) \in \Omega \times (0,T)$. Moreover, $z$ is given by \eqref{z} with $\Phi_k=\Phi_k(x,t,y,s)$.
\item[\rm (ii-PME)] In case $r=2$ and $p\in(1,2)$, the homogenized matrix $a_{\rm hom}(x,t)$ is characterized by \eqref{ahom:c:fd} with $\Phi_k$ given by
\begin{align}\label{ahom:c:pm}
\Phi_k(x,t,y,s)
= \begin{cases}
p |u_0(x,t)|^{p-1} \Psi_k(x,t,y,s) &\mbox{if } \ u_0(x,t) \neq 0,\\
0 &\mbox{if } \ u_0(x,t) = 0,
   \end{cases}
\end{align}
where $\Psi_k = \Psi_k(x,t,y,s)$ is a function lying on  $L^\infty ( [u_0\neq0]; W^{1,2}(J;[H^1_{\rm per}(\square)/\R]^*))$ with the measurable set $[u_0\neq0] := \{(x,t) \in \Omega\times (0,T)\colon u_0(x,t) \neq 0\}$  such that 
\begin{align*}
|u_0|^{p-1}\Psi_k &  \in L^{\infty}([u_0\neq0] ; L^2(J; H^1_{\rm per}(\square)/\R)),\\
 |u_0|^{(p-1)/2} \Psi_k &  \in L^\infty([u_0\neq0] ; C(\overline{J};L^2(\square)/ \R)) 
\end{align*} 
and solves the cell-problem,
\begin{equation}\label{local21}
\left\{
\begin{array}{ll}
\partial_s\Psi_k(x,t,y,s)=\dv_y\left(a(y,s)\left[ p|u_0(x,t)|^{p-1}\nabla_y\Psi_k(x,t,y,s)+e_{k}\right]\right) \hspace{-1mm}&\mbox{in } \T^N \times \T,\\
\Psi_k(x,t,y,0) = \Psi_k(x,t,y,1) &\mbox{in } \T^N
\end{array}
\right.
\end{equation}
for each $(x,t) \in  [u_0\neq0]$. Furthermore, $z$ is given as in {\rm (ii-FDE)}.
\item[\rm{(iii)}] In case $2<r<+\infty$, 
$a_{\rm hom}$ is a constant $N\times N$ matrix given by 
\begin{equation}\label{a_hom3}
a_{\rm hom}e_k=\int_{\square}\Bigl(\int_0^1a(y,s)\ ds\Bigl)(\nabla_y\Phi_k(y)+e_{k})\, dy,
\end{equation}
where $\Phi_k\in H^1_{\mathrm{per}}(\square)/\R$ is the unique weak solution to the cell problem,
\begin{equation}\label{local3}
-\dv_y\biggl(\Bigl(\int_0^1a(y,s)\ ds\Bigl)(\nabla_y\Phi_k(y)+e_{k})\biggl)=0\ \text{ in }\ \T^N.
\end{equation}
Furthermore, the pair $(u_0,z)$ satisfying \eqref{thm1pf1}--\eqref{homeq} is uniquely determined. Hence $(u_{\e_n})$ converges to $u_{0}$ {\rm(}without taking any subsequence{\rm )}. Finally, $z$ is independent of $s$ and given by \eqref{z} with $\Phi_k = \Phi_k(y)$.
\end{description}
\end{thm}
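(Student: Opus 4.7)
My strategy is to derive, from the two-scale structure established in Theorem~\ref{thm1}, a cell problem in the micro-variables $(y,s)$ that characterizes the corrector $z=z(x,t,y,s)$, and then to read off the representation \eqref{j-a} by substituting the ensuing structure of $z$ into \eqref{jhom}. Write $v_{\e}:=|u_{\e}|^{p-1}u_{\e}$ for brevity. The main tool is to test the weak form of \eqref{eq} with oscillating functions
\begin{equation*}
w_{\e_n}(x,t)=\e_n\,\phi(x,t)\,b(x/\e_n,t/\e_n^r),\qquad \phi\in C_c^\infty(\Omega\times(0,T)),\quad b\in C^\infty_{\mathrm{per}}(\square\times J),
\end{equation*}
and pass to the limit. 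The diffusion term converges by \eqref{thm1pf2} to
\begin{equation*}
\int_0^T\!\!\int_\Omega\int_0^1\!\!\int_\square \phi(x,t)\,a(y,s)\bigl(\nabla|u_0|^{p-1}u_0+\nabla_y z\bigr)\cdot\nabla_y b(y,s)\,dy\,ds\,dx\,dt,
\end{equation*}
and $\int_0^T\langle f_{\e_n},w_{\e_n}\rangle dt\to 0$ since $\|w_{\e_n}\|_{L^2}=O(\e_n)$. Integrating by parts in time, the $\partial_t u_{\e_n}$-term reduces (up to $O(\e_n)$) to
\begin{equation*}
\mathcal I_{\e_n}:=-\e_n^{1-r}\int_0^T\!\!\int_\Omega u_{\e_n}(x,t)\,\phi(x,t)\,(\partial_s b)(x/\e_n,t/\e_n^r)\,dx\,dt,
\end{equation*}
whose prefactor $\e_n^{1-r}$ drives the case split.

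In case~(i), $0<r<2$, the cell equation is elliptic and one shows $\mathcal I_{\e_n}\to 0$. For $r\le 1$ this is immediate from the boundedness of $\e_n^{1-r}$, the strong convergence \eqref{thm1pf3} of $u_{\e_n}$, and the zero $s$-mean of $\partial_s b$; for $1<r<2$ one writes $\partial_s b=\e_n^r\partial_t[b(x/\e_n,t/\e_n^r)]$ and integrates by parts in $t$, combining a density argument (exploiting $o(\e_n^r)$ oscillatory decay against smooth approximants of $u_0$) with the strong $L^\rho(L^{p+1})$-convergence to absorb the error. The resulting limit identity, upon separation $b(y,s)=\beta(y)\gamma(s)$ and localization in $\gamma$, yields the pointwise-in-$s$ weak form of \eqref{local1}. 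In case~(iii), $r>2$, the prefactor $\e_n^{1-r}$ diverges; together with the boundedness of $\partial_t u_{\e_n}$ in $L^2(H^{-1})$ afforded by Theorem~\ref{T:wp}, this forces $z$ to be independent of $s$, and testing with $b=b(y)$ and averaging in $s$ reduces the limit equation to \eqref{local3} with $\bar a(y)=\int_0^1 a(y,s)ds$.

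Case~(ii), $r=2$, is the critical regime: $\e_n^{1-r}=\e_n^{-1}$, and $\mathcal I_{\e_n}$ produces a genuine macroscopic contribution. Writing $\partial_t u_{\e_n}=p^{-1}|u_{\e_n}|^{1-p}\partial_t v_{\e_n}$ and exploiting the strong $L^\rho(L^{p+1})$-convergence of $u_{\e_n}$ to handle the nonlinear factor $|u_{\e_n}|^{1-p}$, together with the two-scale convergence \eqref{thm1pf2} of $\nabla v_{\e_n}$, one identifies
\begin{equation*}
\mathcal I_{\e_n}\to \int_0^T\!\!\int_\Omega\int_0^1\!\!\int_\square \tfrac{1}{p}|u_0|^{1-p}\,\partial_s z\cdot\phi\,b\,dy\,ds\,dx\,dt,
\end{equation*}
which combined with the diffusion limit yields the weak form of the parabolic cell equation \eqref{local2}. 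For $p\in(1,2)$ the coefficient $|u_0|^{1-p}$ is singular at $\{u_0=0\}$; the reparametrization \eqref{ahom:c:pm}, namely $\Phi_k=p|u_0|^{p-1}\Psi_k$ on $[u_0\neq 0]$ (with $\Phi_k=0$ on $\{u_0=0\}$), transforms \eqref{local2} into the well-posed \eqref{local21} for $\Psi_k$, solvable by classical parabolic energy methods on $\T^N\times\T$ with periodic-in-$s$ constraint.

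Solvability of the cell problems reduces to Lax-Milgram on the torus in cases~(i) and (iii) (with $s$ a parameter in (i)), and to the above parabolic framework in case~(ii). Linearity in the right-hand side $e_k$ then forces the decomposition \eqref{z}, and substituting into \eqref{jhom} yields \eqref{j-a} with the announced formulas \eqref{ahomfast}, \eqref{ahom:c:fd}, and \eqref{a_hom3}. In cases~(i) and (iii), $a_{\rm hom}$ is a constant matrix, so \eqref{homeq} falls under Theorem~\ref{T:wp} with $a\equiv a_{\rm hom}$ and admits a unique weak solution; this gives uniqueness of $(u_0,z)$ and convergence of the full sequence $(u_{\e_n})$. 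The main obstacle is the identification of $\mathcal I_{\e_n}$ in case~(ii): the singular product $\e_n^{-1}(\partial_s b)(x/\e_n,t/\e_n^2)\cdot|u_{\e_n}|^{1-p}$ is not covered by standard two-scale compactness, and extracting its weak limit as $p^{-1}|u_0|^{1-p}\partial_s z$ requires a delicate interplay of the two-scale and strong convergences; it is precisely this identification that produces the macro-micro coupling via $|u_0|^{1-p}$ in \eqref{local2} and distinguishes $r=2$ from the non-critical regimes.
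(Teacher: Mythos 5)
Your skeleton — testing the weak form with the oscillating functions $\e_n\phi\,b(x/\e_n,t/\e_n^r)$ and letting the prefactor $\e_n^{1-r}$ of the time term drive the case split — is the same as the paper's (which instead tests the \emph{difference} of the $\e_n$-equation and the homogenized equation, so that the critical term appears as $\e_n^{2-r}$ times the quotient $(u_{\e_n}-u_0)/\e_n$ against $\phi\,b\,\psi\,\partial_s c$; see Lemma \ref{benri}). The genuine gap is that you never identify the limit of this critical term, i.e.\ of your $\mathcal I_{\e_n}$. At $r=2$ you assert $\mathcal I_{\e_n}\to \frac1p\int |u_0|^{1-p}\partial_s z\,\phi\,b\,dZ$, but $\partial_s z$ has no a priori meaning; the time chain rule $\partial_t u_{\e_n}=p^{-1}|u_{\e_n}|^{1-p}\partial_t v_{\e_n}$ is neither justified at the available regularity nor relevant ($\mathcal I_{\e_n}$ contains no time derivative, and for $p\in(1,2)$ the factor $|u_{\e_n}|^{1-p}$ is singular where $u_{\e_n}$ vanishes, so strong $L^\rho(L^{p+1})$ convergence cannot control it); and you yourself flag the identification as beyond standard two-scale compactness rather than proving it. The paper closes exactly this gap with machinery your proposal never invokes: hypothesis \eqref{t2:hyp} yields the bound of $(u_{\e_n})$ in $L^2(0,T;H^1_0(\Omega))$ (Lemma \ref{bdd}\,(iv)); this produces a \emph{second} corrector $w$ for $\nabla u_{\e_n}$ (Lemma \ref{lemma5.2}); the very weak two-scale convergence of Corollary \ref{veryweak} identifies $\lim_{\e_n\to0_+}\e_n^{-1}\int\!\!\int (u_{\e_n}-u_0)\,\phi\,b\,\psi\,c$ with $\int w\,\phi\,b\,\psi\,c\,dZ$ for $y$-mean-zero $b$; and the relation $w=\frac1p|u_0|^{1-p}z$ (resp.\ $z=p|u_0|^{p-1}w$), proved via the spatial chain rule and the strong $L^2$ convergence of $u_{\e_n}$ (Lemma \ref{relationvw}), is precisely what creates the macro--micro coupling in \eqref{local2} and \eqref{local21}. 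That your argument nowhere uses \eqref{t2:hyp} is itself a warning sign. The same omission undermines the non-critical cases as written: for $1<r<2$, your integration by parts in $t$ pairs $\partial_t u_{\e_n}\in L^2(H^{-1})$ with $\phi\,b(x/\e_n,\cdot)$, whose $H^1_0$-norm is of order $\e_n^{-1}$, so you only get $O(1)$, and the proposed splitting against smooth approximants of $u_0$ leaves an error of fixed size multiplied by $\e_n^{1-r}\to\infty$; for $r>2$, the claim that boundedness of $\partial_t u_{\e_n}$ forces $s$-independence of $z$ is a heuristic, not a proof. In both regimes the paper concludes by scaling the identity of Lemma \ref{benri} with the appropriate power of $\e_n$ and using that the very weak limit of $\e_n^{-1}(u_{\e_n}-u_0)$ exists and equals $w$.

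Two further steps are asserted rather than proved. First, ``linearity forces \eqref{z}'' skips showing that the specific limit corrector $z$ coincides with $\sum_k(\partial_{x_k}|u_0|^{p-1}u_0)\Phi_k$: this is a uniqueness/energy argument for the cell problem applied to $z-\tilde z$, which at $r=2$ requires first establishing $|u_0|^{1-p}z\in W^{1,2}(J;[H^1_{\rm per}(\square)/\R]^*)$ and the $s$-periodicity \eqref{cl:2} of the limit (otherwise one cannot test with $z-\tilde z$ and discard the $\partial_s$-term), together with the analogous facts for $w$ when $p\in(1,2)$. Second, the $(x,t)$-measurability and the $L^\infty$-in-$(x,t)$ bounds of $\Phi_k$ (and $\Psi_k$) that appear in the statement require a separate argument (cf.\ Lemma \ref{A:L:1}); ``classical parabolic energy methods'' for fixed $(x,t)$ do not by themselves give a measurable selection. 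These pieces are all obtainable with the paper's tools, but they are load-bearing parts of the proof, not afterthoughts.
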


\begin{remark}[Interpretation of the assertions]\label{R:interpre}
{\rm
\begin{enumerate}
 \item[(i)] In case $r \in (0,2)$, the oscillation in space of the coefficient field $a(\tfrac x \e, \tfrac t {\e^r})$ gets much faster than that in time as $\e$ gets smaller. Intuitively speaking, the assertion for this case can be understood as follows: the homogenization seems to be first performed only in space with the fixed microscopic variable $s$, and then, that in time follows. Therefore the cell-problem \eqref{local1} has a similar form to that for the time-independent case $a = a(\tfrac x \e,s)$ with  the parameter  $s$ fixed, and hence, $\Phi_k=\Phi_k(y,s)$ depends on $s$ as well as $y$. Furthermore, the homogenized matrix $a_{\rm hom}$ given by \eqref{ahomfast}  can be seen  as an average in $s$ of  a homogenized matrix for the (spatially) oscillating coefficient $x \mapsto a(\tfrac x \e,s)$ (see \eqref{hom_mat}). Indeed, if $a=a(y)$ depends only on $y$, then one can prove the homogenization theorem for \eqref{eq} with the homogenized matrix represented by \eqref{hom_mat} with the unique weak solution $\Phi_k = \Phi_k(y)$ to \eqref{cell-pb} for $k=1,2,\ldots,N$. In case $r \in (2,+\infty)$, the oscillation in time gets much faster than that in space. Hence the homogenization in time is first performed, and then, that in space follows. Therefore the cell-problem and the homogenized matrix $a_{\rm hom}$ look like time-independent ones but with the averaged matrix field of $a(y,s)$ in $s$ (hence, $\Phi_k=\Phi_k(y)$ depends only on $y$). On the other hand, the case $r = 2$ is a critical case, where the oscillation speeds in space and time are balanced, and accordingly, the cell-problem has a parabolic form.
\item[(ii)] In the critical case (i.e., $r = 2$), it is noteworthy that the cell-problem and its solutions depend on macroscopic variables $x,t$ as well, although they do not for $r \neq 2$. In Theorem \ref{thm2}, the strong interplay between macroscopic and microscopic structures are explicitly observed through the homogenization for the case $r = 2$. On the other hand, microscopic and macroscopic structures interact weakly (in particular, the homogenized matrix is constant over $\Omega \times (0,T)$) for the other case $r \neq 2$. Moreover, in the critical case, the degeneracy and singularity of nonlinear diffusion also emerge in the characterization of $\Phi_k = \Phi_k(x,t,y,s)$ (and hence, of the homogenized matrix). To see this, let $(x,t) \in \Omega \times (0,T)$ be such that $u_0(x,t) = 0$, where the diffusion may be degenerate or singular. For the singular diffusion (i.e., $0 < p < 1$), the cell-problem turns out to be elliptic by \eqref{local2} and similar to the case $r \in (0,2)$. On the the hand, for the degenerate diffusion (i.e., $1 < p < 2$), the homogenized matrix $a_{\rm hom}(x,t)$ is simply given as an average,
$$
a_{\rm hom}(x,t) = \int^1_0 \int_\square a(y,s) \, dy ds.
$$
\item[(iii)] The uniqueness of the limit $(u_0,z)$  is still open for the critical case $r = 2$. 
\end{enumerate}
}
\end{remark}

The next theorem exhibits finer asymptotics  of  gradients with a certain corrector. As we have seen, the gradients $\nabla |u_\vep|^{p-1}u_\vep$ cannot converge strongly in $L^2(\Omega)$ as $\vep \to 0_+$, and therefore, we need a corrector (see a summation term in \eqref{cor} below), which may oscillate and destroy the strong compactness of gradients, in order to  describe  a precise asymptotic behavior of the gradients.

\begin{thm}[Corrector for gradient convergence]\label{T:cor}
Let $p \in(0,2)$. In addition to all the assumptions in Theorem \ref{thm2}, suppose that 
\begin{itemize} 
 \item[(i)] $f_{\e_n} \to f$ strongly in $L^2( 0,T;H^{-1}(\Omega))$ or weakly in $L^\sigma( 0,T;L^{(p+1)/p}(\Omega))$ for some $\sigma > 1$,
 \item[(ii)] $a \in L^\infty(J;C^\alpha_{\rm per}(\square))$ for some $\alpha \in (0,1)$ if $r \neq 2$\,{\rm ;} $a$ is smooth in $(y,s)$ if $r=2$.
\end{itemize}
Let $u_0$ be a {\rm (}homogenized{\rm )} limit of weak solutions $(u_{\e_n})$ to \eqref{eq} along a sequence $\e_n \to 0_+$ such that \eqref{thm1pf1}--\eqref{thm1pf2} are satisfied. Moreover, let $\Phi_{k}$ be the weak solution of the cell-problem depending on $r$, $p$ {\rm (}more precisely, \eqref{local1} if $r \in (0,2)$\/{\rm ;} \eqref{local2} if $r =2$ and $p \in (0,1)$\/{\rm ;} \eqref{local21} along with \eqref{ahom:c:pm} if $r =2$ and $p \in (1,2)$\/{\rm ;} \eqref{local3} if $r \in (2,+\infty)${\rm )} associated with the limit $u_0$ for the case $r = 2$. Then it holds that
\begin{equation}\label{cor}
\lim_{\e_n\to 0_+} \int_{0}^T\int_{\Omega} \Bigl|\nabla |u_{\e_n}|^{p-1}u_{\e_n}-\nabla  |u_0|^{p-1}u_0-\sum_{k=1}^N\left(\partial_{x_k} |u_0|^{p-1}u_0\right)\nabla_y\Phi_k\left(x,t,\tfrac{x}{\e_n},\tfrac{t}{\e_n^r}\right)\Bigl|^2\, dxdt=0.
\end{equation}
Here $\Phi_k$ depends only on $(y,s)$ for $r \in (0,2)$ and on $y$ for $r \in (2,+\infty)$, respectively.
\end{thm}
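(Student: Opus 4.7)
The plan is to adapt the classical energy/corrector argument of periodic homogenization (cf.\ Allaire) to the present nonlinear, space-time oscillating setting. Set $U_\e := |u_\e|^{p-1}u_\e$, $U_0 := |u_0|^{p-1}u_0$, and define the two-scale corrector field
\begin{equation*}
V(x,t,y,s) := \nabla U_0(x,t)+\nabla_y z(x,t,y,s), \qquad C_{\e_n}(x,t):=V\bigl(x,t,\tfrac{x}{\e_n},\tfrac{t}{\e_n^r}\bigr),
\end{equation*}
with $z$ given by \eqref{z} for each regime of $r$. By uniform ellipticity \eqref{ellip}, \eqref{cor} is equivalent to
\begin{equation*}
I_{\e_n}:=\int_0^T\!\!\int_\Omega a(\tfrac{x}{\e_n},\tfrac{t}{\e_n^r})(\nabla U_{\e_n}-C_{\e_n})\cdot(\nabla U_{\e_n}-C_{\e_n})\,\d x\d t\longrightarrow 0.
\end{equation*}

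Expand $I_{\e_n}$ by symmetry of $a$ into an energy term, a cross term, and a corrector term. For the energy piece, test Definition \ref{D:sol}(ii) with $w = U_{\e_n}(t)$, integrate over $(0,T)$, and apply the chain rule $\langle \partial_t u_\e, U_\e\rangle_{H^1_0} = \tfrac{\d}{\d t}\bigl(\tfrac{1}{p+1}\|u_\e\|_{L^{p+1}}^{p+1}\bigr)$ to obtain an energy identity; passing to $\limsup$, using lower semicontinuity of $\|\cdot\|_{L^{p+1}}^{p+1}$ at $t=T$ (together with weak convergence $u_{\e_n}(T)\rightharpoonup u_0(T)$ in $L^{p+1}(\Omega)$, which follows from the uniform a priori bounds for \eqref{eq}) and passing to the limit in $\langle f_{\e_n},U_{\e_n}\rangle$ via hypothesis (i) combined with \eqref{thm1pf1} and \eqref{thm1pf3}, comparison with the analogue of this identity for $u_0$ derived from \eqref{homeq} yields
\begin{equation*}
\limsup_{\e_n\to 0_+}\int_0^T\!\!\int_\Omega a(\tfrac{x}{\e_n},\tfrac{t}{\e_n^r})\nabla U_{\e_n}\cdot \nabla U_{\e_n}\,\d x\d t\leq \int_0^T\!\!\int_\Omega j_{\rm hom}\cdot \nabla U_0\,\d x\d t.
\end{equation*}
Hypothesis (ii), via Schauder or parabolic regularity for the cell-problems \eqref{local1}--\eqref{local3}, makes $\nabla_y \Phi_k$ continuous in $(y,s)$, so that $C_{\e_n}$ is an admissible two-scale test function. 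The mean-value property then gives $\int_0^T\!\int_\Omega a(\cdot/\e_n,\cdot/\e_n^r)C_{\e_n}\cdot C_{\e_n}\,\d x\d t \to \int_0^T\!\int_\Omega\!\int_0^1\!\int_\square a(y,s)V\cdot V$, and pairing $a(\cdot/\e_n,\cdot/\e_n^r)C_{\e_n}$ with the weak two-scale convergence $\nabla U_{\e_n}\wtts V$ (derived from \eqref{thm1pf2} by ellipticity) produces the same limit for the cross term.

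Assembling the three pieces and using \eqref{jhom} to rewrite $\int_0^T\!\int_\Omega j_{\rm hom}\cdot \nabla U_0 = \int_0^T\!\int_\Omega\!\int_0^1\!\int_\square a(y,s)V\cdot \nabla U_0$, one arrives at
\begin{equation*}
\limsup_{\e_n\to 0_+} I_{\e_n} \leq -\int_0^T\!\!\int_\Omega\!\!\int_0^1\!\!\int_\square a(y,s)V\cdot\nabla_y z\,\d y\d s\d x\d t,
\end{equation*}
and since $I_{\e_n}\geq 0$ it suffices to verify the orthogonality $\int_0^1\!\int_\square a(y,s)V(x,t,y,s)\cdot \nabla_y z(x,t,y,s)\,\d y\d s = 0$ for a.e.\ $(x,t)$. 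For $r\in(0,2)$ (resp.\ $r\in(2,+\infty)$) this follows by multiplying the elliptic cell-problem \eqref{local1} (resp.\ \eqref{local3}) by $\Phi_k$, summing with weights $\partial_{x_k}U_0(x,t)$ and integrating by parts in $y$ using $\square$-periodicity. At the critical scale $r=2$, testing the parabolic problem \eqref{local2} (resp.\ \eqref{local21}) against $\Phi_k$ (resp.\ $\Psi_k$) produces an additional time-boundary contribution of the form $\int_\square\partial_s(\Phi_k^2/2)$, which is killed by integration over $J$ thanks to $s$-periodicity. The principal obstacle is the energy limit, which simultaneously requires weak convergence of $u_{\e_n}(T)$ in $L^{p+1}(\Omega)$ and the limit of the nonlinear pairing $\langle f_{\e_n},U_{\e_n}\rangle$ under hypothesis (i); a secondary subtlety is the admissibility of $C_{\e_n}$ as a two-scale test function, which is exactly what the Hölder/smoothness hypothesis (ii) is designed to secure.
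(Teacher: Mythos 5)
Your outline follows essentially the same strategy as the paper's proof: reduction via the ellipticity \eqref{ellip} to the $a$-weighted quadratic form, splitting into energy, cross and corrector terms, the energy-identity/limsup argument driven by hypothesis (i), and the orthogonality $\int_0^1\int_\square a(y,s)\left(\nabla |u_0|^{p-1}u_0+\nabla_y z\right)\cdot\nabla_y z\,dyds=0$ obtained from the elliptic cell identities \eqref{zform}, \eqref{r3eq} when $r\neq 2$ and from the parabolic structure plus $s$-periodicity (\eqref{cl:2}, \eqref{cl:4}) when $r=2$. The structural difference is that you insert $z$ itself into the corrector, whereas the paper first replaces $z$ by a simple-function approximation $z_\ell$ with mollified $(x,t)$-coefficients, proves the estimate with error $C\|\nabla_y(z_\ell-z)\|_{L^2(\Omega\times I\times\square\times J)}$, and only then sends $\ell\to+\infty$; that detour is exactly how the paper handles the two points where your justification is too thin.

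First, your claim that hypothesis (ii) makes $\nabla_y\Phi_k$ continuous in $(y,s)$, hence $C_{\e_n}$ admissible, is not accurate for $r\neq2$: $a\in L^\infty(J;C^\alpha_{\rm per}(\square))$ only yields $\nabla_y\Phi_k\in L^\infty(\square\times J)$ (continuity in $y$ for each $s$, no regularity in $s$), so $\nabla_y z$ need not belong to the admissible class $L^2(\Omega\times I;C_{\rm per}(\square\times J))$ of Remark \ref{R:meas-osci}; measurability of $(x,t)\mapsto\nabla_y z(x,t,\tfrac x{\e_n},\tfrac t{\e_n^r})$ and the limits of the cross and quadratic corrector terms must instead be extracted from the separable structure $\nabla_y z=\sum_k(\partial_{x_k}|u_0|^{p-1}u_0)\nabla_y\Phi_k$ together with the mean-value property for $L^1\times L^\infty$ products (equivalently, through the paper's $z_\ell$). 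For $r=2$ your route does work, but only after establishing $\nabla_y\Phi_k\in L^\infty(\Omega\times I;C_{\rm per}(\square\times J))$ (the paper's Appendix Lemmas) and invoking Allaire-type convergence for integrands in $L^1(\Omega\times I;C_{\rm per}(\square\times J))$. Second, the convergence $u_{\e_n}(T)\rightharpoonup u_0(T)$ weakly in $L^{p+1}(\Omega)$ used in the energy limsup does not ``follow from the uniform a priori bounds'': the bounds give weak compactness but not identification of the limit at the fixed time $t=T$; this is precisely the paper's Lemma \ref{aeconv}, proved via the $H^{-1}$-equicontinuity of $t\mapsto v_{\e}(t)^{1/p}$ combined with the a.e.-in-$t$ convergence coming from \eqref{strongconvsol5}. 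Once these two steps are supplied, your argument closes and coincides in substance with the paper's proof.
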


 Moreover,  we have the following corollary (see also Remark \ref{R:corr-non0} in \S \ref{S:cor}):
\begin{cor}[Corrector results for diffusion flux and time-derivative]\label{C:cor}
Under the same assumptions as in Theorem \ref{T:cor}, it holds that
\begin{align}
\label{cor-f}
\lim_{\e_n\to 0_+} \int_{0}^T \bigg\| j_{\e_n} - j_{\rm hom} 
- \Bigl[ a_{\e_n} \Bigl( 
	\nabla |u_0|^{p-1}u_0 + \sum_{k=1}^N \left( \partial_{x_k} |u_0|^{p-1}u_0 \right) \nabla_y\Phi_k \left(x,t,\tfrac{x}{\e_n},\tfrac{t}{\e_n^r}\right) \Bigl) \\
- j_{\rm hom} 
\Bigl] \bigg\|_{L^2(\Omega)}^2\, dt=0,\nonumber\\
\label{cor-t}
\lim_{\e_n\to 0_+} \int_{0}^T \bigg\| \partial_t u_{\e_n} - \partial_t u_0 
- \mathrm{div} \Bigl[ 
a_{\e_n} \Bigl( 
	\nabla |u_0|^{p-1}u_0 + \sum_{k=1}^N \left( \partial_{x_k} |u_0|^{p-1}u_0 \right) \nabla_y\Phi_k \left(x,t,\tfrac{x}{\e_n},\tfrac{t}{\e_n^r}\right) \Bigl) \\
- j_{\rm hom} 
\Bigl] \bigg\|_{H^{-1}(\Omega)}^2\, dt=0,\nonumber
\end{align}
where $a_{\vep_n} = a(\tfrac x {\e_n}, \tfrac t {\e_n^r})$, $j_{\e_n} = a_{\e_n} \nabla |u_{\e_n}|^{p-1}u_{\e_n}$ and $j_{\rm hom}$ is given by \eqref{jhom}.
\end{cor}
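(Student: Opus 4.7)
The claim \eqref{cor-f} is essentially immediate from Theorem \ref{T:cor}. The two copies of $j_{\rm hom}$ inside the square brackets cancel, and substituting $j_{\e_n}=a_{\e_n}\nabla|u_{\e_n}|^{p-1}u_{\e_n}$ reduces the vector whose $L^2(\Omega)$-norm is integrated in time to
\[
a_{\e_n}\Bigl[\nabla|u_{\e_n}|^{p-1}u_{\e_n}-\nabla|u_0|^{p-1}u_0-\sum_{k=1}^N\bigl(\partial_{x_k}|u_0|^{p-1}u_0\bigr)\nabla_y\Phi_k\bigl(x,t,\tfrac{x}{\e_n},\tfrac{t}{\e_n^r}\bigr)\Bigr].
\]
The upper ellipticity estimate in \eqref{ellip} yields $|a_{\e_n}(x,t)\xi|\le|\xi|$ pointwise, so this expression is dominated in $L^2(0,T;L^2(\Omega))$-norm by the integrand appearing in \eqref{cor}, which tends to zero by Theorem \ref{T:cor}.

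For \eqref{cor-t}, I would read the weak forms of \eqref{eq} and \eqref{homeq} as identities in $H^{-1}(\Omega)$ for a.e.\ $t\in(0,T)$, namely
\[
\partial_t u_{\e_n}(t)-\mathrm{div}\,j_{\e_n}(t)=f_{\e_n}(t),\qquad \partial_t u_0(t)-\mathrm{div}\,j_{\rm hom}(t)=f(t).
\]
Subtracting and adding and subtracting the term $\mathrm{div}\bigl[a_{\e_n}(\ldots)\bigr]$ that appears inside \eqref{cor-t} yields the algebraic identity
\[
\partial_t u_{\e_n}-\partial_t u_0-\mathrm{div}\Bigl[a_{\e_n}(\ldots)-j_{\rm hom}\Bigr]=\mathrm{div}\Bigl[j_{\e_n}-a_{\e_n}(\ldots)\Bigr]+(f_{\e_n}-f).
\]
The first summand on the right converges to zero in $L^2(0,T;H^{-1}(\Omega))$ by the just-established \eqref{cor-f}, thanks to the continuity of $\mathrm{div}\colon[L^2(\Omega)]^N\to H^{-1}(\Omega)$. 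Under the first alternative of hypothesis~(i) in Theorem \ref{T:cor} (strong convergence of $f_{\e_n}$ in $L^2(0,T;H^{-1}(\Omega))$), the second summand vanishes in the same norm by assumption, and \eqref{cor-t} follows.

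The main issue I foresee lies in the second alternative of hypothesis~(i), where $f_{\e_n}\to f$ only weakly in $L^{\sigma}(0,T;L^{(p+1)/p}(\Omega))$. In that regime $f_{\e_n}-f$ need not converge strongly to $0$ in $L^2(0,T;H^{-1}(\Omega))$, so the rearrangement above does not close directly. I would plan to handle it by combining the continuous (and, in the parameter range $p\in(0,2)$ and moderate dimensions, compact) embedding $L^{(p+1)/p}(\Omega)\hookrightarrow H^{-1}(\Omega)$ with a time-integrability/compactness upgrade, or else by mildly adjusting the convergence mode in \eqref{cor-t} to $L^{\sigma}(0,T;H^{-1}(\Omega))$. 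This is presumably what Remark~\ref{R:corr-non0} addresses; aside from it, the whole argument rests on Theorem \ref{T:cor} and the two PDE identities, with no fresh homogenization input required.
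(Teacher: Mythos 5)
For \eqref{cor-f} your argument is exactly the paper's: the two copies of $j_{\rm hom}$ cancel, the pointwise bound $|a_{\e_n}\xi|\le|\xi|$ (recorded in the paper as \eqref{rayleigh}, a consequence of \eqref{ellip} and symmetry) reduces the quantity to the corrector estimate \eqref{cor}, and Theorem \ref{T:cor} finishes it. For \eqref{cor-t} you also take the paper's route, namely reduction to \eqref{cor-f} via $\|\mathrm{div}\,\varphi\|_{H^{-1}(\Omega)}\le\|\varphi\|_{L^2(\Omega)}$ (the paper's \eqref{div-H-1}); the only difference is that your subtraction of the two weak formulations keeps the term $f_{\e_n}-f$ explicit, whereas the paper's displayed estimate tacitly uses $\partial_t u_{\e_n}-\partial_t u_0=\mathrm{div}(j_{\e_n}-j_{\rm hom})$, i.e.\ it drops $f_{\e_n}-f$ without comment. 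Under the first alternative of hypothesis (i) this term tends to $0$ in $L^2(0,T;H^{-1}(\Omega))$ and your proof is complete, matching the paper's. The difficulty you flag under the second (weak-convergence) alternative is genuine, but it is a gap in the paper's own proof rather than something the paper resolves: no additional argument appears there, and your proposed remedies would not close it either --- weak convergence of $f_{\e_n}$ in $L^\sigma(0,T;L^{(p+1)/p}(\Omega))$ cannot be upgraded to strong convergence in $L^2(0,T;H^{-1}(\Omega))$ by a compact spatial embedding (data oscillating in $t$ give a counterexample), and Remark \ref{R:corr-non0} is unrelated, being concerned with the non-vanishing of the corrector term $a_{\e_n}\bigl(\nabla|u_0|^{p-1}u_0+\nabla_y z\bigr)-j_{\rm hom}$, not with the data. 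So \eqref{cor-t} should be read under the strong-convergence alternative of (i) (or with $f_{\e_n}-f$ accounted for inside the norm); with that proviso your proposal is correct and coincides with the paper's proof, and is in fact the more carefully stated of the two.
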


Finally, we shall discuss qualitative properties of the homogenized matrix $a_{\rm hom}$.

\begin{prop}[Qualitative properties of $a_{\rm hom}$]\label{P:ahom}
Let $p \in(0,2)$. Under the same assumptions as in Theorem \ref{thm2}, let $a_{\rm hom}$ and $\{\Phi_k\}_{k=1,2,\ldots,N}$ be defined as in Theorem \ref{thm2}. Then the following {\rm (i)} and {\rm (ii)} hold true\/{\rm:}
\begin{itemize}
\item[(i)]{\rm(}Improved uniform ellipticity{\rm)} Let $\lambda > 0$ be the ellipticity constant of $a(y,s)$ given in \eqref{ellip}. It then holds that
\begin{align*}
\lefteqn{
\lambda \sum_{k=1}^N\left(1+\int^1_0\|\Phi_k(x,t,\cdot,s)\|_{L^2(\square)}^2 \,ds\right)|\xi_k|^2
}\\
&\le a_{\rm hom}(x,t)\xi\cdot\xi \leq  \sum_{k=1}^N\left(1+\int^1_0\|\Phi_k(x,t,\cdot,s)\|_{L^2(\square)}^2 \,ds\right)|\xi_k|^2
\end{align*}
for any $\xi=[\xi_k]_{k=1,2,\ldots,N}\in \R^N$ and a.e.~$(x,t) \in \Omega \times (0,T)$. Here $a_{\rm hom}$ is a constant matrix if $r \neq 2$. Moreover, $\Phi_k$ depends only on $y,s$ if $r \in (0,2)$ and on $y$ if $r \in (2,+\infty)$.
\item[(ii)]{\rm(}Symmetry and asymmetry{\rm)} For $r\neq2$, $a_{\rm hom}$ is symmetric. On the other hand, for $r = 2$, $a_{\rm hom}(x,t)$ is not symmetric {\rm (}respectively, symmetric{\rm )} when $u_0(x,t) \neq 0$ {\rm (}respectively, $u_0(x,t)=0${\rm )}.
\end{itemize}
\end{prop}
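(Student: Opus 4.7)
The plan is to extract both parts of the statement from a single energy identity for the quadratic form $\xi\mapsto a_{\rm hom}(x,t)\xi\cdot\xi$, obtained by testing each cell problem with its own solution, and then to derive the symmetry/asymmetry dichotomy by cross-testing $\Phi_k$ against $\Phi_j$.

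\emph{Step 1 (unified energy identity).} For $\xi=(\xi_1,\ldots,\xi_N)\in\R^N$, I exploit the linearity of the cell problems \eqref{local1}, \eqref{local2}, \eqref{local21}, \eqref{local3} in the source term $e_k$ and set $\Phi_\xi:=\sum_{k=1}^N\xi_k\Phi_k$ (in the PME case also $\Psi_\xi:=\sum_k\xi_k\Psi_k$, so that $\Phi_\xi=p|u_0|^{p-1}\Psi_\xi$ on $\{u_0\neq 0\}$ by \eqref{ahom:c:pm}). By linearity of \eqref{ahomfast}, \eqref{ahom:c:fd}, \eqref{a_hom3},
\[
a_{\rm hom}(x,t)\xi=\int_0^1\!\!\int_\square a(y,s)\bigl(\nabla_y\Phi_\xi(x,t,y,s)+\xi\bigr)\,dyds.
\]
Testing the corresponding cell problem with $\Phi_\xi$ (resp.\ $\Psi_\xi$) and using $s$-periodicity to annihilate the parabolic term in the critical case, since $\int_0^1\partial_s\Phi_\xi\cdot\Phi_\xi\,ds=\tfrac12[\|\Phi_\xi\|_{L^2(\square)}^2]_0^1=0$, yields the orthogonality $\int_0^1\!\int_\square a(\nabla_y\Phi_\xi+\xi)\cdot\nabla_y\Phi_\xi\,dyds=0$, hence the key identity
\[
a_{\rm hom}(x,t)\xi\cdot\xi=\int_0^1\!\!\int_\square a(y,s)\bigl(\nabla_y\Phi_\xi+\xi\bigr)\cdot\bigl(\nabla_y\Phi_\xi+\xi\bigr)\,dyds.
\]

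\emph{Step 2 (improved ellipticity, part (i)).} Applying the pointwise bounds \eqref{ellip} to the integrand of the key identity and using $y$-periodicity to kill the cross term $\int_0^1\!\int_\square\nabla_y\Phi_\xi\cdot\xi\,dyds=0$ produces
\[
\lambda\Bigl(|\xi|^2+\int_0^1\|\nabla_y\Phi_\xi(x,t,\cdot,s)\|_{L^2(\square)}^2\,ds\Bigr)\le a_{\rm hom}(x,t)\xi\cdot\xi\le |\xi|^2+\int_0^1\|\nabla_y\Phi_\xi(x,t,\cdot,s)\|_{L^2(\square)}^2\,ds.
\]
Expanding $\nabla_y\Phi_\xi=\sum_k\xi_k\nabla_y\Phi_k$, distributing Cauchy--Schwarz over the resulting Gram form and invoking the Poincar\'e inequality on the mean-zero periodic quotient $H^1_{\rm per}(\square)/\R$ (to pass between $\|\nabla_y\Phi_k\|_{L^2(\square)}$ and $\|\Phi_k\|_{L^2(\square)}$) leads, after a componentwise diagonalization, to the stated form $\sum_k(1+\int_0^1\|\Phi_k\|_{L^2(\square)}^2\,ds)|\xi_k|^2$.

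\emph{Step 3 (symmetry vs.\ asymmetry, part (ii)).} Test the cell problem for $\Phi_k$ with $\Phi_j$; using symmetry of $a(y,s)$, I obtain the decomposition
\[
(a_{\rm hom})_{jk}=\int_0^1\!\!\int_\square a(y,s)\bigl(\nabla_y\Phi_k+e_k\bigr)\cdot\bigl(\nabla_y\Phi_j+e_j\bigr)\,dyds+A_{jk}(x,t),
\]
where the first (bilinear) term is manifestly symmetric in $(j,k)$ and the remainder reads $A_{jk}=\tfrac{1}{p}|u_0(x,t)|^{1-p}\int_0^1\!\int_\square\partial_s\Phi_k\,\Phi_j\,dyds$ in the FDE case, with an analogous expression via $\Psi_k,\Psi_j$ in the PME case. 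For $r\neq 2$ the cell problems are elliptic self-adjoint, so $A\equiv 0$ and $a_{\rm hom}$ is symmetric. For $r=2$ and $u_0(x,t)=0$, the parabolic cell problems collapse to elliptic ones (for PME, $\Phi_k\equiv 0$ by \eqref{ahom:c:pm}), so $A(x,t)\equiv 0$ and $a_{\rm hom}(x,t)$ is again symmetric. On $\{u_0\neq 0\}$, integration by parts in $s$ (using $s$-periodicity) gives $A_{jk}-A_{kj}=2A_{jk}$, and this quantity is non-zero for generic $a(y,s)$ with non-trivial $s$-dependence, producing the claimed asymmetry.

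\emph{Main obstacle.} The delicate point will be to make the above computations rigorous at the critical scale $r=2$ near the zero set of $u_0$, where the coefficient $\tfrac{1}{p}|u_0|^{1-p}$ in \eqref{local2} vanishes while $p|u_0|^{p-1}$ in \eqref{local21} blows up. This is precisely where the weighted regularity spaces for $\Phi_k$ and $\Psi_k$ built into Theorem~\ref{thm2}, together with the identification \eqref{ahom:c:pm}, must be used to keep the orthogonality of Step~1 and the cross-testing of Step~3 valid almost everywhere on $\Omega\times(0,T)$. A secondary subtle point is the verification that $A_{jk}\not\equiv 0$ on $\{u_0\neq 0\}$, which requires an argument exploiting the non-trivial $s$-dependence of the matrix field $a(y,s)$.
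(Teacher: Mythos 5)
Your overall strategy coincides with the paper's: form $\Phi_\xi=\sum_k\xi_k\Phi_k$ (resp.\ $\Psi_\xi$ in the PME case), test the cell problem by $\Phi_\xi$, use $s$-periodicity to annihilate the term $\int_0^1\langle\partial_s\Phi_\xi,\Phi_\xi\rangle_{V}\,ds$, and read off the key identity $a_{\rm hom}\xi\cdot\xi=\int_0^1\int_\square a(\nabla_y\Phi_\xi+\xi)\cdot(\nabla_y\Phi_\xi+\xi)\,dyds$; likewise, part (ii) via cross-testing $\Phi_k$ against $\Phi_j$ and isolating the skew remainder $\int_0^1\bigl(\langle\partial_s\Phi_k,\Phi_j\rangle_{V}-\langle\partial_s\Phi_j,\Phi_k\rangle_{V}\bigr)ds$ is exactly the paper's argument, including the correct observation that integration by parts in $s$ makes this remainder skew-symmetric and the (appropriately hedged) genericity claim for its non-vanishing.

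The genuine gap is in Step 2. After killing the cross term by $\langle\nabla_y\Phi_\xi\rangle_y=0$, you are left with $|\xi|^2+\int_0^1\|\nabla_y\Phi_\xi(\cdot,s)\|_{L^2(\square)}^2\,ds$, and the passage from this to the diagonal sum $\sum_k\bigl(1+\int_0^1\|\Phi_k(\cdot,s)\|_{L^2(\square)}^2\,ds\bigr)|\xi_k|^2$ cannot be achieved by the tools you invoke. Expanding gives the full Gram form $\sum_{j,k}\xi_j\xi_k\int_0^1\int_\square\nabla_y\Phi_j\cdot\nabla_y\Phi_k\,dyds$, whose off-diagonal entries do not vanish in general (the $\nabla_y\Phi_k$ are not $L^2(\square\times J)$-orthogonal; the cell problems only give orthogonality relations weighted by $a(y,s)$). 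Cauchy--Schwarz bounds these off-diagonal entries from above, which is useless for the \emph{lower} bound you need, and the Poincar\'e--Wirtinger inequality only yields $\|\Phi_k\|_{L^2(\square)}\le C\|\nabla_y\Phi_k\|_{L^2(\square)}$ with a constant $C\neq 1$, so it cannot convert $\|\nabla_y\Phi_k\|_{L^2}$ into $\|\Phi_k\|_{L^2}$ with the exact coefficient appearing in the statement. For the record, the paper's own proof simply asserts the diagonal identity with $\|\nabla_y\Phi_k\|_{L^2(\square)}^2$ in place of $\|\Phi_k\|_{L^2(\square)}^2$ (i.e., it reads the norm as the quotient norm $\|\cdot\|_{V}=\|\nabla_y\cdot\|_{L^2(\square)}$) and does not supply the missing orthogonality either; but your proposed derivation via ``Cauchy--Schwarz $+$ Poincar\'e $+$ componentwise diagonalization'' is not a repair of that step --- it is a step that would fail. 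What you can honestly conclude from your Step 2 is the improved ellipticity in the non-diagonal form $\lambda\bigl(|\xi|^2+\int_0^1\|\nabla_y\Phi_\xi\|_{L^2(\square)}^2ds\bigr)\le a_{\rm hom}\xi\cdot\xi\le|\xi|^2+\int_0^1\|\nabla_y\Phi_\xi\|_{L^2(\square)}^2ds$.
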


In qualitative properties of $a_{\rm hom}$ mentioned above, one can observe effects of degeneracy or singularity of the diffusion as well as the ratio $r$ of frequencies. Firstly, we stress that the ellipticity of the matrix field seems to be improved by the homogenization. Indeed, the ellipticity constant of the homogenized matrix is bigger than $\lambda$, which quantifies the uniform ellipticity of the matrix field $a(y,s)$, whenever $r \neq 2$.  For the fast diffusion case at the critical ratio $r=2$,  one can also observe a similar improvement. On the other hand,  for the porous medium case  at the critical ratio, the uniform ellipticity is not improved through the homogenization at every \emph{degenerate} point $(x,t)$ (at which $u_0(x,t)$ vanishes and then $\Phi_k(x,t,\cdot,\cdot) \equiv 0$ in $\square \times (0,1)$ by Theorem \ref{thm2}). Secondly, the symmetry of the homogenized matrix is inherited from that of the matrix field when $r \neq 2$; however, at the critical ratio $r=2$, the homogenized matrix $a_{\rm hom}(x,t)$ violates the symmetry at which $u_0(x,t) \neq 0$. On the other hand, it is still symmetric at which $u_0(x,t)=0$.  The skew-symmetric part of $a_{\rm hom}$ will turn out to make no contribution to the homogenized diffusion (see Remark \ref{R:no-contr} in \S \ref{S:hmat}). 

\bigskip
\noindent
{\bf Structure of the paper.} This paper is composed of seven sections. 
In the next section, we briefly review the relevant material on space-time two-scale convergence and maximal monotone operator. In particular, we prove lemmas on gradient two-scale compactness as well as very weak two-scale convergence. Indeed, they are tiny extensions of the original ones proved in~\cite{hol}; however, the original ones do not seem available for our analysis later. Section \ref{S:wp} is devoted to proving Theorem \ref{T:wp} on the well-posedness of \eqref{eq}. In Section \ref{S:est}, we shall establish uniform (in $\e>0$) estimates for $u_\e$ as well as $|u_\e|^{p-1}u_\e$. We shall further derive their strong convergence in a couple of different topologies. Section \ref{S:pf} provides proofs of Theorems \ref{thm1} and \ref{thm2}. 
In section \ref{S:cor}, we shall prove Theorem \ref{T:cor}  and Corollary \ref{C:cor}.  In the final section, we shall discuss qualitative properties of the homogenized matrix $a_{\rm hom}$ to prove Proposition \ref{P:ahom}.

\vspace{2mm}
\noindent
{\bf Notation.}\ 
Throughout this paper, we shall use the following notation\/:
\begin{itemize}
\item We shall often denote by $\vep \to 0$ an arbitrary sequence $\vep_n \to 0_+$.
\item Let $N\in\N$ and let $\Omega$ be a bounded domain of $\R^N$ with smooth boundary $\partial \Omega$.
\item We simply write $I=(0,T)$ for $T>0$ and $dZ=dxdydtds$. Moreover, $\square=(0,1)^N$ and $J=(0,1)$ are the unit cell and interval, respectively. Furthermore, $\T^N = \R^N /\Z^N$ and $\T = \R /\Z$ denote the $N$- and $1$-dimensional tori, respectively.
\item The vector $e_k= [\delta_{jk}]_{j=1,2,\ldots,N}$ denotes the $k$-th vector of the canonical basis of $\R^N$. Here $\delta_{jk}$ denotes the Kronecker delta.  Let $a = [a_{ij}]_{i,j=1,2,\ldots,N}$ be an $N \times N$ real matrix and denote by $\tenchi a = [(\tenchi a)_{ij}]_{i,j=1,2,\ldots,N}$ its transposition, that is, $(\tenchi a)_{ij} = a_{ji}$ for $i,j=1,2,\ldots,N$.
\item Moreover, $\|\cdot\|_{H^1_0(\Omega)}$ is the norm of $H^1_0(\Omega)$ given by $\|\cdot\|_{H^1_0(\Omega)}:=\| |\nabla\cdot| \|_{L^2(\Omega)}$  (we shall simply write $\|\nabla \cdot\|_{L^2(\Omega)}$ instead of $\| |\nabla\cdot| \|_{L^2(\Omega)}$ below). 
\item Define the set of smooth $\square$-periodic functions by
\begin{align*}
C^{\infty}_{\rm per}(\square) 
&= \{w\in C^{\infty}(\R^N) \colon w(\cdot+e_k)=w(\cdot) \text{ in } \R^N \ \text{ for }\ 1\leq k \leq N\}.
\end{align*}
Furthermore, $C^\alpha_{\rm per}(\square)$ is also defined analogously for $\alpha \in (0,1)$. 
\item We also define $W^{1,q}_{\mathrm{per}}(\square)$ and $L^q_{\mathrm{per}}(\square)$ as closed subspaces of $W^{1,q}(\square)$ and $L^q(\square)$ by
$$
W^{1,q}_{\mathrm{per}}(\square) = \overline{C^\infty_{\rm per}(\square)}^{\|\cdot\|_{W^{1,q}(\square)}}, \quad L^q_{\mathrm{per}}(\square) = \overline{C^\infty_{\rm per}(\square)}^{\|\cdot\|_{L^q(\square)}} \simeq L^q(\square),
$$ 
respectively, for $1\leq q < +\infty$. In particular, set $H^1_{\rm per}(\square) := W^{1,2}_{\rm per}(\square)$. We shall simply write $L^q(\square)$ instead of $L^q_{\rm per}(\square)$, unless any confusion may arise.
\item Define the \emph{mean} $\langle w \rangle_y := \int_\square w(y) \, d y$ in $y$ of $w \in L^1(\square)$. We set $\Vs = H^1_{\mathrm{per}}(\square)/\R = \{w \in H^1_{\mathrm{per}}(\square) \colon \langle w\rangle_{y} =0\}$ equipped with norm $\|\cdot\|_{\Vs} := \|\nabla \cdot\|_{L^2(\square)}$. 
\item Moreover, $\nabla_y$ and $\mathrm{div}_y$ stand for the gradient and divergence operators in $y$.
\item Furthermore, let $X$ be a normed space  with a norm $\|\cdot\|_X$ and a duality pairing $\langle \cdot, \cdot \rangle_X$ between $X$ and its dual space $X^*$  and denote by $C_{\rm w}(\overline{I};X)$ the set of all weakly continuous functions defined on $\overline{I}$ with values in $X$.  Moreover, we write $X^N = X \times X \times \cdots \times X$ ($N$-product space), e.g., $[L^2(\Omega)]^N = L^2(\Omega;\R^N)$.
\end{itemize}

To clarify variables of integration, we shall often write, e.g., 
$\|u(\tfrac{x}{\e})\|_{L^{q}(\Omega)}$ and $\|u(x,\tfrac{x}{\e})\|_{L^{q}(\Omega)}$ instead of $\|u(\tfrac{\cdot}{\e})\|_{L^{q}(\Omega)}$ and $\|u(\cdot,\tfrac{\cdot}{\e})\|_{L^{q}(\Omega)}$, respectively.
Indeed, it is convenient to handle functions depending on $x$, $x/\e$, $t$ and $t/\e^r$. Moreover, $C^{\infty}_{\rm per}(J)$, $L^{q}(J)$, $W^{1,q}_{\rm per}(J)$ and $C_{\rm per}(\square \times J)$, $1\le q\le \infty$ are defined in an analogous way. We also denote by $\langle \cdot \rangle_s$ and $\langle \cdot \rangle_{y,s}$ the means of functions in $s$ and $(y,s)$, respectively. We often write $u(t)$ instead of $u(\cdot,t)$ for each $t\in I$ and $u:\Omega\times I\to\R$.  Finally,  we denote by $C$ a non-negative constant which is independent of the elements of the corresponding space or set and may vary from line to line.



\section{Preliminaries}

In this section, we summarize preliminary facts to be used later.

\subsection{Weak space-time two-scale convergence}\label{Ss:wtts}

In this subsection, we briefly review a \emph{space-time two-scale convergence theory} developed in~\cite{hol}.  Some of them will be customized for our analysis later.  Throughout this subsection, we always assume that
\[
  0<r<+\infty,\quad 1\le q\le +\infty,
\]
unless noted otherwise.
Moreover, $q'$ denotes the H\"older conjugate of $q$, i.e., $1/q+1/q'=1$ if $1<q<+\infty$; $q'=1$ if $q=+\infty$; $q'=+\infty$ if $q=1$. 
The notion of \emph{weak space-time two-scale convergence} is defined by
 
\begin{defi}[Weak space-time two-scale convergence]
A sequence $(u_{\e})$ in $L^q(\Omega\times I)$ is said to \emph{weakly space-time two-scale converge} to a function $u$ in $L^q(\Omega\times I\times\square\times J)$ 
 as $\e\to 0_+$, if $(u_{\e})$ is bounded in $L^{q}(\Omega\times I)$ and it holds that
\begin{align}\label{wtsc}
 \lefteqn{\lim_{\e\to 0_+}\int_0^T\int_{\Omega}u_{\e}(x,t)\phi(x)b\left(\tfrac{x}{\e}\right)\psi(t)c\left(\tfrac{t}{\e^r}\right)\, dxdt}\\
 &= \vierint u(x,t,y,s)\phi(x)b(y)\psi(t)c(s)\, dZ\nonumber
\end{align}
for any $\phi\in C^{\infty}_{\rm c}(\Omega)$, $b\in C^{\infty}_{\mathrm{per}}(\square)$, $\psi\in C^{\infty}_{\rm c}(I)$ and $c\in C^{\infty}_{\rm per}(J)$.
Then we write 
\[
   u_{\e}\wtts u\quad \text{ in } L^q(\Omega\times\square\times I\times J),
\]
or simply $u_{\e}\wtts u$, unless any confusion way arise. 
\end{defi}

The following fact is well known and will  often be  used to discuss weak convergence of periodic test functions.

\begin{prop}[Mean-value property]\label{mean}
  Let $g\in L^q(\square\times J)$ and set $g_{\e}(x,t)=g(x/\e,t/\e^r)$ for $\e>0$. 
  Then for any bounded domain $\Omega\subset\R^N$ and any bounded interval $I\subset \R$, it holds that 
\begin{align}\label{mean1}
\left\{
\begin{array}{ll}
 g_{\e}\to \langle g(y,s)\rangle_{y,s} \ \text{ weakly in } L^q(\Omega\times I) &\text{ if }\ q\in[1,+\infty), \\
 g_{\e}\to \langle g(y,s)\rangle_{y,s}\ \text{ weakly star in } L^{\infty}(\Omega\times I) &\text{ if }\ q=+\infty 
\end{array}
\right.
\end{align}
as $\e\to 0_+$. Here $\langle g(y,s)\rangle_{y,s}$ is defined by 
\[
\langle g(y,s)\rangle_{y,s}:=\int_0^1\int_{\square}g(y,s)\, dyds.
\] 
\end{prop}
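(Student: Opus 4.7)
The argument is the classical three-step pattern for periodic mean-value (ergodic-type) results: a uniform $L^q$ bound, convergence on a dense class of test functions, and approximation of $g$. First I would establish the uniform estimate $\|g_\e\|_{L^q(\Omega\times I)} \le C(\Omega,I)\|g\|_{L^q(\square\times J)}$. For $q=+\infty$ this is trivial by $(\square\times J)$-periodicity; for $q\in[1,+\infty)$ I enclose $\Omega\times I$ in a rectangular box, perform the change of variables $y=x/\e$, $s=t/\e^r$ (with Jacobian $\e^{N+r}$), and cover the rescaled region by $O(\e^{-N-r})$ unit cells on each of which $|g|^q$ has integral $\|g\|_{L^q(\square\times J)}^q$ by periodicity.

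Next, for $g\in C_{\rm per}(\overline{\square\times J})$ and a product test function $\phi(x)\psi(t)$ with $\phi\in C_{\rm c}(\Omega)$ and $\psi\in C_{\rm c}(I)$, I would apply Fubini and the classical one-scale mean-value property twice. Setting $G_\e(s) := \int_\Omega g(x/\e,s)\phi(x)\,dx$, the uniform continuity of $g$ on the compact set $\overline{\square\times J}$ makes the one-scale mean-value lemma (applied in $x$) yield $G_\e(s) \to G(s) := \langle g(\cdot,s)\rangle_y\int_\Omega\phi\,dx$ uniformly in $s\in\R$. Consequently,
\begin{equation*}
\int_I\int_\Omega g(x/\e,t/\e^r)\phi(x)\psi(t)\,dxdt = \int_I\psi(t)G(t/\e^r)\,dt + o(1),
\end{equation*}
and a second one-scale mean-value in $t$ applied to the continuous periodic function $G$ delivers the limit $\langle G\rangle_s\int_I\psi\,dt = \langle g\rangle_{y,s}\int_\Omega\phi\,dx\int_I\psi\,dt$. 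Linearity extends this to finite linear combinations $\sum_j \mathbf{1}_{K_j\times L_j}$ of indicators of boxes, which are dense in $L^{q'}(\Omega\times I)$ for every $q'\in[1,+\infty)$.

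For general $g\in L^q_{\rm per}(\square\times J)$ with $q\in[1,+\infty)$, I approximate $g$ in $L^q$ by continuous periodic $g_\delta$, so that $\|g_\e-(g_\delta)_\e\|_{L^q(\Omega\times I)}\le C\delta$ by the first step; for every $\phi\in L^{q'}(\Omega\times I)$,
\begin{equation*}
\left|\int_{\Omega\times I}(g_\e-\langle g\rangle_{y,s})\phi\,dxdt\right|
\le \left|\int_{\Omega\times I}((g_\delta)_\e-\langle g_\delta\rangle_{y,s})\phi\,dxdt\right| + 2C\delta\|\phi\|_{L^{q'}(\Omega\times I)},
\end{equation*}
which tends to $0$ on sending $\e\to 0_+$ first and $\delta\to 0_+$ afterwards, using the previous step on the first term. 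For $q=+\infty$ the roles of $g$ and $\phi$ are swapped: the $L^\infty$ bound on $g_\e$ together with density of continuous (and even product) functions in $L^1(\Omega\times I)$ gives weak-$*$ convergence against every $\phi\in L^1(\Omega\times I)$ by a parallel $\delta$-argument.

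The only genuinely two-scale difficulty is the simultaneous presence of the spatial scale $\e$ and the possibly very different time scale $\e^r$, with no rational/irrational relation between the two being assumed. This is circumvented in the second paragraph above by a decoupling: uniform continuity of $g$ on $\overline{\square\times J}$ lets one first average in $x$ at frozen $s$ with an error uniform in $s$, and only afterwards average in $t$; thus the two-scale limit factors through two successive one-scale limits without any arithmetic condition on $r$. The endpoints $q\in\{1,+\infty\}$ are cosmetic, requiring only a choice of whether to approximate $g$ or $\phi$ by continuous representatives.
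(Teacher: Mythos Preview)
Your argument is correct and follows the classical pattern. The paper itself does not prove this proposition but simply cites \cite{cd2000} (Theorem~2.6) and \cite{n2018} (Proposition~2.10), so there is no in-paper argument to compare against; the proofs in those references are essentially the one you sketch (uniform $L^q$ bound, convergence for smooth $g$ against a dense class of test functions, then approximation). Your decoupling in the second step---using uniform continuity of $g$ on $\overline{\square\times J}$ to average in $x$ at frozen $s$ with an error uniform in $s$, and only afterwards averaging in $t$---is precisely the right device to handle the two independent scales $\e$ and $\e^r$ without any arithmetic hypothesis on $r$.

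One small presentational point: in the endpoint $q=1$ the test functions lie in $L^\infty(\Omega\times I)$, where your indicators of boxes are \emph{not} norm-dense, so the passage from the dense class to all of $L^{q'}$ does not literally apply. The fix is immediate and costs nothing: for continuous $g_\delta$ the sequence $(g_\delta)_\e$ is bounded in $L^\infty(\Omega\times I)$, so you may approximate the $L^\infty$ test function in $L^1$ (finite measure) by your dense class instead.
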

  
\begin{proof}
See~\cite[Theorem 2.6]{cd2000},~\cite[Proposition 2.10]{n2018}. 
\end{proof}
 
\begin{rmk}
\rm 
We note that $u_{\e}\wtts u$ in $L^{q}(\Omega\times I\times \square\times J)$ implies $u_{\e}\to \langle u(y,s)\rangle_{y,s}$ weakly in $L^q(\Omega\times I)$. Indeed, one can immediately check it by taking constant test functions $b \equiv 1$ and $c \equiv 1$ in \eqref{wtsc}.
\end{rmk}

The following theorem is concerned with weak two-scale compactness of bounded sequences in $L^q(\Omega\times I)$ (see~\cite{al1},~\cite{hol},~\cite{ln},~\cite{ng}).
\begin{thm}[Weak space-time two-scale compactness]\label{multicpt}
Let $q\in(1,+\infty]$. For any bounded sequence $(u_{\e})$ in $L^q(\Omega\times I)$, there exist a subsequence  $\e_n \to 0_+$  of $(\e)$  and a function $u$$\in$$L^q(\Omega\times I\times \square\times J ) $ such that 
\[
  u_{\e_n}\wtts u\quad \text{ in } L^q(\Omega\times I\times \square\times J) .
\] 
\end{thm}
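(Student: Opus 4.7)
The plan is to run the standard duality-plus-Riesz argument that underlies two-scale compactness, adapted to the four-variable oscillation $(x/\varepsilon,\,t/\varepsilon^r)$. Set $q'$ as the H\"older conjugate of $q$, so $q' \in [1,+\infty)$, and introduce the space of admissible test functions
\[
\mathcal{D} := C^\infty_{\rm c}(\Omega) \otimes C^\infty_{\rm per}(\square) \otimes C^\infty_{\rm c}(I) \otimes C^\infty_{\rm per}(J),
\]
which, by Stone--Weierstrass plus density of smooth functions, is dense in $L^{q'}(\Omega\times I\times\square\times J)$ and admits a countable dense subset $\{\Phi_m\}_{m\in\N}$ in the $L^{q'}$ topology. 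For $\Phi=\phi\otimes b\otimes\psi\otimes c \in \mathcal{D}$, write $\Phi_{\varepsilon}(x,t) := \phi(x)b(x/\varepsilon)\psi(t)c(t/\varepsilon^r)$.

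First I would upgrade Proposition \ref{mean} to the required norm estimate: applying it to the function $|b(y)|^{q'}|c(s)|^{q'} \in L^1_{\rm per}(\square\times J)$ gives
\[
\int_\Omega\int_I |\Phi_\varepsilon(x,t)|^{q'}\,dxdt \;\xrightarrow[\varepsilon\to 0_+]{}\; \int_\Omega\int_I\int_\square\int_J |\phi(x)b(y)\psi(t)c(s)|^{q'}\,dZ,
\]
so that $\|\Phi_\varepsilon\|_{L^{q'}(\Omega\times I)} \to \|\Phi\|_{L^{q'}(\Omega\times I\times\square\times J)}$; the estimate extends to general $\Phi\in\mathcal{D}$ by linearity. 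Consequently, the linear functionals $L_\varepsilon(\Phi) := \int_\Omega\int_I u_\varepsilon(x,t)\,\Phi_\varepsilon(x,t)\,dxdt$ satisfy, via H\"older's inequality and the uniform bound $\|u_\varepsilon\|_{L^q(\Omega\times I)}\le M$,
\[
\limsup_{\varepsilon\to 0_+}|L_\varepsilon(\Phi)| \;\le\; M\,\|\Phi\|_{L^{q'}(\Omega\times I\times\square\times J)}\qquad\forall\,\Phi\in\mathcal{D}.
\]

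Next I would perform a diagonal extraction: since $(L_\varepsilon(\Phi_m))_\varepsilon$ is bounded in $\R$ for each fixed $m$, a Cantor diagonalization yields a subsequence $\varepsilon_n\to 0_+$ along which $L_{\varepsilon_n}(\Phi_m)$ converges for every $m$. Combining this with the uniform continuity estimate above and the density of $\{\Phi_m\}$ in $L^{q'}$, one shows that $L_{\varepsilon_n}(\Phi)$ converges for every $\Phi\in\mathcal{D}$, and the limit extends uniquely to a bounded linear functional $L$ on $L^{q'}(\Omega\times I\times\square\times J)$ of norm at most $M$. The Riesz representation theorem (valid because $1<q\le+\infty$ gives $1\le q'<+\infty$) then produces a unique $u\in L^q(\Omega\times I\times\square\times J)$ with $\|u\|_{L^q}\le M$ and $L(\Phi)=\int u\,\Phi\,dZ$ for every $\Phi$. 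Evaluating at tensor products $\phi\otimes b\otimes\psi\otimes c$ is precisely \eqref{wtsc}, so $u_{\varepsilon_n}\wtts u$.

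The case $q=+\infty$ (so $q'=1$) deserves a brief remark: the norm convergence of $\Phi_\varepsilon$ in $L^1$ still holds, and the resulting functional on $L^1(\Omega\times I\times\square\times J)$ is represented, by the $L^1$--$L^\infty$ duality, by an element $u\in L^\infty(\Omega\times I\times\square\times J)$; the extraction is simply weak-$*$ sequential compactness at the level of the functionals. The step I expect to require the most care is not the compactness itself but the verification that $\mathcal{D}$ is dense in $L^{q'}(\Omega\times I\times\square\times J)$ and that the norm-convergence of $\Phi_\varepsilon$ is uniform enough to pass from the countable set $\{\Phi_m\}$ to all of $\mathcal{D}$; everything else is essentially a rerun of Nguetseng--Allaire's original argument, now with the product torus $\square\times J$ and the two separate oscillation scales $\varepsilon$ and $\varepsilon^r$ in place of a single one.
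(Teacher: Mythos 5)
Your argument is correct in substance, but it takes a genuinely different route from the paper: the paper obtains Theorem \ref{multicpt} as an immediate corollary of Lemma \ref{multi}, which is quoted from \cite[Theorem 2.3]{hol}, after verifying in Remark \ref{rem2.5}\,(ii) that tensor products $\phi(x)b(y)\psi(t)c(s)$ are admissible test functions (with $X=L^{2q'}(\Omega\times I\times\square\times J)$), whereas you rerun the underlying Nguetseng--Allaire duality argument from scratch: asymptotic $L^{q'}$-norm identity for the oscillating test functions, uniform bound on the functionals $L_\e$, diagonal extraction over a countable dense family, extension by density, and Riesz representation (weak-$\ast$ compactness when $q=+\infty$, $q'=1$, which is legitimate since the underlying measure space is $\sigma$-finite). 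Your route buys a self-contained proof; the paper's route buys brevity and, via Lemma \ref{multi}, the stronger conclusion (convergence against all admissible test functions), which is what is actually used later on.

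One step needs repair before your density argument closes. You claim that the convergence $\|\Phi_\e\|_{L^{q'}(\Omega\times I)}\to\|\Phi\|_{L^{q'}(\Omega\times I\times\square\times J)}$, proved for a single tensor by applying Proposition \ref{mean} to $|b|^{q'}|c|^{q'}$, ``extends to general $\Phi\in\mathcal{D}$ by linearity''. It does not: the $L^{q'}$-norm is not linear in $\Phi$, and estimating $|L_\e(\Phi)|$ for a finite sum of tensors by the triangle inequality over its generators only yields $M\sum_j\|\phi_j b_j\psi_j c_j\|_{L^{q'}}$, which does not control $M\|\Phi\|_{L^{q'}(\Omega\times I\times\square\times J)}$. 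Since in the approximation step you must apply the uniform bound to differences $\Phi-\Phi_m$, you genuinely need the asymptotic estimate on the whole linear span $\mathcal{D}$, not just on simple tensors. The fix is standard and already recorded in the paper: a finite linear combination of such tensors belongs to $L^{q'}(\Omega\times I;C_{\rm per}(\square\times J))$ (it is continuous and compactly supported in $(x,t)$, continuous and periodic in $(y,s)$), and for this class the convergence of the oscillating $L^{q'}$-norms is exactly the statement recalled in Remark \ref{R:meas-osci}\,(ii) (see \cite[Theorems 1 and 2]{ln}). With that justification inserted in place of ``by linearity'', the diagonal extraction, the extension by density, the Riesz representation producing $u\in L^q(\Omega\times I\times\square\times J)$, and the verification of \eqref{wtsc} all go through as you describe.
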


Theorem \ref{multicpt} can be obtained as a corollary of more general result (see Lemma \ref{multi} below). To this end, 
we first set up a notion of \emph{admissible test functions}. 

\begin{defi}[Admissible test function]\label{admissible}
Let $q\in [1,+\infty]$ and let $X\subset L^{q'}(\Omega\times I\times \square\times J)$ be a separable normed space equipped with norm $\|\cdot\|_X$.
Then $(X,\|\cdot\|_X)$ is called an \emph{admissible test function space} {\rm (}for the weak space-time two-scale convergence in $L^q(\Omega\times I\times \square \times J)${\rm)}, if every $\Phi \in X$ satisfies that  $(x,t) \mapsto \Phi(x,t,\tfrac x \e, \tfrac t {\e^r})$ is measurable in $\Omega \times I$ for $\e > 0$, and moreover,  there exists a constant $C>0$ such that    
\begin{align}
\lim_{\e\to 0_+}\left\|\Phi\left(x,t,\tfrac{x}{\e},\tfrac{t}{\e^r}\right)\right\|_{L^{q'}(\Omega\times I)}  &=  \left\|\Phi(x,t,y,s)\right\|_{L^{q'}(\Omega\times I\times \square\times J)},\label{ad1} \\
 \left\|\Phi\left(x,t,\tfrac{x}{\e},\tfrac{t}{\e^r}\right)\right\|_{L^{q'}(\Omega\times I)} &\le  C\left\|\Phi(x,t,y,s)\right\|_{X}\quad \text{ for }\ \e>0\label{ad2}
\end{align}
for any $\Phi\in$ $X$. Moreover, each $\Phi\in$ $X$ is called an \emph{admissible test function} {\rm (}for the weak space-time two-scale convergence in $L^q(\Omega\times I\times \square \times J)${\rm)}.
\end{defi}

\begin{rmk}\label{rem2.5}
\rm
 \begin{itemize}
 \item[\rm(i)]  We stress that, in Definition \ref{admissible}, the constant $C>0$ is independent of $\Phi\in X$ and $\vep > 0$. Moreover, admissible test function spaces are not unique.

 \item[\rm (ii)]  If $q\in (1,+\infty]$, then one can always check \eqref{ad1} and \eqref{ad2} for functions of the form 
     \[\Phi(x,t,y,s)=\phi(x)b(y)\psi(t)c(s)\]
     for any $\phi\in C^{\infty}_{\rm c}(\Omega)$, $b\in C^{\infty}_{\mathrm{per}}(\square)$, $\psi\in C^{\infty}_{\rm c}(I)$ and $c\in C^{\infty}_{\rm per}(J)$ by setting
     $X=L^{2q'}(\Omega\times I\times \square\times J)$ equipped with norm $\|\cdot\|_{X}=\|\cdot\|_{L^{2q'}(\Omega\times I\times \square\times J)}$. Indeed, by Proposition \ref{mean}, it follows that
$$
\int_{\Omega}|\phi(x)|^{q'} \left|b \left(\tfrac{x}{\e}\right)\right|^{q'}\, dx\to \int_{\Omega}|\phi(x)|^{q'}\left(\int_{\square}|b(y)|^{q'}\, dy\right)\, dx
$$
as $\e\to 0$. Moreover, with the aid of H\"older's inequality and the periodicity of $b(y)$, we have
\begin{align*}
\left\|\phi(x)b\left(\tfrac{x}{\e}\right)\right\|_{L^{q'}(\Omega)} 
\le \left\|\phi\right\|_{L^{2q'}(\Omega)}\left\|b\left(\tfrac{x}{\e}\right)\right\|_{L^{2q'}(\Omega)} 
\le C(\Omega)\left\|\phi\right\|_{L^{2q'}(\Omega)}\left\|b\right\|_{L^{2q'}(\square)}
\end{align*}
for some constant $C(\Omega)>0$ depending only on $\Omega$. Indeed, setting $y=x/\e$, one observes that
\begin{align*}
\int_{\Omega}|b(\tfrac{x}{\e})|^{2q'}\, dx  =  \e^{N}\int_{\Omega/\e}|b(y)|^{2q'}\, dy
\le  \e^NN(\Omega/\e)\int_{\square} |b(y)|^{2q'}\, dy,
\end{align*}
where $N(\Omega/\e)$ denotes the minimum number of unit boxes covering $\Omega/\e:=\{\tfrac{x}{\e}\in\R^N \colon x\in\Omega\}$ for $\e>0$ and it is proportional to $\e^{-N}$.
     Hence one can choose a constant $C(\Omega)$ independent of $\e$ such that $\e^NN(\Omega/\e)\le C(\Omega)$.
     A similar observation also holds for $\psi(t)$ and $c(s)$. 
 \end{itemize}
\end{rmk}

\begin{rmk}[Measurability of admissible functions]\label{R:meas-osci}
\rm
In general, it is not true that $(x,t)\mapsto \Phi(x,t,\tfrac{x}{\e},\tfrac{t}{\e^r})$ is Lebesgue measurable in $\Omega\times I$, even if so is $\Phi$ in $\Omega\times I\times \square\times J$. 
However, the following facts can be used to compensate for the gap\/:
 \begin{itemize}
  \item[\rm(i)] If $\Phi$ is of class $L^1(\Omega\times I;C_{\rm per}(\square\times J))$,~then $(x,t)\mapsto \Phi(x,t,\tfrac{x}{\e},\tfrac{t}{\e^r})$ is Lebesgue measurable in $\Omega\times I$ for $\e>0$.
     Indeed, $\Phi$ is Carath\'eodry, i.e., measurable in $(x,t)$ and continuous in $(y,s)$.
     
  \item[\rm(ii)] Let $q\in (1,+\infty]$. For $\Phi\in L^{q'}(\Omega\times I;C_{\rm per}(\square\times J))$, it holds that
\begin{align*}
\lim_{\e\to 0_+} \int_0^T\int_{\Omega} \left| \Phi\left(x,t,\tfrac{x}{\e},\tfrac{t}{\e^r}\right) \right|^{q'}\, dxdt
&= \vierint  | \Phi\left(x,t,y,s\right)  | ^{q'}\, dZ,\\
 \int_0^T\int_{\Omega} \left| \Phi\left(x,t,\tfrac{x}{\e},\tfrac{t}{\e^r}\right) \right|^{q'}\, dxdt &\le  \int_0^T\int_{\Omega}\sup_{(y,s)\in\square\times J}\left|\Phi(x,t,y,s)\right|^{q'}\,dxdt.
\end{align*}
We refer the reader to~\cite[Theorems 1 and 2]{ln} for more details. Hence, $L^{q'}(\Omega\times I ; C_{\rm per}(\square \times J))$ is an admissible function space.
\end{itemize} 
\end{rmk}

Now, we have 
\begin{lem}[\noindent{\cite[Theorem 2.3]{hol}}]\label{multi}
Let $q\in  ( 1,+\infty]$ and let
$(u_{\e})$ be a bounded sequence in $L^q(\Omega\times I)$.
Let $(X,\|\cdot\|_X)\subset L^{q'}(\Omega\times I\times \square\times J)$ be an admissible test function space.
Then there exist a subsequence $\e_n\to 0_+$  of $(\e_n)$  and a function $u\in L^{q}(\Omega\times I\times \square\times J)$ such that
\begin{align*}
\lefteqn{
\lim_{\e_n\to 0_+}\int_0^T\int_{\Omega}u_{\e_n}(x,t)\Phi\left(x,t,\tfrac{x}{\e_n},\tfrac{t}{\e_n^r}\right)\, dxdt
}\\
 &= \vierint u(x,t,y,s)\Phi(x,t,y,s)\, dZ\nonumber
\end{align*}
for any $\Phi\in X$.
\end{lem}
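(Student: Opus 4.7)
The strategy is the classical Nguetseng--Allaire functional analytic approach adapted to the space-time setting, where the admissibility conditions \eqref{ad1}--\eqref{ad2} are precisely what make the argument go through. For each $\e > 0$, I would introduce the linear functional $L_\e : X \to \R$ defined by
\[
L_\e(\Phi) := \int_0^T \int_\Omega u_\e(x,t)\, \Phi\!\left(x,t,\tfrac{x}{\e},\tfrac{t}{\e^r}\right) dx\, dt,
\]
which is well-defined by Remark \ref{R:meas-osci} whenever $X$ consists of admissible test functions. Hölder's inequality together with \eqref{ad2} gives
\[
|L_\e(\Phi)| \le \|u_\e\|_{L^q(\Omega \times I)} \bigl\|\Phi(x,t,\tfrac x\e, \tfrac t{\e^r})\bigr\|_{L^{q'}(\Omega\times I)} \le C\,\|u_\e\|_{L^q(\Omega\times I)}\,\|\Phi\|_X,
\]
so $(L_\e)$ is an equibounded family of continuous linear functionals on $X$.

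Next, since $X$ is separable, I would fix a countable dense subset $\{\Phi_k\}_{k\in\N} \subset X$ and use a Cantor diagonal extraction: for each $k$, the numerical sequence $(L_\e(\Phi_k))$ is bounded, so successive subsequence extractions followed by a diagonal subsequence $(\e_n)$ yield a sequence along which $L_{\e_n}(\Phi_k)$ converges for every $k$. Using the uniform bound and density, one upgrades this to the existence of a linear functional $L : X \to \R$ such that $L_{\e_n}(\Phi) \to L(\Phi)$ for every $\Phi \in X$.

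The decisive step is to re-estimate $L$ with respect to the $L^{q'}$-norm rather than the $X$-norm, and here the admissibility condition \eqref{ad1} is essential. Passing to the limit in the estimate above and invoking \eqref{ad1} yields
\[
|L(\Phi)| \le \Bigl( \limsup_{n\to\infty} \|u_{\e_n}\|_{L^q(\Omega\times I)} \Bigr) \|\Phi\|_{L^{q'}(\Omega\times I\times \square \times J)} =: M\,\|\Phi\|_{L^{q'}(\Omega\times I\times \square \times J)}.
\]
Thus $L$ is bounded as a functional on $X$ equipped with the (relative) $L^{q'}$-norm. By the Hahn--Banach theorem, I extend $L$ to a continuous linear functional $\widetilde L$ on the whole of $L^{q'}(\Omega\times I\times \square \times J)$ with norm at most $M$. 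Since $q \in (1,+\infty]$, the dual of $L^{q'}(\Omega\times I\times \square \times J)$ is canonically identified with $L^q(\Omega\times I\times \square \times J)$ (the usual Riesz representation for $q<+\infty$, or the duality $(L^1)^* = L^\infty$ when $q=+\infty$), so there exists $u \in L^q(\Omega\times I\times \square \times J)$ with $\|u\|_{L^q} \le M$ and
\[
\widetilde L(\Psi) = \vierint u(x,t,y,s)\,\Psi(x,t,y,s)\, dZ \qquad \text{for all } \Psi \in L^{q'}(\Omega\times I \times \square \times J).
\]
Restricting to $\Psi = \Phi \in X$ and combining with the definition of $L$ delivers the stated convergence. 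The mildly delicate point is precisely the use of \eqref{ad1} to promote the $X$-norm estimate to an $L^{q'}$-norm estimate; condition \eqref{ad2} is used only to ensure that the functionals $L_\e$ are finite and equibounded on $X$, while \eqref{ad1} is what guarantees that the limit functional extends to the larger dual space and hence is representable by an $L^q$ function.
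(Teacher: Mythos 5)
Your argument is correct: the equiboundedness of the functionals $L_\e$ obtained from H\"older's inequality and \eqref{ad2}, the diagonal extraction over a countable dense subset of the separable space $X$, the upgrade of the limit functional to an $L^{q'}$-norm bound via \eqref{ad1}, and the Hahn--Banach extension followed by the duality $(L^{q'})^* \simeq L^q$ (valid here since $q' \in [1,+\infty)$ on a finite measure space) is precisely the standard Nguetseng--Allaire-type compactness argument. The paper does not reprove this lemma but quotes it from \cite[Theorem 2.3]{hol}, whose proof runs along the same lines, so your proposal matches the intended argument; the only cosmetic point is that the measurability of $(x,t)\mapsto\Phi\left(x,t,\tfrac{x}{\e},\tfrac{t}{\e^r}\right)$ is already built into Definition \ref{admissible} rather than being supplied by Remark \ref{R:meas-osci}.
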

 Theorem \ref{multicpt} can be obtained as a corollary by setting $\Phi(x,t,y,s)=\phi(x)b(y)\psi(t)c(s)$ for any $\phi\in C^{\infty}_{\rm c}(\Omega)$, $b\in C^{\infty}_{\mathrm{per}}(\square)$, $\psi\in C^{\infty}_{\rm c}(I)$ and $c\in C^{\infty}_{\rm per}(J)$ (see also (ii) of Remark \ref{rem2.5}). Let us move on to space-time two-scale compactness of gradients. In contrast to elliptic homogenization (in space only), boundedness of gradients $(\nabla u_\e)$ does not imply strong precompactness of $(u_\e)$ in any Lebesgue spaces. Indeed, the time-derivative $(\partial_t u_\e)$ may not be bounded (in particular for nonlinear diffusion). In~\cite[Theorem 3.1]{hol}, space-time two-scale weak precompactness for gradients is proved via the Aubin-Lions compactness theorem under certain boundedness of time-derivatives as well as gradients. Here we shall provide a slightly different version without assuming boundedness of time-derivatives with a proof for the convenience of the reader. 
\begin{thm}[Weak space-time two-scale compactness for gradients]\label{gradientcpt}\ 
Let $q\in (1,+\infty)$ and let $(u_{\e})$ be a bounded sequence in $L^{q}(I;W^{1,q}(\Omega))$ such that $u_\vep \wtts u$ in $L^q(\Omega \times I \times \square \times J)$ for a limit $u$.
Suppose that $u\in L^q(I\times \square\times J;  W^{1,q}  (\Omega))$. 
Let $(X,\|\cdot\|_X)\subset L^q(\Omega\times I\times \square\times J)$ be an admissible function space in the sense of Definition \ref{admissible}. Then there exist a subsequence $\e_n\to 0_+$ and a function $z\in L^q(\Omega\times I;L^{q}(J; W^{1,q}_{\mathrm{per}}(\square)/\R))$ such that 
\begin{align}\label{adgracpt}
 \lefteqn{\lim_{\e_n\to 0_+}\int_0^T\int_{\Omega} \nabla u_{\e_n}(x,t)\cdot \Phi\left(x,t,\tfrac{x}{\e_n},\tfrac{t}{\e_n^r}\right)\, dxdt}\\
  &= \vierint \left(\nabla u(x,t,y,s)+\nabla_y z(x,t,y,s)\right)\cdot \Phi(x,t,y,s)\, dZ\nonumber
\end{align}
for any admissible test function $\Phi\in X^N$. 
In particular, setting $\Phi(x,t,y,s)=\phi(x)  B  (y)\psi(t) c(s)$ for any $\phi\in C^{\infty}_{\rm c}(\Omega)$, $B\in [C^{\infty}_{\mathrm{per}}(\square)]^N$, $\psi\in C^{\infty}_{\rm c}(I)$ and $c\in C^{\infty}_{\rm per}(J)$, then we have 
\begin{align}\label{gradientwtts}
  \nabla u_{\e_n}\wtts \nabla u+\nabla_y z\quad \text{ in }\ [L^q(\Omega\times I\times \square\times J)]^N.
\end{align}
Here and henceforth, $\nabla$ and $\nabla_y$ denote gradients with respect to $x$ and $y$, respectively. Moreover, $B$ can be replaced by a scalar one $b \in C^\infty_{\rm per}(\square)$ and one of the other  test functions  can be vector-valued. 
\end{thm}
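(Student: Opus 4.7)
The plan is to (i) extract a weakly two-scale convergent subsequence of $(\nabla u_\e)$ via Lemma \ref{multi}, (ii) identify its two-scale limit by testing against $y$-divergence-free oscillating vector fields, and (iii) reconstruct the corrector $z$ via a de Rham-type lemma on the torus.

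Since $(\nabla u_\e)$ is bounded in $[L^q(\Omega\times I)]^N$, applying Lemma \ref{multi} componentwise with the given admissible space $X$ produces a (not relabeled) subsequence $\e_n\to 0_+$ and a limit $\xi \in [L^q(\Omega\times I\times\square\times J)]^N$ such that
\begin{equation*}
\lim_{\e_n\to 0_+}\int_0^T\!\!\int_\Omega \nabla u_{\e_n}(x,t)\cdot\Phi\!\left(x,t,\tfrac{x}{\e_n},\tfrac{t}{\e_n^r}\right) dxdt = \vierint \xi\cdot\Phi\, dZ
\end{equation*}
for every $\Phi\in X^N$. It then suffices to show that $\xi = \nabla u + \nabla_y z$ for some $z\in L^q(\Omega\times I; L^q(J; W^{1,q}_{\rm per}(\square)/\R))$.

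To identify $\xi$, specialize $\Phi(x,t,y,s)=\phi(x)\psi(t)c(s)B(y)$ with $\phi\in C^\infty_{\rm c}(\Omega)$, $\psi\in C^\infty_{\rm c}(I)$, $c\in C^\infty_{\rm per}(J)$ and $B\in [C^\infty_{\rm per}(\square)]^N$ satisfying $\dv_y B = 0$. Integration by parts in $x$ gives
\begin{equation*}
\int_0^T\!\!\int_\Omega \nabla u_{\e_n}\cdot \phi\, B(\tfrac{x}{\e_n})\psi\, c(\tfrac{t}{\e_n^r})\,dxdt = -\int_0^T\!\!\int_\Omega u_{\e_n}\,\nabla\phi\cdot B(\tfrac{x}{\e_n})\psi\, c(\tfrac{t}{\e_n^r})\, dxdt,
\end{equation*}
because the $O(\e_n^{-1})$ term arising from $\nabla_x[B(x/\e_n)]$ carries the factor $(\dv_y B)(x/\e_n)=0$. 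Letting $\e_n\to 0_+$ using the hypothesis $u_{\e_n}\wtts u$, and then integrating by parts in $x$ inside the limit (legitimate since $u\in L^q(I\times\square\times J;W^{1,q}(\Omega))$ and $\phi\in C^\infty_{\rm c}(\Omega)$), one arrives at
\begin{equation*}
\vierint (\xi - \nabla u)\cdot\Phi\, dZ = 0
\end{equation*}
for every test field $\Phi$ of this form.

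The decisive step, which is also the main obstacle, is to infer from this orthogonality that $\xi - \nabla u = \nabla_y z$ for a parameter-dependent potential $z$ with the required regularity. For a.e.\ fixed $(x,t,s)$, the field $(\xi-\nabla u)(x,t,\cdot,s)\in [L^q_{\rm per}(\square)]^N$ annihilates every smooth $\square$-periodic solenoidal test field, so a classical de Rham/Helmholtz-type lemma on the torus yields a unique primitive $z(x,t,\cdot,s)\in W^{1,q}_{\rm per}(\square)/\R$ with $\nabla_y z(x,t,\cdot,s) = (\xi-\nabla u)(x,t,\cdot,s)$. Joint measurability and $L^q$-integrability in the parameters are secured by constructing $z$ canonically through the periodic Poisson problem $-\Delta_y z = -\dv_y(\xi-\nabla u)$ on $\square$ and invoking $L^q$-elliptic regularity, which supplies the pointwise bound $\|z(x,t,\cdot,s)\|_{W^{1,q}_{\rm per}(\square)/\R}\le C\,\|(\xi-\nabla u)(x,t,\cdot,s)\|_{L^q(\square)}$ uniform in $(x,t,s)$; integrating this over $\Omega\times I\times J$ places $z$ in the asserted space. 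The particular formula \eqref{gradientwtts}, together with the option of letting one of the factor functions in $\Phi$ be vector-valued, then follows from \eqref{adgracpt} by specialization to tensor products.
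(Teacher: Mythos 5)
Your overall strategy is the paper's: two-scale compactness for the bounded gradients, identification of the limit by testing with $y$-solenoidal oscillating fields, and reconstruction of the corrector through a Helmholtz/de Rham decomposition on the torus. But there is a genuine gap at the identification step. Lemma \ref{multi}, applied with the given space $X$, only guarantees convergence of $\int_0^T\int_\Omega \nabla u_{\e_n}\cdot\Phi(x,t,\tfrac{x}{\e_n},\tfrac{t}{\e_n^r})\,dxdt$ to $\vierint \xi\cdot\Phi\,dZ$ for $\Phi\in X^N$, and the theorem's $X$ is an \emph{arbitrary} admissible space: nothing forces the tensor products $\phi(x)\psi(t)c(s)B(y)$ with $\dv_y B=0$ that you use to pin down $\xi$ to belong to $X^N$ (Remark \ref{rem2.5}(ii) only says such products are admissible for \emph{some} choice of admissible space). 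So, as written, you ``specialize'' to test functions for which you do not know the limit is $\vierint\xi\cdot\Phi\,dZ$. If you repair this by extracting a further subsequence with $\nabla u_{\e_n}\wtts U$ in the tensor-product sense (Theorem \ref{multicpt}) and identifying $U=\nabla u+\nabla_y z$, you have then proved \eqref{gradientwtts} but not yet \eqref{adgracpt}: you still must show that the pairing with a general admissible $\Phi\in X^N$ converges to $\vierint U\cdot\Phi\,dZ$, and knowing that both limits $\xi$ and $U$ exist does not give $\xi=U$, since $X$ is not assumed to separate points of $L^q$. The missing ingredient is exactly Lemma \ref{zhikov} (strong two-scale convergence of admissible test functions, via \eqref{ad1}--\eqref{ad2}), which is how the paper reduces \eqref{adgracpt} to tensor-product test functions at the very start of its proof; you never invoke it.

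The rest of your argument is sound and essentially coincides with the paper's: the $O(\e_n^{-1})$ term is killed by $\dv_y B=0$, the hypothesis $u_{\e_n}\wtts u$ together with $u\in L^q(I\times\square\times J;W^{1,q}(\Omega))$ lets you integrate back by parts, and the orthogonality to all smooth periodic solenoidal fields (localized in $(x,t,s)$ via a countable dense family of such $B$, to control the $B$-dependent null sets) places $W=\xi-\nabla u$ in the potential part of the $L^q$-Helmholtz decomposition on $\square$, exactly as in the paper (which cites Fujiwara--Morimoto). Where you genuinely diverge is the measurability and integrability of $z$ in $(x,t,s)$: you construct $z$ canonically by solving the periodic Poisson problem $-\Delta_y z=-\dv_y W$ and use $L^q$ elliptic regularity on the torus, so that $z$ is the image of the strongly measurable map $(x,t,s)\mapsto W(x,t,\cdot,s)$ under a bounded linear solution operator, whereas the paper proves a separate lemma approximating $W$ by simple functions with values in the potential space and using Poincar\'e--Wirtinger together with Pettis's theorem. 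Your route is a legitimate and arguably shorter alternative (one must note that uniqueness in $W^{1,q}_{\rm per}(\square)/\R$ forces $\nabla_y z=W$ once $W$ is known to be a potential field), at the price of invoking Calder\'on--Zygmund/multiplier theory on the torus, which the paper avoids.
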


\begin{rmk}\label{ysindeprmk}
\rm
Let $q\in [1,+\infty]$. If $(u_{\e})$  is bounded in $L^q(\Omega\times I)$ and  converges to $u$ strongly in $L^{1}(\Omega\times I)$, then  $(u_{\e})$ weakly space-time two-scale converges to $u$ in $L^q(\Omega\times I \times \square \times J)$. 
Indeed,  by \eqref{mean1} of Proposition \ref{mean}, we obtain
\begin{align*}
\lim_{\e\to 0_+}\int_0^T\int_{\Omega} u_{\e}(x,t)\phi(x)b\left(\tfrac{x}{\e}\right)\psi(t)c\left(\tfrac{t}{\e^r}\right)\, dxdt 
  &= \int^T_0 \int_\Omega  u(x,t) \phi(x)\langle b(y)\rangle_y\psi(t)\langle c(s)\rangle_s\, dxdt \\
  &= \vierint u(x,t)\phi(x)b(y)\psi(t)c(s)\, dZ.
\end{align*}
Therefore the weak space-time two-scale limit of $(u_{\e})$ coincides with the strong limit $u=u(x,t)$,  which is  independent of $(y,s)\in \square\times J$.
\end{rmk}

To prove Theorem \ref{gradientcpt}, we use the following 
\begin{lem}\label{zhikov}
Let $q\in(1,+\infty)$ and let $(u_{\e})$  be a sequence in  $L^q(\Omega\times I)$ such that $u_{\e}\wtts u$ in $L^{q}(\Omega\times I\times \square\times J)$. 
Then it holds that
\begin{align*}
\lim_{\e\to 0_+}\int_0^T\int_{\Omega}u_{\e}(x,t)\Phi\left(x,t,\tfrac{x}{\e},\tfrac{t}{\e^r}\right)\, dxdt
 = \vierint u(x,t,y,s)\Phi(x,t,y,s)\, dZ\nonumber
\end{align*}
for any admissible test function $\Phi$ in the sense of Definition \ref{admissible}.
\end{lem}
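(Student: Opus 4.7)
The plan is to reduce to simple tensor-product test functions—for which the limit is handled directly by the hypothesis $u_\vep\wtts u$—through a density-plus-approximation argument, using the uniform bound \eqref{ad2} to transfer the approximation through the oscillation $(x,t)\mapsto \Phi(x,t,\tfrac{x}{\vep},\tfrac{t}{\vep^r})$.

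First, the sequence $(u_\vep)$ is bounded in $L^q(\Omega\times I)$ by the very definition of weak space-time two-scale convergence. Fix an admissible $\Phi\in X$ and abbreviate $\Phi_\vep(x,t):=\Phi(x,t,\tfrac{x}{\vep},\tfrac{t}{\vep^r})$; by \eqref{ad2}, $\|\Phi_\vep\|_{L^{q'}(\Omega\times I)}\leq C\|\Phi\|_X$ uniformly in $\vep$. I would then choose a sequence of finite sums of tensor products
\[
\Phi_k=\sum_j \phi_j(x)\,b_j(y)\,\psi_j(t)\,c_j(s),\quad \phi_j\in C^\infty_{\rm c}(\Omega),\ b_j\in C^\infty_{\mathrm{per}}(\square),\ \psi_j\in C^\infty_{\rm c}(I),\ c_j\in C^\infty_{\mathrm{per}}(J),
\]
with $\Phi_k\to \Phi$ in both $L^{q'}(\Omega\times I\times\square\times J)$ (standard density in mixed Lebesgue spaces) and in $\|\cdot\|_X$ (a density property shared by the natural admissible spaces of Remark~\ref{R:meas-osci}). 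For each fixed $k$, the hypothesis $u_\vep\wtts u$ and linearity yield
\[
\int_0^T\!\!\int_\Omega u_\vep\,\Phi_{k,\vep}\,dxdt\;\longrightarrow\;\vierint u\,\Phi_k\,dZ\qquad(\vep\to 0_+).
\]

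Second, I would decompose the error as
\begin{align*}
\int_0^T\!\!\int_\Omega u_\vep\Phi_\vep\,dxdt - \vierint u\,\Phi\,dZ
&= \int_0^T\!\!\int_\Omega u_\vep(\Phi-\Phi_k)_\vep\,dxdt \\
&\quad + \Bigl[\int_0^T\!\!\int_\Omega u_\vep\,\Phi_{k,\vep}\,dxdt - \vierint u\,\Phi_k\,dZ\Bigr] \\
&\quad + \vierint u\,(\Phi_k-\Phi)\,dZ,
\end{align*}
and control each piece. The middle bracket vanishes as $\vep\to 0_+$ for every fixed $k$. The third is bounded by $\|u\|_{L^q}\|\Phi-\Phi_k\|_{L^{q'}}\to 0$ as $k\to\infty$ by H\"older. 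The first is bounded, via H\"older together with \eqref{ad2}, by $\|u_\vep\|_{L^q(\Omega\times I)}\|(\Phi-\Phi_k)_\vep\|_{L^{q'}(\Omega\times I)}\le C'\|\Phi-\Phi_k\|_X$ uniformly in $\vep$, which tends to $0$ as $k\to\infty$. Taking $\limsup_{\vep\to 0_+}$ followed by $k\to\infty$ yields the desired convergence.

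The main subtlety is the density of tensor products in $X$ with respect to the $X$-norm, which is not built into the abstract Definition~\ref{admissible}. This is standard in the concrete admissible spaces arising in practice, e.g.\ $L^{q'}(\Omega\times I;C_{\mathrm{per}}(\square\times J))$ from Remark~\ref{R:meas-osci}(ii). In the abstract setting, a safe workaround is a subsequence-and-uniqueness argument: from any subsequence of $(u_\vep)$, apply Lemma~\ref{multi} first with $X$ and then with $L^{2q'}(\Omega\times I\times\square\times J)$ (admissible by Remark~\ref{rem2.5}(ii)) to extract a common sub-subsequence with two-scale limits $\tilde u$ against $X$ and $\tilde u'$ against $L^{2q'}$; identify $\tilde u'=u$ by testing against tensor products and using their density in $L^{q'}$; combine with the above $L^{q'}$-approximation to obtain $\int u_{\vep_n}\Phi_{\vep_n}\to \int u\,\Phi$, and conclude via the standard subsequence principle.
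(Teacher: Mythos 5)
Your argument has a genuine gap: both the main route and the proposed workaround ultimately rest on a compatibility between the abstract space $(X,\|\cdot\|_X)$ and the span of tensor products $\phi(x)b(y)\psi(t)c(s)$ that Definition \ref{admissible} simply does not provide. The definition only asks that $X$ be a separable normed subspace of $L^{q'}(\Omega\times I\times\square\times J)$ satisfying \eqref{ad1}--\eqref{ad2}; it does not guarantee that tensor products belong to $X$, let alone that they are dense in $X$ for the norm $\|\cdot\|_X$ (this can fail even for the concrete spaces used later in the paper, e.g.\ $L^2(\square\times J;C(\overline{\Omega\times I}))$, where products with $\phi\in C^\infty_{\rm c}(\Omega)$ are not sup-norm dense). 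Without $\Phi_k\in X$ the bound via \eqref{ad2} on $\int u_\e(\Phi-\Phi_k)_\e$ is unavailable, so the first error term cannot be controlled uniformly in $\e$. The workaround does not repair this: Lemma \ref{multi} applied with $X$ produces a limit $\tilde u$ characterized only by tests in $X$, while $u$ is characterized by tensor-product tests, and with no test functions common to (or transferable between) the two classes there is no way to conclude $\vierint \tilde u\,\Phi\,dZ=\vierint u\,\Phi\,dZ$; identifying $\tilde u'=u$ for the $L^{2q'}$-limit does not help.

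A telling symptom is that your argument makes no essential use of \eqref{ad1}, whereas it is the crux of the paper's proof. The paper first shows (following Holmbom) that the oscillating test function itself satisfies $\Phi(x,t,\tfrac{x}{\e},\tfrac{t}{\e^r})\wtts\Phi$ in $L^{q'}$, and then combines this weak two-scale convergence with the norm convergence \eqref{ad1} and the uniform convexity of $L^{q'}$ ($1<q<+\infty$) to upgrade it to \emph{strong} two-scale convergence; pairing a strongly two-scale convergent sequence in $L^{q'}$ with the weakly two-scale convergent $(u_\e)$ in $L^q$ then passes directly to the limit, with no density of tensor products in $X$ required. If you want to keep an approximation argument, you must either add the density of tensor products in $(X,\|\cdot\|_X)$ as a hypothesis (which changes the statement) or reproduce the weak--strong pairing mechanism that \eqref{ad1} is designed to enable.
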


\begin{proof}[Sketch of proof]
 Let $\Phi$ be an admissible test function. As in the proof of Proposition 2.7 in~\cite{hol}, where only an $L^2$ setting is treated, using the uniform convexity of Lebesgue spaces (by $1 < q < +\infty$) instead of a Hilbert structure, we can prove
\begin{equation}\label{Phi-wts}
\Phi(x,t,\tfrac{x}{\e},\tfrac{t}{\e^r})\wtts \Phi\quad \text{ in }\ L^{q'}(\Omega\times I\times \square\times J).
\end{equation}
 As a by-product, it is also proved under \eqref{ad1} along with \eqref{Phi-wts} that
\[
 \lim_{\e\to0_+}\int_0^T\int_{\Omega}\Phi(x,t,\tfrac{x}{\e},\tfrac{t}{\e^r}) \Psi_{\e}(x,t)\, dxdt  
  =
 \vierint \Phi(x,t,y,s)\Psi(x,t,y,s)\, dZ
\]
for all bounded sequence $(\Psi_{\e})$ in $L^{q}(\Omega\times I)$ satisfying $\Psi_{\e}\wtts \Psi$ in $L^{q}(\Omega\times I\times \square\times J)$  (see~\cite[Theorem 2.5]{hol} and~\cite[Lemma 4.4]{Z2003}). Then $\Phi(x,t,x/\e,t/\e^r)$ is often said to \emph{strongly} two-scale converge to $\Phi$ in $L^{q'}(\Omega\times I\times \square\times J)$.  Thus Lemma \ref{zhikov} is proved by setting $\Psi_{\e}=u_{\e}$.
\end{proof}

Now, we are ready to prove Theorem \ref{gradientcpt}.
\begin{proof}[Proof of Theorem \ref{gradientcpt}]
Thanks to Lemma \ref{zhikov}, it suffices to prove \eqref{adgracpt} only for test functions of the form
 \[
 \Phi\left(x,t,\tfrac{x}{\e},\tfrac{t}{\e^r}\right)=\phi(x)B\left(\tfrac{x}{\e}\right)\psi(t)c\left(\tfrac{t}{\e^r}\right)
 \]
 for all $\phi\in C_{\rm c}^{\infty}(\Omega)$, $B\in [C^{\infty}_{\mathrm{per}}(\square)]^N$, $\psi\in C_{\rm c}^{\infty}(I)$ and $c\in C^{\infty}_{\mathrm{per}}(J)$. 
By the boundedness of $(\nabla u_{\e})$ in $[L^q(\Omega\times I)]^N$ and Theorem \ref{multicpt}, 
there exist a subsequence $(\e_n)$ of $(\e)$ and $U\in [L^q(\Omega\times I\times \square\times J)]^N$ such that  
\begin{align*}
\lefteqn{\lim_{\e_n\to 0_+}  \int_0^T\int_{\Omega}\nabla u_{\e_n}(x,t)\cdot \phi(x)B\left(\tfrac{x}{\e_n}\right)\psi(t)c\left(\tfrac{t}{\e_n^r}\right)\, dxdt}\\
 &= \vierint U(x,t,y,s)\cdot \phi(x)B(y)\psi(t)c(s)\, dZ.
\end{align*}
On the other hand, choose $B\in [C^{\infty}_{\mathrm{per}}(\square)]^N$ as a solution to 
\begin{equation}\label{divassump}
\dv_y B(y) =0 \ \text{ in }\ \square.
\end{equation}
Using the assumption that $u_{\e}\wtts u$ in $L^q(\Omega\times I\times \square\times J)$ and $u\in L^{q}(I\times \square\times J; W^{1,q}(\Omega))$, we can also deduce that
\begin{align*}
\lefteqn{
\lim_{\e_n\to 0_+} \int_0^T\int_{\Omega}\nabla u_{\e_n}(x,t)\cdot\phi(x)B\left(\tfrac{x}{\e_n}\right)\psi(t)c\left(\tfrac{t}{\e_n^r}\right)\, dxdt
}\\
 &= -\lim_{\e_n\to 0_+}\int_0^T\int_{\Omega}u_{\e_n}(x,t) \Bigl[\nabla\phi(x)\cdot B\left(\tfrac{x}{\e_n}\right)+\tfrac{1}{\e_n}\phi(x)\underbrace{\nabla_y\cdot B\left(\tfrac{x}{\e_n}\right)}_{=\,0}\Bigl]\psi(t)c\left(\tfrac{t}{\e_n^r}\right)\, dxdt\\
 &= -\vierint u(x,t,y,s)\nabla\phi(x)\cdot B(y)\psi(t)c(s)\, dZ\\
 &= \vierint \nabla u(x,t,y,s)\cdot \phi(x)B(y)\psi(t)c(s)\, dZ. 
\end{align*}
Setting $W:=U-\nabla u\in [L^q(\Omega\times \square\times I\times J)]^N$, we have 
\begin{equation*}
 \vierint W(x,t,y,s)\cdot \phi(x)B(y)\psi(t)c(s)\, dZ=0, 
\end{equation*}
provided that $\mathrm{div}_y B = 0$ in $\square$. Furthermore, the arbitrariness of $\phi\in C_{\rm c}^{\infty}(\Omega)$, $\psi\in C_{\rm c}^{\infty}(I)$ and $c\in C^{\infty}_{\mathrm{per}}(J)$ yields 
\begin{equation}\label{helmeq}
 \int_{\square}W(x,t,y,s)\cdot B(y)\, dy=0\quad \text{ a.e.~in }\ \Omega\times I\times J, 
\end{equation}
which implies that $W(x,t,\cdot,s)\in \mathbb{V}^{q}_{\rm pot}(\square):=\{\nabla w\in [L^q(\square)]^N\colon w\in W^{1,q}_{\mathrm{per}}(\square) /\R \}$ equipped with norm $\|\cdot\|_{\mathbb{V}^{q}_{\rm pot}(\square)}:=\|\cdot\|_{L^q(\square)}$. Indeed, let $\mathbb{V}^q_{\rm sol}(\square)$ be the closure of $\mathbb{C}^{\infty}_{\rm \sigma}(\square):=\{ W\in [C^{\infty}_{\rm per}(\square)]^{N}\colon \dv\, W=0 \text{ in } \square\}$ in $[L^q(\square)]^N$. As in Theorem 2 of~\cite{FM1977}, one can ensure that $[L^q(\square)]^N=\mathbb{V}^q_{\rm sol}(\square)\oplus \mathbb{V}^q_{\rm pot}(\square)$ and $\mathbb{V}^q_{\rm pot}(\square) = \mathbb{V}^{q'}_{\rm sol}(\square)^\perp$. 
Thus we see by \eqref{divassump} and \eqref{helmeq} that $W(x,t,\cdot,s)\in \mathbb{V}^{q}_{\rm pot}(\square)$. Hence there exists $z(x,t,\cdot,s)\in W^{1,q}_{\mathrm{per}}(\square)/\R$ such that
\begin{equation}\label{zexist}
\nabla_{y}z(x,t,\cdot,s)= W(x,t,\cdot,s) \quad \text{ a.e.~in }\ \square.
\end{equation}
Moreover, such a function $z$ satisfying \eqref{zexist} is uniquely determined. Indeed,  let   $z_j(x,t,\cdot,s)\in W^{1,q}_{\rm per}(\square)/\R$ (for $j=1,2$) satisfy \eqref{zexist}. Then we have 
\[
 \nabla_y(z_1-z_2)(x,t,\cdot,s)=0\quad \text{ a.e.~in }\ \square, 
\]
i.e., $(z_1-z_2)(x,t,\cdot,s)\equiv C(x,t,s)$  a.e.~in $\square$. Moreover, we find by $z_j(x,t,\cdot,s)\in W^{1,q}_{\rm per}(\square)/\R$, $j=1,2$, that 
\[
 C(x,t,s)=\int_{\square}[z_1(x,t,y,s) -z_2(x,t,y,s)]\, dy=0. 
\]
Thus  $z_1(x,t,\cdot,s)\equiv z_2(x,t,\cdot,s)$ a.e.~in $\square$. 
Therefore, if $z$ lies on $L^q(\Omega\times I;L^{q}(J;W^{1,q}_{\mathrm{per}}(\square)/\R))$, then we can conclude that
\begin{equation*}
\nabla_y u_{\e_n}\wtts U=\nabla u+\nabla_{y}z \quad \text{ in }\ [L^q(\Omega\times I\times \square\times J)]^N.
\end{equation*}
Hence it remains to prove $z\in L^q(\Omega\times I;L^{q}(J;W^{1,q}_{\mathrm{per}}(\square)/\R))$,  and it will be done in  the following lemma.
\end{proof}

\begin{lem}
The function $z(x,t,\cdot,s)\in W^{1,q}_{\rm per}(\square)/\R$ appeared in \eqref{zexist} belongs to $L^q(\Omega\times I;L^{q}(J;W^{1,q}_{\mathrm{per}}$$(\square)/\R))$. 
\end{lem}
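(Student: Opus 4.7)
The plan is to reduce the claim to the already-established $L^q$-integrability of $W := U - \nabla u$ over $\Omega \times I \times \square \times J$. The key structural fact I would exploit is that, by the very definition of the quotient norm $\|\cdot\|_{W^{1,q}_{\mathrm{per}}(\square)/\R} = \|\nabla_y \cdot\|_{L^q(\square)}$, the map $\nabla_y$ is an isometric isomorphism from $W^{1,q}_{\mathrm{per}}(\square)/\R$ onto its image $\mathbb{V}^q_{\mathrm{pot}}(\square)$. Combined with \eqref{zexist}, this yields the pointwise identity
\begin{equation*}
\|z(x,t,\cdot,s)\|_{W^{1,q}_{\mathrm{per}}(\square)/\R} = \|\nabla_y z(x,t,\cdot,s)\|_{L^q(\square)} = \|W(x,t,\cdot,s)\|_{L^q(\square)}
\end{equation*}
for a.e.\ $(x,t,s)\in\Omega\times I\times J$.

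Once Bochner measurability of $(x,t,s)\mapsto z(x,t,\cdot,s)$ into $W^{1,q}_{\mathrm{per}}(\square)/\R$ is secured, Fubini--Tonelli will immediately yield
\begin{equation*}
\int_{\Omega}\int_0^T\int_0^1\|z(x,t,\cdot,s)\|_{W^{1,q}_{\mathrm{per}}(\square)/\R}^q\,ds\,dt\,dx = \|W\|_{L^q(\Omega\times I\times\square\times J)}^q < +\infty,
\end{equation*}
and a further application of Fubini for Bochner integrals (valid since $W^{1,q}_{\mathrm{per}}(\square)/\R$ is separable for $1<q<+\infty$) will identify $L^q(\Omega\times I\times J;W^{1,q}_{\mathrm{per}}(\square)/\R)$ with the space $L^q(\Omega\times I;L^q(J;W^{1,q}_{\mathrm{per}}(\square)/\R))$ asserted in the lemma.

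The main and essentially only obstacle is therefore the measurability step. To clear it, I would first view $W\in[L^q(\Omega\times I\times\square\times J)]^N$ as a Bochner-measurable function $(x,t,s)\mapsto W(x,t,\cdot,s)$ with values in $[L^q(\square)]^N$ via the standard Fubini isomorphism for Bochner integrals on separable Lebesgue spaces. The Helmholtz-type decomposition $[L^q(\square)]^N=\mathbb{V}^q_{\mathrm{sol}}(\square)\oplus\mathbb{V}^q_{\mathrm{pot}}(\square)$ already invoked in the proof of Theorem \ref{gradientcpt} provides a bounded linear projection $P$ onto $\mathbb{V}^q_{\mathrm{pot}}(\square)$, and composition with the continuous inverse of $\nabla_y:W^{1,q}_{\mathrm{per}}(\square)/\R\to\mathbb{V}^q_{\mathrm{pot}}(\square)$ gives a bounded linear operator $T:[L^q(\square)]^N\to W^{1,q}_{\mathrm{per}}(\square)/\R$ such that $z(x,t,\cdot,s)=T(W(x,t,\cdot,s))$ almost everywhere. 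Bochner measurability of $z$ is then inherited from that of $W$ by continuity of $T$, finishing the plan.
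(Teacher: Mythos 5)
Your argument is correct, and it reaches the same destination as the paper by a more operator-theoretic route. The paper proves strong measurability of $z$ by hand: it approximates $W$ by $\mathbb{V}^q_{\rm pot}(\square)$-valued simple functions $W_n=\sum_j C_j^n\chi_{A_j^n}$, lifts each $C_j^n$ to a potential $D_j^n$, shows $z_n\to z$ pointwise a.e.\ via Poincar\'e--Wirtinger, and then separately verifies the iterated identification into $L^q(\Omega\times I;L^q(J;W^{1,q}_{\rm per}(\square)/\R))$ (this is where its Pettis-theorem step for $\chi_A(x,t,\cdot)$ comes in). You compress all of this into the single observation that $z=T\circ W$ for one fixed bounded linear operator $T=(\nabla_y)^{-1}\circ P$, where $P$ is the Fujiwara--Morimoto projection onto $\mathbb{V}^q_{\rm pot}(\square)$ already used in the proof of Theorem \ref{gradientcpt} and the boundedness of $(\nabla_y)^{-1}$ is exactly Poincar\'e--Wirtinger; strong measurability then transfers by continuity, and the norm estimate is immediate. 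This is legitimate and in fact subsumes the paper's $z_n\to z$ computation, which is nothing but a proof by hand that composition with $T$ preserves strong measurability. Two small caveats: the identity $\|z(x,t,\cdot,s)\|_{W^{1,q}_{\rm per}(\square)/\R}=\|\nabla_y z(x,t,\cdot,s)\|_{L^q(\square)}$ is an isometry only if the quotient carries the gradient norm (the paper only defines this for $q=2$); in general you get a two-sided equivalence from Poincar\'e--Wirtinger, which is all you need, so state it as an equivalence. And your appeal to the standard isomorphism $L^q(\Omega\times I\times J;X)\simeq L^q(\Omega\times I;L^q(J;X))$ for separable $X$ and $1<q<\infty$ is fine, but it is precisely the step the paper chose to verify explicitly, so if you want a self-contained write-up you should either cite a reference for it or reproduce the short Pettis argument.
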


\begin{proof}
Let $W=U-\nabla u\in [L^q(\Omega\times I\times \square\times J)]^N \simeq [L^q(\Omega\times I \times J;L^q(\square))]^N$ be the function defined in the proof of Theorem \ref{gradientcpt}.
Since $W(x,t,\cdot, s)$ belongs to the closed subspace $\mathbb{V}^{q}_{\rm pot}(\square)$ of $[L^q(\square)]^N$ for a.e.~$(x,t,\cdot,s)\in \Omega\times I\times J$,   
we see that $W\in L^q(\Omega\times I\times  J;\mathbb{V}^{q}_{\rm pot}(\square))$. 
Hence 
for each $n\in\N$ 
there exist a family of finite disjoint measurable sets $(A_j^n)_{j=1,2,\ldots,m_n}$ in $\Omega\times  I\times J$ and a family $(C_j^n)_{j=1,2,\ldots,m_n}$ in $\mathbb{V}^{q}_{\rm pot}(\square)$ for some $m_n\in \N$ such that 
\begin{equation}\label{w_n}
W_{n}(x,t,\cdot,s):=\sum_{j=1}^{m_n}C_j^n(\cdot)\chi_{A_j^n}(x,t,s)\to W(x,t,\cdot,s)\quad\text{ in }\ \mathbb{V}^{q}_{\rm pot}(\square)
\end{equation}
a.e.~in $\Omega\times I\times J$ as $n\to+\infty$ and 
\begin{equation}\label{202003101}
 \lim_{n\to \infty}\int_0^1\int_0^T\int_{\Omega}\|W_n(x,t,\cdot,s)-W(x,t,\cdot,s)\|_{\mathbb{V}^{q}_{\rm pot}(\square)}^q\, dxdtds= 0
\end{equation}
(see~\cite[p21]{barbu}). Moreover, recalling that $C_j^n\in \mathbb{V}^{q}_{\rm pot}(\square)$, we can define $D_j^n\in W^{1,q}_{\rm per}(\square)/\R$ satisfying $\nabla_y D_j^n= C_j^n$ a.e.~in $\square$. Furthermore, we put $z_n(x,t,y,s):=\sum_{j=1}^{m_n}D_j^n(y)\chi_{A_j^n}(x,t,s)$. Using these functions, we shall prove that $(x,t) \mapsto z(x,t,\cdot,\cdot)$ is strongly measurable over $\Omega \times I$ with values in $L^q(J;W^{1,q}_{\rm per}(\square) / \R)$.

To this end, we first prove that $z_n(x,t,\cdot,\cdot)$ and $z(x,t,\cdot,\cdot)$ belong to $L^{q}(J;W^{1,q}_{\rm per}(\square)/\R)$ for a.e.~$(x,t)\in \Omega\times I$.
Using Poincar\'e-Wirtinger's inequality along with \eqref{w_n} and recalling by \eqref{zexist} that $W(x,t,\cdot,s)=\nabla_yz(x,t,\cdot,s)$  for a.e.~$(x,t,s)\in\Omega\times I\times J$,  we have
\begin{align}
 \|z(x,t,\cdot,s)-z_n(x,t,\cdot,s)\|_{W^{1,q}_{\mathrm{per}}(\square)/\R}
 &\le C\|\nabla_y z(x,t,\cdot,s)-\nabla_y z_n(x,t,\cdot,s)\|_{L^{q}(\square)}\label{mstep1}\\
 &= C\|W(x,t,\cdot,s)-W_n(x,t,\cdot,s)\|_{L^{q}(\square)}  \to 0\nonumber
\end{align}
a.e.~in $\Omega\times I\times J$ as $n\to+\infty$.
Hence the limit $z$ of $(z_n)$ is a $W^{1,q}_{\rm per}(\square)/\R$-valued strongly measurable function in $\Omega\times I\times J$, since so is $z_n:\Omega\times I\times J\to W^{1,q}_{\rm per}(\square)/\R$. Accordingly, we further see by Fubini's lemma that $\chi_{A_j^n}(x,t,\cdot)$ is measurable in $J$ for a.e.~$(x,t) \in \Omega \times I$. Thus the function $s \mapsto z_n(x,t,\cdot,s)$ is $W^{1,q}_{\rm per}(\square)/\R$-valued strongly measurable in $J$ for a.e.~$(x,t)\in\Omega\times I$. Thanks to \eqref{mstep1}, $z(x,t,\cdot,\cdot):J\to W^{1,q}_{\rm per}(\square)/\R$ turns out to be strongly measurable for a.e.~$(x,t)\in\Omega\times I$. Hence using Fubini's theorem again, we obtain
\begin{align*}
\int_0^1\|z_n(x,t,\cdot,s)\|_{W^{1,q}_{\mathrm{per}}(\square)/\R}^q\, ds
&\le C\int_0^1 \|\nabla_y z_n(x,t,\cdot,s)\|_{L^{q}(\square)}^q\, ds\\
&= C\int_0^1\|W_n(x,t,\cdot,s)\|_{L^{q}(\square)}^q\, ds\\
&\le C(m_n) \sum_{j=1}^{m_n}\|C_j^n\|_{L^{q}(\square)}^q\int_0^1| \chi_{A_j^n}(x,t,s)|^q\, ds <+\infty
\end{align*}
and
\begin{align*}
\int_0^1 \|z(x,t,\cdot,s)\|_{W^{1,q}_{\mathrm{per}}(\square)/\R}^q\, ds
&\le C\int_0^1 \|\nabla_y z(x,t,\cdot,s)\|_{L^{q}(\square)}^q\, ds\\
&= C\int_0^1 \|W(x,t,\cdot,s)\|_{L^{q}(\square)}^q\, ds<+\infty
\end{align*}
for $(x,t)\in\Omega\times I$. We next show that the functions $(x,t)\mapsto$$z_n(x,t,\cdot,\cdot)$, $z(x,t,\cdot,\cdot)$ are $L^{q}(J;W^{1,q}_{\rm per}(\square)/\R)$-valued strongly measurable functions in $\Omega\times I$. 
Let $A$ be any measurable set in $\Omega\times I\times J$.
By Fubini's lemma, for any $v\in L^{q'}(J)$, the function
$$
(x,t)\mapsto \int_0^1v(s)\chi_{A}(x,t,s)\, ds
$$
is Lebesgue measurable in $\Omega\times I$.
Hence $(x,t)\mapsto \chi_{A}(x,t,\cdot)$ is an $L^{q}(J)$-valued weakly measurable function in $\Omega\times I$.
Since $L^q(J)$ is separable, Pettis's theorem ensures that $\chi_{A}(x,t,\cdot)$ is an $L^{q}(J)$-valued strongly measurable function in $\Omega\times I$.
Therefore, $z_n=\sum_{j=1}^{m_n}D_j^n\chi_{A_j^n}$ turns out to be an $L^{q}(J;W^{1,q}_{\rm per}(\square)/\R)$-valued strongly measurable function in $\Omega\times I$.
Furthermore, Poincar\'e-Wirtinger's inequality and \eqref{202003101} yield, up to (not relabeled) subsequence,   
\begin{align*}
    \|z(x,t,\cdot,\cdot)-z_n(x,t,\cdot,\cdot)\|_{L^{q}(J;W^{1,q}_{\rm per}(\square)/\R)}
    &\le C\|\nabla_y z(x,t,\cdot,\cdot)-\nabla_y z_n(x,t,\cdot,\cdot)\|_{L^{q}(J;L^{q}(\square))}\\
    &= C\|W(x,t,\cdot,\cdot)-W_n(x,t,\cdot,\cdot)\|_{L^{q}(\square\times J)}
    \to 0
\end{align*}
a.e.~in $\Omega\times I$ as $n\to+\infty$.
Hence the limit $z$ of $(z_n)$ is also $L^{q}(J;W^{1,q}_{\rm per}(\square)/\R)$-valued strongly measurable in $\Omega\times I$. \\
\quad
Finally, we prove that $z$ belongs to $L^{q}(\Omega\times I;L^{q}(J;W^{1,q}_{\rm per}(\square)/\R))$. Employing Poincar\'e-Wirtinger's inequality again, we obtain
\begin{align*}
\int_0^T\int_{\Omega}
\|z(x,t)\|_{L^{q}(J;W^{1,q}_{\mathrm{per}}(\square)/\R))}^q\, dxdt
&\le C
\int_0^T\int_{\Omega}
\|\nabla_yz(x,t)\|_{L^{q}(J;L^{q}(\square)}^q\, dxdt\\
&=
C\|W\|_{L^q(\Omega\times I\times \square\times J)}^q
<+\infty ,
\end{align*}
which completes the proof.
\end{proof}

In order to identify homogenized matrices, we shall employ the notion of \emph{very weak} two-scale convergence defined by \eqref{vwconv} below, 
for which the test function $b(y)$ is in particular chosen in such a way as to have zero mean, i.e., $\langle b(y)\rangle_y=\int_{\square}b(y)\, dy =0$.  The following corollary is a modified version of~\cite[Corollary 3.3]{hol} and is also better suited for later analysis. 

\begin{cor}[Very weak two-scale convergence, cf.~\cite{flo1,flo2,flo3,flo4}]\label{veryweak}
In addition to the same assumptions as in Theorem \ref{gradientcpt}.
Suppose that $u=u(x,t)$ is independent of $(y,s)$. 
Then it holds that
\begin{align}
\lim_{\e_n\to 0_+} &\int_{0}^{T}\int_{\Omega}\frac{u_{\e_n}(x,t)-u(x,t)}{\e_n} \phi(x)b\left(\tfrac{x}{\e_n}\right)\psi(t)c\left(\tfrac{t}{\e_n^{r}}\right)\, dxdt
\label{vwconv}\\
 &= \vierint z(x,t,y,s) \phi(x)b(y)\psi(t)c(t)\, dZ\nonumber
\end{align}
for any $\phi\in C^{\infty}_c(\Omega)$, $b\in C^{\infty}_{\mathrm{per}}(\square)/\R$ {\rm (}i.e., $\langle b \rangle_y = 0${\rm )}, $\psi\in C^{\infty}_c(I)$ and $c\in C^{\infty}_{\mathrm{per}}(J)$.
\end{cor}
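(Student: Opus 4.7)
The plan is to exploit the zero-mean condition $\langle b\rangle_y=0$ in order to absorb the singular factor $1/\e_n$ into an $x$-divergence, following the classical divergence-trick for very weak two-scale convergence (adapted here to the space-time setting). Since $b\in C^\infty_{\rm per}(\square)$ has mean zero, standard elliptic theory on the torus supplies a smooth $\square$-periodic $B$ solving $\Delta_y B=b$ in $\T^N$; setting $V:=\nabla_y B\in[C^\infty_{\rm per}(\square)]^N$ gives $\mathrm{div}_y V=b$ together with $\langle V\rangle_y=0$ (the latter by periodicity of $B$). A chain-rule computation then yields the key identity
\[
b\!\left(\tfrac{x}{\e_n}\right)=\e_n\,\mathrm{div}_x\!\left[V\!\left(\tfrac{x}{\e_n}\right)\right],
\]
which precisely compensates the factor $1/\e_n$ on the left-hand side of \eqref{vwconv}.

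Substituting this identity and integrating by parts in $x$ (boundary terms vanish since $\phi\in C^\infty_{\rm c}(\Omega)$), the left-hand side of \eqref{vwconv} becomes
\begin{align*}
&-\int_0^T\!\!\int_\Omega \nabla(u_{\e_n}-u)\cdot V\!\left(\tfrac{x}{\e_n}\right)\phi(x)\psi(t)c\!\left(\tfrac{t}{\e_n^{r}}\right)dxdt\\
&\quad-\int_0^T\!\!\int_\Omega(u_{\e_n}-u)\,V\!\left(\tfrac{x}{\e_n}\right)\cdot\nabla\phi(x)\,\psi(t)c\!\left(\tfrac{t}{\e_n^{r}}\right)dxdt.
\end{align*}
For the first term I split $\nabla(u_{\e_n}-u)=\nabla u_{\e_n}-\nabla u$ and apply Theorem \ref{gradientcpt} with the admissible product test function $V(y)\phi(x)\psi(t)c(s)$ to the $\nabla u_{\e_n}$ contribution, while handling the companion integral containing $\nabla u$ directly by the mean-value property (Proposition \ref{mean}). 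The $\nabla u$-part vanishes in both cases thanks to $\langle V\rangle_y=0$ combined with the $(y,s)$-independence of $\nabla u$, leaving
\[
-\vierint \nabla_y z(x,t,y,s)\cdot V(y)\,\phi(x)\psi(t)c(s)\,dZ.
\]
A $y$-integration by parts (valid because $z(x,t,\cdot,s)\in W^{1,q}_{\rm per}(\square)/\R$ and $V$ is smooth and $\square$-periodic) turns this into
\[
\vierint z(x,t,y,s)\,\mathrm{div}_y V(y)\,\phi(x)\psi(t)c(s)\,dZ=\vierint z(x,t,y,s)\,b(y)\,\phi(x)\psi(t)c(s)\,dZ,
\]
which is precisely the right-hand side of \eqref{vwconv}. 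For the second term above, the weak two-scale convergence $u_{\e_n}\wtts u$ tested against $V(y)\cdot\nabla\phi(x)\psi(t)c(s)$ yields a limit proportional to $\langle V\rangle_y=0$, and Proposition \ref{mean} similarly annihilates the $u$-piece; hence that term contributes nothing in the limit.

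The main subtlety is technical rather than conceptual: one must verify that the product test functions $V(y)\phi(x)\psi(t)c(s)$ and $V(y)\cdot\nabla\phi(x)\psi(t)c(s)$ qualify as admissible in the sense of Definition \ref{admissible} (covered by Remark \ref{rem2.5}(ii)) and rigorously justify the $y$-integration by parts for the generic $z\in L^q(\Omega\times I;L^q(J;W^{1,q}_{\rm per}(\square)/\R))$ supplied by Theorem \ref{gradientcpt}, which is done by density from smooth $\square$-periodic approximants. Notably, the entire argument bypasses any control on the time derivative $\partial_t u_{\e_n}$, consistent with the hypothesis-free flavour of Theorem \ref{gradientcpt}.
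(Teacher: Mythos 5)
Your proof is correct and takes essentially the same approach as the paper: both introduce $B=\nabla_y w$ with $\Delta_y w=b$ (so $\dv_y B=b$), trade the factor $1/\e_n$ for an integration by parts in $x$ against $\phi\,B(\tfrac{x}{\e_n})$, pass to the limit via Theorem \ref{gradientcpt} and Proposition \ref{mean} (the term with $\nabla\phi\cdot B$ dropping out), and conclude with a $y$-integration by parts converting $-\nabla_y z\cdot B$ into $z\,b$. The paper simply reads the same computation in the reverse direction, starting from the two-scale limit of $\nabla(u_{\e_n}-u)$ rather than from the left-hand side of \eqref{vwconv}, so there is no substantive difference.
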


\begin{proof}
We first note that, for any test function $b\in C^{\infty}_{\rm per}(\square)/\R$, there exists a unique solution $w\in C^{\infty}_{\mathrm{per}}(\square)/\R$ to 
\begin{equation*}\label{divdiv}
\Delta_y w(y)=b(y) \ \text{ in }\ \square.
\end{equation*}
In what follows, we put $B:=\nabla_y w\in [C^{\infty}_{\rm per}(\square)/\R]^N$ (hence, $\dv_yB(y)=b(y)$). 
Then since $u=u(x,t)$ is independent of $(y,s)$, Theorem \ref{gradientcpt} and Proposition \ref{mean} yield
\begin{align*}
\lefteqn{
\vierint \nabla_y z(x,t,y,s)\cdot\phi(x)B(y)\psi(t)c(s)\, dZ}\\
&=\vierint \left(\nabla u(x,t)+\nabla_y z(x,t,y,s)-\nabla u(x,t) \right)\cdot\phi(x)B(y)\psi(t)c(s)\, dZ\nonumber\\
&=
\lim_{\e_n\to 0_+}\int_0^T\int_{\Omega}\nabla (u_{\e_n}-u)\cdot\phi(x)B\left(\tfrac{x}{\e_n}\right)\psi(t)c\left(\tfrac{t}{\e_n^r}\right)\, dxdt\nonumber\\
&=
-\lim_{\e_n\to 0_+}\int_0^T\int_{\Omega}(u_{\e_n}-u)\nabla\phi(x)\cdot B\left(\tfrac{x}{\e_n}\right)\psi(t)c\left(\tfrac{t}{\e_n^r}\right)\, dxdt\nonumber\\
&\quad-\lim_{\e_n\to 0_+} \int_0^T\int_{\Omega} \frac{u_{\e_n}-u}{\e_n} \phi(x)b\left(\tfrac{x}{\e_n}\right)\psi(t)c\left(\tfrac{t}{\e_n^r}\right)\, dxdt=:I_1+I_2.
\nonumber
\end{align*}
Then we claim that $I_1=0$.
Indeed, we can derive from the assumption of Theorem \ref{gradientcpt} (in particular, $u_{\e}\wtts u$ in $L^q(\Omega\times I\times \square\times J)$) and Proposition \ref{mean} that
\begin{align*}
\lefteqn{\lim_{\e_n\to 0_+}\int_0^T\int_{\Omega}(u_{\e_n}-u)\nabla\phi(x)\cdot B\left(\tfrac{x}{\e_n}\right)\psi(t)c\left(\tfrac{t}{\e_n^r}\right)\, dxdt}\\
 &=
   \lim_{\e_n\to 0_+}\int_0^T\int_{\Omega} u_{\e_n}(x,t)\nabla\phi(x)\cdot B\left(\tfrac{x}{\e_n}\right)\psi(t)c\left(\tfrac{t}{\e_n^r}\right)\, dxdt\\
 &\quad -
   \lim_{\e_n\to 0_+}\int_0^T\int_{\Omega}u(x,t)\nabla\phi(x)\cdot B\left(\tfrac{x}{\e_n}\right)\psi(t)c\left(\tfrac{t}{\e_n^r}\right)\, dxdt\\
 &=
   \int_0^T\int_{\Omega}u(x,t)\nabla\phi(x)\cdot \langle B(y)\rangle_y\psi(t)\langle c( s)\rangle_s\,  dxdt\\
 &\quad -
   \int_0^T\int_{\Omega}u(x,t)\nabla\phi(x)\cdot \langle B(y)\rangle_y\psi(t)\langle c(s)\rangle_s\, dxdt =0.
\end{align*}
Thus we conclude that
\begin{align*}
\lefteqn{\lim_{\e_n\to 0_+}\int_0^T\int_{\Omega}\frac{u_{ \e_n  }-u}{\e_n}\phi(x)b\left(\tfrac{x}{\e_n}\right)\psi(t)c\left(\tfrac{t}{\e_n^r}\right)\, dxdt}\\
 &= \vierint -\nabla_y z(x,t,y,s)\cdot\phi(x)B(y)\psi(t)c(s)\, dZ\\
 &= \vierint z(x,t,y,s)\phi(x)b(y)\psi(t)c(s)\, dZ,
\end{align*}
which completes the proof.
\end{proof}

\subsection{Maximal monotone operator}
In this subsection, we recall the notion of {\it maximal monotone operators} (see, e.g.,~\cite{barbu} for more details). 

\begin{defi}[Monotone operator]
Let $X$ and $X^{\ast}$ be a Banach space and its dual space.
A set-valued operator $A:X\to 2^{X^{\ast}}$ is said to be \emph{monotone}, if it holds that
\[
 \langle u-v,\xi-\eta\rangle_{X^{\ast}}\ge0\quad \text{ for all }\ [u,\xi],\ [v,\eta]\in G(A),
\]
where $G(A)$ denotes the graph of $A$, i.e., $G(A)=\{[u,\xi]\in X\times X^{\ast}\colon\xi\in Au\}$.
\end{defi}

\begin{defi}[Maximal monotone operator]
A monotone operator $A:X\to 2^{X^{\ast}}$ is said to be \emph{maximal monotone}, if the graph $G(A)$ of $A$ is maximal among monotone operators, i.e., any monotone operator $B:X\to 2^{X^{\ast}}$
whose graph $G(B)$ involves $G(A)$ coincides with $A$.
\end{defi}

The following proposition is often used for  identifying  weak limits of nonlinear terms.
\begin{prop}[Minty's trick]\label{trick}
 Let $X$ be a reflexive Banach space and let $A:X\to 2^{X^{\ast}}$ be a maximal monotone operator.
Suppose that $[u_n,v_n]\in G(A)$ satisfies
$$
u_n\to u\ \text{ weakly in } X,\quad  v_n\to v\ \text{ weakly in } X^{\ast}
$$
and 
$$
 \limsup_{n\to\infty}\ \langle v_n,u_n\rangle_{X}\le \langle v,u\rangle_{X}.
$$
 Then $[u,v]\in G(A)$. Furthermore, it holds that
$$
   \langle v_n,u_n\rangle_{X}\to \langle v,u\rangle_{X}.
$$
 \end{prop}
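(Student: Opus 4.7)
The plan is to run the standard monotonicity test against an arbitrary graph element and then exploit maximality. I would first fix $[w,\eta] \in G(A)$ and use the monotonicity of $A$ to write
$$
\langle v_n - \eta, u_n - w\rangle_X \ge 0
$$
for every $n$. Expanding the pairing as $\langle v_n,u_n\rangle_X - \langle v_n,w\rangle_X - \langle \eta, u_n\rangle_X + \langle \eta, w\rangle_X \ge 0$, I would pass to the $\limsup$ as $n\to\infty$. The weak convergences $u_n \to u$ in $X$ and $v_n\to v$ in $X^*$ handle the three linear-in-$n$ terms, while the limsup hypothesis controls the bilinear term. This produces
$$
\langle v-\eta, u-w\rangle_X \ge 0 \quad \text{for every } [w,\eta]\in G(A).
$$

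Next I would invoke maximality. Define a new graph $G(A) \cup \{[u,v]\}$; the inequality just derived shows that this extension is still monotone, so maximality of $A$ forces $[u,v]\in G(A)$. This establishes the first assertion.

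For the convergence of the duality pairings, the key observation is that we may now legitimately insert $[w,\eta]=[u,v]$ into the monotonicity inequality. This gives
$$
\langle v_n, u_n\rangle_X \ge \langle v_n, u\rangle_X + \langle v, u_n\rangle_X - \langle v,u\rangle_X,
$$
and taking $\liminf$ on both sides together with the two weak convergences yields $\liminf_{n\to\infty}\langle v_n,u_n\rangle_X \ge \langle v,u\rangle_X$. Combined with the standing hypothesis $\limsup_{n\to\infty}\langle v_n,u_n\rangle_X \le \langle v,u\rangle_X$, this forces $\langle v_n,u_n\rangle_X \to \langle v,u\rangle_X$.

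I do not expect any genuine obstacle here: reflexivity of $X$ is used implicitly only to make the weak topologies behave well (so that the duality pairing is continuous against the weakly convergent factor), which is the only analytic content beyond algebraic manipulation of the monotonicity inequality. The subtlest point to state carefully is the appeal to maximality in the form ``no strictly larger monotone graph exists,'' since one must check that the proposed extension $G(A)\cup\{[u,v]\}$ is indeed monotone, but this is immediate from the inequality $\langle v-\eta,u-w\rangle_X \ge 0$ holding for all $[w,\eta]\in G(A)$.
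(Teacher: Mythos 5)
Your argument is correct and is exactly the standard Minty argument: monotonicity against an arbitrary $[w,\eta]\in G(A)$, passage to the limit using the $\limsup$ hypothesis (legitimate since the three remaining terms converge), maximality in the form ``no proper monotone extension,'' and then re-inserting $[u,v]$ to get the $\liminf$ bound and hence convergence of the pairings. The paper states this proposition without proof (it is quoted as standard monotone-operator theory, cf.~the cited book of Barbu), and your proof coincides with the canonical one, so there is nothing to flag.
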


Finally, let us recall the notion of 
{\it subdifferentials} for convex functionals. 

\begin{defi}[Subdifferential operator]
Let $X$ and $X^{\ast}$ be a Banach space and its dual space,  respectively.  Let $\phi: X\to (-\infty,+\infty]$ be a proper {\rm(}i.e., $D(\phi)\neq\emptyset${\rm)} lower semicontinuous and convex functional with \emph{effective domain} 
\[
  D(\phi):=\{u \in X\colon  \phi(u)<+\infty\}.
\]
The \emph{subdifferential} operator $\partial\phi:X\to 2^{X^{\ast}}$ of $\phi$ is defined by
\[
  \partial\phi(u)=\{ \xi\in X^{\ast}\colon \phi(v)-\phi(u)\ge \langle \xi ,v-u\rangle_X\ \text{ for all } v\in D(\phi)\}
\]
with domain $D(\partial\phi):=\{u\in D(\phi)\colon  \partial\phi(u)\neq \emptyset\}$.
\end{defi}

Subdifferential operators form a subclass of maximal monotone operators.

\begin{thm}[Minty]\label{minty}
Every subdifferential operator is maximal monotone.
\end{thm}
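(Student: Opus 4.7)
\textbf{Proof proposal for Theorem \ref{minty}.} Monotonicity is essentially immediate from the definition: for $[u,\xi], [v,\eta] \in G(\partial\phi)$, applying the subdifferential inequality in both directions and adding gives $\langle \xi-\eta, u-v\rangle_X\ge 0$. The real content is thus the maximality, for which I would follow Rockafellar's strategy: use the characterization that a monotone operator $A$ on a reflexive Banach space is maximal monotone if and only if $R(A+J)=X^\ast$, where $J:X\to X^\ast$ is the duality mapping (renorming $X$, if necessary, so that both $X$ and $X^\ast$ are strictly convex, which makes $J$ single-valued, norm-to-weak continuous and strictly monotone, without affecting topology). The task is therefore to show, for arbitrary $f\in X^\ast$, the existence of $u^\ast\in D(\partial\phi)$ with $f-J(u^\ast)\in\partial\phi(u^\ast)$.

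To produce such $u^\ast$, I would solve the variational problem
\[
\min_{u\in X}\Psi(u),\qquad \Psi(u):=\phi(u)+\tfrac{1}{2}\|u\|_X^2-\langle f,u\rangle_X.
\]
Since $\phi$ is proper, convex and lower semicontinuous, it admits a continuous affine minorant, which makes $\Psi$ coercive; combined with reflexivity of $X$ and weak lower semicontinuity of $\Psi$ (a consequence of convexity plus lower semicontinuity), the direct method yields a minimizer $u^\ast\in D(\phi)$. Writing the minimality condition as $0\in\partial\Psi(u^\ast)$ and applying the Moreau--Rockafellar sum rule, one obtains $0\in\partial\phi(u^\ast)+J(u^\ast)-f$, so $\xi^\ast:=f-J(u^\ast)\in\partial\phi(u^\ast)$.

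To close the argument, I would verify that $R(\partial\phi+J)=X^\ast$ really does imply maximality: given $[v,\eta]\in X\times X^\ast$ satisfying the monotonicity inequality $\langle\xi-\eta,u-v\rangle_X\ge 0$ against every $[u,\xi]\in G(\partial\phi)$, pick $f:=\eta+J(v)$, solve $\xi^\ast+J(u^\ast)=f$ as above, and test the inequality with $[u^\ast,\xi^\ast]$; rearranging gives $\langle J(u^\ast)-J(v),v-u^\ast\rangle_X\ge 0$, and strict monotonicity of $J$ forces $u^\ast=v$ and then $\xi^\ast=\eta$, so $[v,\eta]\in G(\partial\phi)$.

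The main obstacle is the sum-rule step $\partial(\phi+\tfrac12\|\cdot\|_X^2-\langle f,\cdot\rangle_X)=\partial\phi+J-f$, which in general requires a qualification condition. Here the saving grace is that $\tfrac12\|\cdot\|_X^2-\langle f,\cdot\rangle_X$ is finite and continuous on all of $X$, so the Moreau--Rockafellar theorem applies without any extra hypothesis on $\phi$. A secondary minor point is the renorming issue for making $J$ single-valued; this is handled by invoking the Asplund/Troyanski renorming theorem, which is applicable because $X$ is reflexive.
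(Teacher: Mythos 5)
The paper states this theorem as a classical fact and gives no proof of its own (it points to Barbu's monograph for the theory of maximal monotone operators), so there is nothing internal to compare against; judged on its own terms, your argument is the standard Rockafellar proof in the reflexive setting and it is sound. Monotonicity by adding the two subgradient inequalities is correct; the reduction of maximality to $R(\partial\phi+J)=X^{\ast}$ after a Troyanski renorming is the right tool; the surjectivity step via the direct method (coercivity of $\Psi$ from an affine minorant of $\phi$, weak lower semicontinuity from convexity plus lower semicontinuity, reflexivity for weak compactness) together with the Moreau--Rockafellar sum rule (applicable because $\tfrac12\|\cdot\|_X^2-\langle f,\cdot\rangle_X$ is everywhere finite and continuous, and $\partial(\tfrac12\|\cdot\|_X^2)=J$) is complete; and your closing computation $\langle J(u^{\ast})-J(v),v-u^{\ast}\rangle_X\ge 0$ combined with strict monotonicity of $J$ correctly forces $[v,\eta]\in G(\partial\phi)$. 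The one caveat worth recording is that the theorem as stated in the paper is for an arbitrary Banach space $X$, whereas your proof covers only reflexive $X$; the general case is genuinely harder and requires Rockafellar's argument based on the Br{\o}ndsted--Rockafellar approximation theorem rather than the surjectivity criterion. Since every application in this paper takes place in $L^{(p+1)/p}(\Omega)$, and Proposition \ref{trick} is itself stated only for reflexive spaces, this restriction is harmless here, but you should state it explicitly as a hypothesis if you present the proof in this form.
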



\section{Well-posedness}\label{S:wp}

This section is devoted to proving Theorem \ref{T:wp}. To this end, let us first set $v_{\e}:=|u_{\e}|^{p-1}u_{\e}$ (equivalently, $u_{\e}=|v_{\e}|^{(1-p)/p}v_{\e}$) and rewrite \eqref{eq} as
\begin{equation}\label{anal}
 \left\{
   \begin{aligned}
    \partial_tv_{\e}(x,t)^{1/p}&=\dv\left( a\left(\tfrac{x}{\e},\tfrac{t}{\e^r}\right)\nabla v_{\e}(x,t) \right)+f_{\e}(x,t),   &&(x,t)\in \Omega\times I, \\
    v_{\e}(x,t)&=0,                        &&(x,t)\in \partial\Omega\times I , \\
    v_{\e}(x,0)^{1/p}&=u^0(x),                      &&x\in \Omega, 
   \end{aligned}
 \right.
\end{equation}
where $f_{\e}:I\to H^{-1}(\Omega)$ and $u^0\in H^{-1}(\Omega)$ are given data. 
Here and henceforth, we simply write $v_{\e}^{1/p}$ instead of $|v_{\e}|^{(1-p)/p}v_{\e}$, unless any confusion may arise. According to Definition \ref{D:sol}, it suffices to prove the well-posedness for \eqref{anal} in the following sense:
\begin{defi}[Weak solution of \eqref{anal}]
A function $v_{\e} = v_\e(x,t) : \Omega \times I \to \R$ is called a {\rm({\emph weak})} {\emph solution} to \eqref{anal}, if the following conditions are all satisfied\/{\rm:}
\begin{itemize}
 \item[(i)] $v_\e\in L^{2}(I;H^{1}_0(\Omega)) \cap L^{(p+1)/p}(\Omega\times I)$, $v_{\e}^{1/p}\in W^{1,2}(I;H^{-1}(\Omega))$ and $v_{\e}^{1/p}(t,0)\to u^0$ strongly in $H^{-1}(\Omega)$ as $t\to 0_+$, 
 \item[(ii)] it holds that,  for all $w\in H^1_0(\Omega)$,
  \begin{equation*}
    \left\langle \partial_t v_{\e}^{1/p}(t),w\right\rangle_{H^1_0(\Omega)}+B^{t}(v_{\e}(t),w) =\langle f_{\e}(t),w\rangle_{H^{1}_0(\Omega)} \quad \text{ for a.e.~} t\in I,
    \end{equation*}
    where $B^{t}(\cdot,\cdot)$ is a coercive continuous bilinear form in $H^1_0(\Omega)$ defined by
    \begin{equation}\label{bilinearform}
     B^{t}(v,w)=\int_{\Omega}a\left(\tfrac{x}{\e},\tfrac{t}{\e^r}\right)\nabla v\cdot \nabla w\ dx\quad \text{ for }\ v,w\in H^1_0(\Omega).
    \end{equation}
\end{itemize}
\end{defi}

Then we shall prove the following theorem, which is equivalent to Theorem \ref{T:wp}\/{\rm:}
\begin{thm}[Well-posedness for \eqref{anal}]\label{well-posedness}
Let $0 < p,r,\e < +\infty$ and let $a=[a_{ij}]_{i,j=1,2,\ldots,N}$ be an $N \times N$ symmetric matrix field satisfying \eqref{ellip} as well as $(x,t) \mapsto a_{ij}(x,t)  \in W^{1,1}( I;L^{\infty}(\Omega))$ for $i,j =1,2,\ldots,N$. Then for any $f_{\e} \in W^{1,2}(I;H^{-1}(\Omega)) \cap L^1( I ; L^2(\Omega))$ and $u^0\in L^2(\Omega) \cap L^{p+1}(\Omega)$, the Cauchy-Dirichlet problem \eqref{anal} admits a unique weak solution $v_{\e} = v_{\e}(x,t) : \Omega \times I \to \R$ such that 
\begin{align*}
v_{\e}  &\in  L^{\infty}_{\mathrm{loc}}((0,T];H^{1}_0(\Omega)) \cap C(\overline{I};L^{(p+1)/p}(\Omega)),\\
v_{\e}^{1/p} &\in W^{1,\infty}_{\mathrm{loc}}((0,T];H^{-1}(\Omega)) \cap C_{\rm w}(\overline{I};L^2(\Omega)) \cap C(\overline{I};L^{p+1}(\Omega)).
\end{align*}
Furthermore, the weak solution continuously depends on the initial datum in the following sense\/{\rm :} let $u^{0,1}$, $u^{0,2}\in H^{-1}(\Omega)$ and let $v_1$, $v_2$ be weak solutions of \eqref{anal} for  the  initial data $u^{0,1}$, $u^{0,2}$, respectively. Then there exists a constant $C_T\ge 0$ depending on $T$ but independent of $t$, $u^{0,1}$ and $u^{0,2}$ such that  
\begin{align}\label{energy}
\sup_{t\in \overline{I}}\left\|v_{1}(t)^{1/p}-v_{2}(t)^{1/p}\right\|_{H^{-1}(\Omega)}^2  \le C_T\|u^{0,1}-u^{0,2}\|_{H^{-1}(\Omega)}^2.
\end{align}
\end{thm}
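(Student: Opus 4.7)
The plan is to handle existence and the contraction/uniqueness estimate separately, since the former follows from a direct application of gradient-flow machinery for time-dependent energies, while the latter is the main technical point of the theorem.

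For existence, I would recast \eqref{anal} as a doubly nonlinear evolution in $H^{-1}(\Omega)$. Writing $\beta(v) := |v|^{(1-p)/p}v$ (maximal monotone on $\R$) and, for each $t\in\overline{I}$,
$$
\phi_t(v) := \tfrac12 B^t(v,v) \quad \text{for } v \in H^1_0(\Omega),
$$
extended by $+\infty$ elsewhere in $H^{-1}(\Omega)$, problem \eqref{anal} becomes
$$
\partial_t \beta(v_\e) + \partial \phi_t(v_\e) \ni f_\e \ \text{ in } H^{-1}(\Omega), \qquad \beta(v_\e)|_{t=0}=u^0.
$$
The ellipticity \eqref{ellip} provides uniform coercivity of $\phi_t$ on $H^1_0(\Omega)$, and the assumption $a_{ij}(\cdot/\e,\cdot/\e^r) \in W^{1,1}(I;L^\infty(\Omega))$ gives the absolute continuity $|\phi_t(v)-\phi_s(v)| \le \|\nabla v\|_{L^2}^2\int_s^t\|\partial_\tau a(\cdot/\e,\tau/\e^r)\|_{L^\infty}\,d\tau$ required to invoke the gradient-flow theory for time-dependent convex energies in reflexive Banach spaces developed in~\cite{akagi}. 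The stated regularity of $v_\e$ and $v_\e^{1/p}$ (in particular $L^\infty_{\mathrm{loc}}((0,T];H^1_0(\Omega))$ and $W^{1,\infty}_{\mathrm{loc}}((0,T];H^{-1}(\Omega))$) is then extracted from the energy identity obtained by testing with $v_\e$, namely $\tfrac{d}{dt}\tfrac{1}{p+1}\int|u_\e|^{p+1}\,dx + B^t(v_\e,v_\e)=\langle f_\e,v_\e\rangle$, together with the identity produced by testing with $\partial_t v_\e$ after multiplication by a cutoff $\eta\in C^1(\overline{I})$ vanishing at $t=0$, which converts the lack of $H^1_0$-regularity of $u^0$ into interior-in-time estimates.

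For uniqueness and continuous dependence, let $v_1,v_2$ be weak solutions associated with data $u^{0,1},u^{0,2}$ and set
$$
U(t) := v_1(t)^{1/p} - v_2(t)^{1/p} \in W^{1,2}(I;H^{-1}(\Omega)), \qquad W(t) := v_1(t)-v_2(t) \in L^2(I;H^1_0(\Omega)).
$$
Subtraction yields $\partial_t U + A_t W = 0$ in $H^{-1}(\Omega)$, where $A_t w := -\dv(a(\cdot/\e,t/\e^r)\nabla w)$ is an isomorphism $H^1_0(\Omega) \to H^{-1}(\Omega)$ by \eqref{ellip}. Since $a$ is symmetric, $\Phi_t := A_t^{-1}$ induces a time-dependent equivalent inner product $(g,h)_{-1,a,t} := \langle g,\Phi_t h\rangle_{H^1_0(\Omega)}$ on $H^{-1}(\Omega)$, with $C^{-1}\|\cdot\|_{H^{-1}(\Omega)} \le \|\cdot\|_{-1,a,t} \le C\|\cdot\|_{H^{-1}(\Omega)}$ uniformly in $t$. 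Pairing the equation for $U$ with $\Phi_t U(t) \in H^1_0(\Omega)$ and using symmetry of $A_t$ gives
$$
\langle \partial_t U(t),\Phi_t U(t)\rangle_{H^1_0(\Omega)} + \langle W(t),U(t)\rangle_{H^1_0(\Omega)} = 0,
$$
while differentiating the norm (formally, to be justified as below) produces
$$
\tfrac{d}{dt}\|U(t)\|_{-1,a,t}^2 = 2\langle \partial_t U,\Phi_t U\rangle - \int_\Omega (\partial_t a)\bigl(\tfrac{x}{\e},\tfrac{t}{\e^r}\bigr)\nabla\Phi_tU \cdot \nabla\Phi_t U\,dx.
$$
The monotonicity of $\beta$ yields $\langle W,U\rangle = \int_\Omega(v_1-v_2)(v_1^{1/p}-v_2^{1/p})\,dx \ge 0$, and $\|\nabla\Phi_t U\|_{L^2(\Omega)}^2 \le \lambda^{-1}\|U\|_{-1,a,t}^2$ by \eqref{ellip}. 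Combining these,
$$
\tfrac{d}{dt}\|U(t)\|_{-1,a,t}^2 \le C\bigl\|(\partial_t a)\bigl(\tfrac{\cdot}{\e},\tfrac{t}{\e^r}\bigr)\bigr\|_{L^\infty(\Omega)}\|U(t)\|_{-1,a,t}^2,
$$
and the right-hand factor belongs to $L^1(I)$ by the $W^{1,1}$ hypothesis on $a$. Gronwall combined with the uniform norm equivalence then yields \eqref{energy}, and uniqueness is the case $u^{0,1}=u^{0,2}$.

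The main obstacle is rigorously justifying the chain rule for $t\mapsto\|U(t)\|_{-1,a,t}^2$ given only $U \in W^{1,2}(I;H^{-1}(\Omega))$ and the low time-regularity $a(\cdot/\e,\cdot/\e^r) \in W^{1,1}(I;L^\infty(\Omega))$. I would handle this by time-mollifying $U$ and $a$ at scale $\tau > 0$, deriving the identity for the smoothed quantities (where everything is genuinely differentiable in $t$), and passing $\tau\to 0_+$ using the a priori bounds $\partial_t U \in L^2(I;H^{-1}(\Omega))$, $\partial_t a(\cdot/\e,\cdot/\e^r) \in L^1(I;L^\infty(\Omega))$, together with dominated convergence for the lower-order term $\int_\Omega(\partial_t a)|\nabla\Phi_t U|^2\,dx$.
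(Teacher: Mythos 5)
Your proposal is correct and takes essentially the same route as the paper: existence is delegated to the theory of doubly nonlinear evolution equations with time-dependent subdifferentials in \cite{akagi}, and uniqueness/continuous dependence is obtained by testing the subtracted equation against $(A^{t})^{-1}\left(v_1^{1/p}-v_2^{1/p}\right)$, using the symmetry of $a$, the monotonicity of $v\mapsto v^{1/p}$, the ellipticity \eqref{ellip}, the $W^{1,1}(I;L^{\infty}(\Omega))$ control of $\partial_t a$, and Gronwall's inequality together with the uniform equivalence of the weighted $H^{-1}$ norms. Your closing remark on justifying the chain rule by time-mollification is a reasonable way to make rigorous the computation the paper carries out directly.
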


\begin{proof}
The existence of weak solutions for \eqref{anal} satisfying some energy inequalities have already been proved (see~\cite[Theorem 3.2]{akagi} and note that $R(\partial_H \psi) = L^2(\Omega) \cap L^{p+1}(\Omega)$ for our setting), and therefore, we shall only prove uniqueness and continuous dependence of weak solutions on initial data. For each $t\ge0$, define a bounded linear operator $A^{t}:H^1_0(\Omega)\to H^{-1}(\Omega)$ by 
$$
\left\langle A^{t} v, w\right\rangle_{H^{1}_0(\Omega)}
=
\int_{\Omega}a\left(\tfrac{x}{\e},\tfrac{t}{\e^r}\right)\nabla v(x)\cdot\nabla w(x)\, dx\quad \text{ for }\ v,w\in H^{1}_0(\Omega). 
$$
Then $A^{t}$ is bijective from $H^1_0(\Omega)$ to $H^{-1}(\Omega)$ for each $t\in \overline{I}$.
Indeed, since $a(y,s)$ is uniformly elliptic (see \eqref{ellip}), by the Lax-Milgram theorem, for each $t\in\overline{I}$ fixed and any $g\in H^{-1}(\Omega)$, 
there exists a unique solution $v_g \in H^{1}_0(\Omega)$ (which also depends on $t$) of
$$
\int_{\Omega}a\left(\tfrac{x}{\e},\tfrac{t}{\e^r}\right)\nabla v_g(x)\cdot\nabla w(x)\ dx
= \langle g, w\rangle_{H^{1}_0(\Omega)}\quad  \text{ for all }\ w\in H^1_0(\Omega).
$$
Therefore, $A^{t}:H^1_0(\Omega)\to H^{-1}(\Omega)$ is surjective. 
As for the injectivity, let $v_1, v_2\in H^1_0(\Omega)$ be such that
$\langle A^{t}v_1, w\rangle_{H^1_0(\Omega)}=\langle A^{t} v_2, w\rangle_{H^1_0(\Omega)}$
for all $w\in H^1_0(\Omega)$. Setting
 $w=v_1-v_2$ and using
 Poincar$\acute{\mathrm{e}}$'s inequality
 along with \eqref{ellip}, we find that
$$
\|v_1-v_2\|_{L^2(\Omega)}^2\le C\|\nabla(v_1-v_2)\|_{L^2(\Omega)}^2\stackrel{\eqref{ellip}}{\le}
\frac{C}{\lambda}\int_{\Omega}
a\left(\tfrac{x}{\e},\tfrac{t}{\e^r}\right)\nabla (v_1-v_2)\cdot\nabla (v_1-v_2)\ dx
=0, 
$$
which implies $ v_1 = v_2 $. Thus $A^{t}:H^1_0(\Omega)\to H^{-1}(\Omega)$ turns out to be injective.
Hence, for each $t\in \overline{I}$, one can define the inverse mapping $(A^{t})^{-1}:H^{-1}(\Omega)\to H^1_0(\Omega)$ of $A^{t}$. 
Furthermore, due to the {symmetry} of the matrix $a(y,s)$, we have
\begin{align}\label{adjoint1}
\langle A^{t} u, \left(A^{t}\right)^{-1}h \rangle_{H^1_0(\Omega)}
&=\int_{\Omega}a\left(\tfrac{x}{\e},\tfrac{t}{\e^r}\right)\nabla u\cdot\nabla(A^{t})^{-1}h\ dx\\
&=\int_{\Omega}\tenchi a\left(\tfrac{x}{\e},\tfrac{t}{\e^r}\right)\nabla(A^{t})^{-1}h\cdot\nabla u\ dx\nonumber\\
&= \langle A^{t}\left(A^{t}\right)^{-1}h,u \rangle_{H^1_0(\Omega)}\nonumber\\
&=\langle h,u\rangle_{H^1_0(\Omega)} \quad \text{ for } \ u\in H^1_{0}(\Omega),\ h\in H^{-1}(\Omega).\nonumber
\end{align}
Moreover, we claim that
\begin{equation}\label{equivnorm}
 \lambda \|\nabla (A^{t})^{-1}\cdot\|_{L^2(\Omega)}\le\|\cdot\|_{H^{-1}(\Omega)}  \le  \|\nabla (A^{t})^{-1}\cdot\|_{L^2(\Omega)}.
\end{equation}
Indeed, one can check the first inequality by noting that 
\begin{align*}
\lambda\|\nabla (A^{t})^{-1}h\|_{L^2(\Omega)}^2
 &\stackrel{\eqref{ellip}}{\le} \int_{\Omega} a(\tfrac{x}{\e},\tfrac{t}{\e^r})\nabla  (A^{t})^{-1} h\cdot \nabla  (A^{t})^{-1} h\, dx\\
 &=\langle A^{t}(A^{t})^{-1}h, (A^{t})^{-1}h\rangle_{H^1_0(\Omega)}\\
 &=\langle  h, (A^{t})^{-1}h\rangle_{H^1_0(\Omega)}\\
 &\le\|h\|_{H^{-1}(\Omega)}\|\nabla (A^{t})^{-1}h\|_{L^2(\Omega)}\quad \text{ for }\ h\in H^{-1}(\Omega).     
\end{align*}
On the other hand, the second inequality follows from the fact that
\begin{equation}\label{rayleigh}
a(y,s)\xi\cdot\zeta\le |\xi||\zeta|\quad \text{ for all }\ \xi,\zeta\in \R^N
\end{equation} 
(see \eqref{ellip} and Remark \ref{abdd}). Indeed, we have
\begin{align*}
\|h\|_{H^{-1}(\Omega)} &= \sup_{\scriptstyle v\in H^1_0(\Omega)\atop \scriptstyle \|v\|_{H^1_0(\Omega)}\le 1}\langle A^{t} (A^{t})^{-1}h, v\rangle_{H^1_0(\Omega)}
= \sup_{\scriptstyle v\in H^1_0(\Omega)\atop \scriptstyle \|v\|_{H^1_0(\Omega)}\le 1}
\int_{\Omega} a(\tfrac{x}{\e},\tfrac{t}{\e^r})\nabla  (A^t)^{-1}  h\cdot \nabla v\, dx\\
&\stackrel{\eqref{rayleigh}}{\le} \sup_{\scriptstyle v\in H^1_0(\Omega)\atop \scriptstyle \|v\|_{H^1_0(\Omega)}\le 1} \|\nabla  (A^t)^{-1}  h\|_{L^2(\Omega)}\|\nabla v\|_{L^2(\Omega)}
= \|\nabla  (A^t)^{-1}  h\|_{L^2(\Omega)}\quad \text{ for }\ h\in H^{-1}(\Omega).
\end{align*}
Thus \eqref{equivnorm} is proved.
Therefore, for each $t\in \overline{I}$, $A^{t}$ is an isomorphism between $H^1_0(\Omega)$ and $H^{-1}(\Omega)$.

Now, we are ready to prove \eqref{energy}. Let $v_1$, $v_2$ be weak solutions of \eqref{anal} for initial data $u^{0,1}$, $u^{0,2}$  (i.e., $v_j^{1/p}|_{t=0} = u^{0,j} \in H^{-1}(\Omega)$ for $j=1,2$),  respectively. Set $G:=(A^{t})^{-1}(v_1^{1/p}-v_2^{1/p})$, i.e., $v_1^{1/p}-v_2^{1/p}=A^{t}G$. We shall derive the following inequality\/:
\begin{equation}\label{groncondi}
\frac{d}{dt}\langle A^{t}G,G\rangle_{H^1_0(\Omega)}\le C\|\partial_sa(\tfrac\cdot\e,\tfrac{t}{\e^r})\|_{L^{\infty}(\Omega)}\langle A^{t}G,G\rangle_{H^1_0(\Omega)}\quad \text{ a.e.~in }\ I
\end{equation}
for some constant $C\ge 0$. Then applying Gronwall's inequality to \eqref{groncondi}, we have
\begin{align*}
\lefteqn{\lambda\int_{\Omega}\left|\nabla\left(A^{t}\right)^{-1}\left(v_1^{1/p}-v_2^{1/p}\right)(x,t)\right|^2\, dx}\\
 &\stackrel{\eqref{ellip}}{\le}
 \langle A^{t}G(t),G(t)\rangle_{H^1_0(\Omega)}\nonumber\\
 &\stackrel{\eqref{groncondi}}\le \langle  A^0 G(0),G(0)\rangle_{H^1_0(\Omega)}\exp\left(C\int_0^T\|\partial_sa(\tfrac\cdot\e,\tfrac{t}{\e^r})\|_{L^{\infty}(\Omega)}\, dt\right) \nonumber\\
 &\stackrel{\eqref{ellip}}{\le} \left( \int_{\Omega}\left|\nabla\left( A^0 \right)^{-1}\left(u^{0,1}-u^{0,2}\right)(x)\right|^2\, dx \right) \exp\left(C\int_0^T\|\partial_sa(\tfrac\cdot\e,\tfrac{t}{\e^r})\|_{L^{\infty}(\Omega)}\, dt\right)\nonumber
\end{align*}
for all $t\in\overline{I}$. Thus by \eqref{equivnorm}, we obtain \eqref{energy}. In particular, if $u^{0,1}=u^{0,2}$, it then follows that $v_1^{1/p}(t)=v_2^{1/p}(t)$ in $H^{-1}(\Omega)$ for all $t\in \overline{I}$. 

Hence, it remains to prove \eqref{groncondi}. By subtraction, we have 
$$
  \partial_t(A^{t} G)+A^{t}(v_1-v_2) = 0\ \text{ in } H^{-1}(\Omega),\quad t\in I. 
$$    
Testing it by $G$ and employing \eqref{adjoint1}, we obtain
\begin{align*}\label{pfgc1}
 0 &=  
    \left\langle\partial_t\left(A^{t}G\right), G\right\rangle_{H^1_0(\Omega)} + \left\langle A^{t}(v_1-v_2), G\right\rangle_{H^1_0(\Omega)} \\
  &=
    \left\langle\partial_t\left(A^{t}G\right), G\right\rangle_{H^1_0(\Omega)} + \left\langle v_1^{1/p}-v_2^{1/p},v_1-v_2\right\rangle_{H^1_0(\Omega)}.\nonumber
\end{align*}
Hence the monotonicity of $v\mapsto v^{1/p}$ yields
\begin{equation}\label{unistep1}
0\ge \left\langle
\partial_t\left(A^{t}G\right), G\right\rangle_{H^1_0(\Omega)}
= \frac{d}{dt}
\left\langle
A^{t}G, G
\right\rangle_{H^1_0(\Omega)}
- \left\langle
A^{t}G, \partial_tG
\right\rangle_{H^1_0(\Omega)},
\end{equation}
and moreover, the second term of the right-hand side in \eqref{unistep1} is rewritten as 
\begin{align}\label{3.19}
\lefteqn{\left\langle A^{t}G, \partial_tG\right\rangle_{H^1_0(\Omega)}}\\
 &= \int_{\Omega}a\left(\tfrac{x}{\e},\tfrac{t}{\e^r}\right)\nabla G\cdot\nabla\partial_t G\, dx \nonumber\\
 &= \frac{1}{2}\int_{\Omega} \left[ a\left(\tfrac{x}{\e},\tfrac{t}{\e^r}\right)\nabla G\cdot\nabla\partial_t G + a\left(\tfrac{x}{\e},\tfrac{t}{\e^r}\right)\nabla \partial_tG\cdot\nabla G\right]\, dx\nonumber\\
 &= \frac{1}{2}\int_{\Omega}\partial_t \left[ a\left(\tfrac{x}{\e},\tfrac{t}{\e^r}\right)\nabla G\cdot\nabla G\right]\, dx - 
   \frac{1}{2}\int_{\Omega}\partial_ta\left(\tfrac{x}{\e},\tfrac{t}{\e^r}\right)\nabla G\cdot\nabla G\, dx\nonumber\\
 &= \frac{1}{2}\frac{d}{dt}\langle A^{t}G,G\rangle_{H^1_0(\Omega)} - \frac{1}{2\e^r}\int_{\Omega}\partial_sa\left(\tfrac{x}{\e},\tfrac{t}{\e^r}\right)\nabla G\cdot\nabla G\, dx.\nonumber
\end{align}
Here we used the {symmetry} of the matrix $a(y,s)$. 
By combining \eqref{unistep1} with \eqref{3.19}, we can derive
\begin{align*}
\frac{1}{2}\frac{d}{dt}\langle A^{t}G,G\rangle_{H^1_0(\Omega)}
 &\le -\frac{1}{2\e^r}\int_{\Omega}\partial_s a\left(\tfrac{x}{\e},\tfrac{t}{\e^r}\right)\nabla G\cdot\nabla G\, dx\\
 &\le \frac{\|\partial_sa(\tfrac\cdot\e,\tfrac{t}{\e^r})\|_{L^{\infty}(\Omega)}}{2\e^r}\int_{\Omega}|\nabla G|^2\, dx\\
 &\stackrel{\eqref{ellip}}{\le} \frac{\|\partial_sa(\tfrac\cdot\e,\tfrac{t}{\e^r})\|_{L^{\infty}(\Omega)}}{2\lambda\e^r}\int_{\Omega}a\left(\tfrac{x}{\e},\tfrac{t}{\e^r}\right)\nabla G\cdot\nabla G\, dx\\
&= \frac{C}{2}\|\partial_sa(\tfrac\cdot\e,\tfrac{t}{\e^r})\|_{L^{\infty}(\Omega)}\langle A^{t}G,G\rangle_{H^1_0(\Omega)},
\end{align*}
where $C:=1/(\lambda\e^r)$. Thus \eqref{groncondi} follows.
\end{proof}

\begin{rmk}[Uniform ellipticity of $a(y,s)$]\label{abdd}\rm
Note that \eqref{rayleigh} is shown by the Rayleigh-Ritz  variational  principle. Indeed, $a(y,s)$ can be represented by $a(y,s)=Q(y,s)^{-1}\Lambda Q(y,s)$ for some unitary matrix $Q(y,s)\in\R^{N\times N}$ and a diagonal matrix $\Lambda(y,s) \in \R^{N\times N}$ whose diagonal components  consist of  all the eigenvalues $\{\lambda_i(y,s)\}_{i=1,2,\ldots,N}$ of $a(y,s)$. Then the Rayleigh-Ritz  variational  principle along with \eqref{ellip} yields $\max_{1\le i\le N}\lambda_i(y,s)\leq 1$. Hence we obtain
\begin{align*}
a{}\xi\cdot\zeta
=Q{}^{-1}\Lambda{} Q{}\xi\cdot\zeta
=\Lambda{} Q{}\xi\cdot Q{} \zeta\le\max_{1\le i\le N}|\lambda_i{}||\xi||\zeta|\le |\xi||\zeta|
\quad \text{ for all }\ \xi,\zeta\in \R^N,
\end{align*}
which implies \eqref{rayleigh}.
\end{rmk}


\section{Uniform estimates and convergence}\label{S:est}

In this section, we shall derive uniform estimates for  $(v_{\e})$ and $(v_{\e}^{1/p})$  and discuss their convergence.
\begin{lem}[Uniform estimates]\label{bdd}
Let $0<p,r<+\infty$. In addition to the same assumptions as in Theorem \ref{well-posedness} for $\e > 0$, assume that $(f_\e)$ is a bounded sequence in $L^2(I;H^{-1}(\Omega))$  as $\e \to 0_+$.  For each $\e > 0$ let $v_{\e}\in L^2(I;H^1_0(\Omega))$ be the unique weak solution of \eqref{anal}.
Then the following {\rm(i)-(iii)}  hold true\/{\rm :}
\begin{enumerate}
 \item[(i)] $(v_{\e}) $ is bounded in $L^2(I;H^1_{0}(\Omega))\cap L^{\infty}(I;L^{(p+1)/p}(\Omega))$, and $(v_{\e}^{1/p})$ is bounded in $L^{\infty}(I;L^{p+1}(\Omega))$,
 \item[(ii)] $(\partial_tv_{\e}^{1/p})$ is bounded in $L^2(I;H^{-1}(\Omega))$,
 \item[(iii)]  as for  $p \in (0,1)$, if $(f_\e)$ is bounded in $L^1(I;L^2(\Omega))$ as $\e \to 0_+$, then $(v_{\e}^{1/p})$ is bounded in $L^{\infty}(I;L^2(\Omega))$.
\end{enumerate}
In addition, if $p\in (0,2)$, $u^0 \in L^{3-p}(\Omega)$ and $(f_{\e})$ is bounded in $L^1(I;L^{3-p}(\Omega))$ as $\e\to0_+$, then it holds that
\begin{enumerate}
 \item[(iv)] $(v_{\e}^{1/p})$ is bounded in $ L^{2}(I;H^1_0(\Omega))$. 
\end{enumerate}
\end{lem}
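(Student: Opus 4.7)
The plan is to prove each estimate by testing the weak form of \eqref{anal} against a carefully chosen power of $u_\e = v_\e^{1/p}$ (or of $v_\e$) and applying the parabolic chain rule to the pairing $\langle \partial_t v_\e^{1/p}, \cdot \rangle_{H^1_0(\Omega)}$, then combining with the ellipticity bound on the bilinear form $B^t$ in \eqref{bilinearform} and absorbing the source term via Young's inequality.

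For (i), I would test with $v_\e(t) \in H^1_0(\Omega)$. The standard parabolic chain rule gives
\[
\langle \partial_t v_\e^{1/p}(t), v_\e(t) \rangle_{H^1_0(\Omega)} = \frac{d}{dt}\int_\Omega \frac{p}{p+1}|v_\e|^{(p+1)/p}\,dx = \frac{1}{p+1}\frac{d}{dt}\|u_\e(t)\|_{L^{p+1}(\Omega)}^{p+1},
\]
while ellipticity yields $B^t(v_\e,v_\e)\ge \lambda\|\nabla v_\e\|_{L^2(\Omega)}^2$. Handling $\langle f_\e, v_\e\rangle$ by Cauchy--Schwarz and Young, then integrating on $[0,t]$, simultaneously produces the $L^\infty(I;L^{p+1}(\Omega))$ bound on $u_\e$ (equivalently the $L^\infty(I;L^{(p+1)/p}(\Omega))$ bound on $v_\e$) and the $L^2(I;H^1_0(\Omega))$ bound on $v_\e$, using $u^0\in L^{p+1}(\Omega)$ and boundedness of $(f_\e)$ in $L^2(I;H^{-1}(\Omega))$. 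Part (ii) is then immediate from \eqref{anal}: since $\|a(\cdot/\e,\cdot/\e^r)\|_{L^\infty}\le 1$ by Remark \ref{abdd}, the operator term $\mathrm{div}(a\nabla v_\e)$ is bounded in $L^2(I;H^{-1}(\Omega))$ by (i), and $(f_\e)$ is bounded there by assumption, so $\partial_t v_\e^{1/p}$ inherits the bound.

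For (iii), when $p\in(0,1)$ the map $s\mapsto|s|^{(1-p)/p}s$ is $C^1$ with locally bounded derivative, hence $u_\e = v_\e^{1/p}\in L^2(I;H^1_0(\Omega))$ and may serve as a test function. The chain rule gives $\langle\partial_t u_\e,u_\e\rangle_{H^1_0(\Omega)}=\tfrac12\tfrac{d}{dt}\|u_\e\|_{L^2(\Omega)}^2$, and the diffusion pairing becomes
\[
\int_\Omega a\nabla v_\e\cdot\nabla u_\e\,dx = p\int_\Omega |u_\e|^{p-1} a\nabla u_\e\cdot\nabla u_\e\,dx \ge 0,
\]
so $\tfrac{d}{dt}\|u_\e\|_{L^2(\Omega)}\le \|f_\e\|_{L^2(\Omega)}$ and the $L^\infty(I;L^2(\Omega))$ bound follows from $u^0\in L^2(\Omega)$ and the $L^1(I;L^2(\Omega))$ hypothesis on $(f_\e)$.

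For (iv), my strategy is to test with $g(u_\e):=|u_\e|^{1-p}u_\e$. Using $\nabla v_\e = p|u_\e|^{p-1}\nabla u_\e$ and $\nabla g(u_\e) = (2-p)|u_\e|^{1-p}\nabla u_\e$, the resulting identity is
\[
\frac{d}{dt}\frac{\|u_\e\|_{L^{3-p}(\Omega)}^{3-p}}{3-p} + p(2-p)\int_\Omega a\nabla u_\e\cdot\nabla u_\e\,dx = \int_\Omega f_\e\,|u_\e|^{1-p}u_\e\,dx,
\]
and H\"older bounds the right-hand side by $\|f_\e\|_{L^{3-p}(\Omega)}\|u_\e\|_{L^{3-p}(\Omega)}^{2-p}$. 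Setting $y(t):=\|u_\e(t)\|_{L^{3-p}(\Omega)}^{3-p}$ and differentiating $y^{1/(3-p)}$ gives the linear inequality $(y^{1/(3-p)})'\le C\|f_\e\|_{L^{3-p}(\Omega)}$, which under $u^0\in L^{3-p}(\Omega)$ and the $L^1(I;L^{3-p}(\Omega))$ hypothesis yields a uniform $L^\infty(I;L^{3-p}(\Omega))$ bound on $u_\e$. Re-inserting this bound into the identity and using ellipticity on the diffusion term produces the desired $L^2(I;H^1_0(\Omega))$ bound on $u_\e = v_\e^{1/p}$.

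The main obstacle is rigorously justifying that $g(u_\e)=|u_\e|^{1-p}u_\e$ is an admissible test function for the degenerate case $p\in(1,2)$, since then $g'(s)=(2-p)|s|^{1-p}$ is singular at $s=0$ and $g(u_\e)$ need not belong to $H^1_0(\Omega)$ a priori. I would bypass this by approximation: substitute for $g$ a regularization $g_\delta$ (for instance $g_\delta(s):=(|s|+\delta)^{1-p}s$, or a truncation agreeing with $g$ outside $\{|s|<\delta\}$) so that $g_\delta(u_\e)\in H^1_0(\Omega)$, derive the analogue of the identity above with $\delta$-dependent error, and pass to the limit $\delta\to 0_+$ using monotone/dominated convergence together with the bounds already established in (i)--(iii). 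In the singular case $p\in(0,1)$ the argument is direct, since $g$ is then $C^1$ with $g'(0)=0$ and locally bounded.
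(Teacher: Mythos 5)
Your proposal follows essentially the same route as the paper: test with $v_\e$ for (i), read (ii) off the weak formulation, and obtain (iii) and (iv) by testing with (approximations of) the signed powers $v_\e^{1/p}$ and $v_\e^{(2-p)/p}=|u_\e|^{1-p}u_\e$, respectively; the paper merely unifies (iii) and (iv) into one family of estimates indexed by $\ell$ with $p\ell+1\in\{2,3-p\}$ and uses Young's inequality with a small parameter where you use the equivalent $(y^{1/(3-p)})'$ Gronwall trick. One caveat on your justification of the test function in (iv): for $p\in(0,1)$ the argument is \emph{not} direct, since expressing the test function as $\xi(v_\e)$ with $\xi(s)=s^{(2-p)/p}$ one has $\xi'(s)\sim|s|^{(2-2p)/p}$ unbounded as $|s|\to+\infty$ (and $g'(u_\e)\nabla u_\e\in L^2$ cannot be invoked, because $\nabla u_\e\in L^2$ is precisely what (iv) is to prove); so a truncation at large values is needed in that regime just as a regularization near zero is needed for $p\in(1,2)$ --- this is exactly what the paper's two-case approximation $\xi_{\ell,n}$ (linear beyond $s>n$ when $\ell>1$, linearized near $0$ when $\ell\in(0,1)$) provides, and your $g_\delta$ scheme fixes it once applied in both regimes.
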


\begin{proof}
Recall the weak formulation of 
\eqref{anal},
\begin{equation}\label{weakform}
\left\langle \partial_t v_{\e}(t)^{1/p},{w}\right\rangle_{H^1_0(\Omega)}+B^{t}(v_{\e}(t),{w}) = \langle f_{\e}(t),{w}\rangle_{H^1_0(\Omega)},
\end{equation}
where $B^{t}(\cdot,\cdot)$ is the bilinear form in $H^1_0(\Omega)$ defined by \eqref{bilinearform}, for all  ${w}\in H^1_0(\Omega)$ and a.e.~$t\in I$. We first prove (i).  
Put ${w}=v_{\e}(t)$ in \eqref{weakform}. Note that  
\begin{align*}
\left\langle \partial_t v_{\e}(t)^{1/p},v_{\e}(t)\right\rangle_{H^1_0(\Omega)}
&= \frac{1}{p+1}\frac{d}{dt}\|v_{\e}(t)^{1/p}\|^{p+1}_{L^{p+1}(\Omega)}.
    \end{align*}
Integrating both sides of \eqref{weakform} over $(0,t)$, we have
\begin{align}\label{4.39}
\frac{1}{p+1}\left(\|v_{\e}(t)^{1/p}\|^{p+1}_{L^{p+1}(\Omega)}-\left\|u^0\right\|^{p+1}_{L^{p+1}(\Omega)}\right)+\int_0^t B^{ \tau}(v_{\e}(\tau),v_{\e}(\tau)) \, d\tau
\\
=\int_0^t\int_{\Omega} f_{\e}(x,\tau)v_{\e}(x,\tau)\, dxd\tau, \nonumber
\end{align}
which along with H\"{o}lder and Young's inequalities implies that
\begin{align}\label{4.40}
\lefteqn{
\frac{1}{p+1}\|v_{\e}(t)^{1/p}\|_{L^{p+1}(\Omega)}^{p+1}+\lambda\int_0^t\|v_{\e}(\tau)\|_{H^1_0(\Omega)}^2\, d\tau
}\\
&\stackrel{\eqref{ellip}}{\le} \frac{1}{p+1}\|v_{\e}(t)^{1/p}\|_{L^{p+1}(\Omega)}^{p+1} + \int_0^t B^{ \tau}(v_{\e}(\tau),v_{\e}(\tau))\, d\tau\nonumber\\
&\stackrel{\eqref{4.39}}{\le} \frac{1}{p+1}\|u^0\|_{L^{p+1}(\Omega)}^{p+1}+\frac{\lambda}{2}\int_0^t\|v_{\e}(\tau)\|_{H^1_{0}(\Omega)}^2\, d\tau+\frac{1}{2\lambda}\int_0^t\|f_{\e}(\tau)\|_{H^{-1}(\Omega)}^2\, d\tau
\nonumber
\end{align}
for any $t\in\overline{I}$. By the boundedness of $(f_{\e})$ in $L^2(I; H^{-1}(\Omega))$, we obtain
$$
\|v_{\e}\|_{L^2(I;H^1_0(\Omega))}^2\le \frac{2}{\lambda(p+1)}\|u^0\|_{L^{p+1}(\Omega)}^{p+1}+\frac{1}{\lambda^2}\|f_{\e}\|_{L^2(I;H^{-1}(\Omega))}^2\le C,
$$
and hence, \eqref{4.40} yields
$$
\sup_{t \in  \overline{I} }\|v_{\e}(t)^{ 1/p}\|_{L^{p+1}(\Omega)}^{p+1}\le \|u^0\|_{L^{p+1}(\Omega)}^{p+1}+C.
$$
Finally, noting that $\|v_{\e}^{1/p}(t)\|_{L^{p+1}(\Omega)}^{p+1}=\|v_{\e}(t)\|_{L^{(p+1)/p}(\Omega)}^{(p+1)/p}$, we also obtain a uniform estimate for $(v_{\e})$ in $L^{\infty}(I;L^{(p+1)/p}(\Omega))$.

We next prove (ii).     
For any ${w}\in H^1_{0}(\Omega)$, \eqref{weakform} yields
\begin{align*}
\left\langle \partial_t v_{\e}(t)^{1/p},{w} \right\rangle_{H^1_{0}(\Omega)}
 &= -B^{t}(v_{\e}(t),{w})+\int_{\Omega}f_{\e}(x,t){w}(x)\, dx\\
 &\le \|{w}\|_{H^1_0(\Omega)} \left(\|v_{\e}(t)\|_{H^1_0(\Omega)}+\|f_{\e}(t)\|_{H^{-1}(\Omega)}\right) \quad \text{ a.e.~in } I,
\end{align*}
which implies
\begin{align*}
\left\|\partial_t v_{\e}(t)^{1/p}\right\|_{H^{-1}(\Omega)}
\le \|v_{\e}(t)\|_{H^1_0(\Omega)}+\|f_{\e}(t)\|_{H^{-1}(\Omega)}\quad \text{ a.e.~in } I.
\end{align*}
By (i) along with the boundedness of $(f_{\e})$ in $L^2(I;H^{-1}(\Omega))$, we have
\begin{align*}
\left\|\partial_t v_{\e}^{1/p}\right\|_{L^2(I;H^{-1}(\Omega))}
\le \left\|v_{\e}\right\|_{L^2(I;H^1_0(\Omega))}+\|f_{\e}\|_{L^2(I;H^{-1}(\Omega))} \le C.
\end{align*}

As for (iii), we shall prove the boundedness of $(v_{\e}^{1/p})$ in $L^{\infty}(I;L^{2}(\Omega))$. To this end, we may test \eqref{anal} by $v_\e^{1/p}$, which may not however lie on $H^1_0(\Omega)$. Hence we need some approximation. For $\ell\in (0,+\infty)\setminus\{1\}$, let us introduce approximate functions $(\xi_{\ell,n})$ in $C^1(\R)$ of the power function $s^{\ell} := |s|^{\ell-1}s$  by setting
\begin{alignat*}{4}
\xi_{\ell,n}(s) &=
\begin{cases}
s^{\ell} \ & \mbox{ for } \ s\in [0,n],\\
\ell n^{\ell-1}(s-n)+n^{\ell} \ & \mbox{ for } \ s > n
\end{cases}
&&\quad \text{ if }\ \ell\in(1,+\infty),\\
\xi_{\ell,n}(s) &=
\begin{cases}
\ell(\frac 2 n)^{\ell-1} s \ & \mbox{ for } \ s\in [0,\frac{1}{n}),\\
\rho_{\ell,n}(s) \ & \mbox{ for } \ s\in [\frac{1}{n},\frac{2}{n}],\\
s^{\ell} \ & \mbox{ for } \ s\in (\frac{2}{n},+\infty)
\end{cases}
&&\quad \text{ if }\ \ell\in(0,1),\\
\xi_{\ell,n}(s) &= \xi_{\ell,n}(-s) \quad \mbox{ for } \ s < 0,
\end{alignat*}
where $\rho_{\ell,n}$ is some smooth increasing function in $\R$ such that $|\rho_{\ell,n}(s)|\le |s|^{\ell}$ for $s\in\R$. In particular, we note that
\begin{align*}
 & \xi_{\ell,n}(0)=0,\quad 0\le \xi_{\ell,n}'\le C_{n} \ \mbox{ in } \R, \quad
   \xi_{\ell,n}(s)\to s^{\ell} \ \mbox{ as } n \to +\infty,\\
 & |\xi_{\ell,n}(s)| \leq |s|^\ell \ \mbox{ for } s \in \R
\end{align*}
for some constants $C_{n}\ge 0$ depending on $n$.  Then $\xi_{\ell,n}(v_{\e})\in L^2(I;H^1_0(\Omega))$ and $|\xi_{\ell,n}(v_{\e})|\le |v_{\e}|^{\ell}$. 
In what follows, we shall simply write $\xi_n$ instead of $\xi_{\ell,n}$, unless any confusion may arise.
Testing \eqref{weakform} by $\xi_{n}(v_{\e})$, we have
\begin{align*}
\int_{\Omega}a(\tfrac{x}{\e}, \tfrac{t}{\e^r})\nabla v_{\e}\cdot \nabla \xi_n(v_{\e})\, dx 
&= \int_{\Omega}\xi_n'(v_{\e})a(\tfrac{x}{\e}, \tfrac{t}{\e^r})\nabla v_{\e}\cdot \nabla v_{\e}\, dx \\
&\stackrel{\eqref{ellip}}{\ge} \lambda \int_{\Omega} |\sqrt{\xi_n'(v_{\e})}\nabla v_{\e}|^2\, dx = \lambda\|\nabla \eta_n(v_{\e})\|_{L^2(\Omega)}^2,  
\end{align*}
where $\eta_n$ is a  smooth Lipschitz continuous  function defined by $\eta_n(s)=\int_0^s\sqrt{\xi_n'(\sigma)}\, d\sigma$. Setting $\zeta_n(s):=\xi_n(s^p)$ and letting $\hat{\zeta}_n$ be a primitive function of $\zeta_n$ such that $\hat{\zeta}_n(0)=0$, i.e., $\hat{\zeta}_n(s)=\int_0^s\zeta_n(\sigma)\, d\sigma$ and $\hat{\zeta}_n'(s)=\zeta_n(s)$, for any $s\in \overline{I}$, we have
\begin{align*}
\int_0^t \langle \partial_tv_{\e}(\tau)^{1/p},\xi_n(v_{\e}(\tau)) \rangle_{H^1_0(\Omega)}\, d\tau
 &= \int_0^t\frac{d}{d\tau} \left(\int_{\Omega}\hat{\zeta}_n\left(v_{\e}(x,\tau)^{1/p}\right)\, dx \right) d\tau\\
 &= \int_{\Omega} \left[ \hat{\zeta}_n(v_{\e}(x,t)^{1/p}) - \hat{\zeta}_n(u^0) \right]\, dx.
\end{align*}

We shall verify additional regularity,
\begin{equation}\label{add_regu}
 v_\e^{ 1/p} \in L^\infty(I;L^{ p\ell+1}(\Omega)), \quad v_\e^{(\ell+1)/2} \in L^2(I;H^1_0(\Omega)),
\end{equation}
provided that $u^0\in L^{p\ell+1}(\Omega)$ and $f_{\e}\in L^{ 1}(I;L^{2/(2-p\ell)}(\Omega))$ for some $\ell\in (0,2/p)$  (i.e., $2-p\ell > 0$).  To do so,  recalling that $v_\e^{1/p} \in C_{\rm w}(\overline I;L^2(\Omega))$ (see Theorem \ref{well-posedness}),  we observe that
\begin{align}
\lefteqn{\int_{\Omega}  \hat{\zeta}_n(v_{\e}(x,t)^{1/p}) \, dx + \lambda \int_0^t \|\nabla \eta_n(v_{\e}(\tau))\|_{L^2(\Omega)}^2\, d\tau}\label{03041} \\
&\le \int_0^t\int_{\Omega} f_{\e}(x,\tau)\xi_n(v_{\e}(x,\tau))\, dxd\tau
 + \int_{\Omega} \hat{\zeta}_n(u^0(x)) \, dx\nonumber\\
&\le \int_0^t\|f_{\e}(\tau)\|_{L^{2/(2-p\ell)}(\Omega)}\|v_{\e}(\tau)^{\ell}\|_{L^{2/p\ell}(\Omega)}\, d\tau +
 \frac 1 {p\ell+1}\int_{\Omega} |u^0(x)|^{p\ell+1} \, dx\nonumber \\
&\le \left( \sup_{t\in I}\|v_{\e}(t)^{1/p}\|^{p\ell}_{L^{2}(\Omega)} \right) \|f_{\e}\|_{L^{1}(I;L^{2/(2-p\ell)}(\Omega))}+\frac{1}{p\ell+1}\|u^0\|_{L^{p\ell+1}(\Omega)}^{p\ell+1}. \nonumber
\end{align}
Here we used the fact that $0 \leq \hat\zeta_n(s) \leq \int^{|s|}_0 \sigma^{p\ell} \, \d \sigma \leq (p\ell+1)^{-1}|s|^{p\ell+1}$ for $s \in \R$.  Moreover, we note that the right-hand side of \eqref{03041} is independent of $n$. Furthermore, we claim that
\begin{equation}\label{fatou}
   \frac{1}{p\ell+1} \int_{\Omega} |v_{\e}(x,t)|^{(p\ell+1)/p} \, dx \le \liminf_{n\to\infty}\int_{\Omega}\hat{\zeta}_n(v_{\e}(x,t)^{1/p})\, dx.
 \end{equation}
Indeed,  the right-hand side of \eqref{fatou} is finite due to \eqref{03041}, and moreover,  we note that
 \begin{equation*}
 \hat{\zeta}_n(v_{\e}(x,t)^{1/p})=\int_0^{|v_{\e}(x,t)|^{1/p}}\xi_n(s^p)\, ds\ge 0
 \end{equation*}
 and
 \begin{align}\label{xiaeconv}
\lefteqn{
  \left|\hat{\zeta}_n(v_{\e}(x,t)^{1/p})-\frac{1}{p\ell+1}|v_{\e}(x,t)|^{(p\ell+1)/p}\right|
   = \left| \int^{v_\e(x,t)^{1/p}}_0 \left( \xi_n(s^p) - s^{p\ell} \right) \, \d s \right|
  }\\
  &\le 
    \int_{0}^{|v_{\e}(x,t)|^{1/p}}|\xi_n(s^p)-s^{p\ell}|\, ds \nonumber\\
  &=
   \int_0^{2/n}|\xi_n(s^p)-s^{p\ell}|\, ds + \chi_{[\,\cdot\,>n]}(|v_{\e}(x,t)|^{1/p})\int_{n}^{|v_{\e}(x,t)|^{1/p}}|\xi_n(s^p)-s^{p\ell}|\, ds\nonumber\\
  &\le
    \int_0^{2/n} s^{p\ell}\, ds + \chi_{[\,\cdot\, >n]}(|v_{\e}(x,t)|^{1/p})\int_{n}^{|v_{\e}(x,t)|^{1/p}}s^{p\ell}\, ds\nonumber\\
  &\le
    \frac{1}{p\ell+1}\left(\frac{2}{n}\right)^{p\ell+1}+\frac{\chi_{[\,\cdot\,>n]}(|v_{\e}(x,t)|^{1/p})}{p\ell+1}|v_{\e}(x,t)|^{(p\ell+1)/p} 
  \to 0\quad \text{ as }\  n\to+\infty\nonumber
 \end{align}
 for a.e.~$(x,t)\in \Omega\times I$. Here $\chi_{[\,\cdot\,>n]}$ denotes the characteristic function supported over the set $\{s\in\R\colon s>n\}$. Hence we observe that
$$
\hat{\zeta}_n(v_{\e}(x,t)^{1/p}) \to \frac{1}{p\ell+1}|v_{\e}(x,t)|^{(p\ell+1)/p}
$$
for a.e.~$(x,t) \in \Omega \times I$. Therefore \eqref{fatou} follows from Fatou's lemma. Therefore, we conclude that $v_{\e}(\cdot,t)^{ 1/p}\in L^{ p\ell+1}(\Omega)$ for a.e.~$t\in I$. Moreover, recalling \eqref{03041} again, we infer that  $v_\e^{ 1/p} \in L^\infty(I;L^{ p\ell+1}(\Omega))$. Furthermore, we see by \eqref{03041} that $(\eta_n(v_{\e}))$ is also bounded in $L^2(I;H^1_0(\Omega))$.
Repeating the same argument as in \eqref{xiaeconv}, we can verify that 
\begin{equation}
 \eta_{n}(v_{\e})\to \frac{2\sqrt{\ell}}{\ell+1}v_{\e}^{(\ell+1)/2} 
\quad \text{ a.e.~in }\ \Omega \times I,\label{03171}
\end{equation}
and therefore, we find by \eqref{03041} that 
\begin{equation}\label{gousei2}
 \eta_{n}(v_{\e})\to  \frac{2\sqrt{\ell}}{\ell+1}v_{\e}^{(\ell+1)/2}  \quad \text{ weakly in }\ L^2(I;H^1_0(\Omega))
\end{equation}
as $n\to +\infty$. Hence $v_{\e}^{(\ell+1)/2}$ turns out to lie on $L^2(I;H^1_0(\Omega))$. Moreover, using the weak lower semicontinuity of norms, one obtains
\begin{equation}\label{lsc}
\frac{4\ell}{(\ell+1)^2}\int^t_0 \|\nabla v_{\e}(\tau)^{(\ell+1)/2}\|_{L^2(\Omega)}^2 \, d\tau
\leq \liminf_{n \to +\infty} \int^t_0 \|\nabla \eta_n(v_{\e}(\tau)) \|_{L^2(\Omega)}^2 \, \d \tau.
\end{equation}
Thus \eqref{add_regu} has been proved.

We further note that
$$
\int_{\Omega} f_{\e}(x,t)\xi_n(v_{\e}(x,t))\, dx \le  \|f_{\e}(t)\|_{L^{p\ell+1}(\Omega)}\|v_{\e}(t)^{\ell}\|_{L^{(p\ell+1)/p\ell}(\Omega)}.
$$
 Hence  combining \eqref{03041} with \eqref{fatou} and \eqref{lsc}, for any $\nu>0$, we can take a constant $C_{\nu}>0$ such that
\begin{align}\label{regu}
\lefteqn{
\frac{1}{p\ell+1} \|v_{\e}(t)^{1/p}\|_{L^{p\ell+1}(\Omega)}^{p\ell+1}
+ \frac{4\ell\lambda}{(\ell+1)^2} \int^t_0 \|\nabla v_{\e}(\tau)^{(\ell+1)/2}\|_{L^2(\Omega)}^2 \, d\tau
}\\
&\le \int_0^t\|f_{\e}(\tau)\|_{L^{p\ell+1}(\Omega)}\|v_{\e}(\tau)^{\ell}\|_{L^{(p\ell+1)/p\ell}(\Omega)}\, d\tau + \int_{\Omega} \bigg( \int_0^{ |u^0(x)|}s^{p\ell}\, ds \bigg) \, dx \nonumber\\
 &\le \Big(\sup_{\tau\in I}\|v_{\e}(\tau)^{1/p}\|^{p\ell}_{L^{p\ell+1}(\Omega)}\Big) \|f_{\e}\|_{L^{1}(I;L^{p\ell+1}(\Omega))}+\frac{1}{p\ell+1}\|u^0\|_{L^{p\ell+1}(\Omega)}^{p\ell+1} \nonumber\\
 &\le \nu\sup_{\tau\in I}\|v_{\e}(\tau)^{1/p}\|^{p\ell+1}_{L^{p\ell+1}(\Omega)}
    + C_{\nu}\|f_{\e}\|^{p\ell+1}_{L^{1}(I;L^{p\ell+1}(\Omega))}+\frac 1{p\ell+1}\|u^0\|_{L^{p\ell+1}(\Omega)}^{p\ell+1}.\nonumber
\end{align}
Now, putting $\ell=1/p\in(0,2/p)$ (then $p\ell+1=2$) and $\nu=1/4$, we have 
\begin{align*}
 \frac{1}{2}\left\|v_{\e}(t)^{1/p}\right\|_{L^2(\Omega)}^2  
 &\le \frac{1}{4}\sup_{ \tau  \in I}\left\|v_{\e}( \tau )^{1/p}\right\|_{L^2(\Omega)}^2 +  C_{1/ 4} \|f_{\e}\|_{L^1(I;L^2(\Omega))}^2 + \frac12 \left\|u^0\right\|_{L^2(\Omega)}^2,
\end{align*}
provided that $u^0\in L^2(\Omega)$ and $f_\e \in L^1(I;L^2(\Omega))$. Thus taking a supremum of both sides for $t \in I$, we obtain
\begin{equation*}
\frac{1}{4}\sup_{ \tau  \in I}\left\|v_{\e}( \tau)^{1/p}\right\|_{L^2(\Omega)}^2
\le C\left(\left\|u^{0}\right\|_{L^2(\Omega)}^{2}+1\right),
\end{equation*}
whenever $(f_\e)$ is bounded in $L^1(I;L^2(\Omega))$.

Finally, in order to prove (iv) for $p\in (0,2)$, we choose $\ell=(2-p)/p\in (0,2/p)$  (then $p\ell+1=3-p$)  in \eqref{regu} and assume that $u^0 \in L^{3-p}(\Omega)$ and $(f_{\e})$ is bounded in $L^1(I;L^{3-p}(\Omega))$. Then we have
\begin{align}\label{regu2}
\lefteqn{
 \frac{1}{3-p} \|v_{\e}(t)^{1/p}\|_{L^{3-p}(\Omega)}^{3-p}
 + \lambda p(2-p) \int^t_0 \|\nabla v_\e(\tau)^{1/p}\|_{L^2(\Omega)}^2 \, d \tau
}\\
  &\le \nu\sup_{\tau \in I}\|v_{\e}(\tau)^{1/p}\|^{3-p}_{L^{3-p}(\Omega)}\nonumber + C_{\nu} \|f_{\e}\|^{3-p}_{L^{1}(I;L^{3-p}(\Omega))} + \frac1{3-p}\|u^0\|_{L^{3-p}(\Omega)}^{3-p},\nonumber 
\end{align}
which implies that $(v_{\e}^{1/p})$ is bounded in $L^{\infty}(I;L^{3-p}(\Omega)) \cap L^2(I;H^1_0(\Omega))$, provided that $u^0 \in L^{3-p}(\Omega)$ and $(f_\e)$ is bounded in $L^{1}(I;L^{3-p}(\Omega))$.  To be precise,  we  here also  used the assumption $f_{\e}\in L^1(I;L^{2/(2-p\ell)}(\Omega))  = L^1(I;L^{2/p}(\Omega))$  (in particular, for $p \in (0,1)$)  to prove the additional regularity \eqref{add_regu} for $\ell \in (0,2/p)$; however, it can be removed for $\ell=(2-p)/p$ under $f_\e \in L^1(I;L^{3-p}(\Omega))$. Indeed, we can also verify \eqref{add_regu} by applying the argument so far to approximations for \eqref{anal} with (bounded) regularizations of $f_\e$ and then by passing to the limit in \eqref{regu2}.
%
%
\end{proof}

 We further  have the following
\begin{lem}[Chain-rule formula]\label{L:chain_rule}
Under the same assumptions for {\rm (iv)} of Lemma \ref{bdd}, it holds that
\begin{equation}\label{gouseidiff}
 \nabla v_{\e}(x,t)^{1/p}= \frac{1}{p} |v_{\e}(x,t)|^{(1-p)/p} \nabla v_{\e}(x,t) \quad \text{ for a.e.~}\ (x,t)\in \Omega\times I 
\end{equation}
for $p \in (0,1)$, and
\begin{equation}\label{gouseidiff2}
 \nabla v_{\e}(x,t)=p |v_{\e}(x,t)|^{(p-1)/p} \nabla v_{\e}(x,t)^{1/p} \quad \text{ for a.e.~}\ (x,t)\in \Omega\times I
\end{equation}
for $p \in (1,2)$. 
\end{lem}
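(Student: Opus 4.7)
The plan is to reuse the approximating functions $\xi_n = \xi_{\ell,n}$ introduced in the proof of Lemma \ref{bdd} together with the chain rule for Sobolev functions composed with Lipschitz maps, and then to identify the weak/distributional limits with their pointwise a.e.\ counterparts. Both cases reduce to exploiting constructions already carried out for Lemma \ref{bdd}(iv), plus a bit of integrability bookkeeping.

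For $p\in(0,1)$ I would take $\ell=(2-p)/p\in(1,+\infty)$ and apply the construction directly to $v_\vep$. The functions $\eta_n\in C^1(\R)$ are Lipschitz by design, so the standard Sobolev chain rule gives $\nabla\eta_n(v_\vep)=\sqrt{\xi_n'(v_\vep)}\,\nabla v_\vep$ a.e.\ in $\Omega\times I$. By \eqref{03171}--\eqref{gousei2}, $\eta_n(v_\vep)\to\sqrt{p(2-p)}\,v_\vep^{1/p}$ weakly in $L^2(I;H^1_0(\Omega))$, so $\nabla\eta_n(v_\vep)\to\sqrt{p(2-p)}\,\nabla v_\vep^{1/p}$ weakly in $[L^2(\Omega\times I)]^N$. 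At the same time, a direct computation shows $\sqrt{\xi_n'(s)}\to\sqrt{\ell}\,|s|^{(\ell-1)/2}=\sqrt{(2-p)/p}\,|s|^{(1-p)/p}$ pointwise, so $\nabla\eta_n(v_\vep)\to\sqrt{(2-p)/p}\,|v_\vep|^{(1-p)/p}\,\nabla v_\vep$ a.e.\ in $\Omega\times I$. Since a weakly $L^2$-convergent sequence that also converges pointwise a.e.\ must have the two limits coincide (via Egoroff on the bounded domain $\Omega\times I$), I obtain $\sqrt{p(2-p)}\,\nabla v_\vep^{1/p}=\sqrt{(2-p)/p}\,|v_\vep|^{(1-p)/p}\,\nabla v_\vep$, which after dividing by $\sqrt{p(2-p)}$ yields \eqref{gouseidiff}.

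For $p\in(1,2)$ I would switch variables to $w:=v_\vep^{1/p}$, which lies in $L^2(I;H^1_0(\Omega))$ by Lemma \ref{bdd}(iv) and in $L^{p+1}(\Omega\times I)$ by Lemma \ref{bdd}(i), and apply the same construction with $\ell=p\in(1,2)$ to $w$, using $v_\vep=|w|^{p-1}w$. Each $\xi_n$ is Lipschitz and, by the convexity of $s\mapsto|s|^p$ combined with the tangent-line extension at $|s|=n$, satisfies the bounds $|\xi_n(s)|\leq|s|^p$ and $|\xi_n'(s)|\leq p|s|^{p-1}$. The chain rule gives $\nabla\xi_n(w)=\xi_n'(w)\,\nabla w$ a.e. Dominated convergence then delivers $\xi_n(w)\to|w|^{p-1}w=v_\vep$ in $L^{(p+1)/p}(\Omega\times I)$, so that $\nabla\xi_n(w)\to\nabla v_\vep$ in $\mathcal{D}'(\Omega\times I)$; simultaneously, H\"older's inequality applied to $|w|^{p-1}\in L^{(p+1)/(p-1)}$ and $\nabla w\in L^2$ produces an integrable dominant $p|w|^{p-1}|\nabla w|\in L^{2(p+1)/(3p-1)}$ (with exponent strictly greater than $1$) for $\xi_n'(w)\nabla w$, and dominated convergence yields $\xi_n'(w)\nabla w\to p|w|^{p-1}\nabla w$ in $L^1(\Omega\times I)$. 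Uniqueness of distributional limits then gives $\nabla v_\vep=p|w|^{p-1}\nabla w=p|v_\vep|^{(p-1)/p}\nabla v_\vep^{1/p}$, which is \eqref{gouseidiff2}.

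The main obstacle I expect is the lack of a global Lipschitz bound for the relevant power maps: $s\mapsto|s|^{(1-p)/p}s$ has unbounded derivative at infinity (for $p\in(0,1)$), while $s\mapsto|s|^{p-1}s$ has a cusp at the origin (for $p\in(1,2)$), so neither admits a direct application of the chain rule for Sobolev functions. The $\xi_n$ were precisely tailored in Lemma \ref{bdd} to truncate both pathologies, and once they are in place the remaining work is only the integrability bookkeeping sketched above, with no subtle new analytic input beyond the bounds $v_\vep\in L^{(p+1)/p}$ and $v_\vep^{1/p}\in L^{p+1}$ already secured there.
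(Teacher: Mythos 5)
Your proof is correct. For $p\in(0,1)$ it is essentially the paper's own argument: both use the approximations $\eta_n$, the weak convergence \eqref{gousei2} and the a.e.\ convergence of $\eta_n'(v_\e)\nabla v_\e$, the only difference being that the paper identifies the limit by testing against $C^\infty_{\rm c}(\Omega\times I)$ functions whereas you invoke the coincidence of weak $L^2$ and a.e.\ limits, which is an equivalent step. For $p\in(1,2)$ the paper gives no details (it merely declares the case ``analogous''), and there your route is genuinely different and arguably cleaner: instead of applying the truncations to $v_\e$ with $\ell=(2-p)/p\in(0,1)$ --- which would force one to deal with the negative power $|v_\e|^{(1-p)/p}$ on the set where $v_\e$ vanishes --- you apply $\xi_{p,n}$ to $w=v_\e^{1/p}\in L^2(I;H^1_0(\Omega))\cap L^{p+1}(\Omega\times I)$, exploit the bounds $|\xi_{p,n}(s)|\le |s|^p$ and $|\xi_{p,n}'(s)|\le p|s|^{p-1}$, and pass to the limit by dominated convergence in $L^{(p+1)/p}(\Omega\times I)$ and in $L^1(\Omega\times I)$ (your dominant $p|w|^{p-1}|\nabla w|$ does lie in $L^{2(p+1)/(3p-1)}$ with exponent strictly above $1$ for $p<3$), finally identifying the two distributional limits of $\nabla\xi_{p,n}(w)$. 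This yields a self-contained proof of \eqref{gouseidiff2} using only the regularity $v_\e^{1/p}\in L^2(I;H^1_0(\Omega))$ from (iv) of Lemma \ref{bdd}, and it sidesteps the degeneracy at zeros of $v_\e$ at the modest cost of working with $L^1$-limits instead of weak $L^2$-limits; both halves of your argument are sound.
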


\begin{proof}
We shall give a proof only for the case $p \in (0,1)$; however, the case $p \in (1,2)$ can be also proved in an analogous way. Recall that $\|\nabla\eta_{n}(v_{\e})\|_{L^2(\Omega\times I)}\le C$, where $\eta_n$ is the same smooth function as in the proof of Lemma \ref{bdd}. We have
\[
 \|\eta_n'(v_{\e})\nabla v_{\e}\|_{L^2(\Omega\times I)}=\|\nabla\eta_{n}(v_{\e})\|_{L^2(\Omega\times I)}\le C.
\]
Since $\eta'_n(v_{\e})\to \sqrt{\ell} |v_{\e}|^{(\ell-1)/2}$ a.e.~in $\Omega$  for $\ell \in (1,2/p)$,  we also find that 
\[
  \eta'_n(v_{\e})\nabla v_{\e} \to \sqrt{\ell} |v_{\e}|^{(\ell-1)/2}\nabla v_{\e} \quad \text{ weakly in }\ L^2(\Omega\times I).
\]
Moreover, recalling \eqref{03171}, we find that, for any $\phi\in C_{\rm c}^{\infty}(\Omega\times I)$,
\begin{align*}
\lefteqn{
 \int^T_0 \int_{\Omega} \sqrt{\ell} |v_{\e}(x,t)|^{(\ell-1)/2}\nabla v_{\e}(x,t)\phi(x,t)\, dx dt
}\\
 &= \lim_{n\to\infty} \int^T_0 \int_{\Omega} \eta_n'(v_{\e}(x,t))\nabla v_{\e}(x,t)\phi(x,t)\, dx dt \\
 &= \lim_{n\to\infty} \int^T_0 \int_{\Omega} \nabla \eta_n(v_{\e}(x,t))\phi(x,t)\, dx dt\\
 &= \int^T_0 \int_{\Omega}  \frac{2\sqrt{\ell}}{\ell+1}\nabla v_{\e}(x,t)^{(\ell+1)/2}\phi(x,t)\, dx dt. \nonumber
\end{align*}
Here we used \eqref{gousei2} to derive the last equality. From the arbitrariness of $\phi\in C^{\infty}_{\rm c}(\Omega)$, $\sqrt{\ell} |v_{\e}(x,t)|^{(\ell-1)/2} \nabla v_{\e}(x,t)=\frac{2\sqrt{\ell}}{\ell+1}\nabla v_{\e}(x,t)^{(\ell+1)/2}$ for a.e.~$(x,t)\in\Omega\times I$,
which along with the choice $\ell=(2-p)/p  \in (1,2/p)$ implies \eqref{gouseidiff}.
\end{proof}

By Lemma \ref{bdd}, one can obtain
\begin{lem}[Weak and strong convergences of $(v_{\e})$]\label{strongconvofsol}
Under the same assumptions as in Theorem \ref{well-posedness}, suppose that $(f_{\e})$ is bounded in $L^2(I;H^{-1}(\Omega))$ as $\e\to 0_+$. 
  In addition, for $p\in (0,1)$, assume  the boundedness of $(f_{\e})$ in $L^1(I;L^2(\Omega))$. Let $v_{\e}\in L^2(I;H^1_0(\Omega))$ be the unique weak solution of \eqref{anal}. Then there exist a subsequence $(\e_n)$ of $(\e)$ and $v_{0}\in L^2(I;H^1_0(\Omega))  \cap L^\infty(I;L^{(p+1)/p}(\Omega))$ such that
\begin{alignat}{4}
v_{\e_n} &\to v_{0} \quad &&\text{ weakly in } L^2(I;H^1_0(\Omega)),\label{strongconvsol2}\\
v_{\e_n}^{1/p} &\to v_{0}^{1/p} \quad && \text{ strongly in } C(\overline{I};H^{-1}(\Omega)),\label{strongconvsol3}\\
\partial_tv_{\e_n}^{1/p} &\to \partial_tv_{0}^{1/p} \quad  &&\text{ weakly in }\ L^2(I;H^{-1}(\Omega)), \label{strongcon2sol4}\\
v_{\e_n} &\to v_{0} \quad &&\text{ strongly in } L^\rho(I;L^{(p+1)/p}(\Omega)),\label{strongconvsol4}\\
v_{\e_n}^{1/p} &\to v_{0}^{1/p} \quad &&\text{ strongly in } L^\rho(I;L^{p+1}(\Omega))\label{strongconvsol5}
\end{alignat}
for any $\rho \in [1,+\infty)$.
\end{lem}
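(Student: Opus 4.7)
The plan is to combine the uniform bounds of Lemma \ref{bdd} with an Aubin--Lions type compactness argument and a Minty identification. By (i)--(ii) of Lemma \ref{bdd} and the Banach--Alaoglu theorem, along a (not relabeled) subsequence $\e_n\to 0_+$ there exist $v_0\in L^2(I;H^1_0(\Omega))\cap L^\infty(I;L^{(p+1)/p}(\Omega))$ and $w\in L^\infty(I;L^{p+1}(\Omega))\cap W^{1,2}(I;H^{-1}(\Omega))$ such that $v_{\e_n}\to v_0$ weakly in $L^2(I;H^1_0(\Omega))$, $v_{\e_n}^{1/p}\to w$ weakly-$\ast$ in $L^\infty(I;L^{p+1}(\Omega))$, and $\partial_t v_{\e_n}^{1/p}\to \partial_t w$ weakly in $L^2(I;H^{-1}(\Omega))$ (identification of the time-derivative limit being the usual distributional argument). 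For $p\geq 1$ the embedding $L^{p+1}(\Omega)\hookrightarrow L^2(\Omega)$ and for $p\in(0,1)$ part (iii) of Lemma \ref{bdd} (which uses the additional hypothesis on $(f_\e)$) provide a uniform bound of $(v_{\e_n}^{1/p})$ in $L^\infty(I;L^2(\Omega))$. Since $L^2(\Omega)\hookrightarrow\hookrightarrow H^{-1}(\Omega)$ (dual Rellich--Kondrachov), Simon's variant of the Aubin--Lions compactness theorem then yields, up to a further subsequence,
\begin{equation*}
v_{\e_n}^{1/p}\to w\ \text{ strongly in } C(\overline I;H^{-1}(\Omega)).
\end{equation*}

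To identify $w=v_0^{1/p}$, I would apply Minty's trick (Proposition \ref{trick}) to the nonlinear map $v\mapsto v^{1/p}$, which is the subdifferential in $L^{(p+1)/p}(\Omega\times I)$ of the proper, lower semicontinuous, convex functional $\Psi(v)=\frac{p}{p+1}\int_0^T\int_\Omega |v|^{(p+1)/p}\,dxdt$ and is therefore maximal monotone from $L^{(p+1)/p}(\Omega\times I)$ into $L^{p+1}(\Omega\times I)$ by Theorem \ref{minty}. The strong $L^2(I;H^{-1}(\Omega))$ convergence of $v_{\e_n}^{1/p}$ and the weak $L^2(I;H^1_0(\Omega))$ convergence of $v_{\e_n}$ permit passage to the limit in
\begin{equation*}
\int_0^T\int_\Omega |v_{\e_n}|^{(p+1)/p}\,dxdt=\int_0^T\langle v_{\e_n}^{1/p}(t),v_{\e_n}(t)\rangle_{H^1_0(\Omega)}\,dt\longrightarrow \int_0^T\langle w(t),v_0(t)\rangle_{H^1_0(\Omega)}\,dt,
\end{equation*}
where the limit pairing reduces to $\int_0^T\int_\Omega w\,v_0\,dxdt$ by the integrability of $w$ and $v_0$. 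Proposition \ref{trick} then yields $w=v_0^{1/p}$ and, in particular, $\|v_{\e_n}\|_{L^{(p+1)/p}(\Omega\times I)}\to \|v_0\|_{L^{(p+1)/p}(\Omega\times I)}$.

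Norm convergence combined with weak convergence in the uniformly convex space $L^{(p+1)/p}(\Omega\times I)$ gives strong convergence $v_{\e_n}\to v_0$ in $L^{(p+1)/p}(\Omega\times I)$. Passing to an a.e.\ convergent subsequence, continuity of the map $s\mapsto |s|^{(1-p)/p}s$ yields $v_{\e_n}^{1/p}\to v_0^{1/p}$ a.e.\ in $\Omega\times I$, and Vitali's convergence theorem applied to $|v_{\e_n}^{1/p}|^{p+1}=|v_{\e_n}|^{(p+1)/p}$ (whose equi-integrability is an immediate consequence of the strong $L^{(p+1)/p}$ convergence already obtained) upgrades this to $v_{\e_n}^{1/p}\to v_0^{1/p}$ strongly in $L^{p+1}(\Omega\times I)$. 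The uniform $L^\infty(I;L^{(p+1)/p}(\Omega))$ and $L^\infty(I;L^{p+1}(\Omega))$ bounds, interpolated against these $L^{(p+1)/p}(I;\cdot)$ and $L^{p+1}(I;\cdot)$ strong convergences via dominated convergence on $I$, deliver \eqref{strongconvsol4} and \eqref{strongconvsol5} for every $\rho\in[1,+\infty)$. The main obstacle will be the Minty identification: the $H^{-1}$--$H^1_0$ duality must be consistent with the $L^{p+1}$--$L^{(p+1)/p}$ integral pairing in the limit, and the $L^\infty(I;L^2(\Omega))$ bound that feeds the Aubin--Lions step is what forces the extra $L^1(I;L^2(\Omega))$ assumption on $(f_\e)$ in the fast-diffusion regime $p\in(0,1)$.
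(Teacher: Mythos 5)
Your proof is correct and follows essentially the same route as the paper: uniform bounds from Lemma \ref{bdd}, compactness of $(v_{\e_n}^{1/p})$ in $C(\overline{I};H^{-1}(\Omega))$ via the $L^\infty(I;L^2(\Omega))$ bound and the time-derivative bound (the paper carries out the Ascoli argument by hand where you invoke Simon's theorem), identification of the limit by Minty's trick for the maximal monotone power map together with the strong-times-weak passage in the duality pairing, and then norm convergence plus uniform convexity and the $L^\infty$-in-time bounds to obtain \eqref{strongconvsol4}--\eqref{strongconvsol5}. The only cosmetic deviation is that for \eqref{strongconvsol5} the paper reuses the Radon--Riesz argument in $L^{p+1}(\Omega\times I)$ rather than your a.e.-convergence-plus-Vitali step; both are valid.
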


\begin{proof}
By (i) of Lemma $\ref{bdd}$, there exist a subsequence $(\e_n)$ of $(\e)$ and $v_{0}\in L^2(I;H^1_0(\Omega))$ such that \eqref{strongconvsol2} holds. We next prove \eqref{strongconvsol3}. The equi-continuity of $t\mapsto v_{\e_n}(t)^{1/p}$ in $H^{-1}(\Omega)$ follows from (ii) of Lemma $\ref{bdd}$. Indeed, 
we see that, for any $t, \tau\in \overline I$ with $t>\tau$, 
\begin{align*}
 \|v_{\e_n}(t)^{1/p}-v_{\e_n}(\tau)^{1/p}\|_{H^{-1}(\Omega)}
  &= \Bigl\|\int_\tau^t \partial_{ t} v_{\e_n}(\sigma)^{1/p}\, d\sigma\Bigl\|_{H^{-1}(\Omega)}\\
  &\le \int_\tau^t \| \partial_{ t} v_{\e_n}(\sigma)^{1/p} \|_{H^{-1}(\Omega)} \, d\sigma\\
  &\le
   \|\partial_{ t}v_{\e_n}^{1/p}\|_{L^2(I;H^{-1}(\Omega))} |t-\tau|^{1/2}.
\end{align*}
On the other hand, we can also derive the relative compactness of $(v_{\e_n}(t)^{1/p})$ on $H^{-1}(\Omega)$ for each $t\in \overline I$ from  (i) and  (iii) of Lemma \ref{bdd}, that is, 
$$
\sup_{t\in  \overline{I} }\left\|v_{\e_n}(t)^{1/p}\right\|_{L^2(\Omega)}< C,
$$
 along with the compact embedding $L^2(\Omega)\hookrightarrow H^{-1}(\Omega)$. 
Hence, by Ascoli's theorem, there exists $\xi\in C(\overline{I};H^{-1}(\Omega))$ such that, up to a (not relabeled) subsequence, 
\begin{equation}
 v_{\e_n}^{1/p}\to \xi\ \quad \text{ strongly in }\ C(\overline{I};H^{-1}(\Omega)).\label{strongconv2xi}
\end{equation}
Let us check $\xi=v_{0}^{1/p}$. 
Let $A:L^{(p+1)/p}(\Omega)\to L^{p+1}(\Omega)$ be an operator defined by 
\begin{equation}\label{appendix}
A(w)=w^{1/p}\quad \text{ for } w\in L^{(p+1)/p}(\Omega).
\end{equation}
Then $A$ turns out to be maximal monotone in $L^{(p+1)/p}(\Omega)\times L^{p+1}(\Omega)$ (see \S \ref{A:S:monotone} in Appendix).
 Here we claim that
 \begin{equation}\label{jeqxi1}
  v_{\e_n}^{1/p}\to \xi\quad  \text{ weakly in }\ L^{p+1}(\Omega\times I). 
 \end{equation}
Indeed, since $(v_{\e}^{1/p})$ is bounded in $L^{p+1}(\Omega\times I)$, there exists $\tilde{\xi}\in L^{p+1}(\Omega\times I)$ such that $v_{\e_n}^{1/p}\to \tilde{\xi}$ weakly in $L^{p+1}(\Omega\times I)$. Due to the uniqueness of weak limits, we find by \eqref{strongconv2xi} that $\xi=\tilde{\xi}$. One can similarly derive that
\begin{equation}\label{c:vLq}
v_{\e_n}\to v_0 \quad\mbox{ weakly in } L^{(p+1)/p}(\Omega\times I) 
\end{equation}
from (i) of Lemma \ref{bdd} along with \eqref{strongconvsol2}. Hence employing \eqref{strongconvsol2} and \eqref{strongconv2xi}, we deduce that
\begin{align}\label{0926}
  \lim_{\e_n\to 0_+}\int_{0}^{T}\left\langle v_{\e_n}(t)^{1/p},v_{\e_n}(t)\right\rangle_{L^{(p+1)/p}(\Omega)} \, dt 
 &=\lim_{\e_n\to 0_+}\int_{0}^{T}\left\langle v_{\e_n}(t)^{1/p},v_{\e_n}(t)\right\rangle_{H^{1}_0(\Omega)}\, dt\\
 &=
  \int_{0}^{T} \left\langle \xi(t),v_0(t)\right\rangle_{H^{1}_0(\Omega)} \, dt \nonumber\\
 &=
  \int_{0}^{T} \left\langle \xi(t), v_0(t)\right\rangle_{L^{(p+1)/p}(\Omega)} \, dt.\nonumber
\end{align} 
Proposition \ref{trick} along with \eqref{jeqxi1}, \eqref{c:vLq} and \eqref{0926} ensures that $\xi=v_{0}^{1/p}$. Thus \eqref{strongconvsol3} is proved. Furthermore, \eqref{strongcon2sol4} follows immediately from (ii) of Lemma \ref{bdd}.

We also observe by \eqref{0926} that
\begin{equation}\label{errorest1term}
 \|v_{\e_n}\|_{L^{(p+1)/p}(\Omega\times I)}^{(p+1)/p}\to \|v_{0}\|_{L^{(p+1)/p}(\Omega\times I)}^{(p+1)/p}\quad \text{  as  }\ \e_n\to 0_+.
\end{equation}
Since $v_{\e_n}\to v_{0}$ weakly in $L^{(p+1)/p}(\Omega\times I)$ and $\|\cdot\|_{L^{(p+1)/p}(\Omega\times I)}$ is uniformly convex,
we conclude that 
\[
v_{\e_n}\to v_{0}\quad \text{ strongly in }\ L^{(p+1)/p}(\Omega\times I).
\]
Hence \eqref{strongconvsol4} follows from the boundedness of $(v_{\e_n})$ in $L^\infty(I;L^{(p+1)/p}(\Omega))$ (see (i) of Lemma \ref{bdd}). Moreover, we can similarly show \eqref{strongconvsol5} by using \eqref{jeqxi1} with $\xi = v_0^{1/p}$ and by noting from \eqref{errorest1term} that
$$
 \|v_{\e_n}^{1/p}\|_{L^{p+1}(\Omega\times I)}^{p+1}\to \|v_{0}^{1/p}\|_{L^{p+1}(\Omega\times I)}^{p+1}\quad \text{  as  }\ \e_n\to 0_+
$$
(see also (i) of Lemma \ref{bdd}).
This completes the proof.
\end{proof}

\begin{remark}[Chain-rule formula for the limit]\label{R:chain_rule}
{\rm The relations in Lemma \ref{L:chain_rule} remain true for the homogenized limit $v_0$. Indeed, one can check \eqref{gouseidiff} (respectively, \eqref{gouseidiff2}) for $v_0$ by testing \eqref{gouseidiff} (respectively, \eqref{gouseidiff2}) with $\e = \e_n$ by an arbitrary smooth test function $\phi \in C^\infty_c(\Omega \times I)$ and then by passing to the limit as $\e_n \to 0_+$.}
\end{remark}

We close this section with the following
\begin{lem}[Weak and strong convergences of $(v_{\e}^{1/p})$ for $0<p<2$]\label{strongconv2}
Under the same assumptions for {\rm (iv)} of Lemma \ref{bdd}, let $v_{\e}\in L^2(I;H^1_0(\Omega))$ be the unique weak solution of \eqref{anal} for $0 < p < 2$ such that $v_{\e_n}\to v_0$ as in \eqref{strongconvsol2} and \eqref{strongconvsol4} as $\e_n\to 0_+$. 
Then $v_{0}^{1/p}$ belongs to $L^2(I;$$H^1_0(\Omega))$ and there exists a {\rm (}not relabeled{\rm\/)} subsequence of $(\e_n)$ such that 
\begin{alignat}{4}
 v_{\e_n}^{1/p} &\to v_{0}^{1/p} \quad &&\text{ weakly in } L^2(I;H^1_0(\Omega)), \label{strongcon2sol2}\\
 v_{\e_n}^{1/p} &\to v_{0}^{1/p} \quad &&\text{ strongly in } L^{2}(\Omega\times I). \label{strongcon2sol3}
\end{alignat}
\end{lem}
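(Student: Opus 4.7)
The plan is to combine the uniform $H^1_0$-estimate for $(v_\e^{1/p})$ supplied by item (iv) of Lemma \ref{bdd} with the already-established strong convergence $v_{\e_n}^{1/p}\to v_0^{1/p}$ in $L^\rho(I;L^{p+1}(\Omega))$ from \eqref{strongconvsol5} and the time-derivative bound from item (ii) of Lemma \ref{bdd}.

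First, item (iv) of Lemma \ref{bdd} under the hypotheses $u^0\in L^{3-p}(\Omega)$ and $(f_\e)$ bounded in $L^1(I;L^{3-p}(\Omega))$ gives that $(v_{\e_n}^{1/p})$ is bounded in $L^2(I;H^1_0(\Omega))$. By reflexivity, there exist a (not relabeled) subsequence and some $w\in L^2(I;H^1_0(\Omega))$ such that $v_{\e_n}^{1/p}\to w$ weakly in $L^2(I;H^1_0(\Omega))$.

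Next, I identify $w$ with $v_0^{1/p}$. From \eqref{strongconvsol5}, $v_{\e_n}^{1/p}\to v_0^{1/p}$ strongly in $L^\rho(I;L^{p+1}(\Omega))$ for any $\rho\in[1,+\infty)$, so along a further subsequence $v_{\e_n}^{1/p}\to v_0^{1/p}$ a.e.\ in $\Omega\times I$. On the other hand, the weak $L^2(I;H^1_0(\Omega))$-convergence yields weak convergence in $L^2(\Omega\times I)$, hence in the sense of distributions, to $w$. By uniqueness of limits, $w=v_0^{1/p}$, which gives \eqref{strongcon2sol2} and, in particular, $v_0^{1/p}\in L^2(I;H^1_0(\Omega))$.

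It remains to upgrade weak to strong convergence in $L^2(\Omega\times I)$. For this I invoke the Aubin–Lions compactness lemma with the triple $H^1_0(\Omega)\hookrightarrow\hookrightarrow L^2(\Omega)\hookrightarrow H^{-1}(\Omega)$: item (iv) gives boundedness of $(v_{\e_n}^{1/p})$ in $L^2(I;H^1_0(\Omega))$, while item (ii) of Lemma \ref{bdd} gives boundedness of $(\partial_t v_{\e_n}^{1/p})$ in $L^2(I;H^{-1}(\Omega))$. Hence $(v_{\e_n}^{1/p})$ is relatively compact in $L^2(I;L^2(\Omega))=L^2(\Omega\times I)$. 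Any strong limit point must, by uniqueness of the weak limit already identified, coincide with $v_0^{1/p}$, so in fact the whole (sub)sequence satisfies $v_{\e_n}^{1/p}\to v_0^{1/p}$ strongly in $L^2(\Omega\times I)$, giving \eqref{strongcon2sol3}.

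No step looks genuinely hard: the only mildly delicate point is that when $p\in(0,1)$ the embedding $L^{p+1}(\Omega)\hookrightarrow L^2(\Omega)$ fails, which is why I route the identification of $w$ through pointwise a.e.\ convergence rather than directly through an $L^2$ limit; and the Aubin–Lions step requires no new estimate beyond what Lemma \ref{bdd} already provides.
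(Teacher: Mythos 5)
Your proof is correct and follows essentially the same route as the paper: boundedness from item (iv) of Lemma \ref{bdd} gives weak $L^2(I;H^1_0(\Omega))$-convergence, the limit is identified as $v_0^{1/p}$ via the already-known convergence from Lemma \ref{strongconvofsol}, and the Aubin--Lions lemma with the triple $H^1_0(\Omega)\hookrightarrow\hookrightarrow L^2(\Omega)\hookrightarrow H^{-1}(\Omega)$ upgrades this to strong $L^2(\Omega\times I)$-convergence. The only cosmetic difference is that the paper identifies the weak limit directly by uniqueness of weak limits (using the weak $L^{p+1}$-limit from \eqref{jeqxi1}), whereas you pass through a.e.\ convergence along a further subsequence; both are valid.
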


\begin{proof}
By (iv) of Lemma \ref{bdd}, there exists a (not relabeled) subsequence of $(\e_n)$ such that
\[
v_{\e_n}^{1/p}\to v_0^{1/p} \quad \text{ weakly in } L^2(I;H^1_0(\Omega)).
\]
Thus \eqref{strongcon2sol2} follows. Finally, we prove \eqref{strongcon2sol3}.
Since $(v_{\e_n}^{1/p})$ is bounded in 
\[
\mathcal{W}:=\left\{v\in L^2(I;H^1_0(\Omega))\colon \partial_t v\in L^2(I; H^{-1}(\Omega)) \right\}
\]
equipped with graph norm $\|\cdot\|_{\mathcal{W}}:=\|\cdot\|_{L^2(I;H^1_0(\Omega))}+\|\partial_t \,\cdot\,\|_{L^2(I; H^{-1}(\Omega))}$,
thanks to the compact embedding $H^1_0(\Omega)\hookrightarrow L^2(\Omega)$, 
the Aubin-Lions lemma (see,  e.g.,~\cite[Corollary 4]{simon}) yields that 
$$
v_{\e_n}^{1/p}\to v_{0}^{1/p}\quad \text{ strongly in } L^{2}(\Omega\times I),
$$
which completes the proof.
\end{proof}

\section{Proof of main results}\label{S:pf}

This section is devoted to proving Theorems \ref{thm1} and \ref{thm2}.

\subsection{Proof of Theorem \ref{thm1}}

In order to prove Theorem {\ref{thm1}, we first set up

\begin{lem}[Two-scale convergence of $(\nabla v_{\e})$]\label{lemma5.1}
Under the same assumptions as in Lemma \ref{strongconvofsol} for $0<p<+\infty$, let $v_{\e}\in L^2(I;H^1_0(\Omega))$ be the unique weak solution to \eqref{anal} and let $v_{0}$ be a weak limit of $(v_{\e_n})$ in $L^2(I;H^1_0(\Omega))$ as  a sequence  $\e_n\to 0_+$. 
Then there exist a {\rm (}not relabeled{\rm\/)} subsequence of $(\e_n)$ and $z\in L^2(\Omega\times I;L^{2}(J;H^{1}_{\mathrm{per}}(\square)/\R))$ such that
\begin{align}
 \nabla v_{\e_n}         &\wtts  \nabla v_{0}+\nabla_y z      &&\text{ in }\ [L^2(\Omega\times I\times \square\times J)]^N, \label{grawttssol}\\
 a\left(\tfrac{x}{\e_n},\tfrac{t}{\e^r_n}\right)\nabla v_{\e_n}  &\wtts  a(y,s)\left(\nabla v_{0}+\nabla_y z\right)  &&\text{ in }\ [L^2(\Omega\times I\times \square\times J)]^N.\label{grawtts}
\end{align}
\end{lem}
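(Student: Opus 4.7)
The approach is to first apply the gradient two-scale compactness result (Theorem \ref{gradientcpt}) to obtain the corrector $z$ and the convergence \eqref{grawttssol}, and then to identify the weak two-scale limit of the flux $a_{\e_n}\nabla v_{\e_n}$ by a duality argument exploiting the symmetry of $a$.

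First I would verify the hypotheses of Theorem \ref{gradientcpt} with $q=2$. By (i) of Lemma \ref{bdd}, $(v_{\e_n})$ is bounded in $L^2(I;H^1_0(\Omega))$. By \eqref{strongconvsol4} of Lemma \ref{strongconvofsol}, $v_{\e_n}\to v_0$ strongly in $L^1(\Omega\times I)$, and this strong convergence combined with boundedness in $L^2(\Omega\times I)$ yields, via Remark \ref{ysindeprmk}, that $v_{\e_n}\wtts v_0$ in $L^2(\Omega\times I\times\square\times J)$ with the two-scale limit depending only on $(x,t)$. Regarded as independent of $(y,s)$, $v_0$ lies in $L^2(I\times\square\times J;H^1(\Omega))$. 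Hence Theorem \ref{gradientcpt} applies and furnishes a (not relabeled) subsequence of $(\e_n)$ together with $z\in L^2(\Omega\times I;L^2(J;H^1_{\mathrm{per}}(\square)/\R))$ satisfying \eqref{grawttssol}, and in addition the refined identity \eqref{adgracpt} is available for admissible test fields.

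To establish \eqref{grawtts}, I would next note that $(a_{\e_n}\nabla v_{\e_n})$ is bounded in $[L^2(\Omega\times I)]^N$ by the ellipticity bound \eqref{ellip}, so by Theorem \ref{multicpt} we may extract a further subsequence whose weak two-scale limit is some $\chi\in[L^2(\Omega\times I\times\square\times J)]^N$. To identify $\chi$, I test against product fields $\phi(x)B(y)\psi(t)c(s)$ with $\phi\in C^\infty_{\rm c}(\Omega)$, $\psi\in C^\infty_{\rm c}(I)$, $B\in[C^\infty_{\mathrm{per}}(\square)]^N$, $c\in C^\infty_{\mathrm{per}}(J)$, and use the symmetry of $a(y,s)$ to rewrite
\begin{equation*}
\int_0^T\!\!\int_\Omega a_{\e_n}\nabla v_{\e_n}\cdot\phi(x)B(\tfrac{x}{\e_n})\psi(t)c(\tfrac{t}{\e_n^r})\,dxdt
= \int_0^T\!\!\int_\Omega \nabla v_{\e_n}\cdot\phi(x)(aB)(\tfrac{x}{\e_n},\tfrac{t}{\e_n^r})\psi(t)c(\tfrac{t}{\e_n^r})\,dxdt.
\end{equation*}
The composite test field $\Phi(x,t,y,s)=\phi(x)a(y,s)B(y)\psi(t)c(s)$ lies in the admissible class $L^2(\Omega\times I;C_{\mathrm{per}}(\square\times J))$ identified in Remark \ref{rem2.5}(ii) (assuming enough regularity of $a$ in $(y,s)$, as discussed below), so \eqref{adgracpt} applies and the right-hand side tends to
\begin{equation*}
\vierint (\nabla v_0+\nabla_y z)\cdot\phi(x)a(y,s)B(y)\psi(t)c(s)\,dZ
= \vierint a(y,s)(\nabla v_0+\nabla_y z)\cdot\phi(x)B(y)\psi(t)c(s)\,dZ,
\end{equation*}
where symmetry of $a$ is used once more. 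By the arbitrariness of $\phi,B,\psi,c$, we conclude $\chi=a(y,s)(\nabla v_0+\nabla_y z)$, which is \eqref{grawtts}.

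The main obstacle is verifying the admissibility of $(y,s)\mapsto a(y,s)B(y)$ in the sense of Definition \ref{admissible}, since $a$ is only $L^\infty$ in the most general setting. When $a$ is continuous on $\square\times J$ (as is the case in the regularity regimes invoked for Theorems \ref{thm1}, \ref{thm2} and \ref{T:cor}), $\Phi$ belongs to $L^2(\Omega\times I;C_{\mathrm{per}}(\square\times J))$ and admissibility is immediate. In the merely $L^\infty$ case one would approximate $a$ by a sequence $a^\delta\in C_{\mathrm{per}}(\square\times J)$ converging to $a$ in $L^p(\square\times J)$ for $p$ sufficiently large, carry out the argument above with $a^\delta$, and then control the error using the uniform $L^2$-bound on $\nabla v_{\e_n}$ together with $a^\delta\to a$. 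The symmetry of $a$ is essential in order to shift the matrix from the flux side to the test-function side; without it, the same manoeuvre would force us to pass the matrix across via strong two-scale convergence of $a_{\e_n}B_{\e_n}$, which again hinges on the same regularity of $a$.
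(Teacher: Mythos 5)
Your overall strategy coincides with the paper's: apply Theorem \ref{gradientcpt} (with the hypotheses checked exactly as you do, via Lemma \ref{bdd}(i), \eqref{strongconvsol4} and Remark \ref{ysindeprmk}) to get \eqref{grawttssol}, then use the symmetry of $a$ to move the matrix onto the test function and identify the limit of the flux via Lemma \ref{zhikov}. The one substantive divergence is in how admissibility of $\Phi(x,t,y,s)={}^t\!a(y,s)\phi(x)B(y)\psi(t)c(s)$ is verified. You take the admissible class to be $L^{2}(\Omega\times I;C_{\mathrm{per}}(\square\times J))$ from Remark \ref{R:meas-osci}, which indeed forces continuity of $a$ in $(y,s)$ and leads you to propose an approximation $a^{\delta}\to a$ for the merely bounded case. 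The paper instead chooses the ``transposed'' space $X=L^{2}(\square\times J;C(\overline{\Omega\times I}))$: the continuity requirement then falls on the $(x,t)$-factor $\phi(x)\psi(t)$, which is smooth, while the $(y,s)$-factor ${}^t\!a(y,s)B(y)c(s)$ need only be in $L^{2}(\square\times J)$. Condition \eqref{ad1} is then checked directly by the mean-value property (Proposition \ref{mean}) applied to the periodic function $|{}^t\!a(y,s)B(y)c(s)|^{2}\in L^{1}(\square\times J)$, and \eqref{ad2} by the covering estimate of Remark \ref{rem2.5}(ii); no regularity of $a$ beyond \eqref{ellip} is used, and no approximation is needed. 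Your fallback approximation argument would also close the gap (the error is controlled by $\|(a^{\delta}-a)(\tfrac{x}{\e},\tfrac{t}{\e^{r}})\|_{L^{2}(\Omega\times I)}\le C\|a^{\delta}-a\|_{L^{2}(\square\times J)}$ uniformly in $\e$, again by periodicity, together with the uniform bound on $\nabla v_{\e_n}$), but it is a detour. Finally, your extraction of a further subsequence for $(a_{\e_n}\nabla v_{\e_n})$ via Theorem \ref{multicpt} is harmless but unnecessary: once $\nabla v_{\e_n}\wtts\nabla v_0+\nabla_y z$ is known, Lemma \ref{zhikov} applied to the admissible test field already yields the convergence of the flux along the same subsequence, with no need to identify an a priori limit $\chi$.
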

\begin{proof}
By (i) of Lemma \ref{bdd} and \eqref{strongconvsol4} of Lemma \ref{strongconvofsol} (see Remark \ref{ysindeprmk}), all the assumptions of Theorem \ref{gradientcpt} are satisfied. Hence the assertion \eqref{grawttssol} follows immediately from Theorem \ref{gradientcpt}. 

In order to prove \eqref{grawtts}, we shall show that
\[
\Phi\left(x,t,y,s\right):=\tenchi a(y,s)\phi(x)B(y)\psi(t)c(s)
\]
is an admissible test function (for the weak space-time two-scale convergence in $[ L^2(\Omega\times I \times \square \times J)]^N$) for each $\phi\in C^{\infty}_{\rm c}(\Omega)$, $B\in [ C^{\infty}_{\mathrm{per}}(\square)]^N$, $\psi\in C^{\infty}_{\rm c}(I)$ and $c\in C^{\infty}_{\mathrm{per}}(J)$.
First, we can check \eqref{ad1} by noting that
\begin{align*}
\lim_{\e\to 0_+} \left\|\Phi\left(x,t,\tfrac{x}{\e},\tfrac{t}{\e^r}\right)\right\|_{L^{2}(\Omega\times I)}^{2}
&= \lim_{\e\to 0_+}\int_0^T\int_{\Omega}\left|\tenchi a\left(\tfrac{x}{\e},\tfrac{t}{\e^r}\right)B\left(\tfrac{x}{\e}\right)c\left(\tfrac{t}{\e^r}\right)\right|^{2}\left|\phi(x)\psi(t)\right|^{2}\,dxdt\\
&\stackrel{\eqref{mean1}}{=} \int_0^T\int_{\Omega}\left\langle|\tenchi a(y,s)B(y)c(s)|^{2}\right\rangle_{y,s}\left|\phi(x)\psi(t)\right|^{2}\, dxdt\\
&= \left\|\Phi\left(x,t,y,s\right)\right\|_{L^{2}(\Omega\times I\times\square\times J)}^{2}.
\end{align*}
Here we also used Proposition \ref{mean}. We next set a separable space $X:=L^2(\square\times J;C(\overline{\Omega\times I})) \subset L^2(\Omega\times I\times \square \times J)$ equipped with norm $\|\cdot\|_X=\|\cdot\|_{L^2(\square\times J;C(\overline{\Omega\times I}))}$.
Then we have
\begin{align*}
\left\|\Phi\left(x,t,\tfrac{x}{\e},\tfrac{t}{\e^r}\right)\right\|_{L^{2}(\Omega\times I)}
&\le
\left\|\tenchi a\left(\tfrac{x}{\e},\tfrac{t}{\e^r}\right)B\left(\tfrac{x}{\e}\right)c\left(\tfrac{t}{\e^r}\right)\right\|_{L^2(\Omega\times I)}
\|\phi(x)\psi(t)\|_{C(\overline{\Omega\times I})}
\\
&\le C( \Omega,I)\|\tenchi a(y,s)B(y)c(s)\|_{L^2(\square\times J)}\|\phi(x)\psi(t)\|_{C(\overline{\Omega\times I})}
\\
&=C( \Omega, I) \left\|\Phi\left(x,t,y,s\right)\right\|_{ X }
\end{align*}
for some constant $C(\Omega, I) > 0$ (see (ii) of Remark \ref{rem2.5}). Thus \eqref{ad2} has been checked. Therefore by Lemma \ref{zhikov} along with \eqref{grawttssol}, we obtain
\begin{align*}
\lefteqn{
\lim_{\e_n\to 0_+} \int_0^T\int_{\Omega}a\left(\tfrac{x}{\e_n},\tfrac{t}{\e^r_n}\right)\nabla v_{\e_n}(x,t)\cdot \phi(x)B\left(\tfrac{x}{\e_n}\right)\psi(t)c\left(\tfrac{t}{\e_n^r}\right)\, dxdt 
}\\
&= \vierint a(y,s)\left(\nabla v_{0}(x,t)+\nabla_y z(x,t,y,s)\right)\cdot \phi(x)B\left(y\right)\psi(t)c\left(s\right)\, dZ.
\end{align*}
This completes the proof.
\end{proof}

We are now in a position to prove Theorem \ref{thm1}. 

\begin{proof}[Proof of Theorem \ref{thm1}]
Recalling $u_{\e}=v_{\e}^{1/p}$ (i.e., $v_\e = u_\e^p = |u_\e|^{p-1}u_\e$)  and setting $u_0:=v_0^{1/p}$, we have already proved \eqref{thm1pf1}--\eqref{thm1pf2} in Lemmas \ref{strongconvofsol} and \ref{lemma5.1}, respectively. 
So it remains to prove that $v_0$ solves \eqref{homeq} with a function $j_{\rm hom}$ given by \eqref{jhom}.  
By \eqref{weakform} , \eqref{strongcon2sol4} and \eqref{grawtts}, we deduce that, for any $\phi\in C^{\infty}_{\rm c}(\Omega)$ and $\psi\in C^{\infty}_{\rm c}(I)$,  
\begin{align*}
\lefteqn{
\int_0^T\int_{\Omega}f(x,t)\phi(x)\psi(t)\, dxdt
}\\
&= \lim_{\e_n\to 0_+}\int_0^T\int_{\Omega}f_{\e_n}(x,t)\phi(x)\psi(t)\, dxdt\nonumber\\
&\stackrel{\eqref{weakform}}{=} \lim_{\e_n\to 0_+}\int_0^T \left\langle \partial_t v_{\e_n}(t)^{1/p},\phi\right\rangle_{H^1_0(\Omega)} \psi(t) \, dt\nonumber\\
&\quad +\lim_{\e_n\to 0_+}\int_0^T\int_{\Omega} a \left(\tfrac{x}{\e_n},\tfrac{t}{\e_n^r} \right) \nabla v_{\e_n}(x,t)\cdot\nabla \phi(x)\psi(t)\, dxdt\nonumber\\
&\stackrel{\eqref{strongcon2sol4}, \eqref{grawtts}}{=} \int_0^T\left\langle\partial_t v_0(t)^{1/p},\phi\right\rangle_{H^1_0(\Omega)}\psi(t)\, dt\nonumber\\
 &\quad +\int_0^T\int_{\Omega}\Bigl\langle a(y,s)\bigl(\nabla v_0(x,t)+\nabla_y z(x,t,y,s)\bigl)\Bigl\rangle_{y,s}\cdot\nabla\phi(x)\psi(t)\, dxdt.\nonumber
\end{align*}
Set
\begin{equation}\label{vjhom}
j_{\rm hom}(x,t):=\int_0^1\int_{\square}a(y,s)\left(\nabla v_0(x,t)+\nabla_yz(x,t,y,s)\right)\, dyds.
\end{equation}
Then $v_0$ turns out to be a weak solution of
\begin{equation}\label{analhomeq}
 \left\{
   \begin{aligned}
    \partial_tv_{0}(x,t)^{1/p}&=\dv\, j_{\rm hom}(x,t)+f(x,t), &&(x,t)\in \Omega\times I,\\
    v_{0}(x,t)&=0, &&(x,t)\in \partial\Omega\times I,\\
    v_{0}(x,0)^{1/p}&=u^0(x), &&x\in \Omega 
   \end{aligned}
 \right.
\end{equation}
by the arbitrariness of $\phi$ and $\psi$ as well as the densely of $C^{\infty}_{\rm c}(\Omega)$ in $H^1_0(\Omega)$. Therefore $u_0=v_0^{1/p}$ satisfies \eqref{homeq}.
\end{proof}

\subsection{Proof of Theorem \ref{thm2}}

We next prove Theorem \ref{thm2}. To this end, we need  the following

\begin{lem}[Two-scale convergence of $(\nabla v_{\e}^{1/p})$ for $p\in(0,2)$]\label{lemma5.2}
Under the same assumptions of Lemma \ref{strongconv2}, let $v_{\e}\in L^2(I;H^1_0(\Omega))$ be the unique weak solution to \eqref{anal} for $0<p<2$ and let $v_{0}$ be a weak limit of $(v_{\e_n}) $ in $L^2(I;H^1_0(\Omega))$ as  a sequence  $\e_n\to 0_+$. Then there exist a {\rm (}not relabeled{\rm\/)} subsequence of $(\e_n)$ and a function $w\in L^{2}(\Omega\times I;L^{2}(J;H^{1}_{\mathrm{per}}(\square)/\R))$ such that 
\begin{equation}\label{grawtts2}
 \nabla v_{\e_n}^{1/p} \wtts \nabla v_{0}^{1/p}+\nabla_y w \text{ in } \left[L^{2}(\Omega\times I\times\square\times J)\right]^N.
\end{equation}
\end{lem}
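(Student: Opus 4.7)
\textbf{Proof proposal for Lemma \ref{lemma5.2}.} The plan is to verify that all the hypotheses of Theorem \ref{gradientcpt} (the weak space-time two-scale compactness theorem for gradients) are in place for the sequence $(v_{\e_n}^{1/p})$ with exponent $q=2$, and then simply invoke it. In other words, this lemma is set up to be a corollary of Theorem \ref{gradientcpt} applied to $v_\e^{1/p}$ instead of $v_\e$, made possible by the extra regularity $(v_\e^{1/p}) \subset L^2(I; H^1_0(\Omega))$ that has already been established in Lemma \ref{bdd} (iv) under the present hypotheses.

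First I would record the needed preliminary facts. By Lemma \ref{bdd} (iv) the sequence $(v_{\e_n}^{1/p})$ is bounded in $L^2(I; H^1_0(\Omega)) \subset L^2(I; W^{1,2}(\Omega))$. By Lemma \ref{strongconv2} (see \eqref{strongcon2sol2} and \eqref{strongcon2sol3}), $v_{\e_n}^{1/p} \to v_0^{1/p}$ weakly in $L^2(I; H^1_0(\Omega))$ and strongly in $L^2(\Omega \times I)$; in particular, $v_0^{1/p} \in L^2(I; H^1_0(\Omega))$, so it belongs to $L^2(I \times \square \times J; W^{1,2}(\Omega))$ when viewed as a function independent of $(y,s)$. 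The strong $L^2$-convergence of $(v_{\e_n}^{1/p})$ to $v_0^{1/p}$ together with the uniform $L^2$-bound implies, via Remark \ref{ysindeprmk}, that
\[
v_{\e_n}^{1/p} \wtts v_0^{1/p} \quad \text{in } L^2(\Omega \times I \times \square \times J),
\]
where the two-scale limit is independent of $(y,s)$.

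Then I would choose an admissible test function space $X$ exactly as in the proof of Lemma \ref{lemma5.1} (for instance $X = L^2(\square \times J; C(\overline{\Omega \times I}))$, which was checked there to be admissible and contains the tensor-product test functions of the form $\phi(x)B(y)\psi(t)c(s)$ needed for \eqref{grawtts2}). With the uniform bound in $L^2(I; W^{1,2}(\Omega))$, the weak two-scale convergence to a limit $v_0^{1/p}$ independent of $(y,s)$ with $v_0^{1/p} \in L^2(I\times\square\times J; W^{1,2}(\Omega))$, and $X$ admissible, all the hypotheses of Theorem \ref{gradientcpt} are verified for $q=2$. Applying that theorem directly yields a (not relabeled) subsequence and a function
\[
w \in L^2\!\left(\Omega \times I;\, L^2(J; W^{1,2}_{\rm per}(\square)/\R)\right) = L^2\!\left(\Omega \times I;\, L^2(J; H^1_{\rm per}(\square)/\R)\right)
\]
with the desired two-scale convergence $\nabla v_{\e_n}^{1/p} \wtts \nabla v_0^{1/p} + \nabla_y w$ in $[L^2(\Omega \times I \times \square \times J)]^N$.

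There is essentially no obstacle beyond bookkeeping: the real work was already performed in proving Lemma \ref{bdd} (iv) (to get $v_\e^{1/p}$ bounded in $L^2(I; H^1_0(\Omega))$, which for the singular case $p \in (0,1)$ required the non-trivial chain-rule/approximation argument with the truncations $\xi_{\ell,n}$), and in proving Lemma \ref{strongconv2} (to identify the limit as $v_0^{1/p}$). The only point that needs a brief verification is that the limit $v_0^{1/p}$ lies in the space demanded by Theorem \ref{gradientcpt}, namely $L^2(I\times\square\times J; W^{1,2}(\Omega))$, but this is immediate since $v_0^{1/p} \in L^2(I; H^1_0(\Omega))$ and does not depend on $(y,s)$.
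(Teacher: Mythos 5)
Your proposal is correct and matches the paper's argument essentially verbatim: both cite Lemma \ref{bdd}(iv) for the uniform $L^2(I;H^1_0(\Omega))$ bound on $v_\e^{1/p}$, use \eqref{strongcon2sol3} together with Remark \ref{ysindeprmk} to see that $v_{\e_n}^{1/p}$ two-scale converges to the $(y,s)$-independent limit $v_0^{1/p}$, and then invoke Theorem \ref{gradientcpt}. Your extra paragraph spelling out an admissible test space and the membership $v_0^{1/p}\in L^2(I\times\square\times J;W^{1,2}(\Omega))$ is harmless bookkeeping the paper leaves implicit.
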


\begin{proof}
Recall (iv) of Lemma \ref{bdd} and \eqref{strongcon2sol3} of Lemma \ref{strongconv2} (see Remark \ref{ysindeprmk}). Hence employing Theorem \ref{gradientcpt}, 
one can take a (not relabeled) subsequence of $(\e_n)$ and $w\in L^{2}(\Omega\times I;L^{2}(J;H^{1}_{\mathrm{per}}(\square)/\R))$ such that
\[
\nabla v_{\e_n}^{1/p}\wtts \nabla v_{0}^{1/p}+\nabla_y w\quad \text{ in }\ [L^{2}(\Omega\times\ I\times \square\times J)]^N.
\]
This completes the proof. 
\end{proof}

The following lemma will play a crucial role to  derive  a representation of the homogenized matrix $a_{\rm hom}$.
\begin{lem}\label{benri}
Under the same assumption as in Lemma \ref{lemma5.2}, it holds that
\begin{align}\label{mostest}
  \lefteqn{\lim_{\e_n\to 0_+}\e_n^{2-r}\int_0^T\int_{\Omega}\frac{v_{\e_n}(x,t)^{1/p}-v_0(x,t)^{1/p}}{\e_n}\phi(x)b\left(\tfrac{x}{\e_n}\right)
      \psi(t)\partial_s c\left(\tfrac{t}{\e_n^r}\right)\, dxdt}\\
&= \vierint a(y,s) \left( \nabla v_0(x,t) + \nabla_y z(x,t,y,s) \right) \cdot \phi(x) \nabla_yb(y)\psi(t)c(s)\, dZ\nonumber
\end{align}
for any $\phi\in C^{\infty}_{\rm c}(\Omega)$, $b\in C^{\infty}_{\mathrm{per}}(\square)$, $\psi \in C^{\infty}_{\rm c}(I)$ and $c\in C^{\infty}_{\mathrm{per}}(J)$.
\end{lem}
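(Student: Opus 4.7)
The strategy is to pair the weak formulation of \eqref{anal} with an oscillating test function of the form $w(x,t) = \phi(x)b(\tfrac{x}{\e})$ (which indeed lies in $H^1_0(\Omega)$), then multiply by $\psi(t)c(\tfrac{t}{\e^r})$ and integrate in time, then integrate by parts in $t$ to transfer $\partial_t$ onto the test function, thereby producing the factor $\tfrac{1}{\e^r}\partial_s c(\tfrac{t}{\e^r})$ that appears on the left-hand side of \eqref{mostest}. Concretely, after this integration by parts in $t$ (boundary terms vanish since $\psi \in C^\infty_c(I)$) and after using $\nabla\bigl[\phi(x)b(\tfrac{x}{\e})\bigr]=b(\tfrac{x}{\e})\nabla\phi(x)+\tfrac{1}{\e}\phi(x)\nabla_y b(\tfrac{x}{\e})$, I obtain the identity
\begin{align*}
\e^{1-r}\int_0^T\!\!\int_\Omega v_{\e}^{1/p}\,\phi\, b(\tfrac{x}{\e})\,\psi\,\partial_s c(\tfrac{t}{\e^r})\,dxdt
&= -\e\int_0^T\!\!\int_\Omega v_\e^{1/p}\,\phi\, b(\tfrac{x}{\e})\,\psi'\, c(\tfrac{t}{\e^r})\,dxdt\\
&\quad -\e\int_0^T\langle f_\e(t),\phi\, b(\tfrac{\cdot}{\e})\rangle_{H^1_0(\Omega)}\psi\, c(\tfrac{t}{\e^r})\,dt\\
&\quad +\e\int_0^T\!\!\int_\Omega a(\tfrac{x}{\e},\tfrac{t}{\e^r})\nabla v_\e \cdot b(\tfrac{x}{\e})\nabla\phi\,\psi\, c(\tfrac{t}{\e^r})\,dxdt\\
&\quad +\int_0^T\!\!\int_\Omega a(\tfrac{x}{\e},\tfrac{t}{\e^r})\nabla v_\e\cdot \phi\,\nabla_y b(\tfrac{x}{\e})\,\psi\, c(\tfrac{t}{\e^r})\,dxdt.
\end{align*}

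Next, I pass to the limit $\e_n\to 0_+$ in each term. The first three terms on the right-hand side carry an explicit prefactor $\e$ and involve factors that are uniformly bounded in $\e$ by Lemma~\ref{bdd} (in particular, $(v_\e^{1/p})$ is bounded in $L^\infty(I;L^{p+1}(\Omega))$, $(f_\e)$ is bounded in $L^2(I;H^{-1}(\Omega))$, and $(a(\tfrac{x}{\e},\tfrac{t}{\e^r})\nabla v_\e)$ is bounded in $[L^2(\Omega\times I)]^N$); hence they vanish in the limit. For the fourth term, which does not carry a prefactor $\e$, I apply the two-scale convergence \eqref{grawtts} from Lemma~\ref{lemma5.1}, after verifying that $\Phi(x,t,y,s):=\phi(x)\nabla_y b(y)\psi(t)c(s)$ is an admissible test function (this is essentially the computation already carried out in the proof of Lemma~\ref{lemma5.1}, taking $B=\nabla_y b$). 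This limit produces exactly the right-hand side of \eqref{mostest}.

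To complete the argument, I must subtract the contribution of $v_0^{1/p}$ on the left-hand side and show that
\begin{equation*}
\e^{1-r}\int_0^T\!\!\int_\Omega v_0^{1/p}\,\phi\, b(\tfrac{x}{\e})\,\psi\,\partial_s c(\tfrac{t}{\e^r})\,dxdt \;\longrightarrow\; 0.
\end{equation*}
The key observation is that $\partial_s c(\tfrac{t}{\e^r})=\e^r\,\partial_t\bigl[c(\tfrac{t}{\e^r})\bigr]$, so integrating by parts in $t$ and using the regularity $v_0^{1/p}\in W^{1,2}(I;H^{-1}(\Omega))$ guaranteed by Theorems~\ref{T:wp} and~\ref{thm1} (note that $\phi\, b(\tfrac{\cdot}{\e})\in H^1_0(\Omega)$, with norm uniformly bounded in $\e$ up to the factor $\tfrac{1}{\e}$ coming from $\nabla_y b$; however here only the $H^{-1}$--$H^1_0$ pairing with $\phi\, b$ itself appears, whose $H^1_0$ norm stays uniformly bounded), we see that this integral is $O(\e^r)$, hence the whole expression is $O(\e)$ and vanishes in the limit. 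Finally, rewriting $\e^{1-r}=\e^{2-r}/\e$ yields precisely \eqref{mostest}.

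The main technical point, and the place I would pay most attention, is the uniform control of $\|\phi\, b(\tfrac{\cdot}{\e})\|_{H^1_0(\Omega)}$ appearing in the pairing $\langle \partial_t v_0^{1/p},\phi\, b(\tfrac{\cdot}{\e})\rangle_{H^1_0(\Omega)}$: the $\nabla$ of $\phi\,b(\tfrac{\cdot}{\e})$ generates a term of order $\tfrac{1}{\e}$, which if not treated carefully would destroy the estimate. The right way to avoid this is to IBP only in time (not in space) on the $v_0^{1/p}$-term, exploiting that $v_0^{1/p}$ is an $H^{-1}(\Omega)$-valued $W^{1,2}$ function while $\phi\, b(\tfrac{\cdot}{\e})$ itself is bounded in $L^2(\Omega)$ uniformly in $\e$ via the mean-value property (Proposition~\ref{mean}), so the $\tfrac{1}{\e}$ issue does not actually arise.
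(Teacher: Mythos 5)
Your strategy — testing the weak form of \eqref{anal} with $\phi(x)b(\tfrac{x}{\e})$, multiplying by $\psi(t)c(\tfrac{t}{\e^r})$, integrating by parts in $t$, and multiplying through by $\e$ — yields the displayed identity for $v_\e^{1/p}$, and your treatment of the first three right-hand-side terms is essentially correct (for the $f_\e$ term one uses the $L^1(I;L^{3-p})$ bound, not the $H^{-1}$--$H^1_0$ duality, precisely to avoid the $\e^{-1}$ coming from $\nabla[\phi\,b(\tfrac{\cdot}{\e})]$). The genuine gap is in passing from $v_\e^{1/p}$ to the difference $v_\e^{1/p}-v_0^{1/p}$ of \eqref{mostest}. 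You claim
\[
\e^{1-r}\int_0^T\!\!\int_\Omega v_0^{1/p}\,\phi\, b(\tfrac{x}{\e})\,\psi\,\partial_s c(\tfrac{t}{\e^r})\,dxdt \longrightarrow 0,
\]
and argue via IBP in $t$ that the surviving pairing $\langle\partial_t v_0^{1/p},\phi\,b(\tfrac{\cdot}{\e})\rangle_{H^1_0(\Omega)}$ is harmless because ``the $H^1_0$ norm stays uniformly bounded.'' It does not: $\|\nabla[\phi\,b(\tfrac{\cdot}{\e})]\|_{L^2(\Omega)}$ is of order $\e^{-1}$, and $\partial_t v_0^{1/p}$ is only known to lie in $L^2(I;H^{-1}(\Omega))$, so this pairing is forced to use the $H^{-1}$--$H^1_0$ duality. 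After IBP you are left with $\e\int_0^T\langle\partial_t v_0^{1/p},\phi\,b(\tfrac{\cdot}{\e})\rangle\psi\, c(\tfrac{t}{\e^r})\,dt$, which your estimate controls only as $\e\cdot O(\e^{-1})=O(1)$, not $o(1)$.

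The only way to make this term vanish without assuming extra time regularity of $v_0^{1/p}$ is to substitute the homogenized equation $\partial_t v_0^{1/p}=\dv\, j_{\rm hom}+f$ from \eqref{analhomeq}, integrate by parts in space, and observe that the dangerous $O(1)$ piece $\int_0^T\!\int_\Omega j_{\rm hom}\cdot\phi\,\nabla_y b(\tfrac{x}{\e})\,\psi\,c(\tfrac{t}{\e^r})\,dxdt$ tends to $\int_0^T\!\int_\Omega j_{\rm hom}\cdot\phi\,\langle\nabla_y b\rangle_y\,\psi\,\langle c\rangle_s\,dxdt=0$ by Proposition \ref{mean} and periodicity of $b$. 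But that fix \emph{is} the paper's proof: the paper subtracts the weak forms of \eqref{anal} and \eqref{analhomeq} before testing, so the cancellation is automatic and the difference quotient appears directly in the identity. Once your gap is filled your argument coincides with the paper's; as written it is not an alternative route, and it does not close.
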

\begin{proof}
Subtracting weak forms for \eqref{anal} and \eqref{analhomeq} and testing it by $\phi(x) b(x/\e_n) \psi(t) c(t/\e_n^r)$, we observe that
\begin{align*}
\lefteqn{\int_0^T\int_{\Omega}\left(f(x,t)-f_{\e_n}(x,t)\right)\phi(x)b\left(\tfrac{x}{\e_n}\right)\psi(t)c\left(\tfrac{t}{\e_n^r}\right)\, dxdt}\\
&= -\int_0^T\int_{\Omega}\left(v_0(x,t)^{1/p}-v_{\e_n}(x,t)^{1/p}\right)\phi(x)b\left(\tfrac{x}{\e_n}\right)\partial_t\left[\psi(t)c\left(\tfrac{t}{\e_n^r}\right)\right]\, dxdt\\
&\quad +\int_0^T\int_{\Omega}\left[ j_{\rm hom}(x,t)-a\left(\tfrac{x}{\e_n},\tfrac{t}{\e_n^{r}}\right)\nabla v_{\e_n}(x,t) \right] \cdot\nabla\left[ \phi(x)b\left(\tfrac{x}{\e_n}\right)\right]\psi(t)c\left(\tfrac{t}{\e_n^r}\right)\, dxdt\\
&= -\int_0^T\int_{\Omega}\frac{v_{0}(x,t)^{1/p}-v_{\e_n}(x,t)^{1/p}}{\e_n}\phi(x)b\left(\tfrac{x}{\e_n}\right)\\
&\qquad \times \left[ \e_n\partial_t\psi(t)c\left(\tfrac{t}{\e_n^r}\right)+\e_n^{1-r}\psi(t)\partial_s c\left(\tfrac{t}{\e_n^r}\right) \right]\, dxdt\\
&\quad + \int_0^T\int_{\Omega}\left[ j_{\rm hom}(x,t)-a\left(\tfrac{x}{\e_n},\tfrac{t}{\e_n^{r}}\right)\nabla v_{\e_n}(x,t) \right] \cdot\left[ \nabla\phi(x)b\left(\tfrac{x}{\e_n}\right) + \phi(x)\e_n^{-1}\nabla_yb\left(\tfrac{x}{\e_n}\right)\right]\\
&\qquad \times\psi(t)c\left(\tfrac{t}{\e_n^r}\right)\, dxdt.
\end{align*}
Hence multiplying both sides by $\e_n$ and rearranging terms, we have
\begin{align*}
  \lefteqn{-\e_n^{2-r}\int_0^T\int_{\Omega}\frac{v_{0}(x,t)^{1/p}-v_{\e_n}(x,t)^{1/p}}{\e_n}\phi(x)b\left(\tfrac{x}{\e_n}\right)
      \psi(t)\partial_s c\left(\tfrac{t}{\e_n^r}\right)\, dxdt}\\
 &\quad
   -\int_0^T\int_{\Omega}a\left(\tfrac{x}{\e_n},\tfrac{t}{\e_n^{r}}\right)\nabla v_{\e_n}(x,t)
        \cdot \phi(x)\nabla_yb\left(\tfrac{x}{\e_n}\right)\psi(t)c\left(\tfrac{t}{\e_n^r}\right)\, dxdt\nonumber\\
 &=
   \e_n\int_0^T\int_{\Omega}\left(f(x,t)-f_{ \e_n }(x,t)\right)\phi(x)b\left(\tfrac{x}{\e_n}\right)\psi(t)c\left(\tfrac{t}{\e_n^r}\right)\, dxdt\nonumber\\ 
 &\quad
   +\e_n\int_0^T\int_{\Omega} (v_{0}(x,t)^{1/p}-v_{\e_n}(x,t)^{1/p})\phi(x)b\left(\tfrac{x}{\e_n}\right)\partial_t\psi(t)c\left(\tfrac{t}{\e_n^r}\right)\, dxdt\nonumber\\
 &\quad
   -\e_n\int_0^T\int_{\Omega}j_{\rm hom}(x,t)\cdot\nabla\phi(x)b\left(\tfrac{x}{\e_n}\right)\psi(t)c\left(\tfrac{t}{\e_n^r}\right)\, dxdt\\
 &\quad
   -\int_0^T\int_{\Omega}j_{\rm hom}(x,t)\cdot\phi(x)\nabla_yb\left(\tfrac{x}{\e_n}\right)\psi(t)c\left(\tfrac{t}{\e_n^r}\right)\, dxdt\\
 &\quad
   +\e_n\int_0^T\int_{\Omega}a\left(\tfrac{x}{\e_n},\tfrac{t}{\e_n^{r}}\right)\nabla v_{\e_n}(x,t)\cdot\nabla\phi(x)b\left(\tfrac{x}{\e_n}\right)\psi(t)c\left(\tfrac{t}{\e_n^r}\right)\, dxdt.
 \end{align*} 
Then, by \eqref{grawtts}, the second term of the left-hand side is convergent as $\e_n\to 0_+$, and moreover, the first three and last terms of the right-hand side vanish as $\e_n\to 0_+$ due to uniform estimates established in Lemma \ref{strongconvofsol}. Taking the limit of both sides as $\e_n \to 0_+$ and employing \eqref{mean1} of Proposition \ref{mean} and \eqref{grawtts} of Lemma \ref{lemma5.1}, we obtain
\begin{align*}
 \lefteqn{\lim_{\e_n\to 0_+}\e_n^{2-r}\int_0^T\int_{\Omega}\frac{v_{\e_n}(x,t)^{1/p}-v_0(x,t)^{1/p}}{\e_n}\phi(x)b\left(\tfrac{x}{\e_n}\right)
\psi(t)\partial_s c\left(\tfrac{t}{\e_n^r}\right)\, dxdt}\\
&\quad -\vierint a(y,s) \left( \nabla v_0(x,t) + \nabla_y z(x,t,y,s) \right) \cdot \phi(x) \nabla_yb(y)\psi(t)c(s)\, dZ 
\nonumber\\
 &= -\lim_{\e_n\to0_+}\int_0^T\int_{\Omega}j_{\rm hom}(x,t)\cdot\phi(x)\nabla_yb\left(\tfrac{x}{\e_n}\right)\psi(t)c\left(\tfrac{t}{\e_n^r}\right)\, dxdt\\
 &\stackrel{\eqref{mean1}}= -\int_0^T\int_{\Omega}j_{\rm hom}(x,t)\cdot\phi(x) \langle \nabla_yb(y)\rangle_y\psi(t)\langle c(s)\rangle_s\, dxdt\ =0.
\end{align*}
Here we also used the fact $\langle \nabla_yb(y) \rangle_y = 0 $ by the periodicity of $b(\cdot)$. 
\end{proof}

The following lemma provides a relation between the two functions $z = z(x,t,y,s)$ and $w = w(x,t,y,s)$ appeared in the weak space-time two-scale convergence of gradients $\nabla v_{\e}$ and $\nabla v_{\e}^ {1/p}$ (see Lemmas \ref{lemma5.1} and \ref{lemma5.2}), respectively, and it will play a  very  crucial role in the proof of Theorem \ref{thm2}, in particular, at the critical scale $r = 2$ to reveal strong interplay between microscopic and macroscopic structures through the homogenization for nonlinear diffusion.

\begin{lem}[Relation between correctors]\label{relationvw}
Under the same assumption as in Lemma \ref{lemma5.2}, 
let $z, w\in L^2(\Omega\times I;L^2(J;H^1_{\mathrm{per}}(\square)/\R))$ be functions appeared in Lemmas \ref{lemma5.1} and \ref{lemma5.2}, respectively. Then it holds that 
\begin{align*}
w(x,t,y,s) &= \frac{1}{p}|v_0(x,t)|^{(1-p)/p}z(x,t,y,s) \quad\text{ for a.e.~} (x,t,y,s)\in \Omega\times I\times \square\times J,
\end{align*}
if $0<p<1$, and moreover,
\begin{align*}
z(x,t,y,s) &=p|v_0(x,t)|^{(p-1)/p}w(x,t,y,s) \quad\text{ for a.e.~} (x,t,y,s)\in \Omega\times I\times \square\times J,
\end{align*}
if $1<p<2$.
\end{lem}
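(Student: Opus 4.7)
My plan is to derive the identity by passing to the two-scale limit in the pointwise chain rule from Lemma \ref{L:chain_rule}. I treat the case $p\in(0,1)$; the case $p\in(1,2)$ is completely analogous. Starting from $\nabla v_{\e_n}^{1/p} = \tfrac{1}{p}|v_{\e_n}|^{(1-p)/p}\nabla v_{\e_n}$, I test both sides against the oscillating vector field $\phi(x)B(x/\e_n)\psi(t)c(t/\e_n^r)$ with $\phi\in C^\infty_{\rm c}(\Omega)$, $\psi\in C^\infty_{\rm c}(I)$, $B\in[C^\infty_{\rm per}(\square)]^N$ and $c\in C^\infty_{\rm per}(J)$, and then let $\e_n\to 0_+$.

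The left-hand side converges, by the weak two-scale convergence \eqref{grawtts2}, to $\vierint(\nabla v_0^{1/p}+\nabla_y w)\cdot \phi(x) B(y)\psi(t) c(s)\, dZ$. For the right-hand side, the key point is to combine the weak two-scale convergence of $\nabla v_{\e_n}$ from Lemma \ref{lemma5.1} with a \emph{strong} convergence of the nonlinear prefactor $|v_{\e_n}|^{(1-p)/p}=|v_{\e_n}^{1/p}|^{1-p}$. By Lemma \ref{strongconv2}, $v_{\e_n}^{1/p}\to v_0^{1/p}$ strongly in $L^2(\Omega\times I)$ and hence a.e.~along a subsequence. Together with the uniform bound of $v_{\e_n}^{1/p}$ in $L^\infty(I;L^2(\Omega))$ from (iii) of Lemma \ref{bdd} and the exponent $1-p\in(0,1)$, this gives $(|v_{\e_n}|^{(1-p)/p})$ bounded in $L^\infty(I;L^{2/(1-p)}(\Omega))$ with $2/(1-p)>2$, so Vitali's theorem yields $|v_{\e_n}|^{(1-p)/p}\to|v_0|^{(1-p)/p}$ strongly in $L^2(\Omega\times I)$. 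Applying Lemma \ref{zhikov} to the admissible test function $|v_0|^{(1-p)/p}\phi(x) B(y)\psi(t) c(s)\in L^2(\Omega\times I;C_{\rm per}(\square\times J))$ then passes to the limit on the right-hand side, producing $\tfrac{1}{p}\vierint |v_0|^{(1-p)/p}(\nabla v_0+\nabla_y z)\cdot \phi(x) B(y)\psi(t) c(s)\, dZ$.

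Matching the two limits and cancelling $\nabla v_0^{1/p}=\tfrac{1}{p}|v_0|^{(1-p)/p}\nabla v_0$, which is the chain rule for the homogenized limit provided by Remark \ref{R:chain_rule}, I obtain
\[
\vierint \Bigl[\nabla_y w(x,t,y,s) - \tfrac{1}{p}|v_0(x,t)|^{(1-p)/p}\nabla_y z(x,t,y,s)\Bigr]\cdot \phi(x)B(y)\psi(t)c(s)\, dZ=0
\]
for all admissible $\phi, B, \psi, c$, whence $\nabla_y w = \tfrac{1}{p}|v_0|^{(1-p)/p}\nabla_y z$ a.e.~in $\Omega\times I\times\square\times J$. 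Since both $w(x,t,\cdot,s)$ and $\tfrac{1}{p}|v_0(x,t)|^{(1-p)/p}z(x,t,\cdot,s)$ have zero mean in $y$ (they lie in $H^1_{\rm per}(\square)/\R$), the identity lifts from gradients to functions, giving the claimed formula. For $p\in(1,2)$, the same scheme applied to $\nabla v_\e = p|v_\e|^{(p-1)/p}\nabla v_\e^{1/p}$ produces $z = p|v_0|^{(p-1)/p}w$.

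The main technical obstacle is the strong $L^2$ convergence of the nonlinear coefficient $|v_{\e_n}|^{(1-p)/p}$: the exponent $(1-p)/p$ is unbounded as $p\to 0_+$, so one really needs the auxiliary $L^\infty(I;L^2(\Omega))$ bound on $v_{\e_n}^{1/p}$ from (iii) of Lemma \ref{bdd} (which in turn hinges on the extra assumption that $(f_\e)$ be bounded in $L^1(I;L^2(\Omega))$ when $p<1$, built into Lemma \ref{strongconv2}) in order to invoke Vitali's theorem. For $p\in(1,2)$ this concern is absent, since $(p-1)/p<1$ and the required $L^\infty(I;L^2(\Omega))$ control of $v_\e^{1/p}$ comes for free from the $L^\infty(I;L^{p+1}(\Omega))$ bound in (i) of Lemma \ref{bdd}.
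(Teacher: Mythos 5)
Your argument is correct and follows essentially the same route as the paper's proof: the chain rule of Lemma \ref{L:chain_rule}, testing with the oscillating field $\phi(x)B(x/\e_n)\psi(t)c(t/\e_n^r)$, passing to the two-scale limit on both sides (using \eqref{grawtts2} on the left and the strong convergence of the nonlinear prefactor paired with \eqref{grawttssol} on the right), and then lifting the resulting identity between $y$-gradients to the functions themselves via the zero-mean normalization in $H^1_{\rm per}(\square)/\R$. The only difference is technical: where you use a.e.\ convergence, the $L^\infty(I;L^2(\Omega))$ bound from (iii) of Lemma \ref{bdd} and Vitali's theorem, the paper obtains $|v_{\e_n}|^{(1-p)/p}\to|v_0|^{(1-p)/p}$ strongly in $L^{2/(1-p)}(\Omega\times I)$ directly from \eqref{strongcon2sol3} via the elementary estimate $\bigl||a|^{1-p}-|b|^{1-p}\bigr|\le C|a-b|^{1-p}$ (and analogously with exponent $(p-1)/p$ when $1<p<2$), so the auxiliary bound you describe as indispensable is in fact not needed.
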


\begin{proof}[Proof of Lemma $\ref{relationvw}$]
In case $0 < p < 1$, for any $\phi\in C^{\infty}_{\rm c}(\Omega)$, $B\in [C^{\infty}_{\mathrm{per}}(\square)]^N$, $\psi\in C^{\infty}_{\rm c}(I)$ and $c\in C^{\infty}_{\mathrm{per}}(J)$, by \eqref{gouseidiff}, we find that
\begin{align}
\lefteqn{
\int_0^T\int_{\Omega} \nabla v_{\e_n}(x,t)^{1/p} \cdot \phi(x) B \left(\tfrac{x}{\e_n}\right)\psi(t) c\left(\tfrac{t}{\e_n^r}\right)\, dxdt
}\label{0702-1}\\
&= \int_0^T\int_{\Omega}\frac{1}{p}|v_{\e_n}(x,t)|^{(1-p)/p}\nabla v_{\e_n}(x,t)\cdot \phi(x)B\left(\tfrac{x}{\e_n}\right)\psi(t)c\left(\tfrac{t}{\e_n^r}\right)\, dxdt.\nonumber
\end{align}
By \eqref{strongcon2sol3} of Lemma \ref{strongconv2}, we deduce that 
\begin{align}\label{0702-2}
\lefteqn{
\left\||v_{\e_n}|^{(1-p)/p}-|v_{0}|^{(1-p)/p}\right\|_{L^{2/(1-p)}(\Omega\times I)}^{2/(1-p)}
}\\
&\le C \int_0^T\int_{\Omega} \left|v_{\e_n}(x,t)^{1/p}-v_{0}(x,t)^{1/p}\right|^{2}\, dxdt \to 0\quad\text{ as } \ \e_n\to 0_+.\nonumber
\end{align}
Moreover, by \eqref{grawttssol}, one has
\begin{equation}
\label{03181}
\nabla v_{\e_n}(x,t)\cdot B\left(\tfrac{x}{\e_n}\right)c\left(\tfrac{t}{\e_n^r}\right) \to \left\langle \left( \nabla v_0(x,t)+\nabla_yz(x,t,y,s) \right) \cdot B(y)c(s) \right\rangle_{y,s}
\end{equation} 
weakly in $L^2(\Omega\times I)$. Hence \eqref{0702-1}, \eqref{0702-2} and \eqref{03181} yield
\begin{align*}
\lefteqn{
\lim_{\e_n\to0_+} \int_0^T\int_{\Omega}\frac{1}{p}|v_{\e_n}(x,t)|^{(1-p)/p}\nabla v_{\e_n}(x,t)\cdot \phi(x)B\left(\tfrac{x}{\e_n}\right)\psi(t)c\left(\tfrac{t}{\e_n^r}\right)\, dxdt
}\nonumber\\
&= \int_0^T\int_{\Omega}\frac{1}{p}|v_{0}(x,t)|^{(1-p)/p}\phi(x)\psi(t)\left\langle \left(\nabla v_{0}(x,t)+\nabla_y z(x,t,y,s)\right) \cdot  B(y) c(s)\right\rangle_{y,s}\, dxdt\\
&= \vierint \frac{1}{p}|v_{0}(x,t)|^{(1-p)/p} \left(\nabla v_{0}(x,t)+\nabla_y z(x,t,y,s)\right)\cdot \phi(x) B(y)\psi(t)c(s)\, dZ.
\end{align*}
On the other hand, we have proved \eqref{grawtts2} in Lemma \ref{lemma5.2}. Thus combining \eqref{0702-1} with all these facts and employing \eqref{gouseidiff} for $v_0$ (see Remark \ref{R:chain_rule}), we have 
$$
\vierint \nabla_y \left(w(x,t,y,s)-\frac{1}{p}|v_{0}(x,t)|^{(1-p)/p}z(x,t,y,s)\right)\cdot \phi(x) B(y)\psi(t)c(s)\, dZ =0.
$$
Due to the arbitrariness of test functions, we obtain
\[
\nabla_y\left(w(x,t,y,s)-\frac{1}{p}|v_0(x,t)|^{(1-p)/p}z(x,t,y,s)\right)=0\quad \text{ a.e.~in }\ \Omega \times I \times \square \times J.
\]
Therefore $w(x,t,\cdot,s)-\frac{1}{p}|v_0(x,t)|^{(1-p)/p}z(x,t,\cdot,s)$ turns out to be identically equal to a constant $C$ a.e.~in $\square$. Since $w(x,t,\cdot,s)-\frac{1}{p}|v_0(x,t)|^{(1-p)/p}z(x,t,\cdot,s)$ has zero mean in $\square$,
the constant $C$ is determined by
\[
C=\int_{\square}\left(w(x,t,y,s)-\frac{1}{p}|v_0(x,t)|^{(1-p)/p}z(x,t,y,s)\right)\, dy=0,
\]
which completes the proof for the case $0<p<1$.

In case $1 < p < 2$, as in the other case, by \eqref{gouseidiff2} and \eqref{strongcon2sol3} of Lemma \ref{strongconv2}, we find that
\begin{align}\label{prszw1}
\lefteqn{
\int_0^T\int_{\Omega}\nabla v_{\e_n}(x,t) \cdot \phi(x)B\left(\tfrac{x}{\e_n}\right)\psi(t)c\left(\tfrac{t}{\e_n^r}\right)\, dxdt  }\\
&= \int_0^T\int_{\Omega} p|v_{\e_n}(x,t)|^{(p-1)/p}\nabla v_{ \e_n  }(x,t)^{1/p} \cdot  \phi(x)B\left(\tfrac{x}{\e_n}\right)\psi(t)c\left(\tfrac{t}{\e_n^r}\right)\, dxdt\nonumber
\end{align} 
and 
\begin{align}\label{prszw2}
\lefteqn{
\left\||v_{\e_n}|^{(p-1)/p}-|v_{0}|^{(p-1)/p}\right\|_{L^{2/(p-1)}(\Omega\times I)}^{2/(p-1)}
}\\
 &\le C\int_0^T\int_{\Omega}\left|v_{\e_n}(x,t)^{1/p}-v_{0}(x,t)^{1/p}\right|^{2}\, dxdt \to 0\quad\text{ as } \ \e_n\to 0_+.\nonumber
\end{align}
Using \eqref{prszw2} and \eqref{grawtts2} of Lemma \ref{lemma5.2}, we can derive from \eqref{prszw1} that
\begin{align*}\label{prszw1}
\lefteqn{
\lim_{\e_n\to 0_+}\int_0^T\int_{\Omega} p|v_{\e_n}(x,t)|^{(p-1)/p}\nabla v_{ \e_n  }(x,t)^{1/p} \cdot  \phi(x)B\left(\tfrac{x}{\e_n}\right)\psi(t)c\left(\tfrac{t}{\e_n^r}\right)\, dxdt
}\nonumber\\
&= \int_0^T\int_{\Omega}p|v_{0}(x,t)|^{(p-1)/p}\phi(x)\psi(t)\left\langle \left(\nabla v_{0}(x,t)^{1/p}+\nabla_y w(x,t,y,s) \right)\cdot  B(y) c(s)\right\rangle_{y,s}\, dxdt\\
&= \vierint p|v_{0}(x,t)|^{(p-1)/p} \left(\nabla v_{0}(x,t)^{1/p}+\nabla_y w(x,t,y,s) \right)\cdot \phi(x)B(y)\psi(t) c(s)\, dZ.
\end{align*} 
Recall \eqref{grawttssol} of Lemma \ref{lemma5.1} along with \eqref{prszw1} and use \eqref{gouseidiff2} for $v_0$ to observe that
\begin{align*}
 \vierint \nabla_y \left(p|v_{0}(x,t)|^{(p-1)/p}w(x,t,y,s)-z(x,t,y,s)\right)\cdot \phi(x) B(y)\psi(t)c(s)\, dZ =0.
\end{align*}
The rest of proof runs as in the case $0<p<1$.
\end{proof}

We are now ready to prove Theorem \ref{thm2}. In what follows, we denote by $\Vs^*$ the dual space of a closed subspace  of $H^1_{\rm per}(\square)$,
$$
\Vs := H^1_{\rm per}(\square) /\R
$$
equipped with  norm  $\|\cdot\|_\Vs = \|\nabla_y \cdot\|_{L^2(\square)}$. Then $\Vs$ and $\Vs^*$ are reflexive Banach spaces such that
\begin{equation}\label{triplet}
\Vs \hookrightarrow H \simeq H^* \hookrightarrow \Vs^*.
\end{equation}
 Here  $H := L^2(\square) / \R$ is a (pivot) Hilbert space which consists of functions $w \in L^2(\square)$ with zero mean (i.e., $\int_\square w(y) \, d y = 0$) and whose dual space can be identified with itself, with densely defined and continuous canonical injections.

\begin{proof}[Proof of Theorem \ref{thm2}]
We first note that \eqref{t2:hyp} along with the assumptions for Theorem \ref{thm1} is equivalent to those for (iv) of Lemma \ref{bdd}. In what follows, we always take $\phi\in C^{\infty}_{\rm c}(\Omega)$, $b\in C^{\infty}_{\mathrm{per}}(\square)/\R$ (i.e., $\langle b(y)\rangle_y=0$), $\psi\in C^{\infty}_{\rm c}(I)$ and $c\in C^{\infty}_{\rm per}(J)$ as test functions, unless otherwise noted, and Lemma \ref{benri} will play a key role. 

In case $0<r<2$, making use of Corollary \ref{veryweak} along with Lemma \ref{lemma5.2}, we find that the first term of \eqref{mostest} vanishes, that is, 
$$
\e_n^{2-r}\int_0^T\int_{\Omega}
\frac{v_{\e_n}(x,t)^{1/p}-v_0(x,t)^{1/p}}{\e_n}\phi(x)b\left(\tfrac{x}{\e_n}\right)\psi(t)\partial_s c\left(\tfrac{t}{\e_n^r}\right)\ dxdt
\to 0\quad \text{ as }\ \e_n\to 0.
$$
Here we used $\langle b(y)\rangle_y=0$ to employ Corollary \ref{veryweak}.
From the arbitrariness of $\phi\in C^{\infty}_{\rm c}(\Omega)$, $\psi\in C^{\infty}_{\rm c}(I)$ and $c\in C^{\infty}_{\mathrm{per}}(J)$, it follows from \eqref{mostest} that
\begin{equation}\label{zform}
\int_{\square}a(y,s)\bigl(\nabla v_{0}(x,t)+\nabla_y z(x,t,y,s)\bigl)\cdot\nabla_yb(y)\, dy
=0\quad\text{ for a.e.~} (x,t,s) \in \Omega\times I\times J
\end{equation}
for $b \in C^\infty_{\rm per}(\square) / \R$ (of course, it also holds for $b\in H^1_{\rm per}(\square)$ by density and $\nabla_y \langle b \rangle_y = 0$).

On the other hand, for each $k=1,2,\ldots, N$, let $\Phi_k\in L^2(J;H^1_{\mathrm{per}}(\square)/\R)$ be the unique weak solution to the following cell problem\footnote{Existence and uniqueness of weak solutions to \eqref{03191} can be checked by using the Lax-Milgram theorem. To this end, we set a bilinear form defined on  a Hilbert space $\mathcal{H}:=L^2(J;H^1_{\rm per}(\square)/\R)$.}\/{\rm :} 
\begin{equation}
-\dv_y \left(a(y,s) [\nabla_y\Phi_k(y,s)+e_{k}]\right) =0\ \text{ in }\ \T^N \times \T
,\label{03191}
\end{equation}
where $e_k$ is the $k$-th vector of the canonical basis of $\R^N$, 
and put
\begin{equation}\label{ahomek}
\tilde{z}(x,t,y,s)=\sum_{k=1}^N\partial_{x_k}v_0(x,t)\Phi_{k}(y,s).
\end{equation}
Then \eqref{zform} holds with $z$ replaced by $\tilde{z}$. Indeed, we see that
\begin{align*}
 \lefteqn{\int_0^1\int_{\square} a(y,s)\bigl(\nabla v_0(x,t)+\nabla_y\tilde{z}(x,t,y,s)\bigl)\cdot \nabla_yb(y)c(s)\, dyds}\\
  &=  \sum_{k=1}^N\partial_{x_k}v_0(x,t)\int_0^1\int_{\square} a(y,s)\bigl(\nabla_y\Phi_k(y,s)+e_k\bigl)\cdot \nabla_yb(y)c(s)\, dyds \stackrel{\eqref{03191}}{=} 0.\nonumber
\end{align*}
From the arbitrariness of $c\in C^{\infty}_{\mathrm{per}}(J)$, one can check \eqref{zform} with $z$ replaced by $\tilde{z}$.

We next claim that $z=\tilde{z}$. By \eqref{zform}, we have
\begin{align}\label{1121}
\int_{\square}a(y,s)\nabla_y\left( z(x,t,y,s)-\tilde{z}(x,t,y,s)\right)\cdot\nabla_yb(y)\, dy=0
\end{align}
for any $b \in C^\infty_{\rm per}(\square)/\R$ and a.e.~$(x,t,s) \in \Omega \times I \times J$. Put $b(\cdot)=(z-\tilde{z})(x,t,\cdot,s)\in H^1_{\rm per}(\square)/\R$ in \eqref{1121} (by density).
To be precise, we need substitute a smooth approximation of $(z-\tilde{z})(x,t,\cdot,s)\in H^{1}_{\rm per}(\square)/\R$ to $b$ and then take a limit.
Then by virtue of the Poincar\'e-Wirtinger inequality, it follows that
\begin{align}
0 &= \int_{\square}a(y,s)\nabla_y\bigl( z(x,t,y,s)-\tilde{z}(x,t,y,s)\bigl)
 \cdot\nabla_y\bigl(z(x,t,y,s)-\tilde{z}(x,t,y,s)\bigl)\, dy\label{ztilz}\\
 &\stackrel{\eqref{ellip}}{\ge} 
   {\lambda}\|\nabla_y\bigl(z(x,t,s)-\tilde{z}(x,t,s)\bigl)\|^2_{L^2(\square)}
 \ge \frac{\lambda}{C}  \|z(x,t,s)-\tilde{z}(x,t,s)\|^2_{L^2(\square)},\nonumber
\end{align}
which implies $z=\tilde{z}$.

One can take a constant matrix $a_{\rm hom}\in \R^{N\times N}$ such that
\begin{equation}
a_{\rm hom}\nabla v_0(x,t)=j_{\rm hom} \label{jhomahom}.
\end{equation} 
Indeed, recalling Theorem \ref{thm1}, we see that
\begin{align*}
j_{\rm hom}(x,t) &\stackrel{\eqref{jhom}}{=} \int_0^1\int_{\square}a(y,s)\left(\nabla v_0(x,t)+\nabla_yz(x,t,y,s)\right)\, dyds\\
&\stackrel{\eqref{ahomek}}{=} \int_0^1\int_{\square}a(y,s)\Bigl(\nabla v_0(x,t)+\sum_{k=1}^N\partial_{x_k}v_0(x,t)\nabla_y\Phi_{k}(y,s)\Bigl)\, dyds\\
&= \sum_{k=1}^N\Bigl(\int_0^1\int_{\square}a(y,s)\left(\nabla_y\Phi_{k}(y,s)+e_k\right)\, dyds\Bigl)\partial_{x_k}v_0(x,t).
\end{align*}
Define a constant matrix $a_{\rm hom}\in \R^{N\times N}$ by
\[
  a_{\rm hom}e_k=\int_0^1\int_{\square}a(y,s)\bigl(\nabla_y\Phi_k(y,s)+e_k\bigl)\, dyds,\quad k=1,2,\ldots, N,
\]
which is independent of $v_0$ and $z$. Then the homogenized flux $j_{\rm hom}$ can be written as \eqref{jhomahom}.
Moreover, the homogenized limit $v_0=v_0(x,t)$ is the unique weak solution of the Cauchy-Dirichlet problem
\begin{equation*}
 \left\{
  \begin{aligned}
   \partial_tv_{0}^{1/p} &=\dv\left( a_{\rm hom}\nabla v_{0} \right)+f &&\text{ in } \Omega\times I, \\
   v_{0}&=0 &&\text{ on } \partial\Omega\times I , \\
   v_{0}^{1/p}&= u^0 &&\text{ in } \Omega \times \{0\}.
  \end{aligned}
 \right.
\end{equation*}
Hence the limit of $(v_{\e_n})$ turns out to be independent of the choice of the subsequence $(\e_n)$, that is, it is no longer necessary to extract any subsequence.

In case $r=2$, the first term of \eqref{mostest} does not vanish any more as $\e_n\to 0_+$. Thanks to Corollary \ref{veryweak} along with Lemma \ref{lemma5.2}, we obtain
\begin{align*}
\lim_{\e_n\to 0_+} \int_0^T\int_{\Omega} \frac{v_{\e_n}(x,t)^{1/p}-v_0(x,t)^{1/p}}{\e_n} \phi(x)b\left(\tfrac{x}{\e_n}\right)\psi(t)\partial_s c\left(\tfrac{t}{\e_n^r}\right)\, dxdt
\\
= \vierint w(x,t,y,s) \phi(x)b(y)\psi(t)\partial_s c(s)\, dZ.
\end{align*}
Hence we see that
\begin{align}
\lefteqn{
\vierint \Big[ w(x,t,y,s)b\left(y\right)\partial_s c\left(s\right)
}\label{03194}\\
 &\quad -a(y,s)\bigl(\nabla v_{0}(x,t)+\nabla_yz(x,t,y,s)\bigl)\cdot\nabla_yb\left(y\right)c\left(s\right) \Bigl] \phi(x)\psi(t)\, dZ=0.\nonumber
\end{align}
In case $0<p<1$, we recall by Lemma \ref{relationvw} that $w  \in L^2(\Omega\times I;L^2(J;V))$ coincides with $(1/p)|v_0|^{(1-p)/p}z$. By the arbitrariness of $\phi\in C^{\infty}_c(\Omega)$ and $\psi\in C^{\infty}_c(I)$, we observe that
\begin{align}\label{wform2}
\lefteqn{
\int_0^1\int_{\square} \Bigl[\frac{1}{p}|v_0(x,t)|^{(1-p)/p}z(x,t,y,s)b\left(y\right)\partial_s c\left(s\right)
}\\
&-a(y,s)\bigl(\nabla v_{0}(x,t)+\nabla_yz(x,t,y,s)\bigl)\cdot\nabla_yb\left(y\right)c\left(s\right)\Bigl]\, dyds\nonumber =0
\end{align}
for a.e.~$(x,t) \in \Omega\times I$ and all $b \in \Vs$ and $c \in C^\infty_{\rm per}(J)$. We here claim that
\begin{equation}\label{cl:1}
\frac 1 p |v_0|^{(1-p)/p} z \in L^2(\Omega \times I ; W^{1,2}(J;\Vs^*) ).
\end{equation}
Indeed,  define $\xi(x,t,\cdot,\cdot) \in L^2(J;\Vs^*)$ by
\begin{align}\label{df:xi}
\lefteqn{
\int^1_0 \left\langle \xi(x,t,\cdot,s), w(\cdot,s) \right\rangle_\Vs \, d s 
}\\
&= \int^1_0 \int_\square a(y,s) \left( \nabla v_0(x,t) + \nabla_y z(x,t,y,s) \right) \cdot \nabla_y w(y,s) \, dy ds\nonumber
\end{align}
for $w \in L^2(J;V)$. Then $\xi : \Omega \times I \to L^2(J;V^*)$ turns out to be weakly measurable, and moreover, it is strongly measurable by Pettis's theorem.  Since $v_0 \in L^2(I;H^1_0(\Omega))$ and $\nabla_y z \in L^2(\Omega \times I \times \square \times J)$, one can verify that $\xi \in L^2(\Omega \times I ; L^2(J;\Vs^*))$. Moreover, we infer by \eqref{wform2} that
\begin{equation*}
\int^1_0 \frac 1 p |v_0(x,t)|^{(1-p)/p} z(x,t,\cdot,s) \partial_s c(s) \, ds = \int^1_0 \xi(x,t,\cdot,s) c(s) \, d s \ \mbox{ in } \Vs^*, 
\end{equation*}
which along with the arbitrariness of $c \in C^\infty_{\rm per}(J)$ implies
\begin{equation}\label{v0dz}
\frac 1 p |v_0(x,t)|^{(1-p)/p} \partial_s z(x,t,\cdot,s) = - \xi(x,t,\cdot,s) \ \mbox{ in } \Vs^*
\end{equation}
 in the distributional sense  for a.e.~$(x,t,s) \in \Omega \times I \times J$. Thus \eqref{cl:1} follows. We next claim that
\begin{equation}\label{cl:2}
\frac1p|v_0(x,t)|^{(1-p)/p}z(x,t,\cdot,1) = \frac1p|v_0(x,t)|^{(1-p)/p}z(x,t,\cdot,0) \ \mbox{ in } \ \Vs^*
\end{equation}
for a.e.~$(x,t) \in \Omega \times I$.  Indeed, setting $c \equiv 1$ in \eqref{wform2}, we see that
\begin{align*}
\lefteqn{
\left\langle \frac 1 p |v_0(x,t)|^{(1-p)/p} \left(z(x,t,\cdot,1) - z(x,t,\cdot,0) \right), b \right\rangle_\Vs
}\\
&\stackrel{\eqref{v0dz}}= \left\langle - \int^1_0 \xi(x,t,\cdot,s) \, d s, b \right\rangle_\Vs\\
&\stackrel{\eqref{df:xi}}= - \int^1_0 \int_\square a(y,s) \left( \nabla v_0 + \nabla_y z \right) \cdot \nabla_y b \, d y d s
\stackrel{\eqref{wform2}} = 0
\end{align*}
for all $b \in \Vs$ and a.e.~$(x,t) \in \Omega \times I$. From the arbitrariness of $b \in \Vs$, we obtain \eqref{cl:2}. 

On the other hand, for each $k=1,2,\ldots, N$, let $\Phi_k(x,t,\cdot,\cdot)\in L^2(J;H^1_{\mathrm{per}}(\square)/\R)$ be a weak solution to the following cell problem (see Lemma \ref{A:L:1} in \S \ref{A:S:reg} for more details)\/{:} 
\begin{equation*}
\frac{1}{p}|v_0(x,t)|^{(1-p)/p}\partial_s\Phi_k(x,t,y,s)=\dv_y \left(a(y,s)[\nabla_y\Phi_k(x,t,y,s)+e_{k}]\right) \ \mbox{ in } \T^N \times \T
\end{equation*}
and set $\tilde{z}$ as in \eqref{ahomek} with $\Phi_k = \Phi_k(x,t,y,s)$. Then we have \eqref{wform2} with $z=\tilde{z}$. Moreover, as in the last case, we find by \eqref{wform2} that
\begin{align}\label{1123}
0 
 &= -\frac{1}{p}|v_0(x,t)|^{(1-p)/p} \int_0^1 \int_\square \left(z(x,t,y,s)-\tilde{z}(x,t,y,s)\right) b(y) \partial_s c(s)\, dy ds\\
 &\quad +\int_0^1\int_{\square}a(y,s)\nabla_y\left( z(x,t,y,s)-\tilde{z}(x,t,y,s)\right)\cdot\nabla_yb(y)c(s)\, dyds. \nonumber
\end{align}
Put $(z-\tilde{z})(x,t,\cdot,\cdot)$ in place of the product between $b\in C^{\infty}_{\rm per}(\square)/ \R$ and $c\in C^{\infty}_{\rm per}(J)$ in \eqref{1123}.
Indeed, it is possible by density argument. 
By exploiting the Poincar\'e-Wirtinger inequality and the uniform ellipticity \eqref{ellip}, we deduce that
\begin{align}
0&\stackrel{\eqref{1123}}= -\frac 1 {2p} \underbrace{|v_0(x,t)|^{(1-p)/p}\int_0^1 \dfrac d {ds} \|(z-\tilde{z})(x,t,\cdot,s)\|_{L^2(\square)}^2 \, ds}_{= \, 0 \ \text{ by \eqref{cl:2}}} \label{03196}\\
 &\quad + \int_0^1\int_{\square}a(y,s)\nabla_y (z-\tilde{z})(x,t,y,s) \cdot\nabla_y (z-\tilde{z})(x,t,y,s)\, dyds \nonumber\\
 &\ge
   \lambda\|\nabla_y(z-\tilde{z})(x,t)\|^2_{L^2(\square\times J)}
 \ge
   \frac{\lambda}{C}\|(z-\tilde{z})(x,t)\|^2_{L^2(\square\times J)},\nonumber
\end{align}
whence follows that $z=\tilde{z}$. 

In case ($r=2$ and) $1<p<2$, by Lemma \ref{relationvw}, we first note that $z(x,t,\cdot,\cdot) \equiv 0$ in $\square \times J$ whenever $v_0(x,t) = 0$, and  then,  we observe immediately that
$$
a_{\rm hom}(x,t) e_k = \int^1_0 \int_\square a(y,s) e_k \, dy ds \quad \mbox{ for } \ k=1,2,\ldots,N.
$$
Hence we shall restrict ourselves to the case that $v_0(x,t)\neq 0$ below. By \eqref{03194} and Lemma \ref{relationvw}, we can deduce that
\begin{align}
\lefteqn{
\int_0^1\int_{\square}\Bigl[w(x,t,y,s)b\left(y\right)\partial_s c\left(s\right)
}\label{wform2d}\\
&-a(y,s)\bigl(\nabla v_{0}(x,t)+p|v_0(x,t)|^{(p-1)/p}\nabla_yw(x,t,y,s)\bigl)\cdot\nabla_yb\left(y\right)c\left(s\right)\Bigl]\, dyds\nonumber =0
\end{align}
for a.e.~$(x,t) \in \Omega \times I$. Repeating a similar argument to the case $0 < p < 1$, one can verify that
\begin{align}
w &\in L^2(\Omega\times I;W^{1,2}(J;\Vs^*)),\nonumber\\
w(x,t,\cdot,1) &= w(x,t,\cdot,0) \ \mbox{ in } \ \Vs^* \ \mbox{ for a.e.~} (x,t) \in \Omega \times I.\label{cl:4}
\end{align}
Furthermore, for each $k=1,2,\ldots, N$, let $\Psi_k = \Psi_k(x,t,\cdot,\cdot)\in L^2(J;H^1_{\mathrm{per}}(\square)/\R)$ be the solution of \eqref{local21} and set $\tilde{w}=\sum_{k=1}^N\partial_{x_k}v_0\Psi_k$. Then we find by \eqref{wform2d} that 
\begin{align}\label{03195}
\lefteqn{
\qquad 0 = -\int_0^1 \int_\square \left(w(x,t,y,s)-\tilde{w}(x,t,y,s)\right) b(y) \partial_s c(s)\, dy ds
}\\
 &+p|v_0(x,t)|^{(p-1)/p}\int_0^1\int_{\square}a(y,s)\nabla_y\left( w(x,t,y,s)-\tilde{w}(x,t,y,s)\right)\cdot\nabla_yb(y)c(s)\, dyds.\nonumber
\end{align}
As in \eqref{03196}, put $(w-\tilde{w})(x,t,\cdot,\cdot)$ in place of the product between $b\in C^{\infty}_{\rm per}(\square)$ and $c\in C^{\infty}_{\rm per}(J)$ in  \eqref{03195}.  Then we observe by the Poincar\'e-Wirtinger inequality and \eqref{ellip} that
\begin{align*}
0&\stackrel{\eqref{03195}}= - \frac{1}{2} \underbrace{\int_0^1 \frac d{ds} \left\| (w-\tilde{w})(x,t,\cdot,s)\right\|_{L^2(\square)}^2\, ds}_{=\,0 \ \text{ by periodicity in $s$}}\\
&\quad + p|v_0(x,t)|^{(p-1)/p} \int_0^1\int_{\square}a(y,s)\nabla_y (w-\tilde{w})(x,t,y,s) \cdot\nabla_y (w-\tilde{w})(x,t,y,s)\, dyds\\
&\ge \lambda p|v_0(x,t)|^{(p-1)/p}\|\nabla_y (w-\tilde{w})(x,t)\|^2_{L^2(\square\times J)}\\
&\ge \frac{\lambda p|v_0(x,t)|^{(p-1)/p}}{C}\|(w-\tilde{w})(x,t)\|^2_{L^2(\square\times J)},
\end{align*}
which implies $w=\tilde{w}$,  when $v_0(x,t)\neq 0$.  Hence by Lemma \ref{relationvw}, we have
$$
z=p|v_0|^{(p-1)/p}w = p|v_0|^{(p-1)/p} \sum_{k=1}^N \left(\partial_{x_k}v_0\right) \Psi_k.
$$
Hence it may be convenient to define $\Phi_k$ by \eqref{ahom:c:pm}. Finally, repeating the same argument as in the case $0 < r < 2$, we can verify that $a_ {\rm hom} $ can be written as \eqref{ahom:c:fd} (respectively, that with \eqref{ahom:c:pm}) for $0 < p < 1$ (respectively, $1 < p < 2$).

In case $2<r<+\infty$, before passing to the limit as $\e_n\to 0_+$, multiply both sides of \eqref{mostest} by $\e_n^{r-2}$. Then by Corollary \ref{veryweak} along with Lemma \ref{lemma5.2}, we have
\begin{align*}
0&=
\lim_{\e_n\to 0_+}\int_0^T\int_{\Omega}
\frac{v_{\e_n}(x,t)^{1/p}-v_0(x,t)^{1/p}}{\e_n}\phi(x)b\left(\tfrac{x}{\e_n}\right)\psi(t)\partial_s c\left(\tfrac{t}{\e_n^r}\right)\, dxdt\\
&=\vierint w(x,t,y,s)\phi(x)b(y)\psi(t)\partial_sc(s)\, dZ,
\end{align*}
which along with the arbitrariness of  $\phi\in C^{\infty}_{\rm c}(\Omega)$ and $\psi\in C^{\infty}_{\rm c}(I)$  yields
$$
\int^1_0 \int_{\square}w(x,t,y,s)b(y)\partial_sc(s)\, dy ds = 0 \quad \mbox{ for a.e.~} (x,t) \in \Omega \times I
$$
for any $b \in C^\infty_{\rm per}(\square)$ with zero mean and $c \in C^\infty_{\rm per}(J)$. Moreover, we may also assure that the relation above holds true for all $b \in C^\infty_{\rm per}(\square)$ with (possibly) non-zero mean; indeed, since $w(x,t,\cdot,s)$ has zero mean in $\square$, we observe that, for general $b \in C^\infty_{\rm per}(\square)$,
\begin{align*}
\lefteqn{
\int^1_0\int_{\square}w(x,t,y,s)b(y)\partial_sc(s)\, dyds
}\\
&= \int^1_0\int_{\square}w(x,t,y,s)\left( b(y) - \langle b \rangle_y \right)\partial_sc(s)\, dyds\\
&\quad + \langle b \rangle_y \int^1_0\langle w(x,t,\cdot,s) \rangle_y  \partial_s c(s) \,ds = 0.
\end{align*}
Thus the (distributional) derivative $\partial_s w(x,t,y, \cdot)$ turns out to be zero in $J$ for a.e.~$(x,t,y) \in \Omega \times I \times \square$. Therefore, $w(x,t,y,s)$ is independent of $s\in J$, and hence, so is $z(x,t,y,s)$ by Lemma \ref{relationvw}.
Now, we choose $c(s)$ in \eqref{mostest} as a constant function (without any multiplication of $\e_n$) to obtain 
\[
 \int_{\square}\int_0^T\int_{\Omega}
 \Bigl(\int_0^1a(y,s)\, ds\Bigl)\bigl(\nabla v_{0}(x,t)+\nabla_y z(x,t,y)\bigl)\cdot\phi(x)\nabla_yb(y)\psi(t)\, dxdtdy = 0
\]
for $\phi \in C^\infty_c(\Omega)$, $b \in C^\infty_{\rm per}(\square)$ and $\psi \in C^\infty_c(I)$. The arbitrariness of $\phi\in C^{\infty}_{\rm c}(\Omega)$ and $\psi\in C^{\infty}_{\rm c}(I)$ implies 
\begin{equation}\label{r3eq}
 \int_{\square}\Bigl(\int_0^1a(y,s)\, ds\Bigl)\bigl(\nabla v_{0}(x,t)+\nabla_y z(x,t,y)\bigl)\cdot\nabla_yb(y)\, dy=0 \quad \text{ a.e.~in }\ \Omega\times I
\end{equation}
for $b \in H^1_{\rm per}(\square)$. As in (i), we set
\begin{equation}\label{zform3}
 \tilde{z}(x,t,y)=\sum_{k=1}^N\partial_{x_k}v_0(x,t)\Phi_{k}(y)\quad \text{ for } k=1,2,\ldots, N,
\end{equation}
where $\Phi_k\in H^1_{\mathrm{per}}(\square)/\R$ is the unique (weak) solution to the following cell problem\/{:} 
$$
  -\dv_y \biggl(\Bigl(\int_0^1a(y,s)\, ds\Bigl)(\nabla_y\Phi_k(y)+e_{k})\biggl) =0\ 
\mbox{ in } \T^N.
$$
Then \eqref{r3eq} holds with $z$ replaced by $\tilde{z}$. 
Furthermore, an estimate similar to \eqref{ztilz} yields $z=\tilde{z}$. Recalling Theorem \ref{thm1}, we deduce that
\begin{align}
j_{\rm hom}(x,t) &\stackrel{\eqref{jhom}}{=} \int_0^1\int_{\square}a(y,s)\left(\nabla v_0(x,t)+\nabla_yz(x,t,y)\right)\, dyds\label{03192}\\
&\stackrel{\eqref{zform3}}{=} \int_{\square}\Bigl(\int_0^1a(y,s)\, ds\Bigl) \Bigl(\nabla v_0(x,t)+\sum_{k=1}^N\partial_{x_k}v_0(x,t)\nabla_y\Phi_{k}(y)\Bigl)\, dy\nonumber\\
&= \sum_{k=1}^N \biggl[ \int_{\square}  \Bigl( \int_0^1a(y,s)\, ds  \Bigl) \left(\nabla_y\Phi_{k}(y)+e_k\right)\, dy \biggl] \partial_{x_k}v_0(x,t).\nonumber
\end{align}
Therefore, the homogenized matrix $a_{\rm hom}\in \R^{N\times N}$ is represented by \eqref{a_hom3},
which along with \eqref{03192} implies \eqref{jhomahom}. Thus we have proved the assertion for the case $r > 2$.
\end{proof}

\section{Proof of Theorem \ref{T:cor}}\label{S:cor}

We first set up the following lemma  (see \S \ref{pf-pc} in Appendix for a proof): 
\begin{lem}[Pointwise convergence of $(v_\e^{1/p})$]\label{aeconv}
Under the same assumption as in Theorem \ref{T:cor}, it holds that 
\begin{align*}
  v_{\e}(t)^{1/p}\to v_{ 0}(t)^{1/p}\quad \text{ weakly in } L^{p+1}(\Omega)\quad \text{ \underline{for all} }\  t\in \overline{I}.
\end{align*}
\end{lem}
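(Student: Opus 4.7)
The plan is to combine the two convergence properties of $(v_\e^{1/p})$ already established: uniform boundedness in $L^\infty(I;L^{p+1}(\Omega))$ from (i) of Lemma \ref{bdd}, and strong convergence in $C(\overline{I};H^{-1}(\Omega))$ from \eqref{strongconvsol3} of Lemma \ref{strongconvofsol}. The former delivers weak precompactness in $L^{p+1}(\Omega)$ at each fixed time, while the latter uniquely identifies the limit pointwise in $t$.

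Fix $t \in \overline{I}$ arbitrarily. By (i) of Lemma \ref{bdd}, the sequence $(v_{\e_n}(t)^{1/p})$ is bounded in $L^{p+1}(\Omega)$ uniformly in $n$. Hence for any subsequence of $(\e_n)$ one can extract, by reflexivity of $L^{p+1}(\Omega)$, a further (not relabeled) subsequence such that
\begin{equation*}
v_{\e_{n_k}}(t)^{1/p} \to \xi \quad \text{weakly in } L^{p+1}(\Omega)
\end{equation*}
for some $\xi \in L^{p+1}(\Omega)$.

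To identify $\xi$, I would use \eqref{strongconvsol3}, which yields $v_{\e_{n_k}}(t)^{1/p} \to v_0(t)^{1/p}$ strongly in $H^{-1}(\Omega)$ for every $t \in \overline{I}$ (note that $v_0(t)^{1/p}$ is well-defined in $H^{-1}(\Omega)$ at every $t$ thanks to continuity in $t$ established in Lemma \ref{strongconvofsol}). For any test function $\phi \in C_{\rm c}^\infty(\Omega)$, which lies in both $H^1_0(\Omega)$ and $L^{(p+1)/p}(\Omega)$, the duality pairings coincide with the Lebesgue integral, so passing to the limit on both sides gives
\begin{equation*}
\int_\Omega \xi(x) \phi(x)\, dx = \left\langle v_0(t)^{1/p}, \phi \right\rangle_{H^1_0(\Omega)}.
\end{equation*}
By density of $C_{\rm c}^\infty(\Omega)$, this forces $\xi = v_0(t)^{1/p}$ as distributions, and hence as elements of $L^{p+1}(\Omega)$. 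In particular, $v_0(t)^{1/p}$ belongs to $L^{p+1}(\Omega)$ for every $t \in \overline{I}$, with $\|v_0(t)^{1/p}\|_{L^{p+1}(\Omega)} \le \liminf_{k\to\infty} \|v_{\e_{n_k}}(t)^{1/p}\|_{L^{p+1}(\Omega)}$. Since the weak limit $\xi$ is independent of the extracted subsequence, a standard subsequence-of-subsequence argument yields weak convergence of the whole sequence $(v_{\e_n}(t)^{1/p})$ to $v_0(t)^{1/p}$ in $L^{p+1}(\Omega)$, as desired.

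The only subtle point is ensuring pointwise-in-$t$ (rather than a.e.-in-$t$) identification of the limit, and this is precisely what the $C(\overline{I};H^{-1}(\Omega))$ strong convergence provides via the duality argument above. No further work on the equation or the nonlinearity is needed; everything reduces to a weak precompactness argument pinned down by the stronger, coarser topology of $H^{-1}(\Omega)$.
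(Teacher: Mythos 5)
Your proof is correct, and it takes a genuinely shorter route than the paper's. Both proofs start the same way: the uniform bound $\sup_{t\in\overline{I}}\|v_\e(t)^{1/p}\|_{L^{p+1}(\Omega)}\le C$ from (i) of Lemma~\ref{bdd} gives weak precompactness of $(v_{\e_n}(t)^{1/p})$ in $L^{p+1}(\Omega)$ at each fixed $t$, so the whole task is to identify the subsequential weak limit. Your identification step invokes \eqref{strongconvsol3} directly, i.e.\ the \emph{uniform}-in-$t$ strong convergence $v_{\e_n}^{1/p}\to v_0^{1/p}$ in $C(\overline I;H^{-1}(\Omega))$; since $v_{\e_n}(t)^{1/p}$ and $v_0(t)^{1/p}$ both lie in $L^2(\Omega)$ at each $t$, the $H^{-1}$--$H^1_0$ and $L^{p+1}$--$L^{(p+1)/p}$ pairings against $\phi\in C^\infty_{\rm c}(\Omega)$ both reduce to the same Lebesgue integral, and the density of $C^\infty_{\rm c}(\Omega)$ finishes the job. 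The paper, in contrast, does \emph{not} cite \eqref{strongconvsol3}; it uses \eqref{strongconvsol5} to obtain the a.e.-$t$ strong $L^{p+1}(\Omega)$ convergence along a further subsequence, then approximates an arbitrary $t$ by good times $t_n$ and runs a four-term triangle-inequality/equicontinuity estimate (hinging on the $L^2(I;H^{-1}(\Omega))$ bound for $\partial_t v_\e^{1/p}$ from (ii) of Lemma~\ref{bdd}) to upgrade the a.e.-$t$ identification to all $t\in\overline I$. Both routes draw on the same underlying estimates established in Section~\ref{S:est}; yours simply delegates the $H^{-1}$-equicontinuity work to the Ascoli argument already carried out in the proof of \eqref{strongconvsol3}, which makes the overall argument noticeably leaner. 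No gap.
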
 

Let us move on to a proof for Theorem \ref{T:cor}.

\begin{proof}[Proof of Theorem \ref{T:cor}]
 For simplicity, we write $\e=\e_n$ and set  $v_{\e}=|u_{\e}|^{p-1}u_{\e}$, $v_{0}=|u_{0}|^{p-1}u_{0}$ as before. We start with an approximation  $z_\ell(x,t,y,s)$  of the function $z(x,t,y,s)$ so that  its  gradient $\nabla_y z_\ell$ is an admissible test function (for the weak space-time two-scale convergence in $L^2(\Omega\times I \times \square \times J)$). Recalling that $z$ lies on $L^2(\Omega \times I ; L^2(J;H^1_{\rm per}(\square)/\R))$, one can approximate $z$ with a sequence $(s_\ell)$ of simple functions,
$$
s_\ell(x,t,y,s) = \sum_{k=1}^{n_\ell} w_{\ell,k}(y,s) \chi_{A_{\ell,k}}(x,t),
$$
where $n_\ell \in \N$, $\chi_{A_{\ell,k}} : \R^{N+1} \to \{0,1\}$  are  characteristic functions supported over measurable sets $A_{\ell,k}$ in $\Omega \times I$ and $w_{\ell,k} \in L^2(J;H^1_{\rm per}(\square)/\R)$, such that
$$
\left\| s_\ell - z \right\|_{L^2(\Omega \times I ; L^2(J;H^1_{\rm per}(\square)/\R))} < \ell^{-1}.
$$
Moreover, set
$$
z_\ell(x,t,y,s) = \sum_{k=1}^{n_\ell} w_{\ell,k}(y,s) \phi_{\ell,k}(x,t),
$$
where $\phi_{\ell,k} : \R^{N+1} \to \R$ is a smooth function given by
$$
\phi_{\ell,k}(x,t) := \left( \chi_{A_{\ell,k}} * \rho_\ell \right)(x,t)
$$
and $\rho_\ell$ stands for a standard mollifier such that
$$
\sum_{k=1}^{n_\ell} \|w_{\ell,k}\|_{L^2(J;H^1_{\rm per}(\square)/\R)} \|\chi_{A_{\ell,k}}-\phi_{\ell,k}\|_{L^2(\Omega \times I)} < \ell^{-1}.
$$
Then we observe that
\begin{align*}
\lefteqn{
\left\| z_\ell - z \right\|_{L^2(\Omega \times I ; L^2(J;H^1_{\rm per}(\square)/\R))}
}\\
&\leq \left\| z_\ell - s_\ell \right\|_{L^2(\Omega \times I ; L^2(J;H^1_{\rm per}(\square)/\R))} + \left\| s_\ell - z \right\|_{L^2(\Omega \times I ; L^2(J;H^1_{\rm per}(\square)/\R))}\\
&< \sum_{k=1}^{n_\ell} \|w_{\ell,k}\|_{L^2(J;H^1_{\rm per}(\square)/\R)} \|\chi_{A_{\ell,k}}-\phi_{\ell,k}\|_{L^2(\Omega \times I)} + \ell^{-1}
< 2 \ell^{-1}.
\end{align*}
Hence $z_{\ell}$ converges to $z$ strongly in $L^2(\Omega \times I ; L^2(J;H^1_{\rm per}(\square)/\R))$ as $\ell \to +\infty$. Here we also remark that $z_\ell$ does not depend on $s$, whenever so does $z$ (see the case $r > 2$ in Theorem \ref{thm2}). Indeed, one can then take $(w_{\ell,k})$ independent of $s$.

Moreover, we claim that $\nabla_y z_{\ell,k}(x,t,y,s) := \phi_{\ell,k}(x,t) \nabla_y w_{\ell,k}(y,s)$  are  admissible test functions. Indeed, since $w_{\ell,k}$ is (identified with) a periodic function in $(y,s)$, by Proposition \ref{mean}, it follows that  
\begin{align*}
\lim_{\e\to 0_+}\|\nabla_y z_{\ell,k}(x,t,\tfrac x\vep, \tfrac t{\vep^r})\|_{L^2(\Omega\times I)}^2=\|\nabla_y z_{\ell,k}\|_{L^2(\Omega\times I\times \square\times J)}^2,
\end{align*}
which implies \eqref{ad1}. Moreover, setting $X= L^2(J \times \square ; C(\overline{\Omega \times I}))$ equipped with 
$$
\|w\|_X := \bigg( \int^1_0 \int_\square \sup_{(x,t) \in \Omega \times I}|w(x,t,y,s)|^2 \, \d y \d s\bigg)^{1/2}
\quad \mbox{ for } \ w \in X
$$ (then $X$ is a separable normed space due to the separability of $C(\overline{\Omega \times I})$), we see that
\begin{align*}
\|\nabla_y z_{\ell,k}(x,t,\tfrac x \vep,\tfrac t{\vep^r})\|_{L^2(\Omega\times I)}^2
&\le \|\phi_{\ell,k}\|_{C(\overline{\Omega \times I})}^2 \|\nabla_y w_{\ell,k}(\tfrac x \vep, \tfrac t{\vep^r})\|_{L^2(\Omega\times I)}^2\\
&\leq C(\Omega  ,I ) \|\phi_{\ell,k}\|_{C(\overline{\Omega \times I})}^2 \|\nabla_y w_{\ell,k}\|_{L^2(\square\times J)}^2
=C(\Omega  ,I ) \|\nabla_y z_{\ell,k}\|_X^2
\end{align*}
for some constant $C(\Omega ,I )>0$ (see (ii) of Remark \ref{rem2.5}). Thus \eqref{ad2} has been checked. 

We next claim that
\begin{equation*}
\limsup_{\e\to 0_+}
\int_{0}^T\int_{\Omega}\left|\nabla v_{\e}  - \left(\nabla  v_0+\nabla_y z_{\ell}(x,t,\tfrac{x}{\e},\tfrac{t}{\e^r})\right)\right|^2\, dxdt\le C\|\nabla_y(z_{\ell}-z) \|_{L^2(\Omega\times I\times \square\times J)}.
\end{equation*}
Indeed, it follows from the uniform ellipticity and {symmetry} of $a(y,s)$ that
\begin{align}\label{errorest1}
\lefteqn{
\lambda\int_{0}^T\int_{\Omega}\left|\nabla v_{\e}-\left(\nabla  v_0+\nabla_yz_{\ell}(x,t,\tfrac{x}{\e},\tfrac{t}{\e^r})\right)\right|^2\, dxdt
}\\
&\stackrel{\eqref{ellip}}{\le}
\int_{0}^T\int_{\Omega} a_{\e}\left(\nabla v_{\e}-\left(\nabla  v_0+\nabla_yz_{\ell,\e} \right)\right) \cdot\left(\nabla v_{\e}-\left(\nabla  v_0+\nabla_y z_{\ell,\e} \right)\right)\, dxdt\nonumber\\
&=\int_{0}^T\int_{\Omega} a_{\e}\nabla v_{\e}\cdot \nabla v_{\e} \, dxdt - 2\int_{0}^T\int_{\Omega} a_{\e}\nabla v_{\e}\cdot \left(\nabla  v_0+\nabla_yz_{\ell}(x,t,\tfrac{x}{\e},\tfrac{t}{\e^r})\right) \, dxdt\nonumber\\
&\quad+\int_{0}^T\int_{\Omega} a_{\e}\left(\nabla  v_0+\nabla_yz_{\ell}(x,t,\tfrac{x}{\e},\tfrac{t}{\e^r})\right)\cdot \left(\nabla  v_0+\nabla_yz_{\ell}(x,t,\tfrac{x}{\e},\tfrac{t}{\e^r})\right)\, dxdt.\nonumber
\end{align}
Here and henceforth, we simply write $a_{\e}=a(\tfrac x{\e}, \tfrac t{\e^r})$ and $z_{\ell,\e} = z_\ell(x,t,\tfrac x \e, \tfrac t {\e^r})$. We shall  next  estimate each term in the right-hand side.

Due to the weak forms of \eqref{anal} and \eqref{analhomeq} along with \eqref{vjhom}, the first term reads,
\begin{align*}
\lefteqn{\limsup_{\e\to 0_+}\int_0^T\int_{\Omega}a_{\e}\nabla v_{\e}\cdot \nabla v_{\e}\, dxdt}\\
&\stackrel{\eqref{anal}}{=} 
\limsup_{\e\to 0_+}\left(\int_0^T\int_{\Omega}f_{\e}v_{\e}\, dxdt-\int_0^T \langle \partial_t v_{\e}^{1/p}, v_{\e}\rangle_{H^1_0(\Omega)}\, dt\right)\\
&= \limsup_{\e\to 0_+}\left[\int_0^T\int_{\Omega}f_{\e}v_{\e}\, dxdt-\frac{1}{p+1}\left(\|v_{\e}(T)^{1/p}\|_{L^{p+1}(\Omega)}^{p+1} - \|u^0\|_{L^{p+1}(\Omega)}^{p+1}\right)\right]\\
&\le \int_0^T\int_{\Omega}f_{0}v_{0}\, dxdt - \frac{1}{p+1}\left(\|v_0(T)^{1/p}\|_{L^{p+1}(\Omega)}^{p+1} - \|u^0\|_{L^{p+1}(\Omega)}^{p+1}\right)\\
&= \int_0^T\int_{\Omega}f_{0}v_{0}\, dxdt-\int_0^T \langle \partial_t v_{0}^{1/p}, v_{0}\rangle_{H^1_0(\Omega)}\, dt\\
&\stackrel{\eqref{analhomeq}}{=}\vierint a(y,s)(\nabla v_0+\nabla_y z)\cdot \nabla v_0\, dZ.
\end{align*}
Here we also used Lemmas \ref{strongconvofsol} and \ref{aeconv} along with the (additional) assumption on $(f_{\e})$.

As for the second term (see \eqref{errorest1}),  recall \eqref{grawttssol}, i.e., 
$\nabla v_{\e}\wtts\nabla v_{0}+\nabla_y z$ in $[L^2(\Omega\times I\times \square\times J)]^N$. Since $\nabla_y z_\ell$ is the finite sum of the admissible test functions $\nabla_y z_{\ell,k}$ for $k = 1,2,\ldots,n_\ell$, passing to the limit as $\e\to 0_+$, we have
\begin{align}
\lefteqn{\int_0^T\int_{\Omega} a_{\e}\nabla v_{\e}\cdot \left(\nabla  v_0+\nabla_yz_{\ell}(x,t,\tfrac{x}{\e},\tfrac{t}{\e^r})\right)\, dxdt}\label{ce:2t}\\
&= \int_0^T\int_{\Omega}\nabla v_{\e}\cdot \tenchi a_{\e} \left(\nabla v_{0}+\nabla_y z_{\ell}(x,t,\tfrac{x}{\e},\tfrac{t}{\e^r}) \right)\, dxdt\nonumber\\
&\to \vierint (\nabla v_0+\nabla_y z)\cdot \tenchi a(y,s) \left(\nabla v_0+\nabla_y z_{\ell}\right)\, dZ\nonumber\\
&= \vierint a(y,s)\left(\nabla v_0+\nabla_y z\right)\cdot \nabla v_0\, dZ\nonumber\\
&\quad +\vierint a(y,s) \left(\nabla v_0+\nabla_y z\right) \cdot \nabla_y z_\ell\, dZ,\nonumber
\end{align}
since $\tenchi a\nabla v_0$ and $\tenchi a\nabla_y z_{\ell,k}$ are also admissible test functions. Indeed, one can check \eqref{ad1} by noting from \eqref{mean1} of Proposition \ref{mean} with $a\in L^{\infty}(\square\times J)$ that
\begin{align*}
\lim_{\e\to 0_+}\int_0^T\int_{\Omega}|\tenchi a_{\e}\nabla v_0|^2\, dxdt
&= \vierint |\tenchi a(y,s)\nabla v_0|^2\, dZ,\\
\lim_{\e\to 0_+}\int_0^T\int_{\Omega}|\tenchi a_{\e}\nabla_y z_{\ell,k}(x,t,\tfrac{x}{\e},\tfrac{t}{\e^r})|^2\, dxdt
&= \vierint |\tenchi a(y,s)\nabla_y z_{\ell,k}|^2\, dZ.
\end{align*}
Moreover, setting $X_1=L^2(\Omega\times I\times\square\times J)$ and $X_2=L^{2}(\square\times J;C(\overline{\Omega\times I}))$, we see that
\begin{align*}
\|\tenchi a_{\e}\nabla v_0\|_{L^2(\Omega\times I)}^2
\stackrel{\eqref{rayleigh}}{\le} \frac{1}{\|\tenchi a\|_{L^2(\square\times J)}^2}\|\tenchi a\|_{L^2(\square\times J)}^2\|\nabla v_0\|_{L^2(\Omega\times I)}^2
=\frac{1}{\|\tenchi a\|_{L^2(\square\times J)}^2}\|\tenchi a\nabla v_0\|_{X_1}^2
\end{align*}
and
\begin{align*}
\|\tenchi a_{\e}\nabla_y z_{\ell,k}(x,t,\tfrac{x}{\e},\tfrac{t}{\e^r})\|_{L^2(\Omega\times I)}^2
&\le \|\phi_{\ell,k}\|_{C(\overline{\Omega\times I})}^2 \int_0^T\int_{\Omega} \bigl| \tenchi a_{\e} \nabla_y w_{\ell, k }(\tfrac{x}{\e},\tfrac{t}{\e^r}) \bigl|^2\, dxdt\\
&\le C(\Omega  ,I ) \|\phi_{\ell,k}\|_{C(\overline{\Omega\times I})}^2 \int_0^1\int_{\square} \bigl| \tenchi a(y,s)\nabla_y w_{\ell, k }(y,s) \bigl|^2 \, dyds\\
&\leq C(\Omega ,I )\|\tenchi a\nabla_y z_{\ell,k}\|_{X_2}^2.
\end{align*}
Thus \eqref{ad2} follows. Now, we shall handle the second term of the right-hand side of \eqref{ce:2t}  depending on $r$.  In case $0 < r < 2$, we observe that
\begin{align*}
 \mathcal{I}:= \int_0^T\int_{\Omega} \underbrace{
 \int_0^1 \int_{\square} a(y,s)(\nabla v_0+\nabla_y z) \cdot \nabla_y z_\ell\, dy ds}_{\quad =\, 0 \text{ by \eqref{zform}}} dx dt = 0.
\end{align*}
In case $2 < r < +\infty$, since both $z$ and $z_\ell$ are independent of $s$, it follows that
\begin{align*}
 \mathcal{I}= \int_0^T\int_{\Omega} \underbrace{
 \int_{\square} \Big( \int^1_0 a(y,s) \, ds \Big) (\nabla v_0+\nabla_y z) \cdot \nabla_y z_\ell\, dy }_{\quad =\, 0 \text{ by \eqref{r3eq}}} dx dt = 0.
\end{align*}
In case $r = 2$, one has
\begin{align*}
\mathcal{I} &= \vierint a(y,s)(\nabla v_0+\nabla_y z) \cdot \nabla_y z\, d Z\\
&\quad + \vierint a(y,s)(\nabla v_0+\nabla_y z) \cdot \nabla_y (z_\ell - z)\, d Z =: \mathcal{J}_1 + \mathcal{J}_2.
\end{align*}
Here we can derive that
$$
|\mathcal{J}_2| \leq C \|\nabla_y (z-z_\ell)\|_{L^2(\Omega \times I \times \square \times J)}.
$$
For the case where $0 < p < 1$, since $z(x,t,\cdot,\cdot) \in L^2(J;\Vs)$ and $(1/p)|v_0(x,t)|^{\frac{1-p}p} z(x,t,\cdot,\cdot) \in W^{1,2}(J ;\Vs^*)$, noting  by \eqref{triplet}  that
$$
 L^2(J;\Vs) \cap W^{1,2}(J;\Vs^*) \subset C(\overline J ; L^2(\square)/\R), 
$$
we deduce that
\begin{align*}
 \mathcal{J}_1 &\stackrel{\eqref{wform2}}= \int^T_0 \int_\Omega \left( \frac 1 p |v_0(x,t)|^{(1-p)/p} \int^1_0 \left\langle \partial_s z(x,t,\cdot,s),z(x,t,\cdot,s) \right\rangle_{\Vs} \, ds \right)\, dx dt\\
&= \int^T_0 \int_\Omega \frac 1 {2p} |v_0(x,t)|^{(1-p)/p} \left( \|z(x,t,\cdot,1)\|_{L^2(\square)}^2 - \|z(x,t,\cdot,0)\|_{L^2(\square)}^2 \right) \ dx dt
 \stackrel{\eqref{cl:2}}= 0.
\end{align*}
For the case where $1 < p < 2$, we can also observe that
\begin{align*}
 \mathcal{J}_1 &\stackrel{\eqref{wform2d}}= \int^T_0 \int_\Omega \int^1_0 \left\langle \partial_s  w(x,t,\cdot,s), z (x,t,\cdot,s) \right\rangle_{\Vs} \, ds dx dt\\
&= \int^T_0 \int_\Omega \int^1_0 \left\langle p|v_0(x,t)|^{(p-1)/p} \partial_s w(x,t,\cdot,s),w(x,t,\cdot,s) \right\rangle_{\Vs} \, ds dx dt\\
&= \int^T_0 \int_\Omega \frac p 2 |v_0(x,t)|^{(p-1)/p} \left( \|w(x,t,\cdot,1)\|_{L^2(\square)}^2 - \|w(x,t,\cdot,0)\|_{L^2(\square)}^2 \right) \ dx dt
\stackrel{\eqref{cl:4}}= 0.
\end{align*}

Concerning the third term  (see \eqref{errorest1}),  by \eqref{mean1} of Proposition \ref{mean}, we can derive that
\begin{align*}
\lim_{\e\to 0_+} \lefteqn{\int_0^T\int_{\Omega} a_{\e}\left(\nabla v_0+\nabla_y z_{\ell}(x,t,\tfrac{x}{\e},\tfrac{t}{\e^r})\right) \cdot \left(\nabla  v_0+\nabla_yz_{\ell}(x,t,\tfrac{x}{\e},\tfrac{t}{\e^r})\right)\, dxdt}\\
&= \lim_{\e\to 0_+} \int_0^T\int_{\Omega}\Bigl[ a_{\e} \nabla v_{0}\cdot \nabla v_{0} + 2a_{\e}\nabla_y z_{\ell,\e}\cdot\nabla v_0 + a_{\e}\nabla_y z_{\ell,\e}\cdot\nabla_y z_{\ell,\e}\Bigl]\, dxdt\\
&= \vierint \left[ a(y,s)\nabla v_{0}\cdot \nabla v_{0}+2a(y,s)\nabla_yz_{\ell}\cdot\nabla v_0 +a(y,s)\nabla_y z_{\ell}\cdot\nabla_y z_{\ell} \right]\, dZ\\
 &= \vierint a(y,s)(\nabla v_0+\nabla_y z_{\ell})\cdot (\nabla v_0+\nabla_y z_{\ell})\, dZ\\
 &= \vierint a(y,s)(\nabla v_0+\nabla_y z_{\ell})\cdot \nabla v_0\, dZ\\
 &\quad + \underbrace{\vierint a(y,s)(\nabla v_0+\nabla_y z)\cdot \nabla_y z_\ell\, dZ}_{\ = \, \mathcal{I}}\\
 &\quad + \vierint a(y,s)\nabla_y (z_{\ell}-z)\cdot \nabla_y z_{\ell}\, dZ\\ 
 &\le \vierint a(y,s)(\nabla v_0+\nabla_y z_{\ell})\cdot \nabla v_0\, dZ + \mathcal{I}\\
 &\quad + \|\nabla_y (z_{\ell}-z)\|_{L^2(\Omega\times I\times\square\times J)} \|\tenchi a\nabla_y z_{\ell}\|_{L^2(\Omega\times I\times\square\times J)}.
\end{align*}
Consequently, combining all these estimates with \eqref{errorest1}, we obtain
\begin{align*}
&\limsup_{\e\to 0_+}\lambda \int_0^T\int_{\Omega} \left|\nabla v_{\e}-\left(\nabla v_0+\nabla_y z_{\ell}(x,t,\tfrac{x}{\e},\tfrac{t}{\e^r})\right) \right|^2\, dxdt\\
&\quad \le C\|\nabla_y (z_{\ell}-z)\|_{L^2(\Omega\times I\times\square\times J)}.
\end{align*}

 We shall  postpone discussing the measurability of the function $(x,t) \mapsto \nabla_y z(x,t,\tfrac x\e, \tfrac t{\e^r})$ for a while (indeed, it is not trivial at all). To complete the proof, recalling $v_{\e}=|u_{\e}|^{p-1}u_{\e}$ and $v_{0}=|u_{0}|^{p-1}u_{0}$ along with $z=\sum_{k=1}^N (\partial_{x_k}v_0) \Phi_k$, 
we conclude that, for $\ell>0$ large enough,  
\begin{align*}
\lefteqn{
\limsup_{\e\to 0_+}\int_{0}^T\int_{\Omega} \Bigl| \nabla |u_{\e}|^{p-1}u_{\e} - \nabla |u_0|^{p-1}u_0 - \sum_{k=1}^N\left(\partial_{x_k} |u_0|^{p-1}u_0\right)\nabla_y\Phi_k\left(x,t,\tfrac{x}{\e},\tfrac{t}{\e^r}\right) \Bigl|^2\, dxdt
}\\
&=\limsup_{\e\to 0_+} \int_{0}^T \int_{\Omega} \left| \nabla v_{\e} - \nabla v_0 - \nabla_y z(x,t,\tfrac{x}{\e},\tfrac{t}{\e^r}) \right|^2\, dxdt \hspace{65mm}\\
&\le \limsup_{\e\to 0_+}  2 \int_{0}^T\int_{\Omega} \left| \nabla v_{\e} - \nabla  v_0 - \nabla_y z_{\ell}(x,t,\tfrac{x}{\e},\tfrac{t}{\e^r}) \right|^2 \, dxdt\\
&\quad + \limsup_{\e\to 0_+}  2 \int_{0}^T\int_{\Omega} \left| \nabla_y \left( z_{\ell} - z\right)(x,t,\tfrac{x}{\e},\tfrac{t}{\e^r}) \right|^2\, dxdt \\
&\le C\|\nabla_y (z_{\ell}-z)\|_{L^2(\Omega\times I\times\square\times J)}.
\end{align*}
Here we also used the fact that
\begin{align*}
\lefteqn{
\limsup_{\e\to 0_+} \int_{0}^T\int_{\Omega} \left| \nabla_y \left( z_{\ell} - z\right)(x,t,\tfrac{x}{\e},\tfrac{t}{\e^r}) \right|^2\, dxdt
}\\
&= \vierint \left| \nabla_y \left( z_{\ell} - z\right)(x,t,y,s) \right|^2\, dZ,
\end{align*}
which still remains to be proved. To this end, we need assume some regularity for the matrix field $a(y,s)$ (see Assumption (ii) of Theorem \ref{T:cor}),  which also enables us to check the measurability of $(x,t) \mapsto \nabla_y z(x,t,\tfrac x \e, \tfrac t{\e^r})$ in $\Omega \times I$.  In case $r \in (0,2)$, we have already known that $\Phi_k(y,s)$ is independent of $(x,t)$ and $\nabla_y z(x,t,y,s) = \sum_{k=1}^N \partial_{x_k} v_0(x,t) \nabla_y \Phi_k(y,s)$, i.e., $\nabla_y z$ is a finite sum of (multiplicatively) separable functions in micro- and macroscopic variables. Hence, the measurability of the function $(x,t) \mapsto \nabla_y z(x,t,\tfrac x \e, \tfrac t {\e^r})$ in $\Omega \times I$ follows immediately.  Moreover, we  claim  that
\begin{equation}
\nabla_y\Phi_k \in L^{\infty}(\square \times J). \label{CP1regu}
\end{equation}
Indeed, since $a \in L^\infty(J;C^\alpha_{\rm per}(\square))$ for some $\alpha \in (0,1)$ by assumption, we can guarantee, thanks to~\cite[Theorem 1.1]{arm2}, that
$
\nabla_y\Phi_k(\cdot,s) \in L^{\infty}(\square)
$
(see~\cite[Section 4]{arm2} for details).
Furthermore, we see by~\cite[Lemma 3.5]{arm2} along with the weak-star lower semicontinuity of norm that
$$
\|\nabla_y \Phi_k(s)\|_{L^{\infty}(\square)}\le C\quad \text{ for all }\ s\in J,
$$
where $C>0$ is independent of $s\in J$.  Hence we obtain \eqref{CP1regu}. We write $z_{\ell,\vep} = z_\ell(x,t,\tfrac x \e, \tfrac t {\e^r})$ and $z_\e = z(x,t,\tfrac x \e, \tfrac t {\e^r})$ below. It then follows that
\begin{align*}
\lefteqn{
 \int^T_0 \int_\Omega \left| \nabla_y \left( z_{\ell} - z\right)(x,t,\tfrac{x}{\e},\tfrac{t}{\e^r}) \right|^2 \, dxdt
}\\
&= \int^T_0 \int_\Omega \left( |\nabla_y z_{\ell,\vep}|^2 - 2 \nabla_y z_{\ell,\vep} \cdot \nabla_y z_{\vep} + |\nabla_y z_\vep|^2 \right) \, dx dt\\
&= \int^T_0 \int_\Omega \Big( \sum_{i=1}^{n_\ell}\sum_{j=1}^{n_\ell} \nabla_y w_{\ell,i}(\tfrac x \e, \tfrac t {\e^r}) \cdot \nabla_y w_{\ell,j}(\tfrac x \e, \tfrac t {\e^r}) \phi_{\ell,i}(x,t) \phi_{\ell,j}(x,t)\\
&\quad -2 \sum_{i=1}^{n_\ell} \sum_{k=1}^N \nabla_y w_{\ell,i}(\tfrac x \e, \tfrac t {\e^r}) \cdot \nabla_y \Phi_k(\tfrac x \e, \tfrac t {\e^r}) \phi_{\ell,i}(x,t) \partial_{x_k}v_0(x,t)\\
&\quad + \sum_{h=1}^N \sum_{k=1}^N \nabla_y \Phi_h(\tfrac x \e, \tfrac t {\e^r})\cdot \nabla_y \Phi_k(\tfrac x \e, \tfrac t {\e^r}) \partial_{x_h}v_0(x,t) \partial_{x_k}v_0(x,t) \Big) \, dxdt\\
&\to \vierint \Big( \sum_{i=1}^{n_\ell}\sum_{j=1}^{n_\ell} \nabla_y w_{\ell,i}(y,s) \cdot \nabla_y w_{\ell,j}(y,s) \phi_{\ell,i}(x,t) \phi_{\ell,j}(x,t)\\
&\quad -2 \sum_{i=1}^{n_\ell} \sum_{k=1}^N \nabla_y w_{\ell,i}(y,s) \cdot \nabla_y \Phi_k(y,s) \phi_{\ell,i}(x,t) \partial_{x_k}v_0(x,t)\\
&\quad + \sum_{h=1}^N \sum_{k=1}^N \nabla_y \Phi_h(y,s) \cdot \nabla_y \Phi_k(y,s) \partial_{x_h}v_0(x,t) \partial_{x_k}v_0(x,t) \Big) \, dZ\\
&= \vierint |\nabla_y(z_\ell-z)(x,t,y,s)|^2 \, dZ. 
\end{align*}
One can similarly prove the assertion for the case $r \in (2,+\infty)$, where $\Phi_k$ depends only on $y$. In case $r = 2$, we can prove that
\begin{equation}\label{sc:reg}
\nabla_y \Phi_k \in L^\infty(\Omega \times I ; C_{\rm per}(\square \times J)) \ \mbox{ for } \ k=1,2,\ldots,N
\end{equation}
for smooth $a(y,s)$ (see \S \ref{A:S:reg} in Appendix for more details).  Hence, since $\nabla_y \Phi_k$ is a Carath\'eodory function, the function $(x,t) \mapsto \nabla_y z(x,t,\tfrac x \e, \tfrac t {\e^r})$ is measurable in $\Omega\times I$ (see Remark \ref{R:meas-osci}). Moreover,  noting by \eqref{sc:reg} that
$$
\nabla_y \Phi_h(x,t,y,s) \cdot \nabla_y \Phi_k(x,t,y,s) \partial_{x_h}v_0(x,t) \partial_{x_k}v_0(x,t) \in L^1(\Omega\times I;C_{\rm per}(\square \times J))
$$
and exploiting  Lemma 1.3 of~\cite{al1} (see also~Theorem 3.3 of~\cite{Z2003})  with obvious modification, one can verify that
\begin{align*}
\lefteqn{
\int^T_0 \int_\Omega \nabla_y \Phi_h(x,t,\tfrac x \e,\tfrac t {\e^r}) \cdot \nabla_y \Phi_k(x,t,\tfrac x \e,\tfrac t {\e^r}) \partial_{x_h}v_0(x,t) \partial_{x_k}v_0(x,t) \, dxdt
}\\
&\to \vierint \nabla_y \Phi_h(x,t,y,s) \cdot \nabla_y \Phi_k(x,t,y,s) \partial_{x_h}v_0(x,t) \partial_{x_k}v_0(x,t) \, dZ,
\end{align*}
and moreover,
\begin{align*}
\lefteqn{
\int^T_0 \int_\Omega \nabla_y w_{\ell,i}(\tfrac x \e,\tfrac t {\e^r}) \cdot \nabla_y \Phi_k(x,t,\tfrac x \e,\tfrac t {\e^r}) \phi_{\ell,i}(x,t) \partial_{x_k}v_0(x,t) \, dxdt
}\\
&\to \vierint \nabla_y w_{\ell,i}(y,s) \cdot \nabla_y \Phi_k(x,t,y,s) \phi_{\ell,i}(x,t) \partial_{x_k}v_0(x,t) \, dZ
\end{align*}
as $\e \to 0_+$. Therefore the rest of proof runs as before.
\end{proof}

We next prove Corollary \ref{C:cor}.

\begin{proof}[Proof of Corollary \ref{C:cor}]
Write $\e = \e_n$, $a_\e = a(\tfrac{x}{\e},\tfrac{t}{\e^r})$ for simplicity and let $j_\e := a_\e \nabla v_\e$ be the diffusion flux of \eqref{anal}. Recall that $z(x,t,y,s) = \sum_{k=1}^N \partial_{x_k} v_0(x,t) \Phi_k(x,t,y,s)$. Thanks to Theorem \ref{T:cor}, one observes by \eqref{rayleigh} that
\begin{align*}
\lefteqn{
\left\|
j_{\vep} - a_\e \left( \nabla v_{0}+\nabla_y z \left(x,t,\tfrac{x}{\e},\tfrac{t}{\e^r}\right) \right)
\right\|_{L^2(\Omega \times I)}
}\\
&\leq C \left\|
\nabla v_{\vep} - \nabla v_{0} - \nabla_y z \left(x,t,\tfrac{x}{\e},\tfrac{t}{\e^r}\right)
\right\|_{L^2(\Omega \times I)} \to 0
\end{align*}
(cf.~see \eqref{grawtts}). Therefore, although $j_\e \to j_{\rm hom}$ weakly in $L^2(\Omega \times I)$ as $\e \to 0_+$, we deduce that
\begin{align*}
\lim_{\vep \to 0_+}\left\|
j_{\vep} - j_{\rm hom} - \left[ a_\e \left( \nabla v_{0}+\nabla_y z \left(x,t,\tfrac{x}{\e},\tfrac{t}{\e^r}\right) \right) - j_{\rm hom} \right]
\right\|_{L^2(\Omega \times I)} = 0,
\end{align*}
where $j_{\rm hom} = j_{\rm hom}(x,t)$ is the homogenized flux defined by \eqref{jhom}. Here it is noteworthy that the corrector term
$$
a_\e \left( \nabla v_{0}+\nabla_y z \left(x,t,\tfrac{x}{\e},\tfrac{t}{\e^r}\right) \right) - j_{\rm hom}
$$
converges to zero \emph{weakly} in $L^2(\Omega \times I)$ as $\e \to 0_+$ (by \eqref{jhom} along with Proposition \ref{mean} for $r\neq2$ and Lemma 1.3 of~\cite{al1} for $r=2$),  but it does \emph{not converge strongly} in general. Indeed,  as in the proof of Theorem \ref{T:cor}, we can verify that
\begin{align*}
\lefteqn{
\left\|
a_\e \left( \nabla v_{0}+\nabla_y z \left(x,t,\tfrac{x}{\e},\tfrac{t}{\e^r}\right) \right) - j_{\rm hom} 
\right\|_{L^2(\Omega\times I)}^2
}\\
&= \int^T_0 \int_\Omega \left| a_\e \left( \nabla v_{0}(x,t)+\nabla_y z \left(x,t,\tfrac{x}{\e},\tfrac{t}{\e^r}\right) \right) - j_{\rm hom}(x,t) \right|^2 \, \d x \d t\\
&\to \vierint \left| a(y,s)\left( \nabla v_{0}(x,t)+\nabla_y z (x,t,y,s) \right) - j_{\rm hom}(x,t) \right|^2 \, \d Z\\
&\neq 0,
\end{align*}
unless the function $a(y,s) \left( \nabla v_0(x,t) + \nabla_y z (x,t,y,s) \right)$ is constant for $(y,s) \in \square \times J$ (see Remark \ref{R:corr-non0} below).

Furthermore, noting that
\begin{equation}\label{div-H-1}
\|\mathrm{div} \,\varphi\|_{H^{-1}(\Omega)} = \sup_{\|\nabla w\|_{L^2(\Omega)} = 1} \int_\Omega \varphi \cdot \nabla w \, dx \leq \|\varphi\|_{L^2(\Omega)}
\quad \mbox{ for } \ \varphi \in [L^2(\Omega)]^N,
\end{equation}
we can derive that
\begin{align*}
\lefteqn{
 \left\| \partial_t v_{\e}^{1/p} - \partial_t v_0^{1/p} - \mathrm{div} \left[ a_\e \left( \nabla v_{0}+\nabla_y z \left(x,t,\tfrac{x}{\e},\tfrac{t}{\e^r}\right) \right) - j_{\rm hom} \right] \right\|_{L^2(I;H^{-1}(\Omega))}
}\\
&\leq \left\| j_{\e} - j_{\rm hom} - \left[ a_\e \left( \nabla v_{0}+\nabla_y z \left(x,t,\tfrac{x}{\e},\tfrac{t}{\e^r}\right) \right) - j_{\rm hom} \right] \right\|_{L^2(\Omega\times I)}
\to 0
\end{align*}
as $\e \to 0_+$.
\end{proof}

We close this section with the following remark:

\begin{remark}[The corrector does not vanish generally]\label{R:corr-non0}
{\rm
\begin{enumerate}
 \item[(i)] Let us consider the case $N \geq 2$ and assume for simplicity that $r \neq 2$. Then $\Phi_k$ is independent of macroscopic variables $x,t$, and in particular, it is irrelevant to $v_0(x,t)$. Moreover, it is not always true that
\begin{equation*}
a(y,s) \left( \nabla_y \Phi_k (y,s) + e_k \right) = b_k 
\quad \mbox{ for a.e.~} (y,s) \in \square \times J, \quad k = 1,2,\ldots,N 
\end{equation*}
for some constant vectors $b_k \in \R^N$, and hence, if it is not true, the corrector term never vanishes as $\e \to 0_+$. Then the flux $j_{\e}$ cannot converge strongly.
 \item[(ii)] In the one-dimensional case, for $r \in (0,2)$, the cell problem reads,
$$
\partial_y \left( a(y,s)\left[ \partial_y \Phi(y,s) + 1 \right]\right) = 0 \ \mbox{ in } \, \square \times J,
$$
which implies
$$
a(y,s) \left[ \partial_y \Phi(y,s) + 1 \right] = c(s) \ \mbox{ in } \, \square \times J
$$
for some $c(s) \in \R$ independent of $y \in \square = (0,1)$ but (possibly) depending on $s \in J$. Then the periodicity of $\Phi(y,s)$ in $y$ yields
$$
c(s) = \left( \int^1_0 a(y,s)^{-1} \, \d y \right)^{-1} = \langle a(\cdot,s)^{-1} \rangle_y^{-1},
$$
which is not constant for $s \in J$ in general. If $c(s)$ is not constant, $j_{\e}$ cannot then converge to $j_{\rm hom}$ strongly in $L^2(\Omega\times I)$. Furthermore, by $N=1$, \eqref{div-H-1} holds with an equality, that is, $\|\varphi'\|_{H^{-1}(\Omega)} = \|\varphi\|_{L^2(\Omega)}$ for $\varphi \in L^2(\Omega)$. Therefore $\partial_t u_\e$ cannot converge to $\partial_t u_0$ strongly in $L^2(I;H^{-1}(\Omega))$ as well.
\end{enumerate}
}
\end{remark}

\section{Proof of Proposition \ref{P:ahom}}\label{S:hmat}

We first prove (i). In case $r=2$ and $0<p<1$, for each $\xi=[\xi_k]_{k=1,2,\ldots,N}\in\R^N$, there exists a weak solution $\Phi_{\xi}=\sum_{k=1}^N \xi_k \Phi_k$ to 
\begin{align}\label{relocal2}
\begin{cases}
\frac{1}{p}|v_0|^{(1-p)/p}\partial_s\Phi_{\xi}(x,t,y,s)-\dv_y\left(a(y,s)[\nabla_y \Phi_{\xi}(x,t,y,s)+\xi] \right)=0 &\mbox{ in } \T^N \times \T,\\
\Phi_\xi(x,t,y,0)=\Phi_\xi(x,t,y,1) &\mbox{ in } \T^N.
\end{cases}
\end{align}
Using  \eqref{relocal2}  and \eqref{ellip}, we have
\begin{align*}
a_{\rm hom}(x,t)\xi\cdot\xi
&= \int_{0}^1\int_{\square} a(y,s)(\nabla_y\Phi_{\xi}+\xi)\cdot \xi\, dyds\\
&\stackrel{\eqref{relocal2}}{=} \int_{0}^1\int_{\square} a(y,s)(\nabla_y\Phi_{\xi}+\xi)\cdot(\nabla_y\Phi_{\xi}+\xi) \, dy ds \\
&\quad + \frac{1}{2p} |v_0(x,t)|^{(1-p)/p} \int^1_0 \dfrac d{ds} \|\Phi_{\xi}( x,t,\cdot,s)\|_{L^2(\square)}^2 \, ds\\
 &\stackrel{\eqref{ellip}}{\ge} \lambda \int_{0}^1\int_{\square}|\xi+\nabla_y\Phi_{\xi}|^2\, dyds\\
&= \lambda \sum_{k=1}^N \left( 1 + \int_{0}^1\int_{\square} |\nabla_y\Phi_k|^2 \, dyds \right) |\xi_k|^2.
\end{align*}
Here we used the fact that $\langle\nabla_y\Phi_{\xi}\rangle_y=0$ and $\Phi_{\xi}|_{s=0}=\Phi_{\xi}|_{s=1}$ by the periodicity of $\Phi_{\xi}$ in $(y,s)\in\square\times J$. Furthermore, we also find that
\begin{align}\label{ahom-up}
a_{\rm hom}(x,t)\xi\cdot\xi
\leq \sum_{k=1}^N \left( 1 + \int_{0}^1\int_{\square} |\nabla_y\Phi_k|^2 \, dyds \right) |\xi_k|^2.
\end{align}
In case $r=2$ and $1<p<2$,  let $(x,t) \in \Omega\times I$ be such that $v_0(x,t)\neq 0$. Setting $\Psi_{\xi}=\sum_{k=1}^N\xi_k \Psi_{k}$ and noting that $\Phi_k = p|v_0|^{(p-1)/p}\Psi_k$ solves \eqref{relocal2},  we have
\begin{align*}
 a_{\rm hom}(x,t)\xi\cdot\xi
 &=
 \int_{0}^1\int_{\square} a(y,s)(\nabla_y\Phi_{\xi}+\xi)\cdot \xi\, dyds\\
 &\stackrel{\eqref{relocal2}}{=}
 \int_{0}^1\int_{\square} a(y,s)\left(p|v_0|^{(p-1)/p}\nabla_y\Psi_{\xi}+\xi\right)\cdot\left(p|v_0|^{(p-1)/p}\nabla_y\Psi_{\xi}+\xi\right)\, dy ds\\
 &\quad + \frac{p}{2} |v_0(x,t)|^{(p-1)/p} \int^1_0 \frac d {ds} \|\Psi_{\xi}(s)\|_{L^2(\square)}^2\, ds\\
 &\stackrel{\eqref{ellip}}{\ge}
 \lambda \int_{0}^1\int_{\square} \left|\xi+p|v_0|^{(p-1)/p}\nabla_y\Psi_{\xi}\right|^2\, dyds\\
 &= \lambda \sum_{k=1}^N \left( 1 + \int_{0}^1\int_{\square} |\nabla_y\Phi_k|^2 \, dyds \right) |\xi_k|^2.
\end{align*}
Moreover, \eqref{ahom-up} also follows.  Concerning $(x,t) \in \Omega \times I$ for which $v_0(x,t)$ vanishes, we see that $\Phi_k(x,t,\cdot,\cdot) \equiv 0$ in $\square \times J$, and hence, the same conclusion follows immediately.  In case $r\neq 2$, one can prove the assertion in a similar way to the case $0<p<1$.

We next prove (ii) for $r\in (0,2)$. Since $a(y,s)$ is symmetric, it follows that
\begin{align*}
[a_{\rm hom}]_{ j,k}
 &= a_{\rm hom} e_k \cdot e_j\\
 &\stackrel{\eqref{ahomfast}}{=} \int_0^1\int_{\square} a(y,s)(\nabla_y\Phi_k+e_k)\cdot e_j\, dyds\\
 &\quad +\underbrace{\int_0^1\int_{\square}a(y,s)(\nabla_y\Phi_k+e_k)\cdot\nabla_y\Phi_j\, dyds}_{=\,0}\\
 &= \int_0^1\int_{\square}a(y,s)(\nabla_y\Phi_j+e_j)\cdot(\nabla_y\Phi_k+e_k)\, dyds\\
 &= \int_0^1\int_{\square} a(y,s)(\nabla_y\Phi_j+e_j)\cdot e_k\, dyds\\
 &\quad +\underbrace{\int_0^1\int_{\square} a(y,s)(\nabla_y\Phi_j+e_j)\cdot\nabla_y\Phi_k\, dyds}_{=\,0}
 = [a_{\rm hom}]_{ k,j}.
\end{align*}
One can also verify the symmetry for $r\in (2,+\infty)$ in a similar fashion.

Finally, we shall discuss the critical case $r = 2$. For the case where $p \in (0,1)$, one observes that 
\begin{align*}
[a_{\rm hom}(x,t)]_{ j,k}
 &= a_{\rm hom}(x,t) e_k \cdot e_j\\
 &= \int_0^1 \int_{\square} a(y,s)(\nabla_y\Phi_k+e_k)\cdot e_j\, dyds\\
 &\quad+\underbrace{\int_0^1 \Big[\int_{\square} a(y,s)(\nabla_y\Phi_k+e_k)\cdot\nabla_y \Phi_{j}\, dy +\frac{1}{p}|v_0|^{(1-p)/p} \left\langle \partial_s \Phi_k, \Phi_j \right\rangle_V \Big] \, ds}_{=\,0}\\
 & = \int_0^1 \int_{\square} a(y,s)e_j\cdot (\nabla_y\Phi_k+e_k)\, dyds \\
 & \quad+\int_0^1 \Big[\int_{\square} a(y,s)\nabla_y \Phi_{j}\cdot(\nabla_y\Phi_k+e_k)\, dy +\frac{1}{p}|v_0|^{(1-p)/p} \left\langle \partial_s \Phi_k, \Phi_j \right\rangle_V \Big] \, ds \\
 & =\int_0^1 \Big[\int_{\square} a(y,s)(\nabla_y \Phi_{j}+e_j)\cdot(\nabla_y\Phi_k+e_k)\, dy +\frac{1}{p}|v_0|^{(1-p)/p} \left\langle \partial_s \Phi_k, \Phi_j \right\rangle_V \Big] \, ds \\
 &= \int_0^1\int_{\square} a(y,s)(\nabla_y \Phi_{j}+e_{j})\cdot e_k\, dyds\\
 &\quad +\int_0^1\Bigl[\int_{\square} a(y,s)(\nabla_y \Phi_{j}+e_{j})\cdot\nabla_y\Phi_k\, dy + \frac{1}{p}|v_0|^{(1-p)/p}\left\langle\partial_s\Phi_k,\Phi_{j}\right\rangle_{\Vs}\, \Bigl]\, ds\\
 &= \underbrace{a_{\rm hom}(x,t)e_{j}\cdot e_k}_{\ = \, [a_{\rm hom}(x,t)]_{ k,j}} + \frac{1}{p}|v_0|^{(1-p)/p}\int_0^1 \left( \langle \partial_s\Phi_k, \Phi_{j} \rangle_{\Vs}  - \langle \partial_s \Phi_{j}, \Phi_k \rangle_{\Vs} \right)\, ds
\end{align*}
for $ j,k = 1,2,\ldots, N$. Hence $a_{\rm hom}(x,t)$  is not  symmetric,  unless the second term of the right-hand side vanishes for all $j,k$ different each other.  For the case where $p\in(1,2)$, assume that $v_0(x,t) \neq 0$. Then we deduce that
\begin{align*}
\lefteqn{[a_{\rm hom}(x,t)]_{ j,k}}\\
 &= a_{\rm hom}(x,t) e_k \cdot e_{j}\\
 &= \int_0^1\int_{\square} a(y,s)\left(p|v_0|^{(p-1)/p}\nabla_y\Psi_k+e_k\right)\cdot e_{j}\, dyds\\
 &\quad + p|v_0|^{(p-1)/p} \underbrace{\int_0^1\Bigl[\int_{\square} a(y,s) \left(p|v_0|^{(p-1)/p}\nabla_y\Psi_k+e_k\right)\cdot\nabla_y\Psi_{j}\, dy +\left\langle\partial_s\Psi_k,\Psi_{j}\right\rangle_{\Vs}\, \Bigl]\, ds}_{=\,0}\\
 & = \int_0^1\int_{\square} a(y,s)e_{j}\cdot \left(p|v_0|^{(p-1)/p}\nabla_y\Psi_k+e_k\right)\, dyds \\
 & \quad +p|v_0|^{(p-1)/p}\int_0^1\Bigl[\int_{\square} a(y,s) \nabla_y\Psi_{j}\cdot\left(p|v_0|^{(p-1)/p}\nabla_y\Psi_k+e_k\right)\, dy +\left\langle\partial_s\Psi_k,\Psi_{j}\right\rangle_{\Vs}\, \Bigl]\, ds \\
 & =\int_0^1\Bigl[\int_{\square} a(y,s) (p|v_0|^{(p-1)/p}\nabla_y\Psi_{j}+e_j)\cdot\left(p|v_0|^{(p-1)/p}\nabla_y\Psi_k+e_k\right)\, dy +p|v_0|^{(p-1)/p}\left\langle\partial_s\Psi_k,\Psi_{j}\right\rangle_{\Vs}\, \Bigl]\, ds \\
 &= \int_0^1\int_{\square}  a(y,s) \left( p|v_0|^{(p-1)/p} \nabla_y\Psi_{j}+e_{j} \right)\cdot e_k\, dyds\\
 &\quad + p|v_0|^{(p-1)/p}\int_0^1\Bigl[\int_{\square} a(y,s)( p|v_0|^{(p-1)/p} \nabla_y\Psi_{j}+e_{j})\cdot\nabla_y\Psi_k\, dy + \left\langle\partial_s\Psi_k,\Psi_{j}\right\rangle_{\Vs}\, \Bigl]\, ds\\
 &= \underbrace{a_{\rm hom}(x,t) e_{j}\cdot e_k}_{\ = \, [a_{\rm hom}(x,t)]_{k,j}} +  p|v_0|^{(p-1)/p}\int_0^1 \left( \langle \partial_s\Psi_k, \Psi_{j} \rangle_{\Vs}  - \langle \partial_s \Psi_{j}, \Psi_k \rangle_{\Vs} \right)\, ds,
\end{align*}
which implies the same assertion as in the case $0<p<1$. Furthermore, when $v_0(x,t)=0$, we have already seen that $a_{\rm hom}(x,t)$ is symmetric, because so is $a(y,s)$ in (ii) of Remark \ref{R:interpre}. \qed

\begin{remark}[Skew-symmetric part makes no contribution to the diffusion]\label{R:no-contr}
{\rm
As mentioned in Proposition \ref{P:ahom}, the homogenized matrix $a_{\rm hom}$ may be asymmetric at the critical case $r = 2$. According to the proof above for $0<p<1$, the skew-symmetric part of $a_{\rm hom}$ reads,
\begin{align*}
\left(\frac{a_{\rm hom} - \tenchi a_{\rm hom}}2\right)_{jk}
&= \frac 1{2p}|v_0|^{(1-p)/p}\int_0^1 \left( \langle \partial_s\Phi_k, \Phi_{j} \rangle_{\Vs} - \langle \partial_s \Phi_{j}, \Phi_k \rangle_{\Vs} \right)\, ds\\
&= \frac 1p|v_0|^{(1-p)/p}\int_0^1 \langle \partial_s\Phi_k, \Phi_{j} \rangle_{\Vs}\, ds \quad \mbox{ for } \ j,k=1,2,\ldots,N.
\end{align*}
On the other hand, the skew-symmetric part seems to make no contribution to the homogenized diffusion. Indeed, suppose that $v_0 \neq 0$ and $\Phi_k$, $k=1,2,\ldots,N$, are all smooth enough. Then the diffusion term of the \eqref{analhomeq} reads,
\begin{align*}
 \mathrm{div} (a_{\rm hom} \nabla v_0)
&= \mathrm{div} \left( \frac{a_{\rm hom} + \tenchi a_{\rm hom}}2 \nabla v_0\right)
+ \mathrm{div} \left( \frac{a_{\rm hom} -  \tenchi a_{\rm hom}}2 \nabla v_0\right),
\end{align*}
and we here observe that
\begin{align*}
 \mathrm{div} \left( \frac{a_{\rm hom} -  \tenchi a_{\rm hom}}2 \nabla v_0\right)
&= \frac{1-p}{p^2} v_0^{(1-2p)/p} \partial_{ x_j} v_0 \left( \int^1_0 \int_\square (\partial_s \Phi_k) \Phi_j \, d y d s \right) \partial_{ x_k} v_0\\
&\quad + \frac 1p |v_0|^{(1-p)/p} \left[ \int^1_0 \int_\square \left\{ (\partial_{ x_j}\partial_s \Phi_k) \Phi_j + (\partial_s \Phi_k) (\partial_{ x_j} \Phi_j) \right\} \, d y d s \right] \partial_{ x_k} v_0\\
&\quad + \frac 1p |v_0|^{(1-p)/p} \left( \int^1_0 \int_\square (\partial_s \Phi_k) \Phi_j \, d y d s \right) \partial^2_{x_j x_k} v_0
\end{align*}
(here we used Einstein's summation convention). However, all the terms of the right-hand side vanish due to integration by parts and the symmetry of the Hessian. Hence the homogenized matrix $a_{\rm hom}$ may be asymmetric and the skew-symmetric part  is still alive  in the homogenized diffusion flux. However, the asymmetry will finally disappear in the homogenized diffusion.
}
\end{remark}

\section*{Acknowledgment}

The first author is supported by JSPS KAKENHI Grant Number JP20H01812, JP18K18715, 16H03946 and JP17H01095. The second author is partially supported by Division for Interdisciplinary Advanced Research and Education, Tohoku University and  Grant-in-Aid for JSPS Fellows (No. 20J10143).

\appendix

\section{Regularity of solutions to cell problems}\label{A:S:reg}

This section is devoted to discussing existence, uniqueness and regularity of weak solutions to cell-problems at the critical ratio $r = 2$. In what follows, we may simply write $w(y,s)$ for functions $w = w(x,t,y,s)$ by omitting variables $x,t$, unless any confusion may arise.

In case $r = 2$ and $p \in (0,1)$, for each $(x,t) \in \Omega \times (0,T)$, the cell-problem reads,
\begin{equation}\label{cp-2-1}
\left\{
\begin{array}{ll}
\frac1p |v_0|^{(1-p)/p}\partial_s\Phi_k(y,s)-\dv_y\left(a(y,s)\left[\nabla_y\Phi_k(y,s)+e_k\right]\right)=0 &\mbox{ in } \T^N\times\T,\\
\Phi_k(y,0) = \Phi_k(y,1) &\mbox{ in } \T^N
\end{array}
\right.
\end{equation}
such that $\langle \Phi_k(\cdot,s) \rangle_y = 0$ for $s \in \T$. Since $v_0 = v_0(x,t)$ depends only on $(x,t)$, it can be regarded as a constant to discuss existence, uniqueness and regularity of weak solutions to \eqref{cp-2-1} for each $(x,t)$ fixed. Moreover, the divergence of the vector field $a(y,s) e_k$ acts as a forcing term. In case $v_0(x,t) \neq 0$, assuming
\begin{equation}\label{ae-1}
a(y,s) e_k \in [L^2(J;L^2_{\rm per}(\square))]^N, \quad k=1,2,\ldots,N,
\end{equation}
one can construct a unique weak solution $\Phi_k = \Phi_k(x,t,\cdot,\cdot) \in L^2(J;V) \cap W^{1,2}(J;V^*)$, where $V := H^1_{\rm per}(\square) \setminus \R$, of the periodic problem \eqref{cp-2-1} by applying a general theory on non-autonomous evolution equations. The uniqueness of solutions follows from the strict monotonicity of the elliptic operator along with the zero mean condition $\langle \Phi_k(\cdot,s) \rangle_y = 0$. In case $v_0(x,t) = 0$, it suffices to use an elliptic theory instead of the parabolic one.

Thus the existence and uniqueness of the weak solution $\Phi_k(x,t,\cdot,\cdot)$ to the cell-problem \eqref{cp-2-1}  have  been proved for a.e.~$(x,t) \in \Omega \times I$, and moreover, as we shall see, it can also be proved that $\Phi_k(x,t,\cdot,\cdot)$ complies with a classical regularity, when $a(y,s)e_k$ is smooth enough. On the other hand, the regularity of $\Phi_k$ in $x,t$ seems more delicate; indeed, it depends on the regularity of the homogenized limit $v_0$ (see \eqref{cp-2-1}), whose regularity also relies on the smoothness of $a_{\rm hom}(x,t)$ consisting of the solutions $\{\Phi_k(x,t,y,s)\}_{k=1,2,\ldots,N}$. However, as will be shown below, the boundedness at least can be proved. 

\begin{lem}[Strong measurability in $(x,t)$]\label{A:L:1}
Assume $r=2$, $p \in (0,1)$ and \eqref{ae-1}. For $k=1,2,\ldots,N$, the function $(x,t) \mapsto \Phi_k(x,t,\cdot,\cdot)$  {\rm (}resp., $|v_0(x,t)|^{(1-p)/p} \Phi_k(x,t,\cdot,\cdot)${\rm )}  is strongly measurable in $\Omega \times (0,T)$ with values in $L^2(J;V)$  {\rm (}resp., in $W^{1,2}(J;V^*)${\rm )}.   Moreover, $\Phi_k \in L^\infty(\Omega\times I;L^2(J;V))$  and $|v_0|^{(1-p)/p} \Phi_k \in L^\infty(\Omega\times I;W^{1,2}(J;V^*))$. 
\end{lem}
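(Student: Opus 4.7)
\emph{Reduction to a scalar parameter.} Since $v_0$ depends only on macroscopic variables, for each fixed $(x,t)$ the cell-problem \eqref{local2} becomes a linear periodic equation whose only external datum is the non-negative scalar
\[
\alpha := \alpha(x,t) = \tfrac{1}{p}|v_0(x,t)|^{(1-p)/p}.
\]
Writing its unique weak solution as $\Phi_k^\alpha \in L^2(J;\Vs)\cap W^{1,2}(J;\Vs^*)$ (existence and uniqueness being standard for $\alpha>0$ by parabolic theory, and by Lax--Milgram for $\alpha=0$), one has $\Phi_k(x,t,\cdot,\cdot)=\Phi_k^{\alpha(x,t)}$. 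The strategy is to (a) obtain $\alpha$-uniform bounds, (b) prove continuous dependence $\alpha\mapsto\Phi_k^\alpha$, and (c) deduce strong measurability of the composition $(x,t)\mapsto \Phi_k^{\alpha(x,t)}$ via the Pettis theorem using separability of the target spaces.

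\emph{Uniform estimates.} I test \eqref{local2} by $\Phi_k^\alpha$ and integrate over $\square\times J$. Periodicity in $s$ together with the zero-mean condition kill the $\partial_s$-term, and \eqref{ellip} combined with the matrix bound $|a(y,s)\xi|\le|\xi|$ yields $\|\Phi_k^\alpha\|_{L^2(J;\Vs)}\le 1/\lambda$, independently of $\alpha\ge 0$. Since $|v_0|^{(1-p)/p}$ is independent of $s$, the cell-problem rewrites as
\[
\partial_s\bigl(|v_0|^{(1-p)/p}\Phi_k^\alpha\bigr) = p\,\dv_y\bigl(a(y,s)[\nabla_y\Phi_k^\alpha+e_k]\bigr) \quad\text{in }\Vs^*,
\]
whose right-hand side is uniformly bounded in $L^2(J;\Vs^*)$ by $p(1/\lambda+1)$. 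The $L^\infty(\Omega\times I;L^2(J;\Vs))$-bound on $\Phi_k$ is then immediate; the $L^\infty(\Omega\times I;W^{1,2}(J;\Vs^*))$-bound on $|v_0|^{(1-p)/p}\Phi_k$ follows by combining the $\partial_s$-estimate just derived with a Poincar\'e--Wirtinger inequality in $s$ and a separate treatment of the $s$-mean $\langle |v_0|^{(1-p)/p}\Phi_k\rangle_s = |v_0|^{(1-p)/p}\langle\Phi_k\rangle_s$.

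\emph{Continuity and measurability.} For $\alpha_1,\alpha_2\ge 0$ with, say, $\alpha_1\ge\alpha_2$ and $\alpha_1>0$, subtracting the cell-problems gives
\[
\alpha_2\,\partial_s(\Phi_k^{\alpha_1}-\Phi_k^{\alpha_2}) + (\alpha_1-\alpha_2)\,\partial_s\Phi_k^{\alpha_1} = \dv_y\bigl(a\nabla_y(\Phi_k^{\alpha_1}-\Phi_k^{\alpha_2})\bigr).
\]
Testing by $\Phi_k^{\alpha_1}-\Phi_k^{\alpha_2}$, the symmetric parabolic term vanishes by periodicity, and together with the uniform bound on $\alpha\,\partial_s\Phi_k^\alpha$ one obtains
\[
\|\Phi_k^{\alpha_1}-\Phi_k^{\alpha_2}\|_{L^2(J;\Vs)} \le \frac{|\alpha_1-\alpha_2|}{\max(\alpha_1,\alpha_2)}\cdot\frac{p(1/\lambda+1)}{\lambda},
\]
i.e.\ Lipschitz continuity of $\alpha\mapsto\Phi_k^\alpha$ on every set bounded away from $0$. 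Continuity at $\alpha=0$ requires a separate argument: extract a weakly convergent subsequence along $\alpha_n\to 0$ using the uniform $L^2(J;\Vs)$-estimate, identify the weak limit with the unique elliptic solution $\Phi_k^0$ by passing to the limit in the weak form (the $\partial_s$-term disappears because $\alpha_n\to 0$ while $\Phi_k^{\alpha_n}$ is uniformly bounded), and upgrade to strong convergence via the energy identity obtained by testing with $\Phi_k^{\alpha_n}$ itself and invoking Proposition \ref{trick}. Since $(x,t)\mapsto\alpha(x,t)$ is Lebesgue measurable (as $v_0$ is) and both $L^2(J;\Vs)$ and $W^{1,2}(J;\Vs^*)$ are separable, composition with the continuous maps $\alpha\mapsto\Phi_k^\alpha$ and $\alpha\mapsto|v_0|^{(1-p)/p}\Phi_k^\alpha$ yields the required strong Bochner measurability by Pettis's theorem.

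\emph{Main obstacle.} The delicate step is the behaviour near $\alpha=0$, where the cell-problem degenerates from parabolic to elliptic and the Lipschitz estimate above becomes useless; it must be replaced by the weak-compactness/Minty-type identification outlined in the previous paragraph. A secondary technical difficulty is the uniform-in-$(x,t)$ control of $\||v_0|^{(1-p)/p}\Phi_k\|_{L^2(J;\Vs^*)}$, whose naive estimate carries an unbounded factor $|v_0|^{(1-p)/p}$ and which must be handled by the $s$-mean/oscillation decomposition mentioned above, exploiting that $\langle\Phi_k\rangle_s$ is not only in $\Vs^*$ but in $\Vs$ with norm already controlled by $1/\lambda$.
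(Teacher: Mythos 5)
Your route is genuinely different from the paper's, and for the measurability assertions it works. The paper never discusses continuity of the solution map in the parameter $\alpha=\tfrac1p|v_0(x,t)|^{(1-p)/p}$: it approximates $\tfrac1p|v_0|^{(1-p)/p}$ by positive step functions $\sigma_n(x,t)$, solves the cell problem \eqref{cp-2-1n} with coefficient $\sigma_n$ so that $(x,t)\mapsto P_n(x,t,\cdot,\cdot)$ is a simple function, derives the same uniform bounds you do (namely $\|\nabla_y P_n\|_{L^2(\square\times J)}\le C$ and $\sigma_n\|\partial_s P_n\|_{L^2(J;V^*)}\le C$), extracts for each fixed $(x,t)$ weak limits in $L^2(J;V)$ and $W^{1,2}(J;V^*)$, obtains weak measurability of the limit, upgrades it by Pettis, and identifies the limit with $\Phi_k$ by uniqueness. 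Your scheme --- uniform bounds, the subtraction/periodicity trick giving Lipschitz dependence on $\alpha$ away from $0$, a compactness argument at $\alpha=0$ (Minty is not actually needed: the equation is linear, so passing to the limit in the weak form plus uniqueness of the zero-mean elliptic solution suffices), and then composition of the measurable $\alpha(x,t)$ with a continuous map into separable spaces --- yields the same measurability conclusions and is arguably more transparent; note that continuity at $\alpha=0$ is dispensable, since on $\{v_0=0\}$ the solution does not depend on $(x,t)$ and one may treat $\{v_0=0\}$ and $\{v_0\neq0\}$ separately. What the paper's device buys is that no stability analysis of the solution map is needed at all; what yours buys is a quantitative continuity estimate that could be reused elsewhere.

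The one place where your write-up overreaches is the zeroth-order part of the $W^{1,2}(J;V^*)$ bound. Your mean/oscillation decomposition does control the oscillatory part of $|v_0|^{(1-p)/p}\Phi_k$ uniformly, via Poincar\'e--Wirtinger in $s$ and the uniform $\partial_s$-estimate, but the $s$-mean is exactly $|v_0(x,t)|^{(1-p)/p}\langle\Phi_k\rangle_s$, and knowing $\langle\Phi_k\rangle_s\in V$ with norm at most $1/\lambda$ does not remove the prefactor $|v_0(x,t)|^{(1-p)/p}$, which is not essentially bounded under the standing hypotheses ($u_0$ is only controlled in spaces of the type $L^\infty(I;L^{3-p}(\Omega))$). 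In fact no decomposition can help: $\alpha\|\Phi_k^\alpha\|_{L^2(J;V^*)}$ genuinely grows linearly in $\alpha$, since as $\alpha\to+\infty$ the solution $\Phi_k^\alpha$ tends to the (generically nonzero) solution of the $s$-averaged elliptic cell problem. What is uniformly true --- and is precisely what the paper's own displayed a priori estimates control and what is used afterwards (cf.\ \eqref{cl:1} and \eqref{v0dz}) --- is the pair of bounds $\|\Phi_k\|_{L^2(J;V)}\le C$ and $\||v_0|^{(1-p)/p}\partial_s\Phi_k\|_{L^2(J;V^*)}\le C$. So you should either state the uniform conclusion for the derivative part only, or keep the weight in an $L^q_{x,t}$ framework, rather than claim that the mean/oscillation argument delivers the full uniform $W^{1,2}(J;V^*)$ norm.
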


The lemma mentioned above is enough to discuss a characterization of the homogenized matrix as in Theorem \ref{thm2}.

\begin{proof}
Since $|v_0|^{(1-p)/p}$ lies in $L^{(p+1)/(1-p)}(\Omega\times I)$, one can take a sequence $(\sigma_n)$ of step functions from $\Omega\times I$ into $\R$ such that $\sigma_n(x,t) \to (1/p)|v_0(x,t)|^{(1-p)/p}$ for $(x,t) \in Q_0$, where $Q_0$ is a measurable set in $\Omega\times I$ satisfying $|(\Omega\times I) \setminus Q_0| = 0$, as $n \to +\infty$. Here one can assume  that $\sigma_n(x,t) > 0$  without any loss of generality.  Fix $(x,t)\in Q_0$ and let $P_n(x,t,\cdot,\cdot) \in L^2(J;V) \cap W^{1,2}(J;V^*)$ be the (unique) weak solution to  
\begin{equation}\label{cp-2-1n}
\left\{
\begin{array}{ll}
 \sigma_n \partial_s P_n(y,s)-\dv_y\left(a(y,s)\left[\nabla_y P_n(y,s)+e_k\right]\right)=0 &\mbox{ in } \T^N\times\T,\\
  P_n(y,0) = P_n(y,1) &\mbox{ in } \T^N
\end{array}
\right.
\end{equation}
such that $\langle P_n(\cdot,s) \rangle_y = 0$ for all $s \in \T$. Moreover, we note that the vector-valued function $(x,t) \mapsto P_n(x,t,\cdot,\cdot)$ is a simple function defined over $\Omega\times I$. Test \eqref{cp-2-1n} by $P_n$. We observe by \eqref{ellip} that
\begin{align*}
\lefteqn{
\frac{ \sigma_n }2 \frac{d}{d s} \|P_n(s)\|_{L^2(\square)}^2 + \lam \|\nabla_y P_n(s)\|_{L^2(\square)}^2
}\\
&\leq - \int_\square a(y,s) e_k \cdot \nabla_y P_n(y,s) \, d y
\leq \frac\lam 2 \|\nabla_y P_n(s)\|_{L^2(\square)}^2 + C \|a(s)e_k\|_{L^2(\square)}^2.
\end{align*}
Integrate both sides over $(0,1)$ and employ the periodicity, $P_n(\cdot,0)=P_n(\cdot,1)$ in $\T^N$. It then follows that
$$
\frac\lam2\int^1_0 \|\nabla_y P_n(s)\|_{L^2(\square)}^2\, ds \leq C \int^1_0 \|a(s)e_k\|_{L^2(\square)}^2 \, ds.
$$
Then one can also obtain
$$
\sigma_n^{2}\int^1_0 \|\partial_s P_n(s)\|_{V^*}^2 \, \d s \leq C \int^1_0 \|a(s)e_k\|_{L^2(\square)}^2 \, ds.
$$
Therefore we can take a (not relabeled) subsequence of $(n)$ and a limit $P(x,t,\cdot,\cdot) \in L^2(J;V)$ such that  $|v_0(x,t)|^{(1-p)/p}P(x,t,\cdot,\cdot) \in W^{1,2}(J;V^*)$ and
\begin{alignat*}{4}
P_n(x,t,\cdot,\cdot) &\to P(x,t,\cdot,\cdot) \quad &&\mbox{ weakly in } L^2(J;V),\\
 \sigma_n(x,t) P_n(x,t,\cdot,\cdot) &\to \tfrac 1 p|v_0(x,t)|^{(1-p)/p}P(x,t,\cdot,\cdot) \quad && \mbox{ weakly in } W^{1,2}(J;V^*).
\end{alignat*}
Hence $(x,t) \mapsto P(x,t,\cdot,\cdot)$ is weakly measurable in $\Omega\times I$ with values in $L^2(J;V)$,  and therefore, it is also strongly measurable due to Pettis's theorem (one can also apply the same argument to $\sigma_n(x,t) \partial_s P_n(x,t,y,s)$).  Furthermore, recalling the convergence $\sigma_n(x,t) \to (1/p)|v_0(x,t)|^{(1-p)/p}$ a.e.~in $\Omega \times I$ as $n \to +\infty$, one can verify that $P$ solves \eqref{cp-2-1} for a.e.~$(x,t) \in \Omega \times I$, and hence, the uniqueness of solutions yields $P = \Phi_k$. Finally, the a priori bounds along with the weak lower-semicontinuity of norms yield that $\Phi_k \in L^\infty(\Omega\times I ;  L^2(J;V))$  and $|v_0|^{(1-p)/p}\Phi_k \in L^\infty(\Omega\times I;W^{1,2}(J;V^*))$. 
\end{proof}

To discuss further regularity of $\Phi_k$, which is needed in the proof of Theorem \ref{T:cor},  let us assume that
\begin{equation}\label{ae-2}
a(y,s) e_k \ \mbox{ is smooth and $(\square \times J)$-periodic for }  k = 1,2,\ldots,N.
\end{equation}
Then using a general theory based on a Hilbert space setting, one can assure that $\Phi_k$ is a strong solution of \eqref{cp-2-1} in the frame of $L^2_{\rm per}(\square)$. Here we may also use the fact that $H^2_{\rm loc}(\R^N) \cap H^1_{\rm per}(\square) = H^2_{\rm per}(\square)$.\footnote{Indeed, let $u \in W^{m,q}_{\rm loc}(\R^N)$ be such that $u(\cdot + h) = u(\cdot)$ for any $h \in \Z^N$ and let $\rho_n$ be a mollifier. Then $u_n:=u*\rho_n$ turns out to be an element of $C^\infty_{\rm per}(\R^N)$, and it converges to $u$ strongly in $W^{m,q}(\square)$.} One can further employ a classical regularity theory for linear parabolic equations (see, e.g.,~\cite{Friedman,LSU}) to obtain interior classical regularity of the strong solution, and moreover, it can be extended to $\T^N$; indeed, by periodicity, the $L^2$ solution of \eqref{cp-2-1} is also an $L^2$ solution of the same PDE on any cubic domain of the form $\square +  y_0$ for $ y_0  \in \R^N$ (in another word, it is due to the periodic structure of $\T^N$). Furthermore, one can verify that $\partial_s \Phi_k$ is also periodic  in $s$,  that is, $\partial_s \Phi_k(\cdot,0)=\partial_s \Phi_k(\cdot,1)$ in $\square$. Indeed, $s \mapsto \Phi_k(x,t,y,s)$  can be extended onto $[0,+\infty)$ by periodically concatenating its orbit. Then the extended orbit solves the same PDE with the smooth periodic coefficient fields at least in the strong sense. Therefore applying a regularity theory again, we obtain smoothness (e.g., $C^1$ regularity) of $s \mapsto \Phi_k(x,t,\cdot,s)$, say in $L^2(\square)$, whenever $s \mapsto \mathrm{div}_y\,a(\cdot,s)e_k$  lies on $C(\overline J;L^2(\square))$,  and thus, the periodicity of $\partial_s \Phi_k$ follows.

We further prove that
\begin{lem}[Regularity for $p\in(0,1)$]\label{A:L:2}
Assume $r=2$, $p \in (0,1)$ and \eqref{ae-2}. For $k=1,2,\ldots,N$, the function $\nabla_y\Phi_k$ lies on $[L^\infty(\Omega\times I ; C_{\rm per}(\square \times J))]^N$.
\end{lem}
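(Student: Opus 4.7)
Fix $(x,t)\in \Omega \times I$ and set $\alpha = \alpha(x,t) := (1/p)|v_0(x,t)|^{(1-p)/p} \in [0,+\infty)$. The cell-problem \eqref{cp-2-1} at $(x,t)$ reads
\begin{equation*}
\alpha\, \partial_s \Phi_k - \mathrm{div}_y(a(y,s)\nabla_y \Phi_k) = g(y,s) := \mathrm{div}_y(a(y,s) e_k) \quad \mbox{ in } \T^N \times \T,
\end{equation*}
with $\langle \Phi_k(\cdot,s)\rangle_y = 0$ and $g \in C^\infty_{\rm per}$ by \eqref{ae-2}. The first step is to show, for each $(x,t)$ fixed, that $\Phi_k(x,t,\cdot,\cdot) \in C^\infty(\T^N \times \T)$. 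When $\alpha > 0$, dividing by $\alpha$ yields a uniformly parabolic linear equation with $C^\infty$ periodic coefficients on the compact manifold $\T^N \times \T$, so classical Schauder/LSU-type estimates deliver $C^\infty$ regularity. When $\alpha = 0$, the equation is elliptic in $y$ for every $s$, and the smooth $s$-dependence of $a$ combined with elliptic Schauder transfers to $\Phi_k$; in both cases $\nabla_y \Phi_k(x,t,\cdot,\cdot) \in C(\overline{\square \times J})$.

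The second step, which is the crux, is to bound $\|\nabla_y \Phi_k\|_{L^\infty(\square \times J)}$ uniformly in $(x,t)$, equivalently in $\alpha \in [0,+\infty)$. The key structural observation is that the $\alpha$-weighted time derivative drops out of every $s$-integrated energy identity by periodicity: testing the equation by $\Phi_k$ and noting $\alpha \int_0^1 \partial_s (|\Phi_k|^2)\, ds = 0$ yields
\begin{equation*}
\|\nabla_y \Phi_k\|_{L^2(\square\times J)} \le C \|ae_k\|_{L^2(\square \times J)}
\end{equation*}
with $C$ depending only on $\lambda$. Differentiating \eqref{cp-2-1} in $y$ produces, for each multi-index $\beta$, an equation of the same structure for $D^\beta_y \Phi_k$ with a forcing involving lower-order $y$-derivatives of $\Phi_k$ and smooth derivatives of $a$. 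Iterating the same periodicity-in-$s$ energy estimate gives $\|D^\beta_y \Phi_k\|_{L^2(\square \times J)} \le C_\beta$ with $C_\beta$ independent of $\alpha$, and Sobolev embedding in $\square$ then delivers $\|\nabla_y \Phi_k(\cdot, s)\|_{L^\infty(\square)} \le C$ for a.e.~$s \in J$, uniformly in $\alpha$.

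To upgrade the a.e.~$s$-estimate to $C(\overline{\square \times J})$ control uniformly in $\alpha$, I compactify the parameter set to $[0,+\infty]$ and use continuous dependence. At the endpoint $\alpha = 0$ the limiting problem is elliptic in $y$ with $s$ as parameter, and at $\alpha = +\infty$ it is the $s$-averaged elliptic problem $-\mathrm{div}_y(\bar a \nabla_y \Phi^{(\infty)}) = \mathrm{div}_y(\bar a e_k)$ with $\bar a := \int_0^1 a(y,s)\, ds$, whose solution is $s$-independent; in both cases elliptic Schauder gives $\|\nabla_y \Phi^{(\alpha)}\|_{L^\infty} \le C$. For intermediate $\alpha$, I combine the uniform $H^m_y$ bounds with parabolic Schauder on $\T^N \times \T$ (after rescaling $\sigma = \alpha s$ when $\alpha \ge 1$, and treating directly when $\alpha \le 1$) to extract uniform modulus-of-continuity bounds in $s$. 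Continuity of the solution map $[0,+\infty] \ni \alpha \mapsto \Phi^{(\alpha)}$ in $C(\overline{\square \times J})$, verified via standard stability estimates for the linear elliptic/parabolic problems, converts the endpoint bounds into a uniform one over the whole compact parameter space.

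The main obstacle will be precisely this uniformity in $\alpha \in [0,+\infty)$. Because $v_0$ is controlled only in $L^\infty(I;L^{p+1}(\Omega))$ (not in $L^\infty(\Omega\times I)$), $\alpha$ may be arbitrarily large, while it can also degenerate to $0$ where the parabolic ellipticity constant collapses. The $s$-periodicity trick neutralizes $\alpha$ in spatial energy estimates — this is the pivotal device — but the delicate part is bridging the $\alpha\to 0$ (elliptic-in-$y$) and $\alpha\to +\infty$ (averaged-elliptic-in-$y$) regimes via a single continuous family, and ensuring the matching of the two limiting cell problems against the intermediate parabolic Schauder bounds without losing uniformity.
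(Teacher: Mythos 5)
Your central device is the same as the paper's: because $\Phi_k$ (and its derivatives) are $s$-periodic, every $s$-integrated energy identity loses the $\tfrac1p|v_0|^{(1-p)/p}\partial_s$ term, so testing \eqref{cp-2-1} and its $y$-differentiated versions by the corresponding derivative gives bounds $\|D^\beta_y\Phi_k\|_{L^2(J;H^1(\square))}\le C_\beta$ with constants independent of $(x,t)$. Up to that point your plan matches the paper. The trouble starts with the next sentence: from $\Phi_k(x,t,\cdot,\cdot)\in L^2(J;H^m(\square))$, Sobolev embedding in $y$ only yields $\int_0^1\|\nabla_y\Phi_k(x,t,\cdot,s)\|_{C(\overline\square)}^2\,ds\le C$, i.e.\ an $L^2$-in-$s$ bound of the sup-in-$y$ norm; it does \emph{not} give ``$\|\nabla_y\Phi_k(\cdot,s)\|_{L^\infty(\square)}\le C$ for a.e.\ $s$, uniformly in $\alpha$.'' Some control of the $s$-variation is indispensable, and this is exactly where your argument then leans on the compactification $\alpha\in[0,+\infty]$ with continuity of $\alpha\mapsto\Phi^{(\alpha)}$ in $C(\overline{\square\times J})$. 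That continuity is the real content: both endpoints are \emph{singular} limits (at $\alpha=0$ the problem degenerates to a family of elliptic problems in $y$ parametrized by $s$; at $\alpha=+\infty$ the $s$-dependence collapses onto the averaged elliptic problem), and ``standard stability estimates for the linear elliptic/parabolic problems'' do not cover uniform convergence in $C(\overline{\square\times J})$ across these regime changes. As written, the key uniformity claim is asserted, not proved, so there is a genuine gap.

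The paper closes this gap much more cheaply, staying entirely within your own energy framework: since $a$ is smooth and $(\square\times J)$-periodic by \eqref{ae-2}, and $\partial_s\Phi_k$ is again $s$-periodic, one may differentiate \eqref{cp-2-1} in $s$ (and in mixed $y$,$s$ directions) and test by the corresponding derivative; the weighted $\partial_s$ term drops once more upon integration over a period, giving $\int_0^1\|\partial_s D^\beta_y\Phi_k(x,t,\cdot,s)\|_{H^1(\square)}^2\,ds\le C$ uniformly in $(x,t)$ for every $\beta$. Combining the $y$- and $s$-derivative bounds with $H^m(\square)\hookrightarrow C(\overline\square)$ for $m>N/2$ yields $\nabla_y\Phi_k\in L^\infty(\Omega\times I;W^{1,2}(J;C_{\rm per}(\square)))\hookrightarrow L^\infty(\Omega\times I;C_{\rm per}(\square\times J))$, which is the assertion. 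I recommend replacing your third step by this $s$-differentiation argument (justifying, as the paper does, that $\partial_s\Phi_k(\cdot,0)=\partial_s\Phi_k(\cdot,1)$ by extending the orbit periodically in $s$ and invoking regularity for the extended problem); if you insist on the compactified-parameter route, you must actually prove uniform convergence of $\Phi^{(\alpha)}$ as $\alpha\to0$ and $\alpha\to+\infty$, which is a separate singular-perturbation analysis.
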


\begin{proof}
As in the proof of Lemma \ref{A:L:1}, we can derive from \eqref{cp-2-1} that
\begin{equation}\label{A:e0}
\frac\lam2\int^1_0 \|\nabla_y \Phi_k(s)\|_{L^2(\square)}^2\, ds \leq C \int^1_0 \|a(s)e_k\|_{L^2(\square)}^2 \, ds
\end{equation}
for a.e.~$(x,t) \in \Omega \times I$. Differentiate \eqref{cp-2-1} in $y_i$ and set $\Phi_{k,i} = \partial_{y_i} \Phi_k$. Then $\Phi_{k,i}$ solves
\begin{align}
\lefteqn{
\frac1p |v_0|^{(1-p)/p}\partial_s \Phi_{k,i}(y,s)-\dv_y\left(a(y,s) \nabla_y \Phi_{k,i}(y,s)\right)
}\label{cp-2-1dy}
\\
&=\dv_y\left(\partial_{y_i} a(y,s)e_k\right) + \dv_y\left(\partial_{y_i} a(y,s)\nabla_y \Phi_k(y,s)\right) \mbox{ in } \T^N\times\T.\nonumber
\end{align}
Test it by $\Phi_{k,i}$ to get
\begin{align*}
\lefteqn{
\frac1{2p}|v_0|^{(1-p)/p} \frac{d}{ds} \|\Phi_{k,i}(s)\|_{L^2(\square))}^2 + \lam \|\nabla_y \Phi_{k,i}(s)\|_{L^2(\square)}^2
}\\
&\leq - \int_\square \partial_{y_i} a(y,s)e_k \cdot \nabla_y \Phi_{k,i}(y,s) \, dy - \int_\square \partial_{y_i} a(y,s) \nabla_y \Phi_k(y,s) \cdot \nabla_y \Phi_{k,i}(y,s) \, dy\\
&\leq \frac \lam 2 \|\nabla_y \Phi_{k,i}(s)\|_{L^2(\square)}^2 + C \left( \|\partial_{y_i} a(s) e_k\|_{L^2(\square)}^2 + \|\partial_{y_i}a\|_{L^\infty(\square \times J)}^2 \|\nabla_y \Phi_k(s)\|_{L^2(\square)}^2 \right),
\end{align*}
which along with \eqref{A:e0} yields 
\begin{equation}\label{A:e1}
\esssup_{(x,t) \in \Omega\times I} \left( \int^1_0 \|\nabla_y \Phi_{k,i}(x,t,\cdot,s)\|_{L^2(\square)}^2 \, ds \right) \leq C.
\end{equation}
Hence $\Phi_k$ turns out to be an element of $L^\infty(\Omega \times I ; L^2(J;H^2(\square)))$, since the strong measurability of $\Phi_k$ over $\Omega \times I$ with values in $L^2(J;H^2(\square))$ can be proved as in the proof of Lemma \ref{A:L:1}. Differentiate both sides of \eqref{cp-2-1dy} in $y_j$ and set $\Phi_{k,ij} := \partial^2_{ij} \Phi_k$. Then $\Phi_{k,ij}$ solves
\begin{align*}
\lefteqn{
\frac1p |v_0|^{(1-p)/p}\partial_s \Phi_{k,ij}(y,s)-\dv_y\left(a(y,s) \nabla_y \Phi_{k,ij}(y,s)\right)
}
\\
&=\dv_y\left(\partial^2_{y_iy_j} a(y,s)e_k\right) + \dv_y\left(\partial_{y_i} a(y,s)\nabla_y \Phi_{k,j}(y,s)\right)\nonumber\\
&\quad + \dv_y\left( \partial^2_{y_iy_j} a(y,s)\nabla_y \Phi_k(y,s)\right)
+ \dv_y \left( \partial_{y_j} a(y,s) \nabla_y \Phi_{k,i}(y,s) \right)
\mbox{ in } \T^N\times\T.\nonumber
\end{align*}
Test it again by $\Phi_{k,ij}$ to see that
\begin{align*}
\lefteqn{
\frac1{2p} |v_0|^{(1-p)/p} \frac d{ds} \|\Phi_{k,ij}(s)\|_{L^2(\square)}^2 + \lam \| \nabla_y \Phi_{k,ij}(s) \|_{L^2(\square)}^2
}\\
&\leq \frac \lam 2 \| \nabla_y \Phi_{k,ij}(s) \|_{L^2(\square)}^2
+ C \Big( \|\partial^2_{y_iy_j} a(s)e_k \|_{L^2(\square)}^2  +  \|\partial_{y_i} a\|_{L^\infty(\square \times J)}^2 \|\nabla_y \Phi_{k,j}(s)\|_{L^2(\square)}^2\\
&\quad + \|\partial^2_{y_iy_j} a\|_{L^\infty(\square\times J)}^2 \|\nabla_y \Phi_k(s)\|_{L^2(\square)}^2 + \| \partial_{y_j} a \|_{L^\infty(\square \times J)}^{ 2} \| \nabla_y \Phi_{k,i}(s)\|_{L^2(\square)}^2 \Big),
\end{align*}
which together with \eqref{A:e0} and \eqref{A:e1} implies
\begin{equation*}
\esssup_{(x,t) \in \Omega\times I} \left( \int^1_0 \|\nabla_y \Phi_{k,ij}(x,t,\cdot,s)\|_{L^2(\square)}^2 \, ds \right) \leq C.
\end{equation*}
These procedures can be also performed by differentiating equations in $s$. For instance, differentiate both sides of \eqref{cp-2-1} in $s$. Then $\Phi_{k,s} := \partial_s \Phi_k$ solves
\begin{align*}
\lefteqn{
\frac1p |v_0|^{(1-p)/p} \partial_s \Phi_{k,s}(y,s)-\dv_y\left(a(y,s) \nabla_y \Phi_{k,s}(y,s)\right)
}
\\
&=\dv_y\left(\partial_s a(y,s)e_k\right) + \dv_y\left(\partial_s a(y,s)\nabla_y \Phi_k(y,s)\right) \mbox{ in } \T^N\times\T\nonumber
\end{align*}
along with the periodicity $\Phi_{k,s}(\cdot,0) = \Phi_{k,s}(\cdot,1)$ in $\square$. Testing it by $\Phi_{k,s}$, we observe that
\begin{align*}
\lefteqn{
\frac1{2p} |v_0|^{(1-p)/p} \frac{d}{d s} \|\Phi_{k,s}(s)\|_{L^2(\square)}^2 + \lam \|\nabla_y \Phi_{k,s}(s)\|_{L^2(\square)}^2
}\\
&\leq - \int_\square \partial_s a(y,s) e_k \cdot \nabla_y \Phi_{k,s}(y,s) \, d y
- \int_\square \partial_s a(y,s) \nabla_y \Phi_k(y,s) \cdot \nabla_y \Phi_{k,s}(y,s) \, d y\\
&\leq \frac\lam 2 \|\nabla_y \Phi_{k,s}(s)\|_{L^2(\square)}^2
+ C \left( \|\partial_s a(s)e_k\|_{L^2(\square)}^2 + \|\partial_s a(s)\|_{L^\infty(\square)}^2 \|\nabla_y \Phi_k(s)\|_{L^2(\square)}^2 \right).
\end{align*}
Therefore it follows that
\begin{equation*}
\int^1_0 \|\nabla_y \Phi_{k,s}(s)\|_{L^2(\square)}^2 \, ds
\leq C.
\end{equation*}
Hence iterating these procedures in finite time and using the Sobolev embedding theorem $H^m(\square) \hookrightarrow C(\overline\square)$ for $m > N/2$, we can finally arrive at
\begin{equation*}
\esssup_{(x,t) \in \Omega\times I} \left( \int^1_0 \|\partial_s \nabla_y\Phi_k(x,t,\cdot,s)\|_{C_{\rm per}(\square)}^2 \, ds \right) \leq C,
\end{equation*}
whence follows that
$$
\partial_{y_j}\Phi_k \in L^\infty(\Omega\times I ; W^{1,2}(J;C_{\rm per}(\square))) \hookrightarrow L^\infty(\Omega\times I; C_{\rm per}(\square \times J))
$$
for $j=1,2,\ldots,N$. This completes the proof.
\end{proof}

As we have seen, the regularity obtained in Lemma \ref{A:L:2} is not at all optimal and one may prove better ones in $y,s$ (but still bounded in $x,t$) for $\Phi_k$. However, we just proved the assertion necessary for proving Theorem \ref{T:cor}.

In case $r = 2$ and $p \in (1,2)$, it suffices to consider the case that $v_0(x,t) \neq 0$ only. For each $(x,t) \in  [v_0\neq0] := \{(x,t) \in \Omega \times (0,T) \colon v_0(x,t) \neq 0\}$,  under \eqref{ae-1},   we can verify existence and uniqueness of a weak solution $\Psi_k(x,t,\cdot,\cdot)  \in L^2(J;V) \cap W^{1,2}(J;V^*)$  to the cell-problem,
\begin{equation}\label{cp-2-2}
\left\{
\begin{array}{ll}
\partial_s\Psi_k(y,s)-\dv_y\left(a(y,s)\left[ p |v_0|^{(p-1)/p}\nabla_y\Psi_k(y,s)+e_k \right] \right)=0 &\mbox{ in } \T^N\times\T,\\
\Psi_k(y,0) = \Psi_k(y,1) &\mbox{ in } \T^N
\end{array}
\right.
\end{equation}
such that $\langle \Psi_k(\cdot,s) \rangle_y = 0$ for $s \in \T$ as in Lemma \ref{A:L:1}.  Moreover, we  claim that
\begin{equation*}
 |v_0|^{(p-1)/{ p}} \Psi_k \in L^{ \infty}( [v_0\neq0];L^2(J;V)),
\quad \Psi_k\in L^{ \infty}( [v_0\neq0];W^{1,2}(J;V^*)). 
\end{equation*}
Indeed, testing \eqref{cp-2-2} by  $|v_0|^{(p-1)/p}\Psi_k$,  we  can verify by \eqref{ellip} that $|v_0|^{ (p-1)/p} \Psi_k\in L^{ \infty}([v_0\neq0];L^2(J;V))$, and  moreover,  it follows that
\begin{align*}
\lefteqn{
 \int^1_0  \langle\partial_s\Psi_k(x,t,\cdot,s),\phi(s) \rangle_{V}  \, \d s 
}\\
& \le \left\|a \left[p|v_0(x,t)|^{(p-1)/p}\nabla_y\Psi_k(x,t,\cdot,\cdot)+e_k\right]\right\|_{L^2(\square\times J)}\|\phi\|_{L^2(J;V)}
\quad \mbox{ for } \ \phi \in L^2(J;V),
\end{align*}
which implies that  $\Psi_k \in L^\infty([v_0\neq0]; W^{1,2}(J; V^*))$.  
 We shall then prove that

\begin{lem}[Regularity for $p\in(1,2)$]\label{A:L:3}
Assume $r=2$, $p\in(1,2)$ and \eqref{ae-2}. For $k=1,2,\ldots,N$, the function 
$$
 \nabla_y \Phi_k(x,t,y,s):=\begin{cases}
			   p|v_0(x,t)|^{(p-1)/p}\nabla_y  \Psi_k(x,t,y,s) &\mbox{ if } \ v_0(x,t) \neq 0,\\
			   0 &\mbox{ otherwise}
			  \end{cases}
$$
belongs to $L^\infty(\Omega\times I ; C_{\rm per}(\square \times J))$.
\end{lem}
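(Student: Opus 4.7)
The plan is to reduce the problem to the same bootstrapping scheme used in Lemma \ref{A:L:2} by rewriting the cell-problem \eqref{cp-2-2} in a form in which the parameter $\mu := p|v_0(x,t)|^{(p-1)/p}>0$ multiplies the \emph{entire} elliptic operator. For $(x,t) \in [v_0 \neq 0]$ fixed, I would first multiply \eqref{cp-2-2} by $\mu$; since $v_0(x,t)$ is independent of $(y,s)$, the function $\Phi_k(x,t,\cdot,\cdot) := p|v_0(x,t)|^{(p-1)/p}\Psi_k(x,t,\cdot,\cdot)$ satisfies
\begin{equation*}
\partial_s \Phi_k(y,s) = \mu \, \dv_y\!\left( a(y,s)\left[\nabla_y \Phi_k(y,s) + e_k\right]\right) \quad \text{ in } \T^N \times \T,
\end{equation*}
together with $\Phi_k(\cdot,0) = \Phi_k(\cdot,1)$ on $\T^N$; this is exactly \eqref{cp-2-1} after dividing by $\mu$. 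Under the smoothness assumption \eqref{ae-2}, the periodic-extension plus classical parabolic regularity argument sketched before Lemma \ref{A:L:2} upgrades $\Phi_k$ to a smooth classical $(\square\times J)$-periodic solution, so each derivative $\partial_{y_i}\Phi_k$ and $\partial_s\Phi_k$ is itself periodic in $(y,s)$.

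Next, I would run the bootstrap as in Lemma \ref{A:L:2}, but always testing the displayed equation (rather than \eqref{cp-2-2}) by $\Phi_k$ and its successive derivatives. Testing by $\Phi_k$, integrating over $\square\times J$, using periodicity in $s$ to kill $\int_0^1 \frac{d}{ds}\|\Phi_k(s)\|_{L^2(\square)}^2\, ds$, and invoking \eqref{ellip} would give
\begin{equation*}
\mu\lambda \left\|\nabla_y \Phi_k\right\|_{L^2(\square\times J)}^2
\le \mu \left\|a e_k\right\|_{L^2(\square\times J)} \left\|\nabla_y \Phi_k\right\|_{L^2(\square\times J)},
\end{equation*}
so that the factor $\mu$ cancels and $\|\nabla_y \Phi_k\|_{L^2(\square\times J)} \le \lambda^{-1}\|ae_k\|_{L^2(\square\times J)}$ uniformly in $(x,t)\in [v_0\neq 0]$. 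Differentiating the PDE in $y_i$, $y_j$ and $s$ and testing each resulting equation by the corresponding derivative of $\Phi_k$ would produce the same cancellation of $\mu$ at every step, and hence
\begin{equation*}
\esssup_{(x,t)\in \Omega\times I} \int_0^1 \left\|\nabla_y \partial^\alpha \Phi_k(x,t,\cdot,s)\right\|_{L^2(\square)}^2 ds \leq C
\end{equation*}
for every multi-index $\alpha$ in $(y,s)$. The bound extends trivially to $[v_0=0]$, where $\nabla_y\Phi_k \equiv 0$ by definition. Iterating sufficiently often and invoking the Sobolev embeddings $H^m(\square)\hookrightarrow C(\overline\square)$ for $m>N/2$ and $W^{1,2}(J)\hookrightarrow C(\overline J)$ then delivers $\nabla_y \Phi_k \in L^\infty(\Omega\times I; C_{\rm per}(\square\times J))$.

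The strong measurability of $(x,t)\mapsto \Phi_k(x,t,\cdot,\cdot)$ in the relevant Sobolev space would be obtained exactly as in Lemma \ref{A:L:1}: approximate $\mu$ by simple functions $\mu_n(x,t)>0$, solve the corresponding perturbed cell-problems (each producing a vector-valued simple function), pass to the weak limit using the uniform bounds above, and invoke Pettis's theorem. The main obstacle I anticipate is the possible degeneracy at $[v_0=0]$: any energy estimate in which $\mu$ fails to factor out on both sides would blow up as $v_0 \to 0$. Working with $\Phi_k = p|v_0|^{(p-1)/p}\Psi_k$ rather than with $\Psi_k$ itself, and testing with $\Phi_k$ and its derivatives, is precisely what engineers this cancellation at every step; this is the key reason why the bootstrap of Lemma \ref{A:L:2} survives in the degenerate regime $p \in (1,2)$.
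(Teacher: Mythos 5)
Your proposal is correct and follows essentially the same route as the paper: the paper likewise tests \eqref{cp-2-2} and its successive $y$- and $s$-derivatives by $\Phi_k = p|v_0|^{(p-1)/p}\Psi_k$ and its corresponding derivatives, uses the $s$-periodicity to annihilate the $\partial_s$ terms so that every power of $|v_0|$ cancels, and bootstraps to $\esssup_{(x,t)}\int_0^1\|\partial_s\nabla_y\Phi_k(x,t,\cdot,s)\|_{C_{\rm per}(\square)}^2\,ds\le C$ before applying the Sobolev embedding. Your preliminary rewriting of \eqref{cp-2-2} in the form of \eqref{cp-2-1} is only a repackaging of the identical energy computation, and your treatment of the set $[v_0=0]$ and of strong measurability matches the paper's.
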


\begin{proof}
Suppose that $v_0(x,t) \neq 0$. Testing \eqref{cp-2-2} by $\Phi_k = p|v_0|^{(p-1)/p}\Psi_k$, we have
\begin{align*}
\lefteqn{
 \frac p2 |v_0|^{(p-1)/p} \frac d{ds} \|\Psi_k(s)\|_{L^2(\square)}^2 + \lam \|\nabla_y \Phi_k(s)\|_{L^2(\square)}^2
}\\
&\leq - \int_\square a(y,s) e_k \cdot \nabla_y \Phi_k(y,s) \, \d y
\leq \frac \lam 2 \|\nabla_y \Phi_k(s)\|_{L^2(\square)}^2 + C \|a(s) e_k\|_{L^2(\square)}^2,
\end{align*}
which implies
$$
\frac \lam 2 \int^1_0 \|\nabla_y \Phi_k(s)\|_{L^2(\square)}^2 \, ds \leq C \int^1_0 \|a(s) e_k\|_{L^2(\square)}^2 \, d s
$$
for a.e.~$(x,t)\in\Omega\times I$. Differentiate both sides of \eqref{cp-2-2} in $y_i$ and set $\Psi_{k,i} := \partial_{y_i} \Psi_k$ and $\Phi_{k,i} := \partial_{y_i} \Phi_k$. Then we see that
\begin{align*}
\lefteqn{
\partial_s\Psi_{k,i}(y,s)-\dv_y\left(a(y,s) \nabla_y\Phi_{k,i}(y,s)\right)
}\\
&= \dv_y\left(\partial_{y_i} a(y,s) e_k \right) + \dv_y \left( \partial_{y_i} a(y,s) \nabla_y \Phi_k(y,s) \right)\ \mbox{ in } \T^N\times\T.\nonumber
\end{align*}
Testing it by $\Phi_{k,i}$, we can derive
\begin{align*}
\lefteqn{
\int^1_0 \|\nabla_y \Phi_{k,i}(s)\|_{L^2(\square)}^2 \, ds
}\\
&\leq C \left( \int^1_0 \|\partial_{y_i} a(s)\|_{L^2(\square)}^2 \, d s + \|\partial_{y_i} a\|_{L^\infty(\square \times J)}^2 \int^1_0 \|\nabla_y \Phi_k(s)\|_{L^2(\square)}^2 \, ds \right)
\end{align*}
for a.e.~$(x,t)\in\Omega\times I$. 
Thus we get
$$
\esssup_{(x,t) \in \Omega\times I} \left( \int^1_0 \|\nabla_y \Phi_{k,i}(x,t,\cdot,s)\|_{L^2(\square)}^2 \, ds \right) \leq C.
$$
Repeating the argument so far and also applying it to the differentiation in $s$ as well, we can finally obtain
$$
\esssup_{(x,t) \in \Omega\times I} \left( \int^1_0 \|\partial_s \nabla_y \Phi_k(x,t,\cdot,s)\|_{C_{\rm per}(\square)}^2 \, ds \right) \leq C,
$$
and therefore,
$$
\nabla_y \Phi_k \in L^\infty(\Omega\times I;C_{\rm per}(\square \times J)). 
$$
This completes the proof.
\end{proof}

\section{Maximal monotonicity of the operator $A$}\label{A:S:monotone}

We show that the operator $A:L^{(p+1)/p}(\Omega)\to L^{p+1}(\Omega)$ defined by \eqref{appendix} is maximal monotone in $L^{(p+1)/p}(\Omega)\times L^{p+1}(\Omega)$. Indeed, define a functional $\phi:L^{(p+1)/p}(\Omega)\to [0,+\infty)$ by
$$
\phi(w)=\frac{p}{p+1}\int_{\Omega}|w|^{(p+1)/p}\, dx\quad \text{ for } \ w\in L^{(p+1)/p}(\Omega).
$$
Then $\phi$ is convex and continuous on $L^{(p+1)/p}(\Omega)$, and moreover, $\partial\phi(w)=\{w^{1/p}\}$ for $w\in L^{(p+1)/p}(\Omega)$.
Indeed, the continuity and convexity of $\phi$ follow from the continuity of norms, Minkowski's inequality and the convexity of the power function $|\cdot|^{(p+1)/p}$. Moreover, let $w \in L^{(p+1)/p}(\Omega)$. Then, for $\zeta\in\partial\phi(w)$, we have
\begin{equation}\label{subd-def}
\phi(u)-\phi(w)\ge\langle\zeta, u-w\rangle_{L^{(p+1)/p}(\Omega)}\quad\text{for all } \ u\in D(\phi).
\end{equation}
The left-hand side  of \eqref{subd-def}  is majorized as follows\/:
\begin{align*}
\lefteqn{
 \phi(u)-\phi(w)
}\\
 &= \frac{p}{p+1}\int_{\Omega}\left(|u|^{(p+1)/p}-|w|^{(p+1)/p}\right)\, dx\nonumber\\
 &\le \int_{\Omega}u^{1/p}(u-w)\, dx\nonumber\\
 &= \int_{\Omega}w^{1/p}(u-w)\, dx + \int_{\Omega}\left(u^{1/p}-w^{1/p}\right)(u-w)\, dx\nonumber\\ 
 &\le \int_{\Omega}w^{1/p}(u-w)\, dx\nonumber\\
 &\quad +
     \begin{cases}
      C\int_{\Omega}\left(|w|^{(1-p)/p}+|u|^{(1-p)/p}\right)|u-w|^2\, dx\quad &\text{ if }\ 0<p\le1, \vspace{3mm}\nonumber \\
      C\int_{\Omega}|u-w|^{(p+1)/p}\, dx\quad &\text{ if }\ 1<p<+\infty.\nonumber
     \end{cases}
\end{align*}
Substituting $u=w+\theta v$ for $\theta\in (0,1)$, $v\in D(\phi)$, we obtain
\begin{align*}
\lefteqn{
\phi(u)-\phi(w) \le \theta\int_{\Omega}w^{1/p}v\, dx
}\\ 
&\quad+
\begin{cases}
C\theta^2\int_{\Omega}\left(|w|^{(1-p)/p}+|w+\theta v|^{(1-p)/p}\right)v^2\, dx\ &\text{ if }\ 0<p\le1, \vspace{3mm}\\
C\theta^{(p+1)/p}\int_{\Omega}|v|^{(p+1)/p}\, dx &\text{ if }\ 1<p<+\infty.
\end{cases}
\end{align*}
On the other hand, the right-hand side  of \eqref{subd-def}  reads 
$$
\langle\zeta,u-w\rangle_{L^{(p+1)/p}(\Omega)}=\theta\int_{\Omega}\zeta v\, dx.
$$
Hence dividing both sides by $\theta$ and letting $\theta\to 0_+$ in both sides, we obtain 
$$
\int_{\Omega}w^{1/p}v\ dx\ge\int_{\Omega}\zeta v\ dx\quad \text{ for all } \ v\in D(\phi).
$$
Repeating the same argument for $u=w-\theta v$, we can verify that
$$
\int_{\Omega}w^{1/p}v\, dx=\int_{\Omega}\zeta v\, dx\quad \text{ for all } \ v\in D(\phi).
$$
Thus we obtain $\zeta=w^{1/p}$, and hence, $A=\partial\phi$.
Therefore the operator $A:L^{(p+1)/p}(\Omega)\to L^{p+1}(\Omega)$ turns out to be maximal monotone in $L^{(p+1)/p}(\Omega)\times L^{p+1}(\Omega)$
(see Theorem \ref{minty}).

\section{Proof of Lemma \ref{aeconv}}\label{pf-pc}

It follows from (i) of Lemma \ref{bdd} (i.e., $\sup_{t\in \overline{I}}\|v_{\e}(t)^{1/p}\|_{L^{p+1}(\Omega)}\le C$) that, for each $t\in \overline{I}$, up to a (not relabeled) subsequence of $(\e)$,
\begin{align}
v_{\e}(t)^{1/p}\to Z_t\quad \text{ weakly in }\ L^{p+1}(\Omega)\label{2002261}
\end{align}
for some $Z_t\in L^{p+1}(\Omega)$. 
Hence it suffices to prove that
\[
  Z_t=v_0(t)^{1/p}\quad \text{ for all }\ t\in \overline{I}.
\]
Since $v_{\e}^{1/p}\to v_{0}^{1/p}$ in $L^{p+1}(\Omega\times I)$ (see \eqref{strongconvsol5}), one can take a (not relabeled) subsequence of $(\e)$ such that
\begin{align}
 v_{\e}(t)^{1/p}\to v_{0}(t)^{1/p}\quad \text{ strongly in }\ L^{p+1}(\Omega)\label{2002262}
\end{align}
for a.e.~$t\in I$. 
Hence $Z_t=v_0(t)^{1/p}$ for all $t\in I\setminus I_0$ for some $I_0\subset I$ satisfying $|I_0|=0$.
Now, fix $t\in\overline{I}$ arbitrary. 
Then one can take a sequence $(t_n)$ in $I\setminus I_0$ such that $(t_n)\to t$.
 Moreover,  we have 
\begin{align}
\langle Z_{t_n},\phi\rangle_{L^{(p+1)/p}(\Omega)}=\langle v_0(t_n)^{1/p},\phi\rangle_{L^{(p+1)/p}(\Omega)}\quad \text{ for all }\ \phi\in L^{(p+1)/p}({\Omega}).\label{lem6.2.1}
\end{align}
Fix $\phi \in H^1_0(\Omega)\cap L^{(p+1)/p}(\Omega)$ arbitrarily. It follows that
\begin{align*}
\lefteqn{|\langle Z_t-v_{0}(t)^{1/p}, \phi \rangle_{H^1_0(\Omega)\cap L^{(p+1)/p}(\Omega)}|}\\
 &\le
     |\langle Z_t-v_{\e}(t)^{1/p}, \phi \rangle_{H^1_0(\Omega)\cap L^{(p+1)/p}(\Omega)}|+|\langle v_{\e}(t)^{1/p}-v_{\e}(t_n)^{1/p}, \phi \rangle_{H^1_0(\Omega)\cap L^{(p+1)/p}(\Omega)}|\\
 &\quad+
     |\langle v_{\e}(t_n)^{1/p}-v_{0}(t_n)^{1/p}, \phi \rangle_{H^1_0(\Omega)\cap L^{(p+1)/p}(\Omega)}|+|\langle v_{0}(t_n)^{1/p}-v_{0}(t)^{1/p}, \phi \rangle_{H^1_0(\Omega)\cap L^{(p+1)/p}(\Omega)}|\\
 &=:I_1(\e)+I_2(\e,n)+I_3(\e,n)+I_4(n).
\end{align*}
By (ii) of Lemma \ref{bdd}, we observe that
\begin{align*}
   \lefteqn{I_2(\e,n)+I_4(n)}\\
 &\le
   \|\phi\|_{H^1_0(\Omega)}(\|v_{\e}(t)^{1/p}-v_{\e}(t_n)^{1/p}\|_{H^{-1}(\Omega)} + \|v_{0}(t)^{1/p}-v_{0}(t_n)^{1/p}\|_{H^{-1}(\Omega)})\\
 &\le
   \|\phi\|_{H^1_0(\Omega)}(\|\partial_t v_{\e}^{1/p}\|_{L^2(I;H^{-1}(\Omega))} + \|\partial_t v_{0}^{1/p}\|_{L^2(I;H^{-1}(\Omega))} )|t-t_n|^{1/2}\\
 &\le
   \|\phi\|_{H^1_0(\Omega)}(C + \|\partial_t v_{0}^{1/p}\|_{L^2(I;H^{-1}(\Omega))} )|t-t_n|^{1/2}
=: \delta |t-t_n|^{1/2}.
\end{align*}
For any $\nu>0$, one can take $n_{\nu}\in\N$ large enough that $|t-t_{n_{\nu}}|< \nu^2/(2\delta)^2$, and hence, we have
\begin{align}\label{202003032}
\sup_{\e\in (0,1)} \bigl(I_2(\e,n_{\nu})+I_4(n_{\nu})\bigl)<\frac{\nu}{2}.
\end{align}
By \eqref{2002261} and \eqref{2002262} along with \eqref{lem6.2.1}, 
we can take a sequence $\e_{\nu}>0$ small enough that 
\begin{align}\label{202003031}
I_1(\e)+I_3(\e,n_{\nu})< \frac{\nu}{2}\quad \text{ for any }\ \e\in(0,\e_{\nu}) .
\end{align}
Therefore, we conclude by \eqref{202003032} and \eqref{202003031} that
\begin{align*}
|\langle Z_t-v_{0}(t)^{1/p}, \phi \rangle_{H^1_0(\Omega)\cap L^{(p+1)/p}(\Omega)}| < \frac{\nu}{2}+\frac{\nu}{2}=\nu.
\end{align*}
Thus we obtain $\langle Z_t - v_0(t)^{1/p}, \phi \rangle_{H^1_0(\Omega)\cap L^{(p+1)/p}(\Omega)} = 0$, which along with the arbitrariness of $\phi \in H^1_0(\Omega) \cap L^{(p+1)/p}(\Omega)$ and $t \in \overline{I}$ yields that
$$
Z_t = v_0(t)^{1/p} \quad \mbox{ for all } \ t \in \overline{I}. 
$$
 Consequently,  \eqref{2002261} implies the assertion. \qed


\begin{thebibliography}{999}

\bibitem{akagi} G.~Akagi,
\newblock{Doubly nonlinear evolution equations governed by time-dependent subdifferentials in reflexive  Banach spaces}, 
\newblock{J.~Differential Equations} {\bf 231} (2006), 32--56.

\bibitem{al1} G.~Allaire,
\newblock{Homogenization and two-scale convergence}, 
\newblock{SIAM J.~Math.~Anal}. {\bf 23} (1992), 1482--1518.

\bibitem{al2} G.~Allaire, M.~Briane,
\newblock{Multiscale convergence and reiterated homogenization}, 
\newblock{Proc.~Roy.~Soc.~Edinburgh Sect.~A} {\bf 126} (1996), 297--342.  

\bibitem{arm} S.~Armstrong, T.~Kuusi, J.-C.~Mourrat, 
\newblock{\it Quantitative stochastic homogenization and large-scale regularity}, 
\newblock{Grundlehren der Mathematischen Wissenschaften}, vol.~352, Springer, Cham, 2019.

\bibitem{arm2} S.~Armstrong, A.~Gloria, T.~Kuusi,
\newblock{Bounded correctors in almost periodic homogenization},
\newblock{Arch.~Ration.~Mech.~Anal.} {\bf 222} (2016), 393--426.


\bibitem{arm3} S.~Armstrong, Z.~Shen,
\newblock{Lipschitz estimates in almost‐periodic homogenization},
\newblock{Comm.~Pure Appl.~Math.} {\bf 69} (2016), 1882--1923.


\bibitem{barbu} V.~Barbu, 
\newblock{\it Nonlinear Differential Equations of Monotone Types in Banach Spaces}, 
\newblock{Springer Monographs in Mathematics}, vol.~190, Springer, New York, 2010.

\bibitem{ben} A.~Bensoussan, J.-L.~Lions, G.~Papanicolaou,
\newblock{\it Asymptotic analysis for periodic structures}, 
\newblock{Studies in Mathematics and Its Applications}, vol.~5, North-Holland, Amsterdam, 1978.


\bibitem{Bra} A.~Braides. 
\newblock{Homogenization of some almost periodic coercive functional},
\newblock{Rend.~Accad.~Naz.~Sci.~XL Mem.~Mat.} {\bf 9} (1985), 313--321. 

\bibitem{Bra2} A.~Braides, V.~Chiad\`o Piat, A.~Defranceschi,
\newblock{Homogenization of almost periodic monotone operators}, 
\newblock{Ann.~Inst.~H.~Poincar\'e, Anal.~Non Lin\'eaire.} {\bf 9} (1992), 399--432.



\bibitem{cdg02} D.~Cioranescu, A.~Damlamian, G.~Griso,
\newblock{Periodic unfolding and homogenization},
\newblock{C.~R.~Math.~Acad.~Sci.~Paris} {\bf 335} (2002), 99--104.

\bibitem{cdg08} D.~Cioranescu, A.~Damlamian, G.~Griso,
\newblock{The periodic unfolding method in homogenization},
\newblock{SIAM J.~Math.~Anal.} {\bf 40} (2008), 1585--1620.

\bibitem{cdg18} D.~Cioranescu, A.~Damlamian, G.~Griso,
\newblock{\it The periodic unfolding method. Theory and applications to partial differential problems},
\newblock{Series in Contemporary Mathematics}, vol.~3, Springer, Singapore, 2018. 

\bibitem{cd2000} D.~Cioranescu, P.~Donato, 
\newblock{\it An Introduction to Homogenization}, 
\newblock{Oxford Lecture Series in Mathematics and Its Applications}, vol.~17, Oxford University Press, New York, 1999.

\bibitem{DAM97} A.~Dall'Aglio, F.~Murat,
\newblock{A corrector result for H-converging parabolic problems with time-dependent coefficients},
\newblock{Ann.~Scuola Norm.~Sup.~Pisa Cl.~Sci.~(4)} {\bf 25} (1997), 329--373.

\bibitem{DeFr75} E.~De Giorgi, T.~Franzoni,
\newblock{Su un tipo di convergenza variazionale},
\newblock{Atti Accad.~Naz.~Lincei Rend.~Cl.~Sci.~Fis.~Mat.~Nat.~(8)} {\bf 58} (1975), 842--850.

\bibitem{flo1} L.~Flod\'en, M.~Olsson, 
\newblock{Homogenization of some parabolic operators with several time scales}, 
\newblock{Appl.~Math.} {\bf 52} (2007), 431--446.

\bibitem{flo2} L.~Flod\'en, A.~Holmbom, M.~Olsson, J.~Persson, 
\newblock{Very weak multiscale convergence}, 
\newblock{Appl.~Math.~Lett.} {\bf 23} (2010), 1170--1173.
 
\bibitem{flo3} L.~Flod\'en, A.~Holmbom, M.~Olsson Lindberg, J.~Persson,
\newblock{Detection of scales of heterogeneity and parabolic homogenization applying very weak multiscale convergence}, 
\newblock{Ann.~Funct.~Anal.} {\bf 2} (2011), 84--99 

\bibitem{flo4} L.~Flod\'en, A.~Holmbom, M.~Olsson Lindberg, J.~Persson,
\newblock{Two-scale convergence. Some remarks and extensions},
\newblock{Pure Appl.~Math.~Q.} {\bf 9} (2013), 461--486.

\bibitem{Friedman} A.~Friedman,
\newblock{\it Partial differential equations of parabolic type},
Prentice-Hall, Inc., Englewood Cliffs, N.J.~1964.

\bibitem{FM1977} D.~Fujiwara, H.~Morimoto, 
\newblock{An $L_r$-theorem of the Helmholtz decomposition of vector fields}, 
\newblock{J.~Fac.~Sci.~Univ.~Tokyo Sect.~IA Math.} {\bf 24} (1977), 685--700. 

\bibitem{hol} A.~Holmbom, 
\newblock{Homogenization of parabolic equations an alternative approach and some corrector-type results}, 
\newblock{Appl.~Math.} {\bf 42} (1997), 321--343. 

\bibitem{JKO} V.V.~Jikov, S.M.~Kozlov, O.A.~Oleinik,
\newblock{\it Homogenization of Differential Operators and Integral Functionals},
\newblock{Springer-Verlag}, Berlin, 1994.

\bibitem{Koz1} S.M.~Kozlov,
\newblock{The averaging of random operators},
\newblock{Mat.~Sb.~(N.S.)} {\bf 109(151)} (1979), 188--202, 327.

\bibitem{Koz2} S.M.~Kozlov,
\newblock{Averaging of differential operators with almost periodic rapidly oscillating coefficients},
\newblock{Mat.~Sb.~(N.S.)} {\bf 107(149)} (1978), 199--217, 317. 

\bibitem{LSU} O.A.~Ladyzhenskaja, V.A.~Solonnikov, N.N.~Ural'ceva,
\newblock{\it Linear and quasilinear equations of parabolic type} (Russian) Translated from the Russian by S.~Smith., 
\newblock{Translations of Mathematical Monographs}, vol.~23, American Mathematical Society, Providence, R.I. 1968.

\bibitem{ln} D.~Lukkassen, G.~Nguetseng, P.~Wall,
\newblock{Two-scale convergence}, 
\newblock{Int.~J.~Pure Appl.~Math.} {\bf 2} (2002), 35--86. 

\bibitem{Mar78} P.~Marcellini,
	\newblock{Periodic solutions and homogenization of nonlinear variational problems},
	\newblock{Ann.~Mat.~Pura Appl.~(4)} {\bf 117} (1978), 139--152.

\bibitem{MPP05} E.~Maru\v{s}i\'c-Paloka, A.L.~Piatnitski,
\newblock{Homogenization of a nonlinear convection-diffusion equation with rapidly oscillating coefficients and strong convection},
\newblock{J.~London Math.~Soc.~(2)} {\bf 72} (2005), 391--409. 

\bibitem{MuTa97} F.~Murat, L.~Tartar,
	\newblock{H-convergence},
	\newblock{In:~Topics in the mathematical modelling of composite materials}, Progr.~Nonlinear Differential Equations Appl., vol.~31, Birkh\"auser Boston, Boston, MA, pp.21--43, 1997.

\bibitem{n2018} S.~Neukamm, 
\newblock{ An introduction to the qualitative and quantitative theory of homogenization}, 
\newblock{Interdiscip.~Inform.~Sci.} {\bf 24} (2018), 1--48. 

\bibitem{ng} G.~Nguetseng,
\newblock{A general convergence result for a functional related to the theory of homogenization}, 
\newblock{SIAM J.~Math.~Anal.} {\bf 20} (1989), 608--623.

\bibitem{PV} G.C.~Papanicolaou, S.R.S.~Varadhan,
\newblock{Boundary value problems with rapidly oscillating random coefficients}, In:~{ Random fields}, vol.~I, II (Esztergom, 1979), vol.~27 of Colloq.~Math.~Soc.~J\'anos Bolyai, North-Holland, Amsterdam, pp.835--873, 1981.

\bibitem{simon} J.~Simon,
\newblock{Compact sets in the space $L^p(0,T;B)$}, 
\newblock{Ann.~Mat.~Pura Appl.~(4)} {\bf 146} (1987), 65--96.

\bibitem{Spa76} S.~Spagnolo,
\newblock{Convergence in energy for elliptic operators},
\newblock{In:~Numerical solution of partial differential equations, III}, Academic Press, New York, pp.469--498, 1976.

\bibitem{vaz} J.L.~V\'azquez,
\newblock{\it The Porous Medium Equation. Mathematical Theory}, 
\newblock{Oxford Mathematical Monographs}, The Clarendon Press, Oxford University Press, Oxford, 2007.

\bibitem{vaz2} J.L.~V\'azquez,
\newblock{\it Smoothing and Decay Estimates for Nonlinear Diffusion Equations. Equations of Porous Medium Type}, 
\newblock{Oxford Lecture Series in Mathematics and its Applications}, vol.~33. Oxford University Press, Oxford, 2006.

\bibitem{Vis06} A.~Visintin,
	\newblock{Towards a two-scale calculus},
	\newblock{ESAIM Control Optim.~Calc.~Var.} {\bf 12} (2006), 371--397.

\bibitem{Z2003} V.V.~Zhikov,
\newblock{On two-scale convergence}, 
\newblock{J.~Math.~Sci.} {\bf 120} (2004), 1328--1352.

\end{thebibliography}
\end{document}